\declaretheorem{theorem}
\declaretheorem{corollary}
\declaretheorem{lemma}
\declaretheorem{claim}
\declaretheorem{conjecture}
\declaretheorem{proposition}
\declaretheorem{fact}
\declaretheoremstyle[qed=$\square$]{definitionwithend}
\declaretheorem{definition}
\declaretheorem[style=definitionwithend]{example}
\declaretheorem{remark}
\crefname{assumption}{Assumption}{Assumptions}
\crefname{setting}{Setting}{Setting}
\crefname{conjecture}{Conjecture}{Conjectures}
\crefname{fact}{Fact}{Facts}
\crefname{claim}{Claim}{Claims}
\newcommand{\abs}[1]{\ensuremath{\left\lvert #1 \right\rvert}}
\newcommand{\by}{\times}
\newcommand{\norm}[1]{\ensuremath{\left\lVert #1 \right\rVert}}
\newcommand{\ip}[1]{\ensuremath{\left\langle #1 \right\rangle}}
\newcommand{\grad}{\ensuremath{\nabla}}
\newcommand{\set}[1]{\left\{#1\right\}}
\def\R{{\mathbb{R}}}
\def\S{{\mathbb{S}}}
\def\cD{{\cal D}}
\DeclareMathOperator*{\argmin}{arg\,min}
\DeclareMathOperator{\tr}{tr}
\newcommand{\framedheader}[3]{
  \framebox[\textwidth]{
    \vbox{
      \vspace{2mm}
      \hbox to \textwidth {\hspace{1em}\today \hfill #1\hspace{1em}}
      \vspace{4mm}
      \hbox to \textwidth {\hfill \Large{#2} \hfill}
      \vspace{2mm}
    }
  }
  \vspace*{4mm}
}
\DeclareMathOperator{\rev}{rev}
\LetLtxMacro\orgvdots\vdots
\LetLtxMacro\orgddots\ddots
\DeclareRobustCommand\vdots{\mathpalette\@vdots{}}
\newcommand*{\@vdots}[2]{\sbox0{$#1\cdotp\cdotp\cdotp\m@th$}\sbox2{$#1.\m@th$}\vbox{\dimen@=\wd0 \advance\dimen@ -3\ht2 \kern.5\dimen@
\dimen@=\wd2 \advance\dimen@ -\ht2 \dimen2=\wd0 \advance\dimen2 -\dimen@
    \vbox to \dimen2{\offinterlineskip
      \copy2 \vfill\copy2 \vfill\copy2 }}}
\DeclareRobustCommand\ddots{\mathinner{\mathpalette\@ddots{}\mkern\thinmuskip
  }}
\newcommand*{\@ddots}[2]{\sbox0{$#1\cdotp\cdotp\cdotp\m@th$}\sbox2{$#1.\m@th$}\vbox{\dimen@=\wd0 \advance\dimen@ -3\ht2 \kern.5\dimen@
\dimen@=\wd2 \advance\dimen@ -\ht2 \dimen2=\wd0 \advance\dimen2 -\dimen@
    \vbox to \dimen2{\offinterlineskip
      \hbox{$#1\mathpunct{.}\m@th$}\vfill
      \hbox{$#1\mathpunct{\kern\wd2}\mathpunct{.}\m@th$}\vfill
      \hbox{$#1\mathpunct{\kern\wd2}\mathpunct{\kern\wd2}\mathpunct{.}\m@th$}}}}
\DeclareRobustCommand\bddots{\mathinner{\mathpalette\@bddots{}\mkern\thinmuskip
  }}
\newcommand*{\@bddots}[2]{\sbox0{$#1\cdotp\cdotp\cdotp\m@th$}\sbox2{$#1.\m@th$}\vbox{\dimen@=\wd0 \advance\dimen@ -3\ht2 \kern.5\dimen@
\dimen@=\wd2 \advance\dimen@ -\ht2 \dimen2=\wd0 \advance\dimen2 -\dimen@
    \vbox to \dimen2{\offinterlineskip
      \hbox{$#1\mathpunct{\kern\wd2}\mathpunct{\kern\wd2}\mathpunct{.}\m@th$}\vfill
      \hbox{$#1\mathpunct{\kern\wd2}\mathpunct{.}\m@th$}\vfill
      \hbox{$#1\mathpunct{.}\m@th$}}}}
\makeatother \usepackage{soul}
\newcommand{\emptySchedule}{[\hspace{0.5em}]}
\newcommand{\fcomposable}{{f-composable}}
\newcommand{\scomposable}{{s-composable}}
\newcommand{\gcomposable}{{g-composable}}
\newcommand{\vr}{{1+\sqrt{2}}}
\DeclareMathOperator{\len}{len}
\begin{document}

\title{Composing Optimized Stepsize Schedules for Gradient Descent}
\author{
    Benjamin Grimmer\footnote{Johns Hopkins University, Department of Applied Mathematics and Statistics, \texttt{grimmer@jhu.edu}}
    \and
    Kevin Shu\footnote{California Institute of Technology, Computational and Mathematical Sciences, \texttt{kshu@caltech.edu}}
    \and
    Alex L.\ Wang\footnote{Purdue University, Daniels School of Business, \texttt{wang5984@purdue.edu}}
}
	\date{\today}
	\maketitle

\begin{abstract}
Recent works by \citet{altschuler2023accelerationPartII} and \citet{grimmer2024accelerated} have shown that it is possible to accelerate the convergence of gradient descent on smooth convex functions, even without momentum, just by picking special stepsizes.
    In this paper, we provide a general theory for composing stepsize schedules capturing all recent advances in this area and more. We propose three notions of ``composable'' stepsize schedules with elementary associated composition operations for combining them. From these operations, in addition to recovering recent works, we construct three highly optimized sequences of stepsize schedules. We first construct optimized stepsize schedules of every length generalizing the exponentially spaced silver stepsizes of~\cite{altschuler2023accelerationPartII}. We then construct highly optimized stepsizes schedules for minimizing final objective gap or gradient norm, improving on prior rates by constants and, more importantly, matching or beating the numerically computed minimax optimal schedules of~\cite{gupta2023branch}. We conjecture these schedules are in fact minimax (information theoretic) optimal. 
    Several novel tertiary results follow from our theory including recovery of the recent dynamic gradient norm minimizing short stepsizes of~\cite{rotaru2024exact} and extending them to objective gap minimization.
\end{abstract}

\section{Introduction}

We consider minimizing an $L$-smooth convex function $f : \R^d \rightarrow \R$. A classic algorithm for approximating the minimum value of $f$ is \emph{gradient descent}, which proceeds given oracle access to the gradient of $f$ and an initialization $x_0 \in \R^d$ by iteratively setting 
\[
x_{i+1} = x_i - \frac{h_i}{L}\nabla f(x_i),
\]
for a schedule of (normalized) stepsizes $h_i$, and for $i = 0, \dots, n-1$.

Rich new theory for gradient descent has been developed in recent years, largely enabled by the performance estimation problem (PEP) framework established in~\cite{drori2012PerformanceOF,taylor2017interpolation,taylor2017smooth,Teboulle2022}.
Classic theory has primarily considered settings where $h_i\in (0,2)$, which we refer to as the ``short stepsize'' regime. For such stepsizes, the objective value $f(x_i)$ is guaranteed to monotonically decrease.
If $f$ attains its minimum at some $x_\star \in \R^d$ with bounded distance from the initialization, $\|x_0-x_\star\|\leq D$, 
the best possible convergence rate for the value of the function at its final iterate is that $f(x_n) - f(x_\star) \le \frac{1}{4}\frac{LD^2/2}{n} + o\left(1/n\right)$.
This rate (and the optimal coefficient of $1/4$) is attained by the dynamic stepsizes of Teboulle and Vaisbourd~\cite{Teboulle2022} and is conjectured\footnote{Since this paper was initially submitted, this conjecture was proven in \cite{kim2024proofexactconvergencerate}. To maintain the historical record, we will continue to refer to this statement as a conjecture in the body of the paper.} to be attained by the constant stepsize schedule considered by~\citet{drori2012PerformanceOF}, differing only in the $o\left(1/n\right)$ term.

Strong indications that convergence rates strictly faster than $O(1/n)$ could be attained by allowing stepsizes larger than two were given by~\citet{gupta2023branch}. Therein, a branch-and-bound technique was applied to produce numerically minimax optimal stepsize schedules of length up to $n=25$. By minimax optimal for a fixed length $n$, we mean the stepsize schedule $h\in\R^n$ solves
\begin{equation} \label{eq:minimax}
	\min_{h \in \mathbb{R}^n} \max_{(f,x_0)\in \mathcal{F}_{L,D}} f(x_n) - \inf f,
\end{equation}
where the set $\mathcal{F}_{L,D}$ contains all considered problem instances $(f,x_0)$ defined by an $L$-smooth convex $f$ and initialization $x_0$ at most distance $D$ from a minimizer of $f$. At face value, this problem may appear intractable given the inner maximization is over the space of functions. However, leveraging the performance estimation techniques of~\cite{drori2012PerformanceOF,taylor2017interpolation}, this inner problem reduces to semidefinite programming and the overall minimax problem reduces to a nonconvex QCQP. From numerical branch-and-bound global solves of this QCQP for $n=1,2,\dots, 25$ (requiring use of the \texttt{MIT Supercloud}) and additional local solves up to $n=50$, \citet{gupta2023branch} conjectured an $O(1/n^{1.178})$ rate may be possible.

Stepsize schedules with provable big-O improvements were then developed concurrently by \citet{altschuler2023accelerationPartII} and~\citet{grimmer2023accelerated}. Specifically, \citet{altschuler2023accelerationPartII} showed that it is possible to achieve an even faster rate of $f(x_n) - f(x_\star) \leq\frac{LD^2/2}{n^{\log_2(1+\sqrt{2})}}$ whenever $n$ is one less than a power of two. This rate was later replicated using the stepsizes of~\citet{grimmer2023accelerated} with an improved constant in~\cite{grimmer2024accelerated}. Both of these works analyzed carefully crafted fractal stepsize schedules with lengths exactly one less than a power of two.

\paragraph{Our Contributions.}
We provide a principled approach to \emph{composing} stepsize schedules for gradient descent. These techniques and ideas motivate and unify recent literature on stepsize schedules for gradient descent.

In \cref{sec:def}, we identify three general families of structured stepsize schedules which can be composed with special composition operations.
Specifically, we define the \fcomposable{}, \gcomposable{}, and \scomposable{} families of stepsize schedules.
These are schedules that, respectively, provide well-behaved convergence guarantees on function value decrease, gradient norm reduction, and both simultaneously.
The empty schedule (in which no steps are taken) is an example of a schedule that is \fcomposable{}, \gcomposable{}, and \scomposable{}.
Along with these families, we also introduce composition operations: the f-join $\triangleright$, the g-join $\triangleleft$, and the s-join $\Join$.
These binary operations take two composable stepsize schedules $a$ and $b$ and produce a new composable stepsize schedule $[a,\mu,b]$, where $\mu \in \R$ is an additional stepsize depending on $a$, $b$, and the particular type of composition being applied.
Any schedule of stepsizes built from these operations
is immediately endowed with tight\footnote{Here, ``tight'' means that our theory produces a convergence rate proof and matching problem instances exactly attaining the proven convergence rates.} convergence rate theory. 
We say that such a schedule is \emph{basic} if it is built via these operations beginning only with the empty schedule.

In Section~\ref{sec:basic}, we show that basic schedules suffice to recover and analyze (i) the recent fractal stepsize schedules of~\cite{altschuler2023accelerationPartII} and~\cite{grimmer2024accelerated}, (ii) all 25 numerically computed minimax optimal schedules of \citet{gupta2023branch}, hence providing these schedules their first formal proofs, and (iii) a host of other schedules including the dynamic stepsizes recently proposed by~\cite{rotaru2024exact}. These results simplify and unify existing literature.

In Section~\ref{sec:schedules}, we show that the optimal \emph{basic} schedules of any given length can be computed easily, specifically via dynamic programming.
The Optimal Basic \scomposable{} Schedules (denoted OBS-S) provides a family generalizing the silver stepsizes of~\cite{altschuler2023accelerationPartII} defined for all $n$. The easily computed Optimal Basic \fcomposable{} Schedules (OBS-F) has worst case convergence rate $\Theta(1/n^{\log_2(1+\sqrt{2})})$ and improves on prior works by a constant factor.
We conjecture that the OBS-F schedules are minimax optimal in the sense of \eqref{eq:minimax}
(see Conjecture~\ref{conj:strong-f-minimax-descripiton} and Figure~\ref{fig:stepsizes}). Similarly, we prove best-known rates for minimizing gradient norm $\|\nabla f(x_n)\|$ via the Optimal Basic \gcomposable{} Schedules (OBS-G) and conjecture they are the minimax optimal schedules for minimizing the final gradient norm. 

As we will see, the three composition operations that we introduce are designed to (i) maintain fundamental symmetries between minimizing the final objective gap and the final gradient norm (i.e., between \fcomposable{} and \gcomposable{} schedules), and (ii) balance worst-case convergence guarantees on quadratic and Huber functions.
Although these two properties do not hold for arbitrary stepsize schedules, they have been observed for many \emph{optimized} first-order methods such as the Optimized Gradient Methods (OGM and OGM-G)~\cite{Kim2016optimal,Kim2021gradient,Kim2024-Hduality} and the conjectured optimal constant stepsize schedules for gradient descent~\cite{rotaru2024exact,drori2012PerformanceOF,grimmer2024strengthenedconjectureminimaxoptimal}.
The symmetry between final objective gap and final gradient norm guarantees is referred to as H-duality and has been proved for an important subset of momentum methods~\cite{Kim2024-Hduality} that do not cover the OBS-F and OBS-G schedules described in this paper.
We will prove that these symmetries and worst-case behaviors extend to OBS-F and OBS-G.

\paragraph{Fixed step gradient descent vs. other first-order methods.}
This paper is concerned with \emph{fixed step gradient descent methods}. Such methods are entirely specified by the number of steps $n$, and the stepsize schedule $h \in \R^n$. Importantly, this stepsize schedule is \emph{fixed} and may not depend on the function $f$ to be minimized.

Many important first-order methods for minimizing convex functions fall outside the class of \emph{fixed step} gradient descent methods.
For example, methods using momentum \cite{Nesterov1983,Kim2016optimal} are not fixed step gradient descent methods due to the inclusion of momentum terms.
These methods seem to be able to obtain convergence rates which are strictly faster than those of gradient descent with fixed step sizes.
Similarly, ``adaptive'' or ``dynamic'' gradient descent methods~\cite{malitsky2020adaptive}, where stepsizes may be chosen in response to the first-order information falls outside the class of fixed step gradient descent methods.

Despite the apparent simplicity of fixed step gradient descent methods, recent work has shown that there is a rich but poorly understood design space for choosing these stepsize schedules. A primary motivation of this work is that a clearer understanding of this setting may lead to improved design principles for first-order algorithms even in other settings.

\paragraph{A Note on Organization.}
Many of the results in this paper come in \fcomposable{}, \gcomposable{}, and \scomposable{} versions.
Thus, for the sake of readability, we present only the proofs for the \fcomposable{} versions of such statements in the main body and defer the analogous proofs for \gcomposable{} and \scomposable{} schedules to Appendix \ref{ap:deferred}.

In the final preparation of this manuscript, the authors became aware of the concurrent work of Zhang and Jiang~\cite{zhang2024acceleratedgradientdescentconcatenation}.  Therein, similar techniques are developed in slightly different terms of so-called ``primitive'', ``dominant'', and ``g-bounded'' schedules. Using a similar dynamical programming technique, they provide an equivalent alternative construction of the schedules we construct in Section~\ref{sec:schedules}. We highlight the main differences between these works in \cref{rem:compare_def,rem:compare_rates}.

\begin{figure}
	\includegraphics[height=2.7cm]{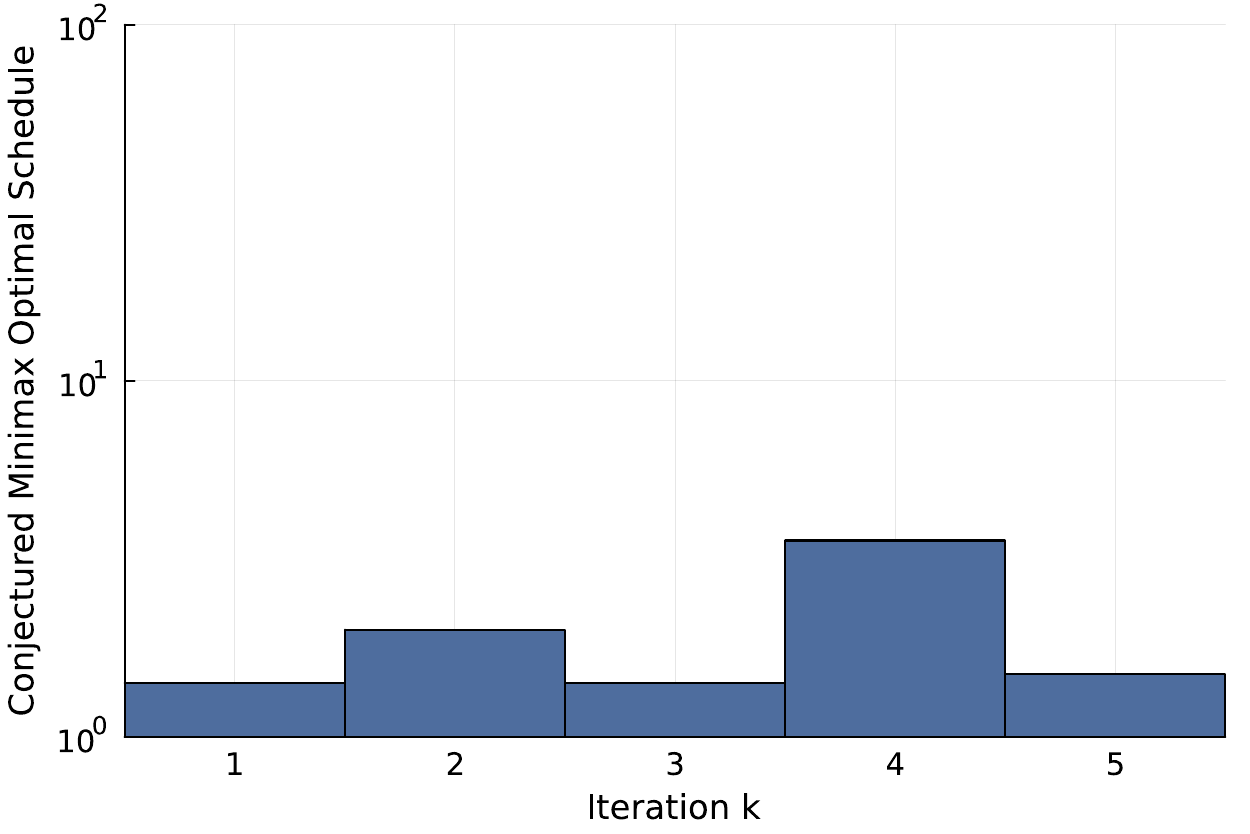}\includegraphics[height=2.7cm]{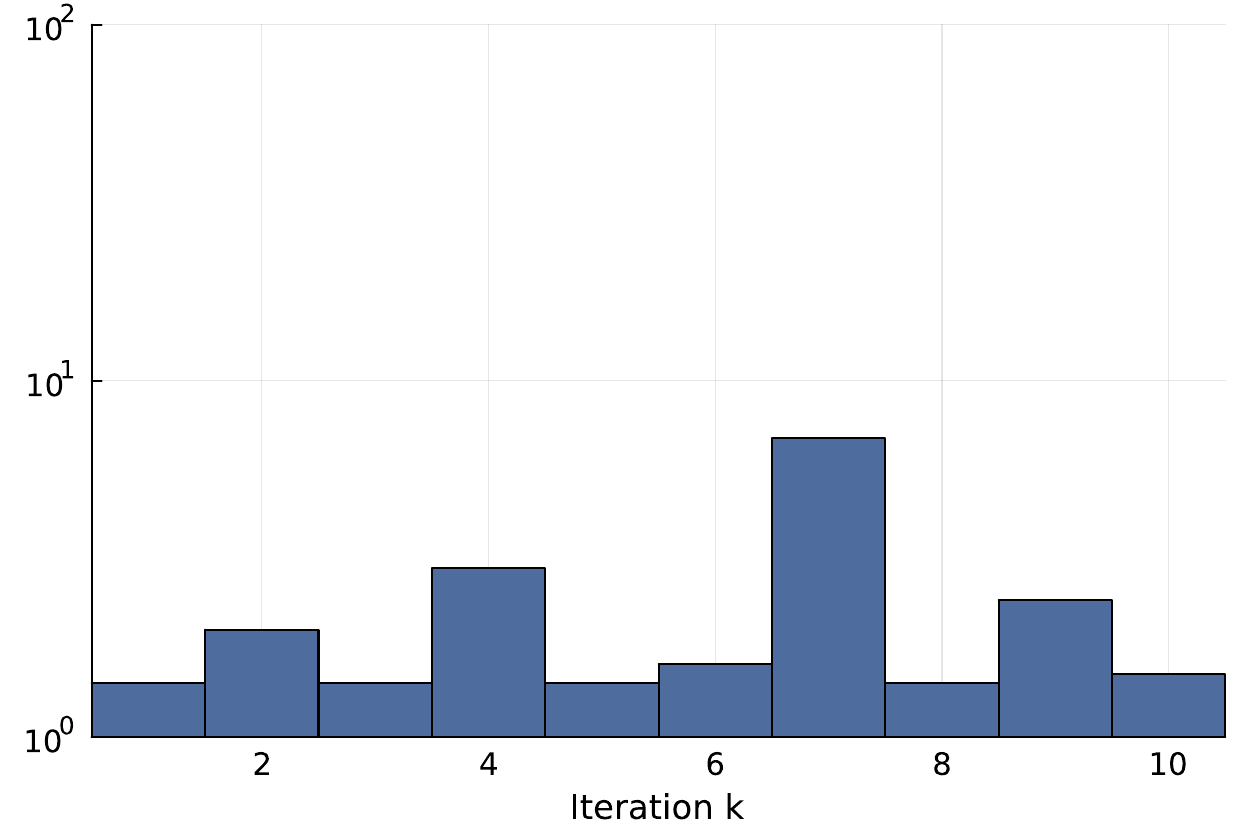}\includegraphics[height=2.7cm]{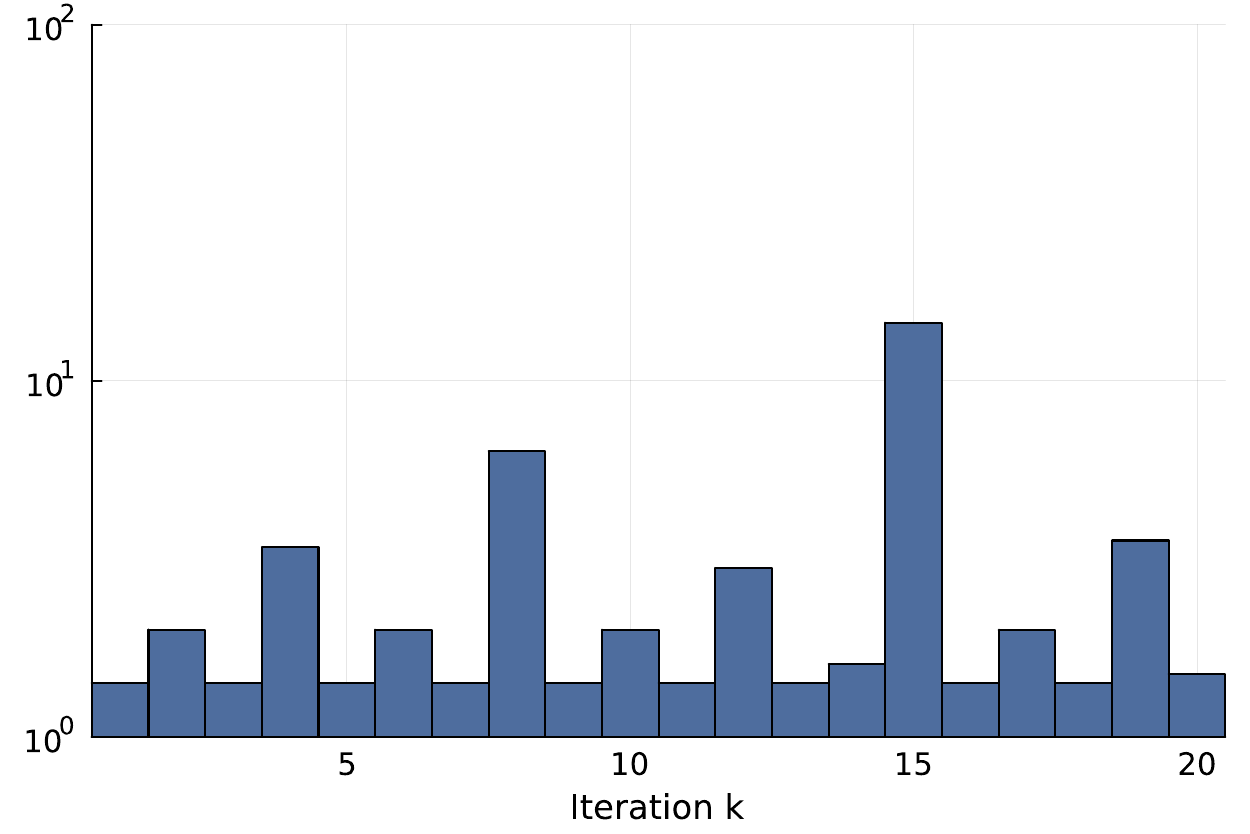}\includegraphics[height=2.7cm]{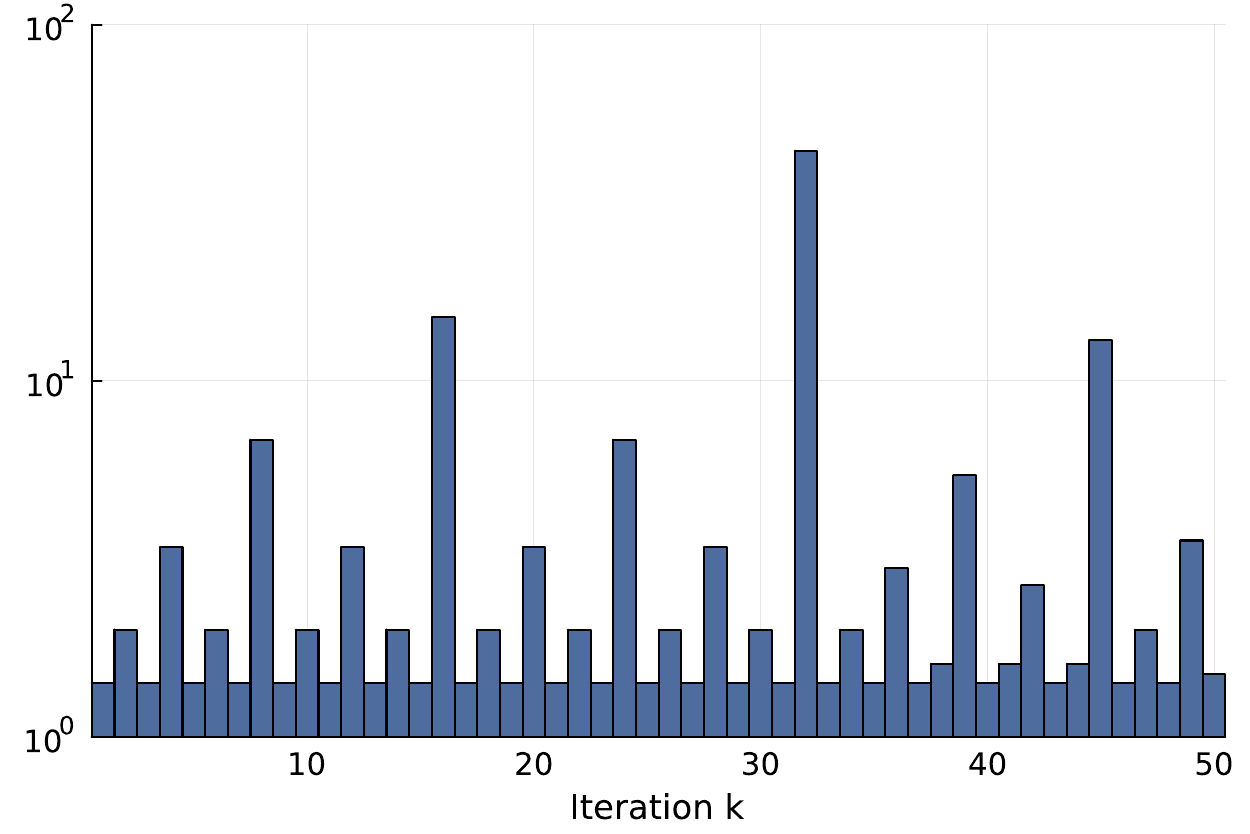}\\
	\includegraphics[height=2.7cm]{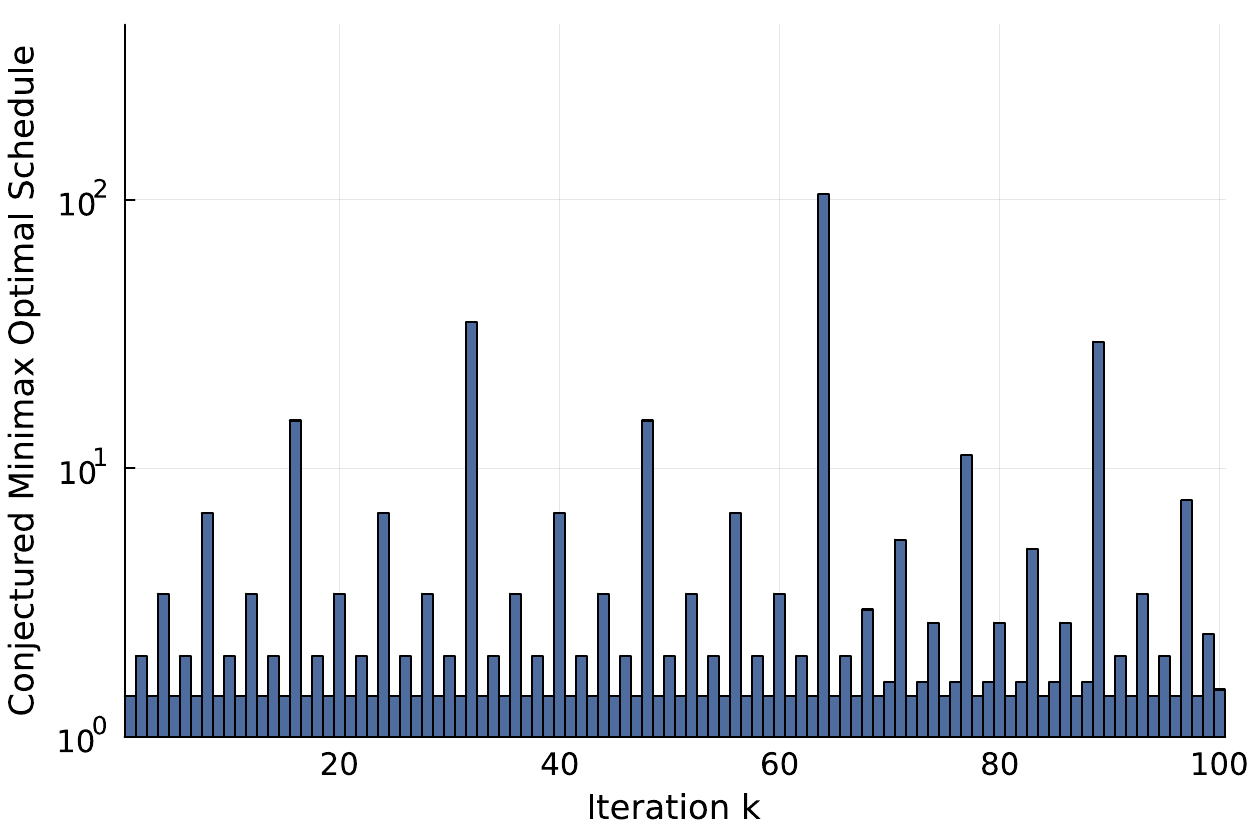}\includegraphics[height=2.7cm]{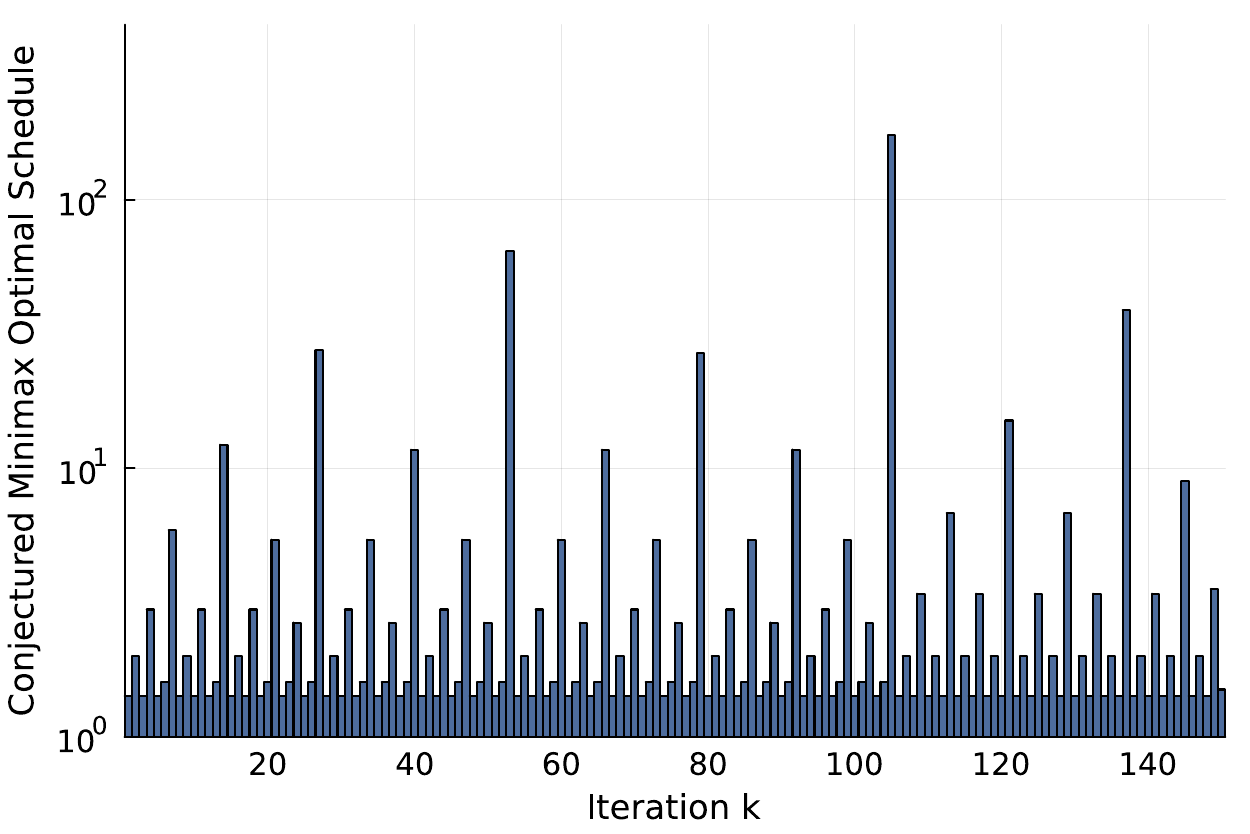}\includegraphics[height=2.7cm]{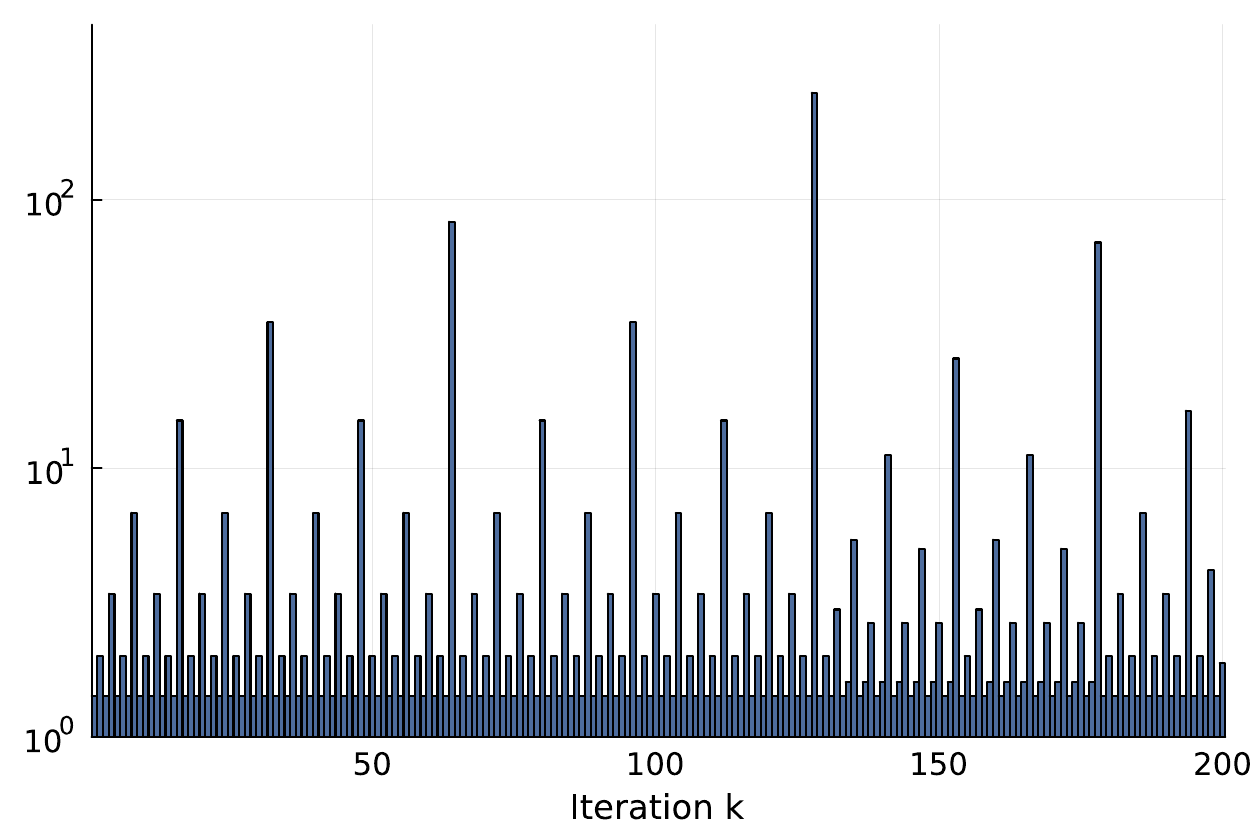}\includegraphics[height=2.7cm]{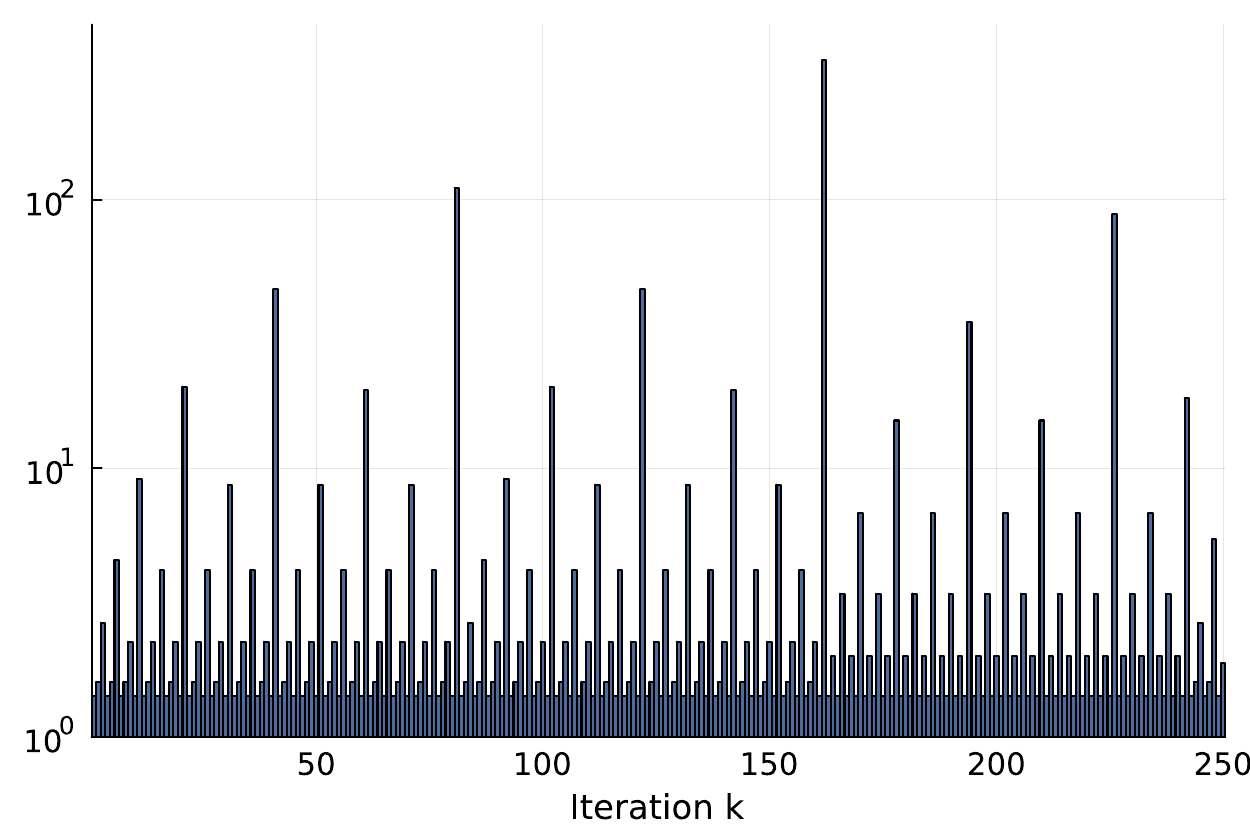}
	\caption{Conjectured Minimax Optimal Stepsize Schedules $h_k$ for sample values of $n=5\dots 250$, formally defined in Section~\ref{sec:obs-f}. Efficient code producing such optimized, OBS-F, schedules of any length $n$ is available at~\url{https://github.com/bgrimmer/OptimizedBasicSchedules}.}\label{fig:stepsizes}
\end{figure}
 \section{Composable Stepsize Schedules and Composition Operations} \label{sec:def}

In the remainder of the paper, we will assume that $L=1$, i.e., that the convex functions of interest are $1$-smooth. This simplifies notation and may be made without loss of generality.

We begin by introducing the \fcomposable{}, \gcomposable{}, and \scomposable{} stepsize schedules. The key motivating principle behind the design of these classes is to balance worst-case performance (i.e., ``hedge'' in the terminology of~\cite{altschuler2023accelerationPartI}) between the two extremes of functions with minimal and maximal curvature (limited by $f$ being $1$-smooth and convex), or more specifically, the one-dimensional quadratic function $q(x)$ and Huber functions $H_{\delta}(x)$ parameterized by $\delta$:
\begin{equation}
    \label{eq:quad_huber}
    q(x)= \frac{1}{2}x^2 \qquad\text{and} \qquad H_{\delta}(x) = \begin{cases}
	\frac{1}{2}x^2&\text{if }\abs{x}\leq \delta\\
	\delta\abs{x} - \frac{\delta^2}{2} & \text{else}
\end{cases},
\end{equation}

This principle has been well-established by prior works considering gradient decent with a constant stepsize schedule. Conjectures of~\cite{drori2012PerformanceOF,taylor2017interpolation} state that the optimal constant stepsize schedule is precisely the one balancing the final iterate's performance between these two functions (for an appropriately chosen $\delta$). In the case of minimizing the final iterate's gradient's norm, this conjecture was recently proven by~\cite{rotaru2024exact}. Modest progress on the conjecture for minimizing final objective gap was given by~\cite{grimmer2024strengthenedconjectureminimaxoptimal}.

\subsection{Definitions of composable Stepsize Schedules}
We now define our three classes of composable stepsize schedules. 
We begin with \fcomposable{} schedules.

Below, and in the remainder of the paper, the phrase ``gradient descent with stepsize schedule $h$'' refers to the algorithm that given $(f, x_0)$, produces $x_1, \dots, x_n$ by setting $x_i = x_{i-1} - h_{i-1} \nabla f(x_{i-1})$.
\begin{definition}
    Let $n\geq 0$ and let $h\in\R^n_{++}$ be indexed by $[0,n-1]$. We say that $h$ is \emph{\fcomposable{} with rate $\eta$} if gradient descent with stepsize schedule $h$ satisfies the following inequality for all $1$-smooth convex functions $f$, all minimizers $x_\star$ of $f$, and all $x_0 \in \R^d$:
	\begin{equation}
		\label{eq:f_composable_inequality}
		f(x_n)-f(x_\star) \leq \eta \frac{\norm{x_0 - x_\star}^2}{2}
	\end{equation}
	and moreover,
		$\eta = \frac{1}{1+2\sum_{i=0}^{n-1} h_i}  = \prod_{i=0}^{n-1} (h_i - 1)^2$.\qedhere
\end{definition}
The $1$-smooth convex function $f:\R^d\to\R$ above may be taken in \emph{any} ambient dimension $d$.

The second condition in the definition of an \fcomposable{} schedule is precisely that the inequality in the first condition is tight and witnessed by both quadratic and Huber functions. The following lemma verifies this.
\begin{lemma}
	\label{lem:f_huber_tight}
    Suppose $h\in\R^n_{++}$ is \fcomposable{} with rate $\eta$. If $x_0 = 1$ and $f(x)$ is either $q(x)$ or $H_{\eta}(x)$ (defined in \cref{eq:quad_huber}), then gradient descent with stepsize schedule $h$ satisfies \eqref{eq:f_composable_inequality} at equality.
\end{lemma}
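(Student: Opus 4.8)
The plan is to trace the gradient descent trajectory explicitly on each of the two one-dimensional test functions and check that \eqref{eq:f_composable_inequality} holds with equality, using the two algebraic identities $\eta = 1/(1+2\sum_{i} h_i) = \prod_{i}(h_i-1)^2$ supplied by the definition of \fcomposable{}. Throughout, observe that for $n\ge 1$ we have $0<\eta<1$ (the denominator $1+2\sum_{i=0}^{n-1}h_i$ is finite and strictly larger than $1$ since $h\in\R^n_{++}$), while for $n=0$, i.e.\ the empty schedule with $\eta=1$, both claims are immediate since $x_n=x_0=1$.

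For $f=q$, the minimizer is $x_\star=0$ and $\nabla q(x)=x$, so the iteration is $x_{i+1}=(1-h_i)x_i$ and hence $x_n=\prod_{i=0}^{n-1}(1-h_i)$ when $x_0=1$. Therefore $q(x_n)-q(x_\star)=\tfrac12 x_n^2=\tfrac12\prod_{i=0}^{n-1}(h_i-1)^2=\tfrac{\eta}{2}$, where the last equality invokes the product formula for $\eta$; this matches $\eta\,\norm{x_0-x_\star}^2/2=\eta/2$.

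For $f=H_\eta$ (Huber parameter $\delta=\eta$) we again have $x_\star=0$ and $H_\eta(x_\star)=0$, and for $x>\eta$ the function equals $\eta x-\tfrac{\eta^2}{2}$, so $\nabla H_\eta$ is the constant $\eta$ there. The only point requiring care is to show all of $x_0,\dots,x_{n-1}$ lie in $\{x>\eta\}$; granting this, every update is $x_{i+1}=x_i-h_i\eta$, so $x_i=1-\eta\sum_{j=0}^{i-1}h_j$, and the identity $\sum_{j=0}^{n-1}h_j=\tfrac{1-\eta}{2\eta}$ gives $x_n=1-\eta\cdot\tfrac{1-\eta}{2\eta}=\tfrac{1+\eta}{2}>\eta$, whence $H_\eta(x_n)=\eta x_n-\tfrac{\eta^2}{2}=\tfrac{\eta}{2}=\eta\,\norm{x_0-x_\star}^2/2$. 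To verify the containment, induct on $i$: $x_0=1>\eta$, and if $x_0,\dots,x_i$ all exceed $\eta$ with $i\le n-2$, then the updates so far give $x_{i+1}=1-\eta\sum_{j=0}^{i}h_j\ge 1-\eta\bigl(\tfrac{1-\eta}{2\eta}-h_{n-1}\bigr)=\tfrac{1+\eta}{2}+\eta h_{n-1}>\tfrac{1+\eta}{2}>\eta$, using $\sum_{j=0}^{i}h_j\le\sum_{j=0}^{n-2}h_j$ and $\eta<1$. There is no real obstacle here; the whole argument rests on the single scalar identity $1+2\sum_{i} h_i=1/\eta$ together with $\eta<1$, the latter ensuring the Huber iterates never enter the quadratic region $\{|x|\le\eta\}$.
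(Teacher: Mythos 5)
Your proposal is correct and follows essentially the same route as the paper: direct computation of the trajectory on $q$ and $H_\eta$, using the product identity for the quadratic and the sum identity $\sum_i h_i = \tfrac{1-\eta}{2\eta}$ for the Huber function. The only difference is that you spell out the induction showing the Huber iterates stay in the linear region, which the paper covers implicitly via the single inequality $1-\eta\sum_i h_i = \tfrac{1+\eta}{2}\geq\eta$ (all partial sums being smaller, every $x_i$ is at least $\tfrac{1+\eta}{2}$).
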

\begin{proof}
	First, suppose $f(x)= \frac{1}{2}x^2$ and $x_0=1$. Then,
	$x_n = \prod_{i=0}^{n-1}(1-h_i)$
	and
$$f(x_n)-f(x_\star) = \frac{1}{2}\prod_{i=0}^{n-1}(1-h_i)^2 = \frac{\eta}{2} = \frac{\eta}{2}\norm{x_0-x_\star}^2.$$

	Next, suppose $f(x)$ is the Huber function described in the lemma statement. Note that:
$$1 - \eta \sum_{i=0}^{n-1}h_i = 1 - \eta\left(\frac{1}{2\eta}- \frac{1}{2}\right) = \frac{1+\eta}{2} \geq \eta.$$
Here the last inequality uses the fact that $\eta \leq 1$.
	Thus, we have that $x_n = \frac{1+\eta}{2}$ and
	\begin{equation*}
		f(x_n)-f(x_\star) = \eta\left(\frac{1+\eta}{2}\right)- \frac{\eta^2}{2} = \frac{\eta}{2} = \frac{\eta}{2}\norm{x_0-x_\star}^2.\qedhere
	\end{equation*}
\end{proof}

Next, we define \gcomposable{} schedules similarly. Again, the second condition in the below definition states that the first condition is tight and witnessed both by quadratic and Huber functions (proof in Appendix \ref{ap:deferred}).
\begin{definition}
	Let $n\geq 0$ and let $h\in\R^n_{++}$ be indexed by $[0,n-1]$. We say that $h$ is \emph{\gcomposable{} with rate $\eta$} if gradient descent with stepsize schedule $h$, on any $1$-smooth convex function $f$ with minimizer $x_\star$, satisfies 
	\begin{equation}
		\label{eq:g_composable_ineq}
		\frac{1}{2}\norm{\nabla f(x_n)}^2 \leq \eta (f(x_0)-f(x_\star))
	\end{equation}
	and moreover,
		$\eta = \frac{1}{1+2\sum_{i=0}^{n-1} h_i}  = \prod_{i=0}^{n-1} (h_i - 1)^2$.\qedhere
\end{definition}

\begin{lemma}
	\label{lem:g_huber_tight}
    Suppose $h\in\R^n_{++}$ is \gcomposable{} with rate $\eta$. If $x_0 = 1$ and $f(x)$ is either $q(x)$ or $H_{2\eta/(1+\eta)}(x)$ (defined in \cref{eq:quad_huber}), then gradient descent with stepsize schedule $h$ satisfies \eqref{eq:g_composable_ineq} at equality.
\end{lemma}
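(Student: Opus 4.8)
The plan is to follow the template of \cref{lem:f_huber_tight} almost verbatim, treating the two functions separately; the only substantive point is tracking which branch of the Huber function each iterate lands on.

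For $f=q$ the claim is immediate. With $x_0=1$ we have $x_\star=0$, and since $\nabla q(x)=x$ the iterates satisfy $x_{i+1}=(1-h_i)x_i$, so $x_n=\prod_{i=0}^{n-1}(1-h_i)$ and $\nabla f(x_n)=x_n$. Then $\tfrac12\norm{\nabla f(x_n)}^2=\tfrac12\prod_{i=0}^{n-1}(1-h_i)^2=\tfrac{\eta}{2}$ by the identity $\eta=\prod_{i=0}^{n-1}(h_i-1)^2$ from the definition of a \gcomposable{} schedule, while $\eta\bigl(f(x_0)-f(x_\star)\bigr)=\eta\cdot\tfrac12$; the two agree, giving equality in \eqref{eq:g_composable_ineq}.

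For $f=H_\delta$ with $\delta=2\eta/(1+\eta)$, I would first record two easy consequences of $\eta=1/(1+2\sum_{i=0}^{n-1}h_i)$: namely $\eta\in(0,1]$ (hence $\delta\le1$, so that $x_0=1$ sits on the linear branch of $H_\delta$) and $\sum_{i=0}^{n-1}h_i=(1-\eta)/(2\eta)$. Define the ``linear-regime guesses'' $\tilde x_i:=1-\delta\sum_{j=0}^{i-1}h_j$. Since all $h_j>0$ and $\delta>0$, the sequence $(\tilde x_i)$ is strictly decreasing, and a one-line computation using $\sum_{j=0}^{n-1}h_j=(1-\eta)/(2\eta)$ gives $\tilde x_n=\delta$; hence $\tilde x_i\ge\delta>0$ for all $i\le n$. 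In particular $\nabla H_\delta(\tilde x_i)=\delta$ for $i=0,\dots,n-1$, so a trivial induction (using $x_0=1=\tilde x_0$ and $x_{i+1}=x_i-h_i\nabla H_\delta(x_i)=\tilde x_i-h_i\delta=\tilde x_{i+1}$) shows the actual gradient descent iterates coincide with the $\tilde x_i$, and in particular $x_n=\delta$. Now $H_\delta$ is $C^1$ with $\nabla H_\delta(\delta)=\delta$, so the left-hand side of \eqref{eq:g_composable_ineq} is $\delta^2/2$; and since $x_\star=0$, $H_\delta(0)=0$, and $\delta\le1$, the right-hand side is $\eta\bigl(H_\delta(1)-0\bigr)=\eta(\delta-\delta^2/2)=\tfrac{\eta\delta}{2}(2-\delta)$. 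After dividing by $\delta/2>0$, equality of the two sides is precisely $\delta(1+\eta)=2\eta$, i.e.\ the defining equation $\delta=2\eta/(1+\eta)$.

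The only place that needs any care --- the ``main obstacle,'' such as it is --- is confirming that each of $x_0,\dots,x_{n-1}$ stays in the linear regime $\abs{x}\ge\delta$ of the Huber function, which is exactly what the monotonicity of $(\tilde x_i)$ together with the endpoint identity $\tilde x_n=\delta$ provides. Structurally this is identical to \cref{lem:f_huber_tight}; the only difference is that the Huber parameter is shifted from $\eta$ to $2\eta/(1+\eta)$ so that the final iterate lands exactly on the kink ($x_n=\delta$) rather than strictly inside the linear branch.
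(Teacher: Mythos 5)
Your proof is correct and follows essentially the same route as the paper's: direct evaluation on the quadratic, and for the Huber function the observation that $1-\delta\sum_{i}h_i=\delta$ (via $\sum_i h_i=(1-\eta)/(2\eta)$) forces $x_n=\delta$ with all earlier iterates in the linear regime, after which equality reduces to $\delta(1+\eta)=2\eta$. Your treatment is slightly more explicit than the paper's about verifying that every iterate stays on the linear branch, but the argument is the same.
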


Finally, by the same principles, we define composable schedules that maintain an inequality intermediate between the inequalities for \fcomposable{} and \gcomposable{} schedules, which we call \scomposable{} schedules. Such schedules possess a nice self-duality, hence the denoting letter ``s''. Again, the second condition below states that the first condition is tight and witnessed both by quadratic and Huber functions (proof in Appendix \ref{ap:deferred}).
\begin{definition}
	Let $n\geq 0$ and let $h\in\R^n_{++}$ be indexed by $[0,n-1]$. We say that $h$ is \emph{\scomposable{} with rate $\eta$} if gradient descent with stepsize schedule $h$, on any $1$-smooth convex function $f$, satisfies 
	\begin{equation}
		\label{eq:self_composable_ineq}
		\frac{1-\eta}{2}\norm{\nabla f(x_n)}^2 +\frac{\eta^2}{2}\norm{x_n-x_\star}^2 + (\eta-\eta^2)(f(x_n)-f(x_\star)) \leq \frac{\eta^2}{2}\norm{x_0 - x_\star}^2
	\end{equation}
	and moreover,
		$\eta = \frac{1}{1+\sum_{i=0}^{n-1}h_i} = \prod_{i=0}^{n-1}(h_i - 1)$.\qedhere
\end{definition}
\begin{lemma}
    \label{lem:s_composable_huber_tight}
    Suppose $h\in\R^n_{++}$ is \scomposable{} with rate $\eta$. If $x_0 = 1$ and $f(x)$ is either $q(x)$ or $H_{\delta}(x)$ (defined in \cref{eq:quad_huber}) for some $\delta\leq \eta$, then gradient descent with stepsize schedule $h$ satisfies \eqref{eq:self_composable_ineq} at equality.
\end{lemma}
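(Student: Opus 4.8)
The plan is to treat the two functions separately; in both cases the minimizer is $x_\star = 0$ and $x_0 = 1$, so the right-hand side of \eqref{eq:self_composable_ineq} equals $\eta^2/2$, and it suffices to show that the left-hand side also equals $\eta^2/2$. Throughout I would use the two identities defining the rate, namely $\eta = \prod_{i=0}^{n-1}(h_i-1)$ and $\eta = 1/(1+\sum_{i=0}^{n-1}h_i)$; the second one, together with $h_i > 0$, forces $\eta \in (0,1]$, so in particular the hypothesis $\delta \le \eta$ gives $\delta \le 1$ and $x_0 = 1 \ge \delta$.

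For $f = q$, I would use $\nabla q(x) = x$, so that gradient descent gives $x_i = \prod_{j=0}^{i-1}(1-h_j)$ and hence $x_n = (-1)^n\prod_{j=0}^{n-1}(h_j-1) = (-1)^n\eta$, whence $\|\nabla q(x_n)\|^2 = \|x_n - x_\star\|^2 = x_n^2 = \eta^2$ and $q(x_n)-q(x_\star) = \eta^2/2$. Substituting these into the left-hand side of \eqref{eq:self_composable_ineq} and factoring out $\eta^2/2$ leaves the scalar $(1-\eta) + \eta^2 + (\eta-\eta^2) = 1$, giving equality. This step is routine and only uses the product formula for $\eta$.

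For $f = H_\delta$, the one genuine point is to show that the iterates never leave the affine branch $\{x : x \ge \delta\}$ of $H_\delta$, on which $\nabla H_\delta \equiv \delta$. I would prove by induction on $i = 0,\dots,n$ that $x_i = 1 - \delta\sum_{j=0}^{i-1}h_j$ and $x_i \ge \delta$. The base case is $x_0 = 1 \ge \delta$. For the inductive step, $x_i \ge \delta \ge 0$ gives $\nabla H_\delta(x_i) = \delta$, so $x_{i+1} = x_i - \delta h_i = 1 - \delta\sum_{j=0}^{i}h_j$, and $x_{i+1} \ge \delta$ follows from $\delta\bigl(1 + \sum_{j=0}^{i}h_j\bigr) \le \delta\bigl(1 + \sum_{j=0}^{n-1}h_j\bigr) = \delta/\eta \le 1$, using $\sum_{j=0}^{n-1}h_j = (1-\eta)/\eta$ and $\delta \le \eta$. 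This is the step I expect to be the main obstacle: because the stepsizes may exceed $2$, an iterate could a priori overshoot below $\delta$ (or even change sign), but the above bound on the partial sums of $h$ rules this out. In particular $x_n = 1 - \delta(1-\eta)/\eta$, $\nabla H_\delta(x_n) = \delta$, and $H_\delta(x_n) - H_\delta(x_\star) = \delta x_n - \delta^2/2$.

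Finally I would substitute these three quantities into the left-hand side of \eqref{eq:self_composable_ineq} and read the result as a quadratic polynomial in $\delta$, writing $s := \sum_{j=0}^{n-1}h_j = (1-\eta)/\eta$ (equivalently $\eta s = 1-\eta$). Its constant term is exactly $\eta^2/2$, the desired value, so it remains to check that the coefficients of $\delta^1$ and $\delta^2$ vanish: the $\delta^1$ coefficient is $-\eta^2 s + (\eta-\eta^2) = -\eta(1-\eta) + \eta(1-\eta) = 0$, and the $\delta^2$ coefficient collapses, after using $\eta^2 s^2 = (1-\eta)^2$ and $(\eta-\eta^2)s = (1-\eta)^2$, to $\tfrac{1-\eta}{2}\bigl(1 - (1-\eta) - \eta\bigr) = 0$. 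This gives the claimed equality for $H_\delta$ and completes the proof.
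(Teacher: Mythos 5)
Your proof is correct and follows essentially the same route as the paper's: evaluate both sides on the quadratic and on the Huber function, using the two identities defining the \scomposable{} rate. The only difference is cosmetic — you verify by induction that \emph{every} iterate stays on the affine branch of $H_\delta$ (the paper only checks $x_n \ge \delta$, from which the intermediate cases follow since the $h_i$ are positive), and you organize the final check as vanishing of the $\delta$ and $\delta^2$ coefficients rather than a direct expansion.
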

\begin{remark}
    Note that the definition of the rate $\eta$ for an \scomposable{} schedule differs from that of the \fcomposable{} and \gcomposable{} schedules, notably in that factors of 2 and squares are dropped.
\end{remark}

To further explain \eqref{eq:self_composable_ineq}, note that the left-hand-side expression in \eqref{eq:self_composable_ineq} is a convex combination of the three performance criteria: $f(x_n)-f(x_\star)$, $\frac{1}{2}\norm{x_n - x_\star}^2$, and $\frac{1}{2}\norm{\nabla f(x_n)}^2$.
Additionally, this inequality is a natural precondition for guaranteeing bounds on both the final suboptimality and gradient norm simultaneously:
\begin{proposition}
    \label{prop:fg_rates_of_scomp}
If $h\in\R^n_{++}$ is \scomposable{} with rate $\eta$, then for any $1$-smooth convex $f$, gradient descent with stepsize schedule $h$ satisfies
\begin{align*}
    f(x_n) - f(x_\star) &\leq \frac{1}{1+2\sum_{i=0}^{n-1}h_i}\left(\frac{1}{2}\norm{x_0 - x_\star}^2 - \frac{1}{2}\norm{x_n - x_\star - \frac{\grad f(x_n)}{\eta}}^2\right)\\
    \frac{1}{2}\norm{\nabla f(x_n)}^2 &\leq\frac{1}{1+2\sum_{i=0}^{n-1}h_i} \left(f(x_0) - f(x_\star)- \frac{1}{2}\norm{g_0 - \eta\sum_{i=0}^{n-1}h_i\grad f(x_i) - \eta \grad f(x_n)}^2\right).
\end{align*}
The bound on the first line is tight and is attained when $f(x)$ is either $q(x)$ or $H_{\eta/(2-\eta)}(x)$ (defined in \eqref{eq:quad_huber}). In the latter case, the bracketed term in the first line simplifies to $\frac{1}{2}\norm{x_0-x_\star}^2$.
Similarly, the bound on the second line is tight and is attained when $f(x)$ is either $q(x)$ or $H_{\eta}(x)$. In the latter case, the bracketed term in the second line simplifies to $f(x_0)-f_\star$.
\end{proposition}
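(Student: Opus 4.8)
The plan is to prove the two displayed bounds separately. Throughout write $g_i := \grad f(x_i)$ and $f_\star := f(x_\star)$, and recall from $\eta = 1/(1+\sum_i h_i)$ that $1 + 2\sum_i h_i = (2-\eta)/\eta$, so the common leading constant in both bounds is $\eta/(2-\eta)$, with $\eta \in (0,1]$ and $n=0$ the trivial case $\eta = 1$. The first bound falls directly out of the defining inequality \eqref{eq:self_composable_ineq}: I would add to it exactly $\eta$ times the $1$-smoothness interpolation inequality between $x_n$ and $x_\star$ (using $g_\star = 0$), namely $f_\star \geq f(x_n) - \ip{g_n, x_n - x_\star} + \tfrac12\norm{g_n}^2$. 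In the resulting sum the two $\norm{g_n}^2$ contributions cancel; the remaining $x_n$-dependent quadratic terms $-\tfrac{\eta^2}{2}\norm{x_n - x_\star}^2 + \eta\ip{g_n, x_n - x_\star}$ complete the square into $-\tfrac{\eta^2}{2}\norm{x_n - x_\star - g_n/\eta}^2$ (the correction $+\tfrac12\norm{g_n}^2$ again cancelling), and the objective terms collect to $-\eta(2-\eta)(f(x_n) - f_\star)$; dividing through by $\eta(2-\eta)$ yields the first bound exactly.

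For the second bound a direct substitution into \eqref{eq:self_composable_ineq} cannot work, because that inequality bounds everything against $\tfrac{\eta^2}{2}\norm{x_0 - x_\star}^2$ and, for a general $1$-smooth convex $f$, there is no upper bound on $\norm{x_0 - x_\star}^2$ in terms of $f(x_0) - f_\star$. Instead I would establish it from scratch: unroll the recursion so that $x_i - x_\star = (x_0 - x_\star) - \sum_{k<i} h_k g_k$, take a nonnegative combination of the $1$-smoothness interpolation inequalities among the points $\{x_0, \dots, x_n, x_\star\}$ with multipliers that are explicit functions of the stepsizes (this is where the identities $\eta = \prod_i(h_i - 1) = 1/(1 + \sum_i h_i)$ enter, rather than just \eqref{eq:self_composable_ineq}), and verify that after substituting the recursion every cross term $\ip{x_0 - x_\star, g_i}$ and every inner product $\ip{g_i, g_j}$ telescopes, leaving precisely $(1 + 2\sum_i h_i)\tfrac12\norm{g_n}^2 + \tfrac12\norm{g_0 - \eta\sum_i h_i g_i - \eta g_n}^2 \leq f(x_0) - f_\star$, which rearranges to the claim. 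Morally this is the ``self-dual'' mirror (in the sense of H-duality) of the one-inequality manipulation behind the first bound, but constructing this certificate and checking that all its multipliers are nonnegative is the main obstacle of the proof; everything else is routine.

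For the tightness statements I would evaluate both bounds on the extremal functions with $x_0 = 1$ and $x_\star = 0$. For $q$: $x_n = \prod_i(1 - h_i) = (-1)^n\eta$ and $g_n = x_n$, and the telescoping identity $\sum_i h_i g_i = x_0 - x_n$ gives $x_n - x_\star - g_n/\eta = (-1)^n(\eta - 1)$ and $g_0 - \eta\sum_i h_i g_i - \eta g_n = 1 - \eta$; substituting, the bracketed term in the first bound becomes $\tfrac12\bigl(1 - (1-\eta)^2\bigr) = \tfrac{\eta(2-\eta)}{2}$, and the same value appears in the second bound, so each holds with equality. For the Huber functions $H_{\eta/(2-\eta)}$ (first bound) and $H_\eta$ (second bound) --- both with parameter at most $\eta$, so Lemma~\ref{lem:s_composable_huber_tight} applies and, since the stepsizes and all the gradients encountered are positive, every iterate lies on the affine branch with gradient equal to the Huber parameter --- one checks directly that $x_n - x_\star - g_n/\eta = 0$ in the first case (there $x_n = 1/(2-\eta)$ and $g_n = \eta/(2-\eta)$) and $g_0 - \eta\sum_i h_i g_i - \eta g_n = 0$ in the second (there all $g_i = \eta$ and $\sum_i h_i = (1-\eta)/\eta$), which is exactly the asserted collapse of the bracketed terms to $\tfrac12\norm{x_0 - x_\star}^2$ and to $f(x_0) - f_\star$ respectively; together with the tightness of the interpolation inequalities invoked (automatic on a quadratic, and a one-line check on a Huber function since the relevant iterates all lie on one branch with $x_\star$ at the kink), both bounds are attained as claimed.
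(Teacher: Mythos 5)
Your derivation of the first bound is correct and is, up to a rescaling by $2/\eta^2$, exactly the paper's: the paper adds $\sum_i h_i Q_{\star,i}+\frac{1}{\eta}Q_{\star,n}$ to the reformulated \scomposable{} inequality of \cref{prop:self_comp_ineq}, which is the same as adding $\tfrac{\eta}{2}Q_{\star,n}$ directly to \eqref{eq:self_composable_ineq} as you do. Your tightness computations on $q$, $H_{\eta/(2-\eta)}$, and $H_\eta$ also match the paper's.

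The gap is the second bound, which is the substantive half of the proposition. You describe a certificate---``a nonnegative combination of the interpolation inequalities among $\{x_0,\dots,x_n,x_\star\}$ with multipliers that are explicit functions of the stepsizes''---but never exhibit it, and you yourself flag its construction as the main obstacle; that obstacle is the proof. Moreover, the plan as stated does not use the hypothesis that $h$ is \scomposable{}: a certificate built only from interpolation inequalities with multipliers depending on the stepsizes (and the algebraic identities $\eta=\prod_i(h_i-1)=1/(1+\sum_i h_i)$) would establish the bound for \emph{every} $h$ satisfying those identities, and for a general \scomposable{} $h$ the multipliers witnessing \eqref{eq:self_composable_ineq} are not explicitly available anyway (only their existence is guaranteed via \cref{lem:pep_exists}). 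The paper avoids both problems by using the \scomposable{} property itself as one of the nonnegative building blocks: it combines the inequality of \cref{prop:self_comp_ineq} with weight $2\eta^2-\eta^3$, the expression $\sum_i h_iQ_{0,i}+Q_{0,n}$ with weight $\eta^2$, and $\sum_i h_iQ_{n,i}+Q_{n,n}$ with weight $\eta^2-\eta^3$, verifies through a nontrivial computation that the quadratic part collapses to $-\eta\norm{g_0-\eta\sum_ih_ig_i-\eta g_n}^2-2(1-\eta)\norm{g_n}^2$, and finishes by adding $\eta Q_{n,\star}$. Without these explicit weights and the accompanying verification, the second inequality remains unproven.
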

See Appendix \ref{subap:fgratesofscomp} for the proof of \cref{prop:fg_rates_of_scomp}.

\begin{remark}
\label{rem:compare_def}
Concurrent work by \citet{zhang2024acceleratedgradientdescentconcatenation} proposes ``dominant'', ``primitive'', and ``$g$-dominated'' stepsize schedules that roughly parallel our \fcomposable{}, \scomposable{}, and \gcomposable{} definitions, \emph{but are distinct definitions}.
We find our definitions to be slightly more natural as they stem from balancing (standard) performance measures on Huber and quadratic functions.
For example, a stepsize schedule $h\in\R^n_{++}$ is \fcomposable{} if the worst-case objective gap is attained by both the quadratic and Huber functions.
On the other hand, this stepsize schedule is dominant in the language of~\citet{zhang2024acceleratedgradientdescentconcatenation} if there exists $u\in\R^{n+1}_{+}$ so that $\sum_i u_i = 1+2\sum_i h_i$ and
$$\frac{1}{2}\left(\norm{x_0 - x_\star}^2 - \norm{x_0 - x_\star - \sum_i u_i g_i}^2\right) - \frac{1}{2}\sum_{i}u_iQ_{\star,i} - (1^\intercal u)(f_n - f_\star) \geq 0.$$
See \eqref{eq:qdef} for definition of $Q_{\star,i}$.
They show that this condition implies that gradient descent with stepsize $h$ satisfies $f_n - f_\star \leq \frac{1}{1+2\sum_i h_i}\cdot\frac{1}{2}\norm{x_0 - x_\star}^2$ (roughly, dominant schedules are \fcomposable{}) but do not prove a reverse implication (roughly,  \fcomposable{} schedules are dominant).\qedhere
\end{remark}

\subsubsection{Simple Examples of Composable Schedules}
As concrete examples, we first consider the empty schedule and constant stepsize schedules. The following two sections will provide many more, nontrivial examples leveraging composition operations to build from these simple examples. In particular, the empty schedule is an invaluable building block.
\begin{example}
	\label{ex:h0_composable}
	Define $h = \emptySchedule$ to be the empty vector.
	By convention, we evaluate empty sums as $0$ and empty products as $1$. Then it is easy to verify that $h$ is \fcomposable{} with rate $1$, \gcomposable{} with rate $1$, and \scomposable{} with rate $1$. For example, we can verify that the empty schedule $h = \emptySchedule$ is \fcomposable{} because 
$$f(x_0) - f(x_\star) \leq \frac{\norm{x_0 - x_\star}^2}{2}$$
on any $1$-smooth convex function $f$.\qedhere
\end{example}
\begin{example}
	\label{ex:constant_composable}
	For any fixed $n$, consider the constant schedule of stepsizes $h_0=\dots=h_{n-1} = \bar h$ where $\bar h$ is the unique positive solution to the equation
	$$ \frac{1}{1+2\bar hn} = (\bar h-1)^{2n}. $$
	Recently, \cite{rotaru2024exact} proved that for any $n$ this schedule is optimal among all constant schedules of length $n$ for reducing gradient norm given a bound on the initial suboptimality.
    Its worst-case convergence rate is attained by both quadratic and Huber problem instances.
    As a result, this schedule is \gcomposable{} with rate $\eta = \frac{1}{1+2n \bar h}$.
    Similarly, this $\bar h$ is also the conjectured optimal constant stepsize for minimizing the final objective gap~\cite{drori2012PerformanceOF,grimmer2024strengthenedconjectureminimaxoptimal}. If true, this pattern is also \fcomposable{} with rate $\eta = \frac{1}{1+2n\bar h}$.

    Numerical evidence suggests that for any $n$, the constant stepsize schedule $h_0=\dots=h_{n-1}= \bar h$, where $\bar h$ is the unique positive solution of
	$$ \frac{1}{1+\bar hn} = (\bar h-1)^{n},$$
    is \scomposable{}.
    This construction gives $h=[\sqrt{2}]$ and $h=[3/2,3/2]$ when $n=1$ and $n=2$ respectively. One can directly verify that these schedules are \scomposable{} with rates $\sqrt{2}-1$ and $\frac{1}{4}$ respectively. We leave the general statement for $n\geq 3$ as an open question.
    \qedhere
\end{example}

\subsection{Composing Schedules and Inductive Composition Theorems}

As our naming indicates, composable schedules can be composed together to yield larger, more interesting composable schedules. This section introduces three operations that can be used to produce new composable schedules. Given these operations and their associated guarantees, one can readily recover existing theory (Section~\ref{sec:basic}) and derive new (potentially minimax optimal) stepsize schedules and theory (Section~\ref{sec:schedules}). We will first showcase a number of examples of these composable schedules, so we defer proofs of these inductive composition theorems to Section~\ref{sec:proofs} (and Appendix \ref{ap:deferred}), where alternative performance estimation style definitions of each type of composability and the underlying proof machinery leveraging the general recursive gluing technique of~\cite{altschuler2023accelerationPartI} are developed.

\begin{definition}
	Suppose $a\in\R^{n_a-1}_{++}$ is
	\scomposable{} with rate $\alpha$
	and $b\in\R^{n_b-1}_{++}$ is \fcomposable{} with rate $\beta$.
	Define the f-join of these schedules as $a\triangleright b \coloneqq [a,\mu,b]$, where
	\begin{equation*}
		\mu \coloneqq 1 + \frac{\sqrt{\alpha^2 + 8 \alpha\beta}-\alpha}{4\alpha\beta}.
    \end{equation*}
    We will overload the $\triangleright$ symbol: Given two nonnegative scalars $\alpha$ and $\beta$, define $\alpha\triangleright \beta \coloneqq \frac{2\alpha\beta}{\alpha + 4\beta + \sqrt{\alpha^2+8\alpha\beta}}$.\qedhere
\end{definition}
We verify that the joined schedule is \scomposable{} at the joined rate in the following theorem.
\begin{theorem}
	\label{thm:f_recurrence}
	For any \scomposable{} $a\in\R^{n_a-1}_{++}$ with rate $\alpha$
	and \fcomposable{} $b\in\R^{n_b-1}_{++}$ with rate $\beta$, $a\triangleright b$ is \fcomposable{} with rate $\alpha\triangleright \beta$.
\end{theorem}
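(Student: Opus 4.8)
The plan is to establish the \fcomposable{} inequality for $c \coloneqq a \triangleright b = [a, \mu, b]$ by decomposing the run of gradient descent into three phases and stitching together the guarantees available in each phase. Write $n_a - 1 = \len(a)$, let $x_0, \dots, x_{n_a-1}$ be the iterates produced by the sub-schedule $a$ from the initialization $x_0$, let $y \coloneqq x_{n_a-1}$, let $z \coloneqq y - \mu \grad f(y)$ be the single intermediate step with stepsize $\mu$, and let $z = z_0, z_1, \dots, z_{n_b-1}$ be the iterates produced by $b$ from $z$. The final iterate of $c$ is $z_{n_b-1}$. Since $a$ is \scomposable{} with rate $\alpha$, I get inequality \eqref{eq:self_composable_ineq} relating $\norm{\grad f(y)}^2$, $\norm{y - x_\star}^2$, and $f(y) - f(x_\star)$ to $\norm{x_0 - x_\star}^2$. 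Since $b$ is \fcomposable{} with rate $\beta$, applied to the run started at $z$, I get $f(z_{n_b-1}) - f(x_\star) \le \beta \frac{\norm{z - x_\star}^2}{2}$. The middle step with stepsize $\mu$ must be handled by the standard smooth-convex interpolation inequalities among the triple $(x_\star, y, z)$ — i.e. the inequalities $f(u) \ge f(v) + \ip{\grad f(v), u-v} + \frac12\norm{\grad f(u) - \grad f(v)}^2$ used pairwise at $y$, $z$, $x_\star$ — to control $\norm{z - x_\star}^2$ and $f(z) - f(x_\star)$ in terms of the quantities at $y$.

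Concretely, the key steps in order: (1) expand $\norm{z - x_\star}^2 = \norm{y - x_\star}^2 - 2\mu\ip{\grad f(y), y - x_\star} + \mu^2 \norm{\grad f(y)}^2$; (2) bound $-\ip{\grad f(y), y - x_\star}$ and fold in $f(z) - f(y)$ using the interpolation inequality between $y$ and $x_\star$ (giving the $\norm{\grad f(y)}^2$ and $f(y) - f(x_\star)$ terms that appear in \eqref{eq:self_composable_ineq}) and between $y$ and $z$; (3) assemble a weighted sum of the \scomposable{} inequality for $a$, the \fcomposable{} inequality for $b$, and these interpolation inequalities, choosing the weights so that all cross terms in $\grad f(y)$, $f(y) - f(x_\star)$, and $\norm{y - x_\star}^2$ cancel; (4) read off that the surviving inequality is exactly $f(z_{n_b-1}) - f(x_\star) \le (\alpha \triangleright \beta) \frac{\norm{x_0 - x_\star}^2}{2}$, and separately verify the algebraic identity $\alpha \triangleright \beta = \frac{1}{1 + 2\sum_i c_i} = \prod_i (c_i - 1)^2$ using $\sum_i c_i = \sum a + \mu + \sum b$, the relations $\alpha = 1/(1+\sum a)$, $\beta = 1/(1+2\sum b)$, and the definition of $\mu$. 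This last identity-checking is mechanical: it is where the precise form of $\mu$ and of $\alpha \triangleright \beta$ gets pinned down, and it should be cross-checked against the tightness witnesses (quadratic $x_0 = 1$ giving $x_n = \prod(1-c_i)$, and the Huber function) via \cref{lem:f_huber_tight}.

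The main obstacle is step (3): finding the correct multipliers for the combination so that everything telescopes. This is essentially a small SDP/Lagrangian-duality computation — the ``performance estimation'' certificate — and the paper hints that the clean way to do it is via the alternative PEP-style characterizations of composability promised in Section~\ref{sec:proofs} and the recursive gluing lemma of \cite{altschuler2023accelerationPartI}. I would therefore structure the proof around that machinery: express each of the three ingredients as a nonnegative combination of interpolation inequalities with explicit multipliers, add them, and verify the result; the role of $\mu$ is precisely to make the middle-step multiplier consistent with both the incoming \scomposable{} certificate and the outgoing \fcomposable{} certificate. The value $\mu = 1 + \frac{\sqrt{\alpha^2 + 8\alpha\beta} - \alpha}{4\alpha\beta}$ is almost certainly the root of the quadratic that arises from demanding this consistency, so I expect the computation to reduce to solving one quadratic in $\mu$ (equivalently, one in $\alpha \triangleright \beta$) and checking the discriminant gives the stated closed form. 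Once the multipliers are in hand, steps (1), (2), and (4) are routine algebra.
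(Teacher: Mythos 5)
There is a genuine gap, and it is in the choice of ingredients rather than in the bookkeeping you defer to step (3). Your plan black-boxes $b$ through its \emph{definitional} guarantee $f(z_{n_b-1})-f(x_\star)\le \tfrac{\beta}{2}\norm{z-x_\star}^2$ and then tries to control $\norm{z-x_\star}^2$ using the $a$-guarantee plus interpolation inequalities among the triple $(x_\star,y,z)$ only. Since the final value $f(z_{n_b-1})$ appears in no other available inequality, any conic-combination certificate built from your list must use that bound with weight exactly one, and hence must establish $\tfrac{\beta}{2}\norm{z-x_\star}^2\le\tfrac{\eta}{2}\norm{x_0-x_\star}^2$ up to adding nonnegative slacks, where $\eta=\alpha\triangleright\beta<\beta$. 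This is false: on the Huber instance $H_\delta$ with $x_0=1$ and $\delta\to 0$, every gradient equals $\delta$, so $z\to x_0$ and $\norm{z-x_\star}^2/\norm{x_0-x_\star}^2\to 1>\eta/\beta$, while all the expressions you are allowed to add ($Q_{i,j}$ among $x_\star,y,z$ and the slack of \eqref{eq:self_composable_ineq} for $a$) remain nonnegative. So the required identity cannot hold as a sum of nonnegative terms, and no choice of multipliers in your search space rescues it. The distance to the optimizer simply does not contract across the first phase; the rate improvement from $\beta$ to $\alpha\triangleright\beta$ cannot be routed through $\norm{z-x_\star}^2$.

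The paper's proof avoids this by first proving \emph{equivalent, history-based} characterizations of composability (\cref{prop:composable_eq,prop:self_comp_ineq}): an \fcomposable{} schedule $b$ admits a weight vector $w\in\R^{n_b}_+$ with $\sum_i w_i=1/\beta$ such that $\sum_i w_i(2(f_i-f_n)+\norm{g_i}^2+2\ip{g_i,x_0-x_i})-\norm{\sum_i w_i g_i}^2\ge 0$ — an inequality that makes no reference to $x_\star$ at all, and whose derivation is itself nontrivial (SDP duality via \cref{lem:pep_exists} plus the Huber tightness in the definition to kill the $\lambda_{i,\star}$ multipliers). The composed certificate for $a\triangleright b$ is then $v=[a,\,1+\tfrac{1}{\alpha},\,\sqrt{\beta/\eta}\,w]$, and the gluing sums (i) the history-based \scomposable{} inequality for $a$, (ii) the history-based \fcomposable{} certificate for $b$ scaled by $\beta/\eta$, and (iii) the interpolation inequalities $Q_{n_a-1,\,n_a+i}$ coupling the junction iterate with \emph{every} iterate of the $b$-phase, weighted by $\tfrac{2\beta}{\alpha}w_i$. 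Those cross-couplings, absent from your three-point decomposition, are what allow the excess in $\norm{z-x_\star}^2$ to be traded against the gradients of the second phase. Your steps (1)--(2) should therefore be replaced wholesale by these equivalent characterizations before any multiplier search can succeed; the identity-checking in your step (4) (that $\alpha\triangleright\beta=\frac{1}{1+2\sum_i c_i}=\prod_i(c_i-1)^2$) is correct and matches \cref{lem:three_expressions}.
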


We define the g-join (denoted $b\triangleleft a$) and the s-join (denoted $a\Join b$) operations similarly below and present their inductive theorems (again with proofs deferred to Section~\ref{sec:proofs}). Our notation choice emphasizes the duality, proven in Section~\ref{subsec:Hdual}, between the f-join $a\triangleright b$ and the g-join $b\triangleleft a$, as well as the self-duality of the s-join.
\begin{definition}
	Suppose $b\in\R^{n_b-1}_{++}$ is \gcomposable{} with rate $\beta$ and $a\in\R^{n_a-1}_{++}$ is
	\scomposable{} with rate $\alpha$.
	Define the g-join of these schedules as $b\triangleleft a \coloneqq [b,\mu,a]$, where
	\begin{align*}
		\mu \coloneqq 1 + \frac{\sqrt{\alpha^2 + 8 \alpha\beta}-\alpha}{4\alpha\beta}.
    \end{align*}
    We will overload the $\triangleleft$ symbol: Given two nonnegative scalars $\alpha$ and $\beta$, define
	$\beta\triangleleft \alpha \coloneqq \frac{2\alpha\beta}{\alpha + 4\beta + \sqrt{\alpha^2+8\alpha\beta}}$.\qedhere
\end{definition}
\begin{theorem}
	\label{thm:g_recurrence}
	For any \gcomposable{} $b\in\R^{n_b-1}_{++}$ with rate $\beta$
	and \scomposable{} $a\in\R^{n_a-1}_{++}$ with rate $\alpha$, $b\triangleleft a$ is \gcomposable{} with rate $\beta\triangleleft \alpha$.
\end{theorem}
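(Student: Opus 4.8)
The plan is to establish \cref{thm:g_recurrence} in one of two ways. The self-contained route mirrors the proof of \cref{thm:f_recurrence} in the dual setting, working with the performance-estimation-style reformulation of \gcomposability{} developed in \cref{sec:proofs}: namely, that $h$ is \gcomposable{} with rate $\eta$ precisely when a specific nonnegative combination of the interpolation inequalities for $1$-smooth convex functions among $x_0,\dots,x_n,x_\star$ certifies \eqref{eq:g_composable_ineq}, and that this certificate records the exact sum-of-squares slack (not merely the bare inequality), matching the tightness in \cref{lem:g_huber_tight}. Decompose gradient descent with $b\triangleleft a=[b,\mu,a]$ into a head running $b$ from $x_0$ to $x_{n_b-1}$, a single bridging step $x_{n_b}=x_{n_b-1}-\mu\,\nabla f(x_{n_b-1})$, and a tail running $a$ from $x_{n_b}$ to $x_n$, where $n=n_a+n_b-1$.

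The core step glues three ingredients, in the spirit of the recursive gluing lemma of \cite{altschuler2023accelerationPartI}: (i) the certificate that $b$ is \gcomposable{} with rate $\beta$, applied to the head; (ii) the certificate that $a$ is \scomposable{} with rate $\alpha$, i.e.\ inequality \eqref{eq:self_composable_ineq} applied to the tail initialized at $x_{n_b}$, which couples $\|\nabla f(x_n)\|^2$, $\|x_n-x_\star\|^2$, and $f(x_n)-f_\star$ at the end of the trajectory to $\|x_{n_b}-x_\star\|^2$; and (iii) the interpolation inequalities linking $x_{n_b-1}$, $x_{n_b}$, and $x_\star$, which together with the bridging-step identity rewrite the $\|x_{n_b}-x_\star\|^2$ term in head quantities. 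One then takes a nonnegative combination with weights to be determined and forces every intermediate quantity -- the terms in $\|x_{n_b-1}-x_\star\|^2$, in $f(x_{n_b-1})-f_\star$, and the inner products involving $\nabla f(x_{n_b-1})$ -- either to cancel or to reassemble into the single perfect-square slack appearing in the refined form of \eqref{eq:g_composable_ineq}. The stepsize $\mu=1+\tfrac{\sqrt{\alpha^2+8\alpha\beta}-\alpha}{4\alpha\beta}$ is exactly the value for which this is possible; solving the cancellation conditions produces simultaneously this closed form and the joined rate $\beta\triangleleft\alpha=\tfrac{2\alpha\beta}{\alpha+4\beta+\sqrt{\alpha^2+8\alpha\beta}}$.

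It then remains to verify the ``moreover'' clause, that $\beta\triangleleft\alpha$ equals both $\tfrac{1}{1+2\sum_i h_i}$ and $\prod_i(h_i-1)^2$ for $h=[b,\mu,a]$. Using $\sum_i b_i=\tfrac{1-\beta}{2\beta}$ and $\prod_i(b_i-1)^2=\beta$ (as $b$ is \gcomposable{} with rate $\beta$) together with $\sum_i a_i=\tfrac{1-\alpha}{\alpha}$ and $\prod_i(a_i-1)=\alpha$ (as $a$ is \scomposable{} with rate $\alpha$), both identities reduce to short manipulations with the surd $\sqrt{\alpha^2+8\alpha\beta}$: one gets $1+2\sum_i h_i=\tfrac2\alpha+\tfrac1\beta+2(\mu-1)=\tfrac{\alpha+4\beta+\sqrt{\alpha^2+8\alpha\beta}}{2\alpha\beta}$, while $(\mu-1)^2=\tfrac{\alpha+4\beta-\sqrt{\alpha^2+8\alpha\beta}}{8\alpha\beta^2}$ rationalizes to $\tfrac{2}{\alpha(\alpha+4\beta+\sqrt{\alpha^2+8\alpha\beta})}$, so that $\alpha^2\beta(\mu-1)^2=\beta\triangleleft\alpha$.

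The second route sidesteps the gluing: once the H-duality results of \cref{subsec:Hdual} are available, reversal of a stepsize schedule exchanges \gcomposability{} and \fcomposability{} at the same rate and fixes \scomposability{}, and since the $\mu$-formulas in the f-join and g-join definitions coincide we have $\rev(b\triangleleft a)=\rev(a)\triangleright\rev(b)$; then $\rev(a)$ is \scomposable{} with rate $\alpha$, $\rev(b)$ is \fcomposable{} with rate $\beta$, \cref{thm:f_recurrence} gives that $\rev(a)\triangleright\rev(b)$ is \fcomposable{} with rate $\alpha\triangleright\beta$, and reversing once more gives the claim with $\beta\triangleleft\alpha=\alpha\triangleright\beta$. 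The main obstacle in the first route will be the cancellation check in steps (ii)--(iii): confirming that the prescribed $\mu$ makes the three-way combination a \emph{valid} nonnegative (sum-of-squares) certificate rather than merely killing the ``first-order'' cross terms; once that algebra is in hand, the rest is bookkeeping.
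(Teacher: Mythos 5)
Your first route is, in outline, the strategy the paper follows, and your verification of the ``moreover'' identities $\beta\triangleleft\alpha=\tfrac{1}{1+2\sum_i h_i}=\prod_i(h_i-1)^2$ is correct. But the outline leaves the entire substance of the theorem unproven, and the specific ingredient you choose for the tail would make the gluing fail. Applying the $x_\star$-based inequality \eqref{eq:self_composable_ineq} to the tail initialized at $x_{n_b}$ puts $+\tfrac{\alpha^2}{2}\norm{x_{n_b}-x_\star}^2$, hence $+\tfrac{\alpha^2}{2}\norm{x_0-x_\star}^2$, into the conic combination; the target $2\eta(f_0-f_\star)-\norm{g_n}^2\ge 0$ has zero coefficient on $\norm{x_0-x_\star}^2$, no interpolation inequality $Q_{i,j}$ produces a standalone $\norm{x_0-x_\star}^2$, and a PSD remainder cannot carry a negative diagonal entry, so this term cannot be cancelled. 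The proof must instead run through the $x_\star$-free reformulations: the head contributes $\eta\bigl(2f_0-2f_{n_b-1}-\tfrac{1-\beta}{\beta}\norm{g_{n_b-1}}^2\bigr)\ge 0$ via \cref{prop:g_composable_eq}, the tail contributes the history-based inequality of \cref{prop:self_comp_ineq} weighted by $\alpha^2$, and the glue consists of $Q_{n_b-1,n_b+i}$ and $Q_{n-1,n_b+i}$ (weights $\tfrac{\alpha^2}{2}a_i$) together with $Q_{n_b-1,n-1}$ and $Q_{n-1,n_b-1}$ (weights $\tfrac{\alpha^2}{2}$ and $\tfrac{\alpha}{2}-\eta$, the latter nonnegative only because $\eta\le\alpha/2$, which must itself be checked). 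The target is the $x_\star$-free form $\eta(f_0-f_{n-1})-\tfrac{1-\eta}{2}\norm{g_{n-1}}^2\ge 0$, and verifying that the quadratic terms collapse to a negative semidefinite form in $g_{n_b-1}$, $x_{n-1}-x_{n_b}$, $g_{n-1}$ is the multi-page computation of \cref{lem:g_certificate_composition}; calling it bookkeeping understates that it \emph{is} the proof.

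Your second route is circular. \cref{thm:self-duality} and \cref{thm:HDual} are proved by induction on the basic construction, and the inductive step of \cref{thm:HDual} invokes \cref{thm:g_recurrence} itself (``$\rev(b)\triangleleft\rev(a)=\rev(h)$ is basic \gcomposable{} with rate $\beta\triangleleft\alpha$''), so they cannot be used to derive it. Moreover, those propositions apply only to \emph{basic} schedules, whereas \cref{thm:g_recurrence} concerns arbitrary \gcomposable{} $b$ and \scomposable{} $a$ (e.g.\ the non-basic constant schedules of \cref{ex:constant_composable}); the paper establishes no reversal duality for general composable schedules, so the claim that $\rev(b)$ is \fcomposable{} with rate $\beta$ is simply not available. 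To complete the proof you must carry out the certificate computation of your first route with the corrected ingredients and weights.
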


\begin{definition}
	Suppose $a\in\R^{n_a-1}_{++}$ is \scomposable{} with rate $\alpha$
	and $b\in\R^{n_b-1}_{++}$ is \scomposable{} with rate $\beta$.
	Define the s-join of these schedules as $a\Join b \coloneqq [a,\mu,b]$, where
	\begin{align*}
		\mu \coloneqq 1+\frac{\sqrt{\alpha^2 + 6\alpha\beta + \beta^2}-\left(\alpha + \beta\right)}{2\alpha\beta}.
    \end{align*}
    We will overload the $\Join$ symbol: Given two nonnegative scalars $\alpha$ and $\beta$, define
	$\alpha\Join \beta \coloneqq \frac{2\alpha\beta}{\alpha + \beta + \sqrt{\alpha^2 + 6\alpha\beta + \beta^2}}$.\qedhere
\end{definition}
\begin{theorem}
	\label{thm:selfcomp_recurrence}
	For any \scomposable{} $a\in\R^{n_a-1}_{++}$ with rate $\alpha$
	and \scomposable{} $b\in\R^{n_b-1}_{++}$ with rate $\beta$, $a\Join b$ is \scomposable{} with rate $\alpha\Join \beta$.
\end{theorem}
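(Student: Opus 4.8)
Since $a$ is \scomposable{} with rate $\alpha$ and $b$ with rate $\beta$, we have $\sum_i a_i = \tfrac1\alpha - 1$, $\sum_i b_i = \tfrac1\beta - 1$, $\prod_i (a_i - 1) = \alpha$, and $\prod_i (b_i - 1) = \beta$. A direct computation shows that the prescribed $\mu$ is exactly the positive root $t = \mu - 1$ of $\alpha\beta\,t^2 + (\alpha+\beta)t - 1 = 0$; equivalently $\mu = 1 + \tfrac{\alpha \Join \beta}{\alpha\beta}$, and $\eta := \alpha \Join \beta$ is the positive root of $\eta^2 + (\alpha+\beta)\eta - \alpha\beta = 0$, i.e.\ $(\alpha+\eta)(\beta+\eta) = 2\alpha\beta$. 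Plugging these in, both $\tfrac{1}{1 + \sum_i a_i + \mu + \sum_i b_i}$ and $\alpha\,(\mu-1)\,\beta$ collapse to $\alpha \Join \beta$, so $[a,\mu,b]$ satisfies the second condition in the definition of \scomposable{} with $\eta = \alpha \Join \beta$. I would also record here the elementary facts $0 < \eta \le \min(\alpha,\beta) \le 1$ and $\mu > 1$ (the latter since $\alpha^2 + 6\alpha\beta + \beta^2 > (\alpha+\beta)^2$), which are needed for the nonnegativity checks below. It remains to establish the inequality \eqref{eq:self_composable_ineq} for $[a,\mu,b]$ at this rate.

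\textbf{Step 2: performance-estimation reformulation and the shift.} As developed in Section~\ref{sec:proofs} (in the spirit of the reformulation appearing in \cref{rem:compare_def}), being \scomposable{} with rate $\eta$ is equivalent to the existence of nonnegative multipliers on the smoothness-plus-convexity interpolation inequalities among the iterates $x_0, \dots, x_n$ and $x_\star$ whose weighted combination, after substituting the updates $x_{i+1} = x_i - h_i \nabla f(x_i)$ and collecting terms in the basis $\{\,x_0 - x_\star,\ \nabla f(x_0), \dots, \nabla f(x_n)\,\}$, is identically the rearranged (nonnegative) form of \eqref{eq:self_composable_ineq}. This turns the theorem into the following task: from such a dual certificate $\mathcal C_a$ for $a$ and $\mathcal C_b$ for $b$, assemble a certificate $\mathcal C$ for $[a,\mu,b]$ witnessing rate $\alpha \Join \beta$. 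One subtlety to dispatch first: $\mathcal C_b$ was written for a run beginning at $x_0$, and we need to invoke it for the run beginning at $x_{n_a} = x_{n_a - 1} - \mu\nabla f(x_{n_a - 1})$; this is legitimate since both the \scomposable{} property and its certificate are invariant under translation of the initialization, and $x_\star$ remains a minimizer of $f$.

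\textbf{Step 3: gluing.} Writing $m = n_a - 1$, the schedule produces the $a$-run $x_0, \dots, x_m$, then the connector $x_{m+1} = x_m - \mu\nabla f(x_m)$, then the $b$-run $x_{m+1}, \dots, x_N$ with $N = n_a + n_b - 1$. I would take $\mathcal C = \theta_a\,\mathcal C_a + \theta_b\,\mathcal C_b$ plus a handful of junction multipliers weighting interpolation inequalities among $x_m$, $x_{m+1}$, and $x_\star$ (together with the single new update identity). Matching the coefficient of $\norm{x_0 - x_\star}^2$ forces $\theta_a = \eta^2/\alpha^2$, and matching the $x_N$-terms while absorbing the leftover $\tfrac{\beta^2}{2}\norm{x_{m+1} - x_\star}^2$ produced by $\mathcal C_b$ against the negative terms of $\theta_a\mathcal C_a$ and the junction terms then determines $\theta_b$ and the junction multipliers. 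The crux is that this matching system is simultaneously solvable with all multipliers nonnegative \emph{precisely} when $\mu$ and $\eta$ obey the quadratic relations of Step 1: the quantity $\alpha^2 + 6\alpha\beta + \beta^2$ enters as exactly the discriminant that must be a perfect square for the junction to close, just as $\alpha^2 + 8\alpha\beta$ does for the f-join in \cref{thm:f_recurrence}. Once $\mathcal C$ is checked to be a valid nonnegative certificate, Step 2 yields \eqref{eq:self_composable_ineq} for $[a,\mu,b]$.

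\textbf{Expected main obstacle.} The delicate part is Step 3, and within it the junction rather than the mechanical concatenation of $\mathcal C_a$ and $\mathcal C_b$: one must show the linear system for the junction multipliers is consistent, verify its solution is nonnegative (this is where $0 < \eta \le \min(\alpha,\beta)$ and $\mu > 1$ are used), and confirm that after substituting all gradient-descent identities the assembled combination collapses to \eqref{eq:self_composable_ineq} with no residual cross-term. The self-duality of the s-join is a useful consistency check --- the junction multipliers and the final constants should be symmetric under $\alpha \leftrightarrow \beta$, which holds for $\mu$ and $\alpha \Join \beta$ but not for the f-join constants. Since only the \fcomposable{} version is proved in the main body, I would present this s-join argument in full and remark that \cref{thm:g_recurrence} follows by the same template with the $\triangleleft$-constants, the three cases differing only in which convex combination of final objective gap, distance to $x_\star$, and gradient norm is propagated.
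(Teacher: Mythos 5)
Your overall strategy is the same as the paper's: it splits the claim into the algebraic rate identity and a glued PEP certificate, exactly as the paper does via \cref{lem:checking_scomp_identity} and \cref{lem:checking_scomp_inequality}. Your Step 1 is complete and correct --- the observations that $\mu-1$ is the positive root of $\alpha\beta t^2+(\alpha+\beta)t-1=0$, that $\eta=\alpha\beta(\mu-1)$, and that $\eta^2+(\alpha+\beta)\eta=\alpha\beta$ are precisely the computations in \cref{lem:checking_scomp_identity}.

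The genuine gap is that Step 3 is a plan, not a proof, and it is where essentially all of the work lives. The paper's \cref{lem:checking_scomp_inequality} is not a short consistency check: it exhibits an explicit auxiliary weight $\sigma=\frac{\beta+\alpha\beta(\mu-1)}{\alpha-\alpha\beta(\mu-1)}$ (whose nonnegativity must itself be proved, via $\beta(\mu-1)<1$), takes the combination of the two sub-certificates and two blocks of junction inequalities $\sum_i b_i(Q_{n_a-1,n_a+i}+Q_{n-1,n_a+i})$ and $Q_{n_a-1,n-1}+(\mu-1)Q_{n-1,n_a-1}$ with weights $1$, $1+2\sigma$, $\sigma$, $\sigma$, and then verifies through a page of algebra that the quadratic form in $(x_{n_a-1}-x_0,\,g_{n_a-1},\,x_{n-1}-x_0,\,g_{n-1})$ collapses with coefficient exactly $-\frac{1-\eta}{\eta^2}$ on $\norm{g_{n-1}}^2$. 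Your proposal asserts that ``the matching system is simultaneously solvable with all multipliers nonnegative'' but does not produce the multipliers or check the collapse; asserting solvability is the theorem, not a proof of it. A secondary structural difference worth noting: the paper glues the $x_\star$-free reformulations of \cref{prop:self_comp_ineq} (which reference only the observed history), so $x_\star$ and $f_\star$ never enter the junction; you propose to glue the $x_\star$-referencing inequalities \eqref{eq:self_composable_ineq} directly with scalings $\theta_a=\eta^2/\alpha^2$, $\theta_b=\eta^2/\beta^2$. That route can be made to work (it is closer to the recursive gluing of Altschuler--Parrilo), but it forces you to control the cross term $\ip{g_{n_a-1},x_{n_a-1}-x_\star}$ at the junction with additional $Q_{\star,\cdot}$ multipliers, which is extra bookkeeping the paper's reformulation is designed to avoid. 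To complete the proof you must either carry out your version of the junction algebra in full or pass through \cref{prop:self_comp_ineq} and reproduce the paper's computation.
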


Lastly, we note the following basic observations regarding the different join operations.
\begin{lemma}
    \label{lem:composition_basic_facts}
    Suppose $\alpha,\beta>0$. Then,
    \begin{itemize}
        \item The s-join is homogeneous. That is, if $r>0$, then
        \begin{equation*}
            (r\alpha)\Join(r\beta) = r (\alpha\Join \beta).
        \end{equation*}
        The f-join and g-join are similarly homogeneous.
        \item The s-join is increasing in both arguments. That is, if $\alpha'>\alpha$ and $\beta'>\beta$, then
        \begin{equation*}
            \alpha'\Join \beta > \alpha \Join \beta \qquad\text{and}\qquad \alpha\Join \beta' > \alpha \Join \beta.
        \end{equation*}
        The f-join and g-join are similarly increasing in both arguments.
        \item The s-join is commutative on rates, that is $\alpha\Join\beta = \beta\Join \alpha$.
        \item The identity $\alpha \Join \alpha = \frac{\alpha}{1+\sqrt{2}}$ holds.
        \item It holds that $\alpha \Join 1 < \alpha$. Similarly, $1\triangleright \alpha = \alpha\triangleleft 1 < \alpha$.
    \end{itemize}
\end{lemma}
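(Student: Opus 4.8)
The plan is to treat the five bullets separately, working throughout with the closed-form scalar formulas $\alpha\Join\beta = \frac{2\alpha\beta}{\alpha+\beta+\sqrt{\alpha^2+6\alpha\beta+\beta^2}}$ and $\alpha\triangleright\beta = \beta\triangleleft\alpha = \frac{2\alpha\beta}{\alpha+4\beta+\sqrt{\alpha^2+8\alpha\beta}}$, which is legitimate since all quantities under the radicals are strictly positive for $\alpha,\beta>0$. Homogeneity, commutativity, the identity $\alpha\Join\alpha=\frac{\alpha}{1+\sqrt2}$, and the two comparisons against $1$ are all one-line substitutions. For homogeneity, replacing $(\alpha,\beta)$ by $(r\alpha,r\beta)$ scales the numerator by $r^2$ and, pulling $r$ out of the square root, the denominator by $r$, giving an overall factor of $r$; the same computation works verbatim for the f-join and g-join. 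Commutativity of the s-join is immediate from the symmetry of its formula in $\alpha$ and $\beta$. The identity reduces to $\alpha\Join\alpha=\frac{2\alpha^2}{2\alpha+\sqrt{8\alpha^2}}=\frac{2\alpha^2}{2\alpha(1+\sqrt2)}=\frac{\alpha}{1+\sqrt2}$. Finally, $\alpha\Join1<\alpha$ is equivalent to $2<\alpha+1+\sqrt{\alpha^2+6\alpha+1}$, i.e.\ $1-\alpha<\sqrt{\alpha^2+6\alpha+1}$, which holds trivially when $\alpha\geq1$ and, when $\alpha<1$, follows by squaring (it becomes $0<8\alpha$); the inequality $1\triangleright\alpha=\alpha\triangleleft1<\alpha$ is handled identically, reducing to $1-4\alpha<\sqrt{1+8\alpha}$, trivial for $\alpha\geq1/4$ and equivalent to $16\alpha^2<16\alpha$ after squaring when $\alpha<1/4$. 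Strictness in both comparisons comes from noting the boundary cases force $\alpha=0$, which is excluded.

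The step requiring slightly more care is monotonicity. Here I would pass to reciprocals and substitute $u=1/\alpha$, $v=1/\beta$, obtaining $\frac{1}{\alpha\Join\beta}=\frac{u}{2}+\frac{v}{2}+\frac12\sqrt{u^2+6uv+v^2}$ and $\frac{1}{\alpha\triangleright\beta}=2u+\frac v2+\frac12\sqrt{v^2+8uv}$. Since $\alpha\mapsto u$ is strictly decreasing, it suffices to show each reciprocal is strictly increasing in $u$ for fixed $v>0$ and strictly increasing in $v$ for fixed $u>0$; for the s-join a single computation suffices by its symmetry. Each such expression is a sum of an affine increasing term and the square root of a quadratic with positive coefficients, and its partial derivative in the relevant variable is manifestly positive — e.g.\ $\partial_u\big(\tfrac{u}{2}+\tfrac12\sqrt{u^2+6uv+v^2}\big)=\tfrac12+\tfrac{u+3v}{2\sqrt{u^2+6uv+v^2}}>0$, and analogously $\partial_u\big(2u+\tfrac12\sqrt{v^2+8uv}\big)=2+\tfrac{2v}{\sqrt{v^2+8uv}}>0$ and $\partial_v\big(\tfrac v2+\tfrac12\sqrt{v^2+8uv}\big)=\tfrac12+\tfrac{v+4u}{2\sqrt{v^2+8uv}}>0$. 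Hence each reciprocal is strictly increasing in each of $u,v$, so $\alpha\Join\beta$, $\alpha\triangleright\beta$, and $\beta\triangleleft\alpha$ are each strictly increasing in both of their arguments; combining with commutativity closes the s-join case, and the g-join case is the f-join case up to renaming.

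I do not anticipate any genuine obstacle. The only mild subtleties are (i) confirming that the radicands are positive so the square roots are real and differentiable, which is immediate from $\alpha,\beta>0$, and (ii) being careful about which variable is held fixed when invoking the reciprocal reformulation. Everything else is elementary algebra, so the actual write-up is essentially bookkeeping.
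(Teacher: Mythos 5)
Your proof is correct; the paper states \cref{lem:composition_basic_facts} without proof, so there is no in-paper argument to compare against, and every step of your verification checks out (the closed-form substitutions, the squaring case analysis for the comparisons with $1$, and the sign of the partial derivatives after passing to reciprocals). Your reciprocal substitution $u=1/\alpha$, $v=1/\beta$ for monotonicity is the natural route and in fact mirrors identities the paper uses implicitly elsewhere, e.g.\ $\frac{1}{\eta}=\frac{2}{\alpha}+2(\mu-1)+\frac{1}{\beta}$ in \cref{lem:three_expressions}.
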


For a given stepsize schedule $h \in \R^n_{++}$, we define $\len(h) = n$.
\begin{lemma}\label{lem:fcomp_simple}
    In this case, we have that if $a, b$ are \scomposable{} schedules, then
    \[
        \len(a \Join b) = \len(a) + \len(b) + 1,
    \]
    and similarly if $a$ is \scomposable{} and $b$ is \fcomposable{}, then
    $\len(a \triangleright b) = \len(a) + \len(b) + 1$ and if $a$ is \gcomposable{} and $b$ is \scomposable{}, then
    $\len(a \triangleleft b) = \len(a) + \len(b) + 1$.
\end{lemma}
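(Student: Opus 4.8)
The plan is to read the three length identities directly off the definitions of the join operations. Each of $a \triangleright b$, $a \triangleleft b$, and $a \Join b$ is, by definition, a concatenation of the form $[a, \mu, b]$ (with the roles of $a$ and $b$ possibly swapped in the written order, but this is irrelevant for counting coordinates), where $\mu \in \R$ is a single scalar stepsize determined by the rates of $a$ and $b$. Hence the resulting schedule has exactly $\len(a) + 1 + \len(b)$ coordinates, which is the claimed identity.

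Concretely, I would first recall the indexing convention: if $a \in \R^{n_a - 1}_{++}$ then $\len(a) = n_a - 1$, and likewise $\len(b) = n_b - 1$. For the s-join, $a \Join b = [a, \mu, b]$ is the vector obtained by listing the $n_a - 1$ entries of $a$, then the single entry $\mu$, then the $n_b - 1$ entries of $b$, for a total of $(n_a - 1) + 1 + (n_b - 1) = \len(a) + \len(b) + 1$ entries. The exact same count applies verbatim to the f-join $a \triangleright b = [a, \mu, b]$ and to the g-join, since concatenation is additive on lengths regardless of the order in which the blocks are listed.

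There is essentially no obstacle here; the only point worth noting is that these operations are well-defined, i.e., the inserted stepsize $\mu$ is a positive real so that the concatenated schedule indeed lies in $\R^n_{++}$ and is therefore a legitimate stepsize schedule whose length is defined. This well-definedness is already guaranteed by Theorems~\ref{thm:f_recurrence}, \ref{thm:g_recurrence}, and~\ref{thm:selfcomp_recurrence}, which assert that each join produces a composable (in particular, positive) schedule. Granting that, the length identities follow immediately from the additivity of concatenation, completing the proof.
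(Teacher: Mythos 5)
Your proof is correct and matches the paper's (implicit) reasoning exactly: the paper states this lemma without proof precisely because each join is defined as the concatenation $[a,\mu,b]$ with a single inserted scalar, so the length count is immediate. Your additional remark on well-definedness of $\mu$ is a reasonable bit of extra care but not needed for the length identity itself.
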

We note that these composition operations do not follow the associative law, i.e., in general,
\[
    (a \Join b) \Join c \neq a \Join (b \Join c).
\]
Indeed, for an \scomposable{} schedule $a$, it is not even the case that $a \Join (a \Join a) = (a \Join a) \Join a$. Similar failures for associativity also occur for the associated rates.
\emph{We will be careful to include all necessary parentheses}.

\subsection{A Summary of Notation and Conventions}
So far, we have introduced an array of new concepts and notation. This subsection is a summary of the notation and basic definitions which have already been established so far, and is meant to be a concise ``user's guide'' to the notation in this paper.

Throughout the paper, we will use $h \in \R^n_{++}$ to denote a stepsize schedule, with the convention that these schedules are indexed starting from 0, i.e., the entries of $h$ are $h_0, \dots, h_{n-1}$.
In the case that $n = 0$, we have that $h = \emptySchedule$, which we think of as an ``algorithm'' which, when run on an instance $(f, x_0)$ returns $x_0$.
At times, we will consider ``composing'' two stepsize schedules, and will use the symbols $a \in \R^{m_1}, b \in \R^{m_2}$ to denote these stepsize schedules. 

Often, we will treat the stepsize schedule $h$, the initial iterate $x_0 \in \R^d$, and the 1-smooth convex function $f : \R^d \rightarrow \R$ with minimizer $x_\star \in \R^d$ as being implicit and define $x_0, \dots, x_n$ by letting $x_i = x_{i-1} + h_{i-1} \nabla f(x_{i-1})$; $g_0, \dots g_n$ by $g_i  = \nabla f(x_i)$, and finally, $f_0, \dots, f_{n}$ by $f_i = f(x_i)$. We will further define $f_\star$ to be $f(x_\star)$, the minimum value of $f$.

There are different notions of convergence for a stepsize schedule, which take the form of different inequalities relating the initial conditions $f_0-f_\star$ and $\norm{x_0-x_\star}^2$ and the final conditions $f_n-f_\star, \norm{x_n-x_\star}^2$ and $\norm{g_n}^2$.
There are three such notions of convergence which we will be interested in with for a given schedule $h$:
\begin{itemize}
    \item The f-convergence rate
    \[
        f_n-f_\star \leq \eta \frac{\norm{x_0 - x_\star}^2}{2}\qquad \text{for all }(f,x_0),
    \]
    \item The g-convergence rate
    \[
        \frac{1}{2}\norm{g_n}^2 \leq \eta (f_0-f_\star)\qquad \text{for all }(f,x_0),
    \]
    \item The s-convergence rate
    \[
        \frac{1-\eta}{2}\norm{g_n}^2 +\frac{\eta^2}{2}\norm{x_n-x_\star}^2 + (\eta-\eta^2)(f_n-f_\star) \leq \frac{\eta^2}{2}\norm{x_0 - x_\star}^2\qquad \text{for all }(f,x_0).
    \]
\end{itemize}
Once one of these notions of convergence is fixed, the ``convergence rate'' of the schedule $h$ is the smallest value of $\eta$ for which the associated inequality holds.
If the notion of convergence is implicit, we will use the symbol $\eta$ to denote the convergence rate of the schedule $h$, or if there are two schedules $a,b$ for which we must consider the convergence rates, we will use the symbols $\alpha$ and $\beta$ to denote these convergence rates respectively.

We then define the ``composable'' schedules for each of these notions of convergence to be those schedules with bounded convergence rates and for which the associated rate is met at equality when the function $f$ is either the Huber function or quadratic functions defined earlier.
In operational terms, this condition translates to the specific equations summarized below.
The empty schedule $\emptySchedule$ is noteworthy as it satisfies all three of these conditions with rate 1.

\begin{center}
    \label{tbl:summary}
\begin{tabular}{c|cc}
    Schedule type & Convergence guarantee & Equation\\
    \hline
    \fcomposable{} & $f_n-f_\star \leq \eta \frac{\norm{x_0 - x_\star}^2}{2}$ &
    $\eta = \frac{1}{1+2\sum_{i=0}^{n-1} h_i}  = \prod_{i=0}^{n-1} (h_i - 1)^2$\\
    \gcomposable{} & $\frac{1}{2}\norm{g_n}^2 \leq \eta (f_0-f_\star)$  &
    $\eta = \frac{1}{1+2\sum_{i=0}^{n-1} h_i}  = \prod_{i=0}^{n-1} (h_i - 1)^2$\\
    \scomposable{} &
    $\begin{array} {lcl}
    \frac{1-\eta}{2}\norm{g_n}^2 +\frac{\eta^2}{2}\norm{x_n-x_\star}^2\\ \quad + (\eta-\eta^2)(f_n-f_\star) \leq \frac{\eta^2}{2}\norm{x_0 - x_\star}^2
    \end{array}$
    & 
		$\eta = \frac{1}{1+\sum_{i=0}^{n-1}h_i} = \prod_{i=0}^{n-1}(h_i - 1)$
\end{tabular}
\end{center}

We will also need notation for composing schedules.
Our definition of composition will always involve a middle stepsize, which we will denote by $\mu \in \R$, and we will compose schedules $a$ and $b$ by setting $h = [a, \mu, b]$, where the brackets denote concatenation.
Depending on the types of schedules being composed, we will require different definitions of $\mu$, and the resulting rates will depend on the rates $\alpha$ and $\beta$ of the input stepsizes.
We overload the notation for composition to also be applicable to rates.
This information is summarized in the next table.

\begin{center}
    \label{tbl:rate_summary}
\begin{tabular}{c|ccc}
    Schedules & Composition & $\mu$ & New Rate\\
    \hline
    $a$ (s-comp.), $b$ (f-comp.)
        & $a \triangleright b$ (f-comp.)
        & $1 + \frac{\sqrt{\alpha^2 + 8 \alpha\beta}-\alpha}{4\alpha\beta}$
        & $\alpha\triangleright\beta\coloneqq \frac{2\alpha\beta}{\alpha + 4\beta + \sqrt{\alpha^2+8\alpha\beta}}$
        \\
    $a$ (s-comp.), $b$ (g-comp.) 
        & $b \triangleleft a$ (g-comp.)
        & $1 + \frac{\sqrt{\alpha^2 + 8 \alpha\beta}-\alpha}{4\alpha\beta}$
        & $\beta\triangleleft\alpha\coloneqq \frac{2\alpha\beta}{\alpha + 4\beta + \sqrt{\alpha^2+8\alpha\beta}}$
        \\
    $a$ (s-comp.), $b$ (s-comp.)
        & $a \Join b$ (s-comp.)
        & $1+\frac{\sqrt{\alpha^2 + 6\alpha\beta + \beta^2}-\left(\alpha + \beta\right)}{2\alpha\beta}$
        & $\alpha\Join\beta\coloneqq\frac{2\alpha\beta}{\alpha + \beta + \sqrt{\alpha^2 + 6\alpha\beta + \beta^2}}$
\end{tabular}
\end{center}

\section{Basic Stepsize Schedules} \label{sec:basic}

Given the f-join, g-join, and s-join operations and any initial set of \fcomposable{}, \gcomposable{}, and \scomposable{} schedules, one can generate new composable schedules using \cref{thm:f_recurrence,thm:g_recurrence,thm:selfcomp_recurrence}.
Here, we consider the simplest initialization, starting with only the empty schedule $h=\emptySchedule$, which is \fcomposable{}, \gcomposable{}, and \scomposable{}, all with rate one. Suprisingly, this suffices for the purposes of recovering existing theory (this section) and proposing new, potentially minimax optimal, schedules (\cref{sec:schedules}).

\begin{definition}\label{def:basic}
	We refer to the empty schedule $h=\emptySchedule$ as a \emph{basic} \fcomposable{}, \gcomposable{}, and \scomposable{} schedule. Inductively, we refer to any schedule that can be created by composing two basic schedules via any operation $\triangleright, \triangleleft, \Join$ as a basic schedule.\qedhere
\end{definition}

\begin{remark}
    The constant stepsize schedules of \cref{ex:constant_composable} are examples of stepsize schedules which are \emph{not} basic. Thus, the \emph{basic} f-, g-, and s-composable schedules are a \emph{proper subset} of the general f-, g-, and s-composable schedules.
\end{remark}

The following table lists all length $n=1,2,3$ basic \scomposable{} schedules. We omit exact formulas for stepsizes and rates for $n=3$ involving more than one nested squareroot, instead simply presenting numerical estimates. \\
\begin{center}
	\begin{tabular}{c|c|c}
	Basic \scomposable{} Schedule & Basic Construction via $\Join$ & \scomposable{} Rate \\
	\hline
	$[\sqrt{2}]$ & $\emptySchedule \Join \emptySchedule$ & $\sqrt{2}-1$ \\
	\hline
	$\left[1+\frac{\sqrt{4\sqrt{2} -2}-\sqrt{2}}{2(\sqrt{2} - 1)}, \sqrt{2}\right]$ & $\emptySchedule \Join (\emptySchedule \Join \emptySchedule)$ & $\frac{2(\sqrt{2}-1)}{\sqrt{4\sqrt{2} - 2} +\sqrt{2}}$ \\
	$\left[\sqrt{2}, 1+\frac{\sqrt{4\sqrt{2} -2}-\sqrt{2}}{2(\sqrt{2} - 1)}\right]$ & $(\emptySchedule \Join \emptySchedule) \Join \emptySchedule$ & $\frac{2(\sqrt{2}-1)}{\sqrt{4\sqrt{2} - 2} +\sqrt{2}}$\\
	\hline
	$[\sqrt{2}, 2, \sqrt{2}]$ & $(\emptySchedule \Join \emptySchedule) \Join (\emptySchedule \Join \emptySchedule)$ & $\frac{1}{3+2\sqrt{2}}$\\
	$\approx [1.7023, 1.6012, 1.4142]$ & $\emptySchedule \Join (\emptySchedule \Join (\emptySchedule \Join \emptySchedule))$ & $\approx 0.17489$ \\
	$\approx [1.7023, 1.4142, 1.6012]$ & $\emptySchedule \Join ((\emptySchedule \Join \emptySchedule) \Join \emptySchedule)$ & $\approx 0.17489$\\
	$\approx [1.6012, 1.4142, 1.7023]$ & $(\emptySchedule \Join (\emptySchedule \Join \emptySchedule) \Join \emptySchedule $ & $\approx 0.17489$ \\
	$\approx [1.4142, 1.6012, 1.7023]$ & $((\emptySchedule \Join \emptySchedule) \Join \emptySchedule) \Join \emptySchedule$ & $\approx 0.17489$ \\
\end{tabular}
\end{center}

From the five possible basic \scomposable{} schedules with $n=3$, we can see they do not all possess the same convergence rate. $[\sqrt{2}, 2, \sqrt{2}] = (\emptySchedule \Join \emptySchedule) \Join (\emptySchedule \Join \emptySchedule)$, corresponding to the second silver stepsize schedule of~\cite{altschuler2023accelerationPartII}, is the optimal basic \scomposable{} schedule of length three. More sophisticated examples of  \scomposable{} schedules are discussed in Section~\ref{subsubsec:silver} where we show repeated composition with $\Join$ can produce every silver stepsize schedule. In Section~\ref{sec:schedules}, we will return to the idea of constructing optimal basic schedules.

Similarly, one can construct \fcomposable{} and \gcomposable{} schedules inductively. For example, all basic \fcomposable{} schedules of length $n=1,2,3$ are given below. In this case, the optimal basic schedule of length two is $[\sqrt{3}, 3/2]$ and length three is $[\sqrt{2}, 1+\sqrt{2}, 3/2]$, corresponding to the minimax optimal schedules numerically found by~\cite{gupta2023branch}. As discussed in the following section, the basic \gcomposable{} schedules are exactly the reverse of each basic \fcomposable{} schedule and possess the same rate, hence we omit a table of basic \gcomposable{} schedules.
\begin{center}
    \label{tbl:basicFcomposableExamples}
	\begin{tabular}{c|c|c}
		Basic \fcomposable{} Schedule & Basic Construction via $\Join$ and $\triangleright$ & \fcomposable{} Rate \\
		\hline
		$[3/2]$ & $\emptySchedule \triangleright \emptySchedule$ & $1/4$ \\
	\hline
		$[\sqrt{3}, 3/2]$ & $\emptySchedule \triangleright (\emptySchedule \triangleright \emptySchedule)$ & $\frac{1}{\sqrt{3} +4}$ \\
		$[\sqrt{2}, \frac{1}{4}\left(3+\sqrt{9+8\sqrt{2}}\right)]$ & $(\emptySchedule \Join \emptySchedule) \triangleright \emptySchedule$ & $\frac{2}{\sqrt{9+8\sqrt{2}} +4\sqrt{2} + 5}$\\
	\hline
		$[\sqrt{2}, 1+\sqrt{2}, 3/2]$ & $(\emptySchedule \Join \emptySchedule) \triangleright (\emptySchedule \triangleright \emptySchedule)$ & $\frac{1}{6+4\sqrt{2}}$\\
		$\approx [1.8199, 1.7321, 1.5]$ & $\emptySchedule \triangleright (\emptySchedule \triangleright (\emptySchedule \triangleright \emptySchedule))$ & $\approx 0.09006$ \\
		$\approx [1.8218, 1.4142, 1.8768]$ & $\emptySchedule \triangleright ((\emptySchedule \Join \emptySchedule) \triangleright \emptySchedule)$ & $\approx 0.08908$\\
		$\approx [1.6012, 1.4142, 2.1888]$ & $(\emptySchedule \Join (\emptySchedule \Join \emptySchedule) \triangleright \emptySchedule $ & $\approx 0.08765$ \\
		$\approx [1.4142, 1.6012, 2.1888]$ & $((\emptySchedule \Join \emptySchedule) \Join \emptySchedule) \triangleright \emptySchedule$ & $\approx 0.08765$ \\
	\end{tabular}
\end{center}
More sophisticated examples of \fcomposable{} and \gcomposable{} schedules are given in Sections~\ref{subsubsec:bnb-optimal-schedules}--\ref{subsubsec:dynamic-short-schedules} recovering the numerically minimax optimal stepsizes previously computed for all $n\leq 25$, previous long step schedules of the authors, and novel short stepsize schedules.

\subsection{H-Duality Theory for All Basic Schedules}\label{subsec:Hdual}
By construction, basic schedules exhibit a simple duality theory following the symmetries identified by~\citet{Kim2024-Hduality}. Therein, it was shown that if a fixed step first-order method (potentially including momentum) possesses a special type of inductive proof of an objective gap convergence rate like~\eqref{eq:f_composable_inequality}, then the reversed fixed-step first-order method would also possess a specific type of inductive proof showing a gradient norm convergence rate like~\eqref{eq:g_composable_ineq}. Their proof of this structure, called H-duality, was highly tailored to the particular type of inductive proof used to analyze Nesterov's momentum method~\cite{Nesterov1983} and OGM~\cite{Kim2016optimal}. The convergence proofs generated by our composition operations lack this tailored form. Regardless, the following propositions establish an H-duality theory for every basic schedule between $h=[h_0\dots h_{n-1}]$ and its reverse $\rev(h)=[h_{n-1}\dots h_0]$.
\begin{proposition}\label{thm:self-duality}
	A stepsize schedule $h$ is basic \scomposable{} with rate $\eta$ if and only if $\rev(h)$ is basic \scomposable{} with rate $\eta$.
\end{proposition}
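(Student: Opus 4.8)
The plan is to prove Proposition~\ref{thm:self-duality} by induction on the structure of the basic \scomposable{} schedule, using the recursive definition of ``basic'' (Definition~\ref{def:basic}) together with the key observation that the s-join is ``self-dual'' in the sense that reversing $a \Join b$ gives $\rev(b) \Join \rev(a)$ with the \emph{same} middle stepsize $\mu$ and the \emph{same} resulting rate. The base case is immediate: the empty schedule $\emptySchedule$ is its own reverse and is basic \scomposable{} with rate $1$.

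\textbf{Key steps.} First I would record the reversal identity for the middle stepsize: since the formula $\mu = 1+\frac{\sqrt{\alpha^2 + 6\alpha\beta + \beta^2}-(\alpha+\beta)}{2\alpha\beta}$ is symmetric in $\alpha$ and $\beta$, we have that $\mu$ is unchanged when we swap the roles of $a$ and $b$. Likewise, $\alpha \Join \beta = \beta \Join \alpha$ by the commutativity established in Lemma~\ref{lem:composition_basic_facts}. Hence, as plain concatenations of sequences,
\[
	\rev\big([a,\mu,b]\big) = [\rev(b),\mu,\rev(a)],
\]
and the right-hand side is exactly $\rev(b) \Join \rev(a)$ provided $\rev(b)$ and $\rev(a)$ are \scomposable{} with rates $\beta$ and $\alpha$ respectively. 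Second, for the inductive step, suppose $h$ is basic \scomposable{} with rate $\eta$; by Definition~\ref{def:basic} either $h = \emptySchedule$ (base case) or $h = a \Join b$ for basic \scomposable{} schedules $a, b$ with rates $\alpha, \beta$ and $\eta = \alpha \Join \beta$. By the induction hypothesis, $\rev(a)$ and $\rev(b)$ are basic \scomposable{} with rates $\alpha$ and $\beta$. Then $\rev(b) \Join \rev(a)$ is basic \scomposable{} (it is a composition of two basic schedules via $\Join$) and by Theorem~\ref{thm:selfcomp_recurrence} its rate is $\beta \Join \alpha = \alpha \Join \beta = \eta$. By the reversal identity above, $\rev(b) \Join \rev(a) = \rev(h)$, completing one direction. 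The converse follows by symmetry, applying the forward direction to $\rev(h)$ and using $\rev(\rev(h)) = h$.

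\textbf{Main obstacle.} The only genuine content beyond bookkeeping is ensuring that ``basic \scomposable{}'' is preserved under reversal at the level of the \emph{construction}, not merely the rate — that is, that $\rev(h)$ is literally obtained by the same recursive grammar of $\Join$-compositions applied to empty schedules. This is handled cleanly by the induction: the reversal of a $\Join$-tree is the mirror-image $\Join$-tree (swapping left and right subtrees at every node and recursively reversing), which is again a valid basic construction, and the symmetry of both $\mu$ and $\alpha\Join\beta$ guarantees the stepsize values and rates match. A minor point to verify is that when $a$ and $b$ have lengths $n_a-1$ and $n_b-1$, the concatenation-and-reversal indexing lines up; this is routine since $\len(a\Join b) = \len(a)+\len(b)+1$ by Lemma~\ref{lem:fcomp_simple} and reversal is length-preserving. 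I would present the reversal identity as a short standalone claim and then give the two-line induction.
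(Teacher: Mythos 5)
Your proposal is correct and follows essentially the same route as the paper: structural induction with the empty schedule as base case, using the reversal identity $\rev(a\Join b)=\rev(b)\Join\rev(a)$ together with the commutativity of $\Join$ on rates. Your explicit remark that the middle stepsize $\mu$ is symmetric in $\alpha$ and $\beta$ makes precise a point the paper's proof leaves implicit, but it is the same argument.
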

\begin{proof}
    By symmetry, it suffices to prove the forward direction. Suppose $h$ has length $n$. We prove this statement by induction. If $n=0$, then $h = \emptySchedule$, and $h$ and $\rev(h)$ are both \scomposable{} schedules with the same rate $\eta = 1$, as was shown in \cref{ex:h0_composable}.
	Next, suppose $n\geq 1$. By definition, $h$ can be written as $a \Join b$ for some basic \scomposable{} schedules $a$ and $b$ with rates $\alpha$ and $\beta$. By induction, $\rev(a)$ and $\rev(b)$ are basic \scomposable{} schedules with rates $\alpha$ and $\beta$ and $\len(a), \len(b)<\len(h)$. Then,
	$\rev(b)\Join\rev(a)= \rev(h)$ is basic \scomposable{} with rate $\beta\Join\alpha = \eta$.\qedhere
\end{proof}

\begin{proposition}\label{thm:HDual}
	\label{lem:basic_f_g}
	A stepsize schedule $h$ is basic and \fcomposable{} with rate $\eta$ if and only if $\rev(h)$ is basic and \gcomposable{} schedule with rate $\eta$.
	Analogously, $h$ is basic and \gcomposable{} with rate $\eta$ if and only if $\rev(h)$ is \fcomposable{} with rate $\eta$.
\end{proposition}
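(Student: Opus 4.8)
The plan is to mirror the inductive argument used for Proposition~\ref{thm:self-duality}, now exploiting the fact that the f-join and g-join were deliberately defined with a common middle stepsize and a common rate formula. The key observation is purely combinatorial: if $h = a \triangleright b = [a,\mu,b]$ with $\mu = 1 + \frac{\sqrt{\alpha^2+8\alpha\beta}-\alpha}{4\alpha\beta}$, then $\rev(h) = [\rev(b),\mu,\rev(a)]$, and this is exactly $\rev(b)\triangleleft\rev(a)$: the g-join of a \gcomposable{} schedule of rate $\beta$ with an \scomposable{} schedule of rate $\alpha$ uses the identical middle stepsize $1 + \frac{\sqrt{\alpha^2+8\alpha\beta}-\alpha}{4\alpha\beta}$, and its rate $\beta\triangleleft\alpha = \frac{2\alpha\beta}{\alpha+4\beta+\sqrt{\alpha^2+8\alpha\beta}}$ equals $\alpha\triangleright\beta$. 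Symmetrically, $\rev(b\triangleleft a) = \rev(a)\triangleright\rev(b)$ with the same rate. Since $\rev$ is an involution, it suffices to prove the two forward implications ``basic \fcomposable{} with rate $\eta$ $\Rightarrow$ $\rev$ is basic \gcomposable{} with rate $\eta$'' and ``basic \gcomposable{} with rate $\eta$ $\Rightarrow$ $\rev$ is basic \fcomposable{} with rate $\eta$''; applying these to $\rev(h)$ yields the converses.

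I would prove both forward implications simultaneously by strong induction on $n = \len(h)$. For $n = 0$ we have $h = \rev(h) = \emptySchedule$, which is basic \fcomposable{}, \gcomposable{} (and \scomposable{}) with rate $1$ by Example~\ref{ex:h0_composable}, so both implications hold. For $n \geq 1$, suppose $h$ is basic \fcomposable{} with rate $\eta$. By the inductive definition of basic schedules, $h = a\triangleright b$ for a basic \scomposable{} $a$ with rate $\alpha$ and a basic \fcomposable{} $b$ with rate $\beta$, where $\eta = \alpha\triangleright\beta$; by Lemma~\ref{lem:fcomp_simple}, $\len(a),\len(b) < n$. By Proposition~\ref{thm:self-duality}, $\rev(a)$ is basic \scomposable{} with rate $\alpha$, and by the inductive hypothesis applied to $b$, $\rev(b)$ is basic \gcomposable{} with rate $\beta$. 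Then $\rev(b)\triangleleft\rev(a)$ is basic \gcomposable{} with rate $\beta\triangleleft\alpha = \eta$ by Theorem~\ref{thm:g_recurrence}, and by the combinatorial identity above it equals $\rev(h)$. The implication ``basic \gcomposable{} with rate $\eta$ $\Rightarrow$ $\rev$ basic \fcomposable{} with rate $\eta$'' is the mirror image: decompose $h = b\triangleleft a$, apply the inductive hypothesis to $b$ and self-duality to $a$, and use Theorem~\ref{thm:f_recurrence} on $\rev(a)\triangleright\rev(b) = \rev(h)$.

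The argument is essentially bookkeeping, so I do not anticipate a genuine obstacle; the one point needing care is the setup of the induction. One must (i) confirm that ``basic \fcomposable{}'' is read inductively --- i.e., that a basic \fcomposable{} schedule of positive length really does decompose as $a\triangleright b$ with $a$ basic \scomposable{} and $b$ basic \fcomposable{} --- so that the inductive hypothesis and Proposition~\ref{thm:self-duality} can be invoked on the pieces, and (ii) verify the elementary identities $\alpha\triangleright\beta = \beta\triangleleft\alpha$ and the equality of the two middle-stepsize formulas, which are immediate from the definitions since both expressions are the same function of $(\alpha,\beta)$ with the roles of $a$ and $b$ swapped. Positivity of $\alpha,\beta$ (needed for these formulas) follows since rates of composable schedules lie in $(0,1]$. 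Everything else --- decreasing lengths, the involution $\rev(\rev(h)) = h$, and $\rev([a,\mu,b]) = [\rev(b),\mu,\rev(a)]$ --- is routine.
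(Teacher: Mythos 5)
Your proof is correct and follows essentially the same route as the paper's: induction on length, decomposing $h=a\triangleright b$, invoking Proposition~\ref{thm:self-duality} on $a$ and the inductive hypothesis on $b$, and concluding via Theorem~\ref{thm:g_recurrence} together with the identity $\rev(a\triangleright b)=\rev(b)\triangleleft\rev(a)$. You are somewhat more explicit than the paper about the combinatorial identity for the reversed join and about reducing the biconditional to the forward implications via the involution $\rev$, but these are details the paper leaves implicit rather than a different argument.
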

\begin{proof}
	We prove the forward direction. Suppose $h$ has length $n$. We prove this statement by induction. If $n=0$, then $h$ is \fcomposable{} with rate $\eta =1$ and $\rev(h)$ is \gcomposable{} with rate $\eta = 1$.
	Next, suppose $n\geq 1$. By definition, $h$ can be written as $a \triangleright b$ for some basic \scomposable{} schedule $a$ with rate $\alpha$ and some basic \fcomposable{} schedule $b$ with rate $\beta$. By 
    the previous proposition, $\rev(a)$ is basic \scomposable{} with rate $\alpha$. By induction, $\rev(b)$ is basic \gcomposable{} schedule with rate $\beta$. Then,
	$\rev(b)\triangleleft\rev(a)= \rev(h)$ is basic \gcomposable{} with rate $\beta\triangleleft \alpha = \eta$.
    
    The proof of the backward direction is analogous.\qedhere
\end{proof}
Lastly, we observe that \cref{prop:fg_rates_of_scomp} shows that \scomposable{} schedules have the same \fcomposable{} and \gcomposable{} rates, which resembles a form of H-duality in which the sequence is not reversed.

\subsection{Recovery of Prior State-of-the-Art Works as Basic Schedules}

The performance estimation framework pioneered by~\cite{drori2012PerformanceOF,taylor2017interpolation} enabled a wave of recent works identifying stepsize schedules for gradient descent with superior performance to classic ``textbook'' approaches. Below we discuss four such advances which each developed specialized state-of-the-art stepsizes schedules via their own ad hoc approach. Our composition machinery recovers all of these prior works as basic schedules arising from simple combinations of the operations $\Join,\triangleright,\triangleleft$.

\subsubsection{Silver Stepsizes of~\cite{altschuler2023accelerationPartII}}\label{subsubsec:silver}
The first stepsize schedule achieving an accelerated objective gap convergence rate of $O(1/n^{\log_2(1+\sqrt{2})})$ was the silver stepsize schedule of \citet{altschuler2023accelerationPartII}. For any $k\geq 0$, we denote the silver stepsize schedule by $\pi^{(k)}\in \mathbb{R}^n_{++}$ which takes length exactly $n=2^{k}-1$. This sequence of stepsize schedules can be defined recursively having $\pi^{(0)}=\emptySchedule$ and thereafter
\begin{equation}\label{eq:silver-construction}
	\pi^{(k+1)} = [\pi^{(k)},  1+(1+\sqrt{2})^{k-1} ,\pi^{(k)}] \ .
\end{equation}
An objective gap convergence rate was proven in~\cite[Theorem 1.1]{altschuler2023accelerationPartII} showing for any $1$-smooth convex function, gradient descent with $h=\pi^{(k)}$ has
\begin{equation}
	f(x_n) - f(x_\star) \leq \frac{\|x_0-x_\star\|^2}{1+\sqrt{4(1+\sqrt{2})^{2k}-3}} \ . \label{eq:silver-rate}
\end{equation}
Despite its fast objective gap convergence, this stepsize schedule is not \fcomposable{}: considering its performance on the simple $1$-smooth quadratic function $\frac{1}{2}x^2$, it attains a much faster rate of
$$ f(x_n) - f(x_\star) =  \frac{\|x_0-x_\star\|^2}{2(1+\sqrt{2})^{2k}}, $$
whereas on an appropriately chosen Huber function, convergence occurs at a slightly faster rate of
$$ f(x_n) - f(x_\star) = \frac{\|x_0-x_\star\|^2}{4(1+\sqrt{2})^{k}-2} \ . $$
As a result, these two extremal cases are not equally balanced (nor do they attain the previously proven convergence rate of~\eqref{eq:silver-rate}).

Instead of hedging objective gap performance between these extremal functions, we find that the silver schedule perfectly balances performance on quadratic and Huber functions in terms of the \scomposable{} inequality~\eqref{eq:self_composable_ineq}.
\begin{lemma}\label{lem:silver_steps_basic}
    The silver stepsize schedule $\pi^{(k)}$ is basic \scomposable{} with rate $(1+\sqrt{2})^{-k}$. 
\end{lemma}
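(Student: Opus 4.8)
The plan is to prove by induction on $k$ that $\pi^{(k)}$ is basic \scomposable{} with rate $(1+\sqrt 2)^{-k}$, using the recursive construction \eqref{eq:silver-construction} together with the s-join composition theorem (\cref{thm:selfcomp_recurrence}) and the identity $\alpha\Join\alpha = \alpha/(1+\sqrt 2)$ from \cref{lem:composition_basic_facts}.

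The base case $k=0$ is immediate: $\pi^{(0)}=\emptySchedule$ is basic \scomposable{} with rate $1 = (1+\sqrt 2)^{0}$, as recorded in \cref{ex:h0_composable}. For the inductive step, assume $\pi^{(k)}$ is basic \scomposable{} with rate $\alpha \coloneqq (1+\sqrt 2)^{-k}$. Then $\pi^{(k)}\Join\pi^{(k)}$ is, by definition, a basic schedule and by \cref{thm:selfcomp_recurrence} it is \scomposable{} with rate $\alpha\Join\alpha$; by the fourth bullet of \cref{lem:composition_basic_facts} this rate equals $\alpha/(1+\sqrt 2) = (1+\sqrt 2)^{-(k+1)}$, which is exactly the claimed rate for $\pi^{(k+1)}$. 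It remains to check that the \emph{stepsize vectors} match, i.e.\ that $\pi^{(k)}\Join\pi^{(k)} = \pi^{(k+1)}$ as given by \eqref{eq:silver-construction}. Since $\pi^{(k)}\Join\pi^{(k)} = [\pi^{(k)},\mu,\pi^{(k)}]$ with $\mu = 1 + \frac{\sqrt{\alpha^2 + 6\alpha^2 + \alpha^2} - 2\alpha}{2\alpha^2} = 1 + \frac{(2\sqrt 2 - 2)\alpha}{2\alpha^2} = 1 + \frac{\sqrt 2 - 1}{\alpha}$, substituting $\alpha = (1+\sqrt 2)^{-k}$ and using $\sqrt 2 - 1 = (1+\sqrt 2)^{-1}$ gives $\mu = 1 + (1+\sqrt 2)^{-1}(1+\sqrt 2)^{k} = 1 + (1+\sqrt 2)^{k-1}$, matching the middle stepsize in \eqref{eq:silver-construction}. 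This completes the induction.

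The only real computation is the simplification of $\mu$, and there is no substantive obstacle — the main point to be careful about is keeping track of the nonstandard definition of the \scomposable{} rate (the dropped factors of $2$ and squares noted in the remark after \cref{lem:s_composable_huber_tight}), so that the rate $(1+\sqrt 2)^{-k}$ here is indeed the ``$\eta = 1/(1+\sum h_i) = \prod(h_i-1)$'' quantity and not the \fcomposable{}-style rate. As a sanity check one can verify directly that $\prod_{i}(h_i - 1) = (1+\sqrt 2)^{-k}$ for the silver schedule using the recursion $\eta_{k+1} = \eta_k^2\cdot(\mu - 1) = (1+\sqrt 2)^{-2k}\cdot(1+\sqrt 2)^{k-1} = (1+\sqrt 2)^{-(k+1)}$, which both confirms the rate formula and independently re-derives the value of $\mu$.
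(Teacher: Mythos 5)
Your proof is correct and follows essentially the same route as the paper's: induction on $k$, showing $\pi^{(k)}\Join\pi^{(k)}=\pi^{(k+1)}$ by computing $\mu=1+\frac{\sqrt{2}-1}{\alpha}=1+(1+\sqrt{2})^{k-1}$ from the s-join definition, and invoking \cref{thm:selfcomp_recurrence} together with the identity $\alpha\Join\alpha=\alpha/(1+\sqrt{2})$ from \cref{lem:composition_basic_facts} for the rate. The concluding sanity check is a harmless addition; nothing is missing.
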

\begin{proof}
    We show that $\pi^{(k)}$ is basic with this rate by induction, first noting that $\pi^{(0)} = \emptySchedule$ is basic with rate 1 by definition. We then claim that $\pi^{(k)} \Join \pi^{(k)} = \pi^{(k+1)}$, which follows since
    \[
        \pi^{(k)} \Join \pi^{(k)} = [\pi^{(k)}, \mu_k, \pi^{(k)}],
    \]
    where
    \[
        \mu_k = 1+\frac{\sqrt{8 \eta_k^2}-2\eta_k}{2\eta_k^2} = 
        1+\frac{\sqrt{2}-1}{\eta_k},
    \]
    where $\eta_k$ is the rate of $\pi^{(k)}$, which by induction is $(1+\sqrt{2})^{-k}$.
    This implies that $\mu_k = 1+\frac{\sqrt{2}-1}{\eta_k} = 1+(1+\sqrt{2})^{k-1}$, which agrees with the definition of the silver stepsize schedule.

    The rate then follows from Lemma~\ref{lem:composition_basic_facts}. \qedhere
\end{proof}

By~\cref{prop:fg_rates_of_scomp}, our theory yields an improved objective gap guarantee for the silver stepsize schedule of $f(x_n) - f(x_\star) \leq  \frac{L\|x_0-x_\star\|^2}{4(1+\sqrt{2})^{k}-2}$, which was conjectured by~\cite{luner2024averagingextrapolationgradientdescent} and recently proven directly by~\cite{wang2024relaxedproximalpointalgorithm}. A matching gradient norm rate similarly follows from~\cref{prop:fg_rates_of_scomp}. Note these improved convergence rates are tight as they are attained by the Huber instance above. The first few examples of this construction are below.
\begin{center}
	\begin{tabular}{c|c|c|c}
		$n$ & Basic Construction of ``Silver Stepsizes''~\cite{altschuler2023accelerationPartII} & Schedule & New $s$-Rate\\
		\hline
		1 & $\emptySchedule\Join\emptySchedule$ & $[\sqrt{2}]$ & $\frac{1}{1+\sqrt{2}}$ \\
		3 & $(\emptySchedule\Join\emptySchedule)\Join(\emptySchedule\Join\emptySchedule)$ & $[\sqrt{2},2,\sqrt{2}]$ & $\frac{1}{(1+\sqrt{2})^2}$ \\
		7 & $((\emptySchedule\Join\emptySchedule)\Join(\emptySchedule\Join\emptySchedule))\Join((\emptySchedule\Join\emptySchedule)\Join(\emptySchedule\Join\emptySchedule))$ & $[\sqrt{2},2,\sqrt{2}, 2+\sqrt{2}, \sqrt{2},2,\sqrt{2}]$ & $\frac{1}{(1+\sqrt{2})^3}$
	\end{tabular}
\end{center}

\subsubsection{Numerically Minimax Optimal Stepsizes for $n=1,2,\dots,25$ of~\cite{gupta2023branch}} \label{subsubsec:bnb-optimal-schedules}

As discussed in the introduction, \citet{gupta2023branch} leveraged substantial computational resources to globally identify minimax optimal stepsize schedules (up to numeric error) for $n\leq 25$. Here, minimax optimal is in the sense of solving~\eqref{eq:minimax}. These floating-point schedules (as well as locally optimized schedules up to $n=50$) are available online\footnote{See  \url{https://github.com/Shuvomoy/BnB-PEP-code/blob/main/Misc/stpszs.jl}}.

We find that every globally numerically optimized schedule reported by~\cite{gupta2023branch} is basic (up to numerical errors): For every $n=1,\dots,25$, a simple computer search over possible constructions found a basic \fcomposable{} schedule with relative, infinity norm difference from the numerically minimax optimal schedule of at most $10^{-3}$. This error is smaller, on the order of $10^{-6}$, for schedules with smaller $n$ where the branch-and-bound solves of~\cite{gupta2023branch} were of higher accuracy. Basic constructions for the first ten schedules matching the numerical minimax optimal solves are given below.
\begin{center}
	\begin{tabular}{c|c|c}
		 & Basic Construction of Previously Numerically Identified & New Objective Gap\\
		$n$ & Minimax Optimal Schedules~\cite{gupta2023branch} &  Convergence Rates\\
		\hline
		1  & $\emptySchedule \triangleright \emptySchedule$ &  $0.25$ \\
		2  & $(\emptySchedule \Join \emptySchedule) \triangleright \emptySchedule$ & $0.13189$  \\
		3  & $(\emptySchedule \Join \emptySchedule) \triangleright (\emptySchedule \triangleright \emptySchedule)$ &  $0.08579$ \\
		4  & $((\emptySchedule \Join \emptySchedule) \Join \emptySchedule) \triangleright (\emptySchedule \triangleright \emptySchedule)$ & $0.06234$  \\
		5  & $((\emptySchedule \Join \emptySchedule) \Join (\emptySchedule \Join \emptySchedule)) \triangleright (\emptySchedule \triangleright \emptySchedule)$ & $0.04814$  \\
		6  & $(\emptySchedule \Join (\emptySchedule \Join \emptySchedule)) \triangleright (((\emptySchedule \Join \emptySchedule) \Join \emptySchedule) \triangleright \emptySchedule)$ & $0.04020$  \\
		7  & $((\emptySchedule \Join (\emptySchedule \Join \emptySchedule)) \Join (\emptySchedule \Join \emptySchedule)) \triangleright ((\emptySchedule \Join \emptySchedule) \triangleright \emptySchedule)$ & $0.03266$  \\
		8  & $(((\emptySchedule \Join \emptySchedule) \Join (\emptySchedule \Join \emptySchedule)) \Join (\emptySchedule \Join(\emptySchedule \Join \emptySchedule))) \triangleright (\emptySchedule \triangleright \emptySchedule)$ & $0.02811$  \\
		9  & $((\emptySchedule \Join (\emptySchedule \Join \emptySchedule)) \Join (\emptySchedule \Join \emptySchedule))\triangleright ((\emptySchedule \Join \emptySchedule) \triangleright ((\emptySchedule \Join \emptySchedule) \triangleright \emptySchedule))$ &  $0.02456$ \\
		10 & $((\emptySchedule \Join(\emptySchedule \Join \emptySchedule)) \Join (\emptySchedule \Join \emptySchedule) \Join (\emptySchedule \Join \emptySchedule)) \triangleright ((\emptySchedule \Join \emptySchedule) \triangleright (\emptySchedule \triangleright \emptySchedule))$ & $0.02124$
	\end{tabular}
\end{center}
Since these basic schedules are built entirely from our operations $\Join$ and $\triangleright$, our theory provides these schedules their first formal convergence rate guarantees, following immediately from our inductive composition Theorems~\ref{thm:f_recurrence} and~\ref{thm:selfcomp_recurrence}. This is in contrast to~\cite{gupta2023branch} where only floating point dual certificates were available, approximating a proof of convergence.

This strong relation between every numerically identified minimax optimal schedule and basic schedules motivates the following natural conjecture.
\begin{conjecture}\label{conj:strong-f-minimax-descripiton}
    For each $n$, every minimax optimal (fixed step) stepsize schedule, solving~\eqref{eq:minimax}, is basic and \fcomposable{}.
\end{conjecture}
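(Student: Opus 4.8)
I would prove a sharper statement that implies the conjecture: for each $n$, the minimax value in~\eqref{eq:minimax} equals $\eta^*_n\cdot D^2/2$, where $\eta^*_n$ denotes the best rate achievable by a basic \fcomposable{} schedule of length $n$, and moreover every minimizer $h$ of~\eqref{eq:minimax} is a basic \fcomposable{} schedule attaining this rate. One inequality is immediate: by \cref{thm:f_recurrence} the rate-optimal basic \fcomposable{} schedule of length $n$ is \fcomposable{} with rate $\eta^*_n$, so running gradient descent with it already shows the minimax value is at most $\eta^*_n D^2/2$. All the content lies in the matching lower bound over \emph{arbitrary} schedules, together with the structural description of the minimizers.

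For both, I would argue by mutual induction on $n$, proving the analogous statements for \gcomposable{} and \scomposable{} schedules at the same time; the \gcomposable{} case should follow from the \fcomposable{} case via the H-duality of \cref{thm:HDual}, once one checks that the gradient-norm minimax problem is itself reversed by $\rev(\cdot)$. Fix a minimax-optimal $h\in\R^n_{++}$, a worst-case instance $(f,x_0)$, and an optimal PEP dual certificate (working with the performance-estimation reformulations of composability developed in Section~\ref{sec:proofs}). The crux is a \emph{splitting lemma}: at optimality there is an index $k$ so that $h$ decomposes as $[a,h_k,b]$ with $a=(h_0,\dots,h_{k-1})$ forced to be a minimax-optimal \scomposable{} schedule of length $k$, $b=(h_{k+1},\dots,h_{n-1})$ forced to be a minimax-optimal \fcomposable{} schedule of length $n-1-k$, and $h_k$ equal to the f-join middle stepsize $\mu$ determined by their rates. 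Granting this, induction gives that $a$ and $b$ are basic, hence $h=a\triangleright b$ is basic \fcomposable{}; optimality over the (finitely many) choices of split then forces rate $\eta^*_n$ and pins $h$ down to the set of rate-optimal basic \fcomposable{} schedules of length $n$.

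The splitting lemma is where the difficulty concentrates, and it is precisely why this remains a conjecture. It asserts that the global optimum of the nonconvex QCQP to which~\eqref{eq:minimax} reduces has the recursive ``glued quadratic/Huber'' structure of \cref{lem:f_huber_tight} --- i.e.\ the worst-case function genuinely factors through a \scomposable{} prefix and an \fcomposable{} suffix meeting at one breakpoint --- and also that no schedule outside the basic family (for instance the optimal constant schedules of \cref{ex:constant_composable}, which are not basic) can reach $\eta^*_n$. I would attempt it in two stages. First, a \emph{value} stage: assemble, recursively along a join tree, an explicit dual-multiplier vector for the PEP of \emph{every} length-$n$ schedule $h$, certifying $\max_{(f,x_0)}f(x_n)-f_\star\ge\eta^*_n D^2/2$ --- essentially a Lagrangian/SOS certificate --- thereby fixing the minimax value. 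Second, a \emph{rigidity} stage: use strict complementarity in this certificate to show that equality forces the interpolation inequalities ``active in the basic proof'' to be tight for $h$, which through the composition formulas for $\mu$ and the rate recursions $\triangleright$ and $\Join$ forces $h$ to coincide with a rate-optimal basic \fcomposable{} schedule.

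The concrete obstacle is the value stage: for a generic non-basic $h$ there is no join-tree structure to recurse on, so a universal lower-bound certificate requires controlling the PEP optimum for \emph{arbitrary} stepsizes, not merely composable ones. A more realistic intermediate goal, still within reach of these tools, is to establish the value stage alone --- yielding $\Theta(n^{-\log_2(1+\sqrt2)})$ minimax optimality with the exact OBS-F constant --- and to leave the rigidity/uniqueness claim that \emph{every} optimizer is basic as the residual gap.
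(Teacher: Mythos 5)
There is nothing to compare against: the statement you are addressing is \cref{conj:strong-f-minimax-descripiton}, which the paper explicitly leaves as an \emph{open conjecture}. The paper offers no proof, only numerical evidence (agreement with the branch-and-bound schedules of \citet{gupta2023branch} for $n\le 25$). Your proposal is therefore not being measured against an existing argument, and, as you yourself acknowledge in the final paragraph, it does not constitute a proof either. The upper-bound direction you cite is indeed already in the paper (\cref{thm:f_recurrence} plus the OBS-F construction of \cref{sec:obs-f} give $\max_{(f,x_0)} f(x_n)-f_\star \le \eta^{\mathtt{OBS\mbox{-}F}}(n)\,D^2/2$), but everything else hangs on your unproven ``splitting lemma,'' and the value stage you identify as the obstacle is exactly the obstacle: the PEP lower bound must be controlled for \emph{arbitrary} $h\in\R^n$, which have no join-tree to recurse on, and the paper's machinery (\cref{prop:composable_eq,prop:self_comp_ineq,lem:pep_exists}) only characterizes schedules that are already composable.

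Two further ingredients of your plan are also not available in the paper and would need independent proofs. First, the H-duality you invoke to transfer the \fcomposable{} case to the \gcomposable{} case (\cref{thm:HDual}) is proved only for \emph{basic} schedules; the statement that the gradient-norm minimax problem over arbitrary fixed-step schedules is the reversal of the objective-gap minimax problem is itself open (the paper notes that the H-duality of \cite{Kim2024-Hduality} does not cover these methods), so your induction would be circular without it. Second, even granting the value stage, the rigidity stage must exclude not only non-basic \fcomposable{} schedules (such as the conjectured constant schedules of \cref{ex:constant_composable}) but also optimizers that are not \fcomposable{} at all, i.e.\ whose worst case is not simultaneously attained by a quadratic and a Huber instance; strict complementarity in a single dual certificate does not obviously deliver this. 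Your proposal is a reasonable research program and correctly locates where the difficulty concentrates, but the statement remains unproven.
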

If true, this would have two key consequences: (i) the basic optimized stepsize schedules we propose in \cref{sec:schedules} are optimal for fixed step gradient descent and (ii) the minimax optimal rate for any fixed step gradient descent method is $\Theta(1/n^{\log_2{1+\sqrt{2}}})$ with nearly tight bounds on the exact suppressed coefficient.
Thus, this conjecture would imply a strict separation between momentum based methods (which converge at a rate of $O(\frac{1}{n^2})$) and fixed step gradient descent methods. We also note that methods that choose step sizes depending on earlier gradient queries (or indeed randomly as in \cite{altschuler2023accelerationPartII}) may be able to outperform fixed step methods even if the conjecture holds.
\subsubsection{Objective Gap and Gradient Norm Minimizing Schedules of~\cite{grimmer2024accelerated}} \label{subsubsec:left-right-heavy-schedules}
Similar to the above construction of silver stepsize schedules via recursive application of $\Join$, the objective gap minimizing stepsizes of~\cite{grimmer2024accelerated} can be similarly recovered, therein called the ``right-side heavy stepsizes''. Namely, these are recovered recursively by setting $h^{\mathtt{right},0} = \emptySchedule$ and recursively setting
$$ h^{\mathtt{right},k+1} = \pi^{(k)} \triangleright h^{\mathtt{right},k},$$
where the quantity $\pi^{(k)}$ on the left is the $k$th silver stepsize schedule. Similarly, the gradient norm minimizing stepsizes, called ``left-side heavy stepsizes'' therein are given recursively by $h^{\mathtt{left},0} = \emptySchedule$ and
$$ h^{\mathtt{left},k+1} =  h^{\mathtt{left},k} \triangleleft \pi^{(k)}\ .$$
That these schedule are \fcomposable{} and \gcomposable{} (with their associated rates) follow immediately from repeated applications of Theorems~\ref{thm:f_recurrence}, \ref{thm:g_recurrence} and~\ref{thm:selfcomp_recurrence}. Unlike the previous examples, the resulting rates are not new. Rather they exactly match those proven directly in~\cite[Theorems 1 and 2]{grimmer2024accelerated}. The first few rounds of these constructions are given below, again providing numerical estimates of formulas with more than one nested squareroot.
\begin{center}
	\begin{tabular}{c|c|c|c}
		$n$ & Basic Construction of ``Right-Side Heavy'' Steps~\cite{grimmer2024accelerated} & Schedule & f-Rate\\
		\hline
		1 & $\emptySchedule\triangleright\emptySchedule$ & $ [3/2]$ & $\frac{1}{4}$ \\
		3 & $(\emptySchedule\Join\emptySchedule)\triangleright(\emptySchedule\triangleright\emptySchedule)$ & $ [\sqrt{2}, 1+\sqrt{2}, 3/2]$ & $\frac{1}{6+4\sqrt{2}}$ \\
		7 & $((\emptySchedule\Join\emptySchedule)\Join(\emptySchedule\Join\emptySchedule))\triangleright((\emptySchedule\Join\emptySchedule)\triangleright(\emptySchedule\triangleright\emptySchedule))$ & $ [\sqrt{2},2,\sqrt{2}, 4.602, \sqrt{2},1+\sqrt{2},3/2]$ & $ 0.03277$\\ & & & \\
		\hline
		n & Basic Construction of ``Left-Side Heavy'' Steps~\cite{grimmer2024accelerated} & Schedule & g-Rate\\
		\hline
		1 & $\emptySchedule\triangleleft\emptySchedule $ & $ [3/2]$ & $\frac{1}{4}$ \\
		3 & $(\emptySchedule\triangleleft\emptySchedule)\triangleleft(\emptySchedule\Join\emptySchedule) $ & $ [3/2,1+\sqrt{2},\sqrt{2}]$ & $\frac{1}{6+4\sqrt{2}}$ \\
		7 & $((\emptySchedule\triangleleft\emptySchedule)\triangleleft(\emptySchedule\Join\emptySchedule))\triangleleft((\emptySchedule\Join\emptySchedule)\Join(\emptySchedule\Join\emptySchedule))$ & $ [3/2,1+\sqrt{2},\sqrt{2}, 4.602, \sqrt{2},2,\sqrt{2}]$ & $ 0.03277$
	\end{tabular}
\end{center}

\subsubsection{Dynamic Short Stepsizes of~\cite{rotaru2024exact}} \label{subsubsec:dynamic-short-schedules}
Our composition theory can also meaningfully contribute to the design of stepsize schedules restricted to have $h_i\in (0,2)$, ensuring a decrease in objective value at every iteration. Recall that we say that $h$ is in the short step size regime if $h_i \in (0,2)$ for each $i$, as this ensures that each step of gradient descent using $h$ decreases both the objective value and the gradient norm of the function.

    We will consider a general construction: for a \gcomposable{} step size schedule $h$ with rate $\eta$, we define $h^{\triangleleft \len(h)} = h$, and for $n > \len(h)$, we define $h^{\triangleleft n} = h^{\triangleleft (n-1)} \triangleleft \emptySchedule$. Explicitly, we have that
    \[
        h^{\triangleleft n} = (\dots(h  \triangleleft \emptySchedule)\dots) \triangleleft \emptySchedule = [h, \mu_{\len(h)+1}(h), \dots, \mu_n(h)],
    \]
    where we perform the composition operation $n - \len(h)$ times, i.e., such that $\len(h^{\triangleleft n}) = n$. We let $\eta_n(h)$ be the rate of $h^{\triangleleft n}$.

    We may consider an analogous operation for \fcomposable{} schedules, $h^{\triangleright n} = (\underbrace{\emptySchedule \triangleright (\emptySchedule \triangleright \dots  (\emptySchedule}_{n-\len(h) \text{ times}} \triangleright h)\dots)$, for which the rates and observations below also apply.

    \begin{lemma}
        If $h$ is a \gcomposable{} stepsize schedule, then for each $n > \len(h)$,
        \[
            \eta_n(h) = \frac{2-\mu_n(h)}{2}.
        \]
        We also have that if $n > \len(h)$, then 
        \[
            \mu_{n+1}(h) = \frac{3-2\mu_n(h)+\sqrt{9-4\mu_{n}(h)}}{2(2-\mu_n(h))}.
        \]
    \end{lemma}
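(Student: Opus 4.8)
The plan is to unwind the definition of the g-join in the special case where the right operand is the empty schedule, which is \scomposable{} with rate $\alpha = 1$ by \cref{ex:h0_composable}. By repeated application of \cref{thm:g_recurrence} (with base case $h^{\triangleleft \len(h)} = h$, which is \gcomposable{} with rate $\eta_{\len(h)}(h) = \eta$), every $h^{\triangleleft n} = h^{\triangleleft(n-1)} \triangleleft \emptySchedule$ with $n > \len(h)$ is \gcomposable{} with rate $\eta_n(h)$, and the middle stepsize it inserts is, by the definition of $\triangleleft$ specialized to $\alpha = 1$, $\beta = \eta_{n-1}(h)$,
$$\mu_n(h) = 1 + \frac{\sqrt{1 + 8\eta_{n-1}(h)} - 1}{4\eta_{n-1}(h)}, \qquad \eta_n(h) = \eta_{n-1}(h)\triangleleft 1 = \frac{2\eta_{n-1}(h)}{1 + 4\eta_{n-1}(h) + \sqrt{1 + 8\eta_{n-1}(h)}}.$$
So both quantities are explicit functions of $\eta_{n-1}(h)$, and what remains is purely algebraic simplification.

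For the first identity, I would substitute $t := \sqrt{1 + 8\eta_{n-1}(h)} \ge 1$, so that $\eta_{n-1}(h) = (t^2-1)/8$. A short computation, using $t^2 - 1 = (t-1)(t+1)$, collapses the two displays above to
$$\mu_n(h) = \frac{t+3}{t+1}, \qquad \eta_n(h) = \frac{t-1}{2(t+1)},$$
from which $2 - \mu_n(h) = (t-1)/(t+1) = 2\eta_n(h)$, i.e. $\eta_n(h) = (2 - \mu_n(h))/2$, as claimed.

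For the recursion on $\mu$, I would write the middle-stepsize formula one index later, $\mu_{n+1}(h) = 1 + (\sqrt{1 + 8\eta_n(h)} - 1)/(4\eta_n(h))$, and then eliminate $\eta_n(h)$ via the first identity. This turns $1 + 8\eta_n(h)$ into $9 - 4\mu_n(h)$ and $4\eta_n(h)$ into $2(2 - \mu_n(h))$, and putting everything over the common denominator $2(2 - \mu_n(h))$ gives
$$\mu_{n+1}(h) = \frac{2(2 - \mu_n(h)) + \sqrt{9 - 4\mu_n(h)} - 1}{2(2 - \mu_n(h))} = \frac{3 - 2\mu_n(h) + \sqrt{9 - 4\mu_n(h)}}{2(2 - \mu_n(h))},$$
which is the stated recursion.

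There is no genuinely hard step here; the only care needed is bookkeeping of indices. The relation $\eta_n(h) = (2 - \mu_n(h))/2$ is only asserted for $n > \len(h)$, precisely because $\mu_n(h)$ is only defined when $h^{\triangleleft n}$ arises from a g-join; correspondingly the $\mu$-recursion is stated for $n > \len(h)$, so the first identity is available at index $n$ before it is used to produce $\mu_{n+1}(h)$. It is also worth noting in passing that $\eta_n(h) = \eta_{n-1}(h) \triangleleft 1 > 0$ (by the positivity of $\triangleleft$ on positive rates, \cref{lem:composition_basic_facts}), equivalently $t > 1$, so $\mu_n(h) = 1 + 2/(t+1) \in (1,2)$; hence all denominators above are nonzero and the appended steps lie in the short-step regime discussed in the surrounding text.
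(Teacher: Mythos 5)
Your proposal is correct and follows essentially the same route as the paper: both unwind the g-join with $\alpha=1$ to express $\mu_n$ and $\eta_n$ in terms of $\eta_{n-1}$, verify $\eta_n=(2-\mu_n)/2$, and then substitute that relation into the formula for $\mu_{n+1}$. The only cosmetic difference is that you simplify via the substitution $t=\sqrt{1+8\eta_{n-1}}$ where the paper rationalizes the denominator directly; both computations check out.
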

    \begin{proof}
        We will let $\mu_n = \mu_n(h)$ and $\eta_n = \eta_n(h)$ to ease notation.
        For $n > \len(h)$, recall the definition of $\mu_{n}$ from the composition $h^{\triangleleft {n-1}} \triangleleft \emptySchedule$,
        \[
            \mu_{n} = 1 + \frac{\sqrt{1 + 8\eta_{n-1}}- 1}{4\eta_{n-1}}.
        \]
        The first equation then follows because
        \begin{align*}
            \eta_{n} &= \frac{2\eta_{n-1}}{4\eta_{n-1} +1 +\sqrt{1 + 8\eta_{n-1}}}\\
            &= \frac{2\eta_{n-1}\left(4\eta_{n-1} -\sqrt{1 + 8\eta_{n-1}}+1\right)}{(4\eta_{n-1}+1)^2 -\left(1 + 8\eta_{n-1}\right)}\\
&= 1-\frac{1 + \frac{\sqrt{1 + 8\eta_{n-1}}-1}{4\eta_{n-1}}}{2}\\
            &= \frac{2-\mu_n}{2}.
        \end{align*}

        For $n > \len(h)$, we may then substitute $\eta_n = \frac{2-\mu_n}{2}$ into the definition of $\mu_{n+1}$ to obtain
        \begin{equation*}
            \mu_{n+1} = 1 + \frac{\sqrt{1 + 4(2-\mu_n)}-1}{2(2-\mu_n)}
            = \frac{3-2\mu_n+\sqrt{9-4\mu_n}}{2(2-\mu_n)}. \qedhere
        \end{equation*}
    \end{proof}
    In particular, for each $n$, $\eta_n(h) > 0$, so we have that $\mu_n(h) = 2(1-\eta_n) < 2$ for $n > \len(h)$. Therefore, if $h$ is in the short stepsize regime, then $h^{\triangleleft n}$ is also in the short stepsize regime. 
    This construction applied to $\emptySchedule$ recovers the stepsize schedule considered in \citet[Corollary 2.19]{rotaru2024exact} and exactly recovers its g-convergence rate. 
\begin{corollary}
    The schedule $\emptySchedule^{\triangleleft n}$ is the same as that defined in \citet[Corollary 2.19]{rotaru2024exact}.
\end{corollary}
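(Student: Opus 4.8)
The plan is to establish the identity by induction on $n$, leveraging the explicit recursion for the middle stepsizes $\mu_n(\emptySchedule)$ and rates $\eta_n(\emptySchedule)$ just proven. The preceding lemma has already reduced our side to the two pieces of data $\mu_1(\emptySchedule)$ and the one-step map $\mu_{n+1}(\emptySchedule) = \frac{3 - 2\mu_n(\emptySchedule) + \sqrt{9 - 4\mu_n(\emptySchedule)}}{2(2-\mu_n(\emptySchedule))}$, so the whole task is to check these match the corresponding data on the other side.

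First I would transcribe the definition of the schedule from \citet[Corollary 2.19]{rotaru2024exact}. There the schedule is built from an auxiliary scalar sequence encoding the best available gradient-norm guarantee as a function of the number of remaining iterations, and each stepsize is an explicit function (a rational expression plus a single square root) of that auxiliary quantity. I would name this schedule $\tilde h^{(n)} = [\tilde\mu_1,\dots,\tilde\mu_n]$ and record both its first stepsize and the one-step recursion governing the $\tilde\mu_k$'s.

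The heart of the proof is then a two-step verification. \textbf{Base case:} the convention $\eta_0(\emptySchedule) = 1$ together with the preceding lemma (equivalently, directly with the $\triangleleft$-rate formula) gives $\mu_1(\emptySchedule) = 1 + \tfrac{\sqrt{9}-1}{4} = \tfrac32$, which I would match against $\tilde\mu_1$. \textbf{Inductive step:} assuming $\mu_k(\emptySchedule) = \tilde\mu_k$ for all $k \le n$, I would pin down the dictionary between Rotaru et al.'s auxiliary quantity and our rate $\eta_n(\emptySchedule)$: since a \gcomposable{} schedule of rate $\eta$ satisfies $\eta = \tfrac{1}{1+2\sum_i h_i}$, their ``remaining-budget'' scalar is an invertible (reciprocal/affine) transform of $\eta_n(\emptySchedule)$. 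Substituting this dictionary into their recursion for $\tilde\mu_{n+1}$ should reproduce exactly the lemma's recursion for $\mu_{n+1}(\emptySchedule)$. Since both sequences are deterministic and agree at $n=1$, they agree for all $n$.

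I expect the only real difficulty to be bookkeeping rather than mathematics: reconciling indexing conventions (Rotaru et al.\ may index from $0$, or present the schedule in reversed order, since gradient-norm-optimal schedules are reverses of objective-gap-optimal ones by the H-duality of \cref{thm:HDual}) and nailing down the precise change of variables to their auxiliary sequence. With that dictionary in hand, matching the two recursions is a routine rationalization of the square root, and no idea beyond the preceding lemma is needed.
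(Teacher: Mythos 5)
Your proposal is correct and matches the paper's proof, which is exactly the same two-step check: the base case yields the length-one schedule $[\tfrac{3}{2}]$, and the recurrence for $\mu_{n+1}$ established in the preceding lemma agrees with the recurrence in \citet[Corollary 2.19]{rotaru2024exact}. The paper states this in one sentence; your additional remarks about fixing the dictionary between their auxiliary sequence and $\eta_n$ are just the bookkeeping implicit in that assertion.
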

\begin{proof}
    The recurrence relation above agrees with that defined in \citet[Corollary 2.19]{rotaru2024exact}, and also, as a base case, both yield the length 1 stepsize schedule $[\frac{3}{2}]$.\qedhere
\end{proof}
As a consequence of the H duality built into our theory, it is immediate that the reversed dynamic short stepsize schedule $\rev(\emptySchedule^{\triangleleft n})$ achieves a matching objective gap rate.

On the other hand, we can obtain a strictly better stepsize schedule in the short stepsize regime by using the ``seed'' $\sigma = \emptySchedule \triangleleft (\emptySchedule \Join \emptySchedule)$ instead of $\emptySchedule$.
\begin{lemma}
    Every entry of $\sigma^{\triangleleft n}$ is in $(0,2)$, and the rate of $\sigma^{\triangleleft n}$ is strictly smaller than that of $(\emptySchedule)^{\triangleleft n}$ for each $n \ge 2$.
\end{lemma}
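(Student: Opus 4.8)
The plan is to split the claim into two independent parts: the short-stepsize property of $\sigma^{\triangleleft n}$, and the rate comparison with $\emptySchedule^{\triangleleft n}$.

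\textbf{Short-stepsize property.} First I would compute $\sigma$ explicitly. Since $\emptySchedule\Join\emptySchedule=[\sqrt2]$ is \scomposable{} with rate $1\Join 1=\tfrac{1}{1+\sqrt2}=\sqrt2-1$ (\cref{lem:composition_basic_facts}), the definition of the g-join gives $\sigma=\emptySchedule\triangleleft[\sqrt2]=[\mu_0,\sqrt2]$ with
\[
    \mu_0=1+\frac{\sqrt{(\sqrt2-1)^2+8(\sqrt2-1)}-(\sqrt2-1)}{4(\sqrt2-1)}=1+\frac{\sqrt{6\sqrt2-5}-(\sqrt2-1)}{4(\sqrt2-1)}.
\]
One checks $\mu_0\in(1,2)$: the bound $\mu_0>1$ is equivalent to $6\sqrt2-5>(\sqrt2-1)^2$, i.e.\ $\sqrt2>1$; the bound $\mu_0<2$ is equivalent to $\sqrt{6\sqrt2-5}<5(\sqrt2-1)$, which upon squaring becomes $6\sqrt2-5<75-50\sqrt2$, i.e.\ $\sqrt2<10/7$. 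Hence both entries of $\sigma$ lie in $(0,2)$, so $\sigma$ is in the short stepsize regime, and its rate $1\triangleleft(\sqrt2-1)$ lies in $(0,1)$. The observation following the preceding lemma (that $\mu_n(h)=2(1-\eta_n(h))\in(0,2)$ for $n>\len(h)$ whenever $\eta_n(h)\in(0,1)$) then shows $\sigma^{\triangleleft n}$ remains in the short stepsize regime for all $n\ge 2$; here one also uses that $\eta_n(\sigma)$ is decreasing in $n$ by $\alpha\triangleleft 1<\alpha$ (\cref{lem:composition_basic_facts}), so $\eta_n(\sigma)<1$.

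\textbf{Rate comparison.} Let $\rho_n$ denote the rate of $\emptySchedule^{\triangleleft n}$ and $s_n$ the rate of $\sigma^{\triangleleft n}$, and set $\phi(x)\coloneqq x\triangleleft 1=\frac{2x}{1+4x+\sqrt{1+8x}}$. By construction $\rho_n=\phi(\rho_{n-1})$ for all $n\ge 1$ (with $\rho_0=1$), and $s_n=\phi(s_{n-1})$ for all $n\ge 3$, where $s_2$ is the rate of $\sigma$. By \cref{lem:composition_basic_facts} the map $\phi$ is strictly increasing, so by a one-line induction it suffices to prove the base case $s_2<\rho_2$: then $s_n=\phi(s_{n-1})<\phi(\rho_{n-1})=\rho_n$ for every $n\ge 3$.

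\textbf{The base case, and the main obstacle.} Here $\rho_2=\phi(\phi(1))=\phi(1/4)=\frac{1}{4+2\sqrt3}$, while rationalizing the denominator of $s_2=1\triangleleft(\sqrt2-1)=\frac{2(\sqrt2-1)}{3+\sqrt2+\sqrt{6\sqrt2-5}}$ (the conjugate clears the denominator to $16$) gives $s_2=\frac{2\sqrt2-1-\sqrt{28\sqrt2-39}}{8}$. The inequality $s_2<\rho_2$ is numerically close and must be checked algebraically: clearing denominators reduces it to $2\sqrt2+4\sqrt3-9<\sqrt{28\sqrt2-39}$ (both sides positive); squaring and simplifying reduces this to $22+2\sqrt6<8\sqrt2+9\sqrt3$ (again both sides positive); squaring once more reduces it to $137<56\sqrt6$, i.e.\ $137^2=18769<18816=6\cdot 56^2$, which holds. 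Each squaring is justified because the quantities involved are checked to be nonnegative beforehand. This chain of radical manipulations, ending in the rather tight integer inequality $18769<18816$, is the only delicate point; everything else is monotonicity bookkeeping.
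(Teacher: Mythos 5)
Your proof is correct and follows the same overall structure as the paper's: compute $\sigma$ explicitly, check its two entries lie in $(0,2)$, invoke the preceding observation that the short-step regime is preserved under $\triangleleft\,\emptySchedule$, and then reduce the rate comparison to the base case $n=2$ via strict monotonicity of $x\mapsto x\triangleleft 1$. The one substantive difference is the base case: the paper simply quotes numerical values ($0.131892$ for $\sigma$ versus $0.174458$ for $\emptySchedule^{\triangleleft 2}$), whereas you verify $s_2<\rho_2$ exactly. This is a genuine improvement, not just pedantry: the paper's quoted value $0.174458$ for $\eta_2(\emptySchedule)$ appears to be an error — by its own recursion $\mu_2=\sqrt 3$ and $\eta_2=\tfrac{2-\sqrt 3}{2}\approx 0.13397$ — so the true margin is $0.13189$ versus $0.13397$, and your chain of squarings (terminating in $18769<18816$, with the penultimate inequality $22+2\sqrt 6<8\sqrt 2+9\sqrt 3$ holding by only about $3\times 10^{-3}$) is exactly the kind of rigorous check this tight comparison warrants. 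All of your radical manipulations and sign checks are correct.
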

\begin{proof}
    A direct computation shows
    \[\sigma = \emptySchedule \triangleleft (\emptySchedule\Join \emptySchedule) = \left[\frac{1}{4}\left(3+\sqrt{9+8\sqrt{2}}\right), \sqrt{2}\right] \approx\left[1.876, 1.414\right]. \]
    In particular, since $\sigma$ has entries in $(0,2)$, every entry of $\sigma^{\triangleleft n}$ is at most 2.

    The rate of $\sigma$ is $\frac{2}{\sqrt{9+8\sqrt{2}} +4\sqrt{2} + 5} = 0.131892$, which is less than the rate of $(\emptySchedule)^{\triangleleft 2}$ being $0.174458$. Since for each $n > 2$, $\eta_n(h) = \eta_{n-1}(h) \triangleleft 1$, which is strictly monotonically increasing in $\eta_{n-1}(h)$, we have that for every $n > 2$, $\eta_n(\sigma)<\eta_n(\emptySchedule)$.\qedhere
\end{proof}
 \section{Optimized Basic Stepsize Schedules (OBS)}\label{sec:schedules}

Beyond recovering prior stepsize schedules, one can leverage the structure of our composition operations to optimize the construction of new schedules.
Computing the optimal basic \scomposable{}, \fcomposable{}, and \gcomposable{} schedules can be done efficiently via dynamic programming: a simple Julia implementation producing optimized stepsize schedules for any $n$ is available at \url{https://github.com/bgrimmer/OptimizedBasicSchedules}.

In the following three subsections, we address each of these optimal constructions and provide nearly matching upper and lower bounds on the worst-case performance of these optimized schedules.

\subsection{The Optimized Basic \scomposable{} Schedule (OBS-S)} \label{sec:obs-s}
We define an Optimized Basic \scomposable{} Schedule (OBS-S) to be a basic \scomposable{} schedule of a given length which has the minimum rate out of all basic \scomposable{} schedules of that length.

The following definitions and results are stated for a schedule of length $n-1$ with $n\geq 1$ (as opposed to a schedule of length $n$ with $n\geq 0$).
This will be natural for the following results where composition plays a key role. Indeed, recall that a schedule of length $n-1$ composed with a schedule of length $m-1$ results in a schedule of length $n+m-1$.

Below, we give a recursive construction of an Optimized Basic \scomposable{} Schedule (OBS-S), $h^\mathtt{OBS\mbox{-}S}(n-1) \in\mathbb{R}^{n-1}_{++}$, and its rate $\eta^\mathtt{OBS\mbox{-}S}(n-1)$.
Begin by setting $h^{\mathtt{OBS\mbox{-}S}}(0) = \emptySchedule$ and $\eta^{\mathtt{OBS\mbox{-}S}}(0) = 1$.
For each $n > 1$, we will let $h^{\mathtt{OBS\mbox{-}S}}(n-1) = h^{\mathtt{OBS\mbox{-}S}}(m-1) \Join h^{\mathtt{OBS\mbox{-}S}}(n-m-1)$, where we choose $1\le m\le n-1$ to minimize the associated rate $\eta^\mathtt{OBS\mbox{-}S}(n-1) = \eta^\mathtt{OBS\mbox{-}S}(m-1) \Join \eta^{\mathtt{OBS\mbox{-}S}}(n-m-1)$. Practically, we can find the value of $m$ minimizing this rate using dynamic programming.

Formally, we define
\begin{equation}
    \eta^\mathtt{OBS\mbox{-}S}(n-1) \coloneqq \min_{1\leq m\leq n-1}\eta^\mathtt{OBS\mbox{-}S}(m-1) \Join \eta^\mathtt{OBS\mbox{-}S}(n-m-1)\label{eq:OBS-S-construction-eta},
\end{equation}
and set $h^\mathtt{OBS\mbox{-}S}(n-1)$ to be a corresponding stepsize schedule 
\begin{equation}
    h^\mathtt{OBS\mbox{-}S}(n-1) \coloneqq h^\mathtt{OBS\mbox{-}S}(m-1) \Join h^\mathtt{OBS\mbox{-}S}(n- m-1), \label{eq:OBS-S-construction-h}
\end{equation}
where $m$ is any $\argmin$ of \eqref{eq:OBS-S-construction-eta}.

It is clear by induction that $h^{\mathtt{OBS\mbox{-}S}}(n-1)$ will then be basic. It will also have the minimum possible rate out of any basic schedule because $\alpha \Join \beta$ is an increasing function in $\alpha$ and $\beta$ (\cref{lem:composition_basic_facts}).
We also note that $\eta^\mathtt{OBS\mbox{-}S}(n) < \eta^\mathtt{OBS\mbox{-}S}(n-1)$, since $h^{\mathtt{OBS\mbox{-}S}}(n-1) \Join \emptySchedule$ has rate $\eta^\mathtt{OBS\mbox{-}S}(n-1) \Join 1 <  \eta^\mathtt{OBS\mbox{-}S}(n-1)$ by \cref{lem:composition_basic_facts}.

As we will see, when $n = 2^k$ for some $k$, then $h^{\mathtt{OBS\mbox{-}S}}(n-1)$ is exactly the silver stepsizes of~\cite{altschuler2023accelerationPartII}.
Our construction~\eqref{eq:OBS-S-construction-h} can then be viewed as a generalization of the silver stepsizes of arbitrary length.
Motivated by the construction of the silver stepsize schedules, the next lemma considers the s-convergence rate achieved by composing a schedule with itself.
\begin{lemma}
    \label{lem:easy_construction}
    Fix some $m \geq 1$ and $k \ge 0$. For all $n \ge 2^km$,
    \[
        \eta^{\mathtt{OBS\mbox{-}S}}(n-1) \le 
        \frac{1}{(\vr)^{k}}\eta^{\mathtt{OBS\mbox{-}S}}(m-1).
    \]
\end{lemma}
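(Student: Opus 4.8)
The plan is to induct on $k$. The base case $k=0$ is immediate since it just asserts $\eta^{\mathtt{OBS\mbox{-}S}}(n-1) \le \eta^{\mathtt{OBS\mbox{-}S}}(m-1)$ for all $n \ge m$, which follows from the monotonicity remark established right after the OBS-S construction (namely $\eta^{\mathtt{OBS\mbox{-}S}}(n) < \eta^{\mathtt{OBS\mbox{-}S}}(n-1)$, iterated). For the inductive step, suppose the claim holds for $k$ and let $n \ge 2^{k+1}m$. I want to split $n$ into two halves, each of size at least $2^k m$, apply the defining minimization~\eqref{eq:OBS-S-construction-eta} at a well-chosen split point, and then use the inductive hypothesis on each half together with the identity $\alpha \Join \alpha = \alpha/(\vr)$ from \cref{lem:composition_basic_facts}.

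Concretely, I would choose the split $h^{\mathtt{OBS\mbox{-}S}}(n-1) = h^{\mathtt{OBS\mbox{-}S}}(\lceil n/2 \rceil - 1) \Join h^{\mathtt{OBS\mbox{-}S}}(\lfloor n/2 \rfloor - 1)$ — or more carefully, any split $n = n_1 + n_2$ with $n_1, n_2 \ge 2^k m$; this is possible precisely because $n \ge 2^{k+1}m$ guarantees we can take, say, $n_1 = 2^k m$ and $n_2 = n - 2^k m \ge 2^k m$. By~\eqref{eq:OBS-S-construction-eta}, $\eta^{\mathtt{OBS\mbox{-}S}}(n-1) \le \eta^{\mathtt{OBS\mbox{-}S}}(n_1 - 1) \Join \eta^{\mathtt{OBS\mbox{-}S}}(n_2 - 1)$. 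By the inductive hypothesis applied to each factor (with the same $m$), $\eta^{\mathtt{OBS\mbox{-}S}}(n_i - 1) \le \frac{1}{(\vr)^k}\eta^{\mathtt{OBS\mbox{-}S}}(m-1)$ for $i=1,2$. Then monotonicity of $\Join$ in both arguments (\cref{lem:composition_basic_facts}) gives
\[
    \eta^{\mathtt{OBS\mbox{-}S}}(n-1) \le \left(\frac{1}{(\vr)^k}\eta^{\mathtt{OBS\mbox{-}S}}(m-1)\right) \Join \left(\frac{1}{(\vr)^k}\eta^{\mathtt{OBS\mbox{-}S}}(m-1)\right),
\]
and finally homogeneity of $\Join$ together with the identity $\alpha \Join \alpha = \alpha/(\vr)$ collapses the right-hand side to $\frac{1}{(\vr)^{k+1}}\eta^{\mathtt{OBS\mbox{-}S}}(m-1)$, completing the induction.

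The steps in order are therefore: (1) state the induction on $k$ and dispatch the base case via iterated monotonicity $\eta^{\mathtt{OBS\mbox{-}S}}(n) < \eta^{\mathtt{OBS\mbox{-}S}}(n-1)$; (2) for the step, observe that $n \ge 2^{k+1}m$ lets us write $n = n_1 + n_2$ with both $n_i \ge 2^k m$; (3) bound $\eta^{\mathtt{OBS\mbox{-}S}}(n-1)$ above by the particular join coming from this split using the $\min$ in~\eqref{eq:OBS-S-construction-eta}; (4) apply the inductive hypothesis to both factors; (5) use monotonicity of $\Join$ to replace both factors by the common upper bound; (6) use homogeneity plus $\alpha \Join \alpha = \alpha/(\vr)$ to simplify. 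I do not anticipate a genuine obstacle here — everything rides on the three algebraic facts about $\Join$ (homogeneity, monotonicity, the self-join identity) already recorded in \cref{lem:composition_basic_facts}. The only point requiring a little care is the arithmetic of the index split: making sure that when $n \ge 2^{k+1}m$ one can always realize $n$ as a sum of two integers each at least $2^k m$, which is trivial but worth writing explicitly so the induction is airtight. It is also worth noting at the end that this lemma is exactly what pins down the $\Theta(1/n^{\log_2(\vr)})$-type rate, since iterating with $m$ fixed and $k \approx \log_2(n/m)$ yields the power-law bound.
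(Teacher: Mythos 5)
Your proof is correct and uses essentially the same ingredients as the paper's: the symmetric split in the $\min$ defining $\eta^{\mathtt{OBS\mbox{-}S}}$, the self-join identity $\alpha\Join\alpha=\alpha/(\vr)$, homogeneity, and monotonicity. The paper merely organizes it slightly differently (first reducing to $n=2^km$ via monotonicity in the length, then iterating a single doubling bound), whereas you induct on $k$ and split $n$ directly; the substance is identical.
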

\begin{proof}
    It is clear in light of \cref{lem:composition_basic_facts} that 
    \[
        \eta^{\mathtt{OBS\mbox{-}S}}(2m-1) \le \eta^{\mathtt{OBS\mbox{-}S}}(m-1) \Join \eta^{\mathtt{OBS\mbox{-}S}}(m-1) = \frac{1}{\vr}\eta^{\mathtt{OBS\mbox{-}S}}(m-1).
    \]
    We then see that by repeatedly applying this fact,
    \begin{align*}
        \eta^{\mathtt{OBS\mbox{-}S}}(n-1) &\le \eta^{\mathtt{OBS\mbox{-}S}}(2^km-1)\\
                                        &\le \frac{1}{\vr}\eta^{\mathtt{OBS\mbox{-}S}}(2^{k-1}m-1)\\
                                        &\leq\dots\\
                                        &\leq\frac{1}{(\vr)^k}\eta^{\mathtt{OBS\mbox{-}S}}(m-1).\qedhere
    \end{align*}
\end{proof}
This lemma allows us to control the asymptotic rate of growth of $\eta^{\mathtt{OBS\mbox{-}S}}(n-1)$, so long as we have computed this value explicitly for enough small values of $n$ to use as ``seeds'' for the above construction.

The next lemma uses this construction to provide nearly tight uniform upper and lower bounds on the rates of all OBS-S schedules. It is stated in terms of a constant
\begin{equation}
	R^\mathtt{OBS\mbox{-}S}_{k} = \max_{n \in[2^k,2^{k+1})}\eta^{\mathtt{OBS\mbox{-}S}}(n-1)n^{\log_2(1+\sqrt{2})},\ \label{eq:def_R_OBS-S} 
\end{equation}
defined for any $k\geq 0$. For reference, $R^\mathtt{OBS\mbox{-}S}_{18}\approx 1.00723$.

\begin{theorem}\label{thm:OBS-S-rate}
    For all $n \ge 1$, the \scomposable{} rate of $h^\mathtt{OBS\mbox{-}S}(n-1)$ is lower bounded by
    \[
        \eta^\mathtt{OBS\mbox{-}S}(n-1)\geq \frac{1}{n^{\log_2(1+\sqrt{2})}}.
    \]
    If $n$ is a power of two, this inequality holds with equality and is attained by the silver stepsize schedule.

    Moreover, if $k \ge 0$ is an integer so that $n \ge 2^{k+1}$, then we also have an upper bound 
	$$\eta^\mathtt{OBS\mbox{-}S}(n-1) \le \frac{R^\mathtt{OBS\mbox{-}S}_{k}}{n^{\log_2(1+\sqrt{2})}} \left(1+\frac{1}{2^k}\right)^{\log_2(1+\sqrt{2})}. $$
\end{theorem}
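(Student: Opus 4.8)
Throughout write $c=\log_2(1+\sqrt 2)$ and abbreviate $\eta(n-1)=\eta^{\mathtt{OBS\mbox{-}S}}(n-1)$; note $2^{c}=1+\sqrt 2$ and $2^{-c}=\sqrt 2-1$, so that \cref{lem:composition_basic_facts} gives $\alpha\Join\alpha=2^{-c}\alpha$. For the lower bound the plan is a strong induction on $n$ showing $\eta(n-1)\ge n^{-c}$. The base case $n=1$ is $\eta(0)=1$. For the step, by the recursion \eqref{eq:OBS-S-construction-eta} and the fact that $\Join$ is increasing in each argument (\cref{lem:composition_basic_facts}), it suffices to show $m^{-c}\Join(n-m)^{-c}\ge n^{-c}$ for every $1\le m\le n-1$; by homogeneity of $\Join$ this is the same as $p^{-c}\Join(1-p)^{-c}\ge 1$ with $p=m/n\in(0,1)$. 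Expanding the definition of $\Join$, observing that both sides are positive, squaring once and rearranging, this is equivalent to the scalar inequality
\[
(1+p^{c})\bigl(1+(1-p)^{c}\bigr)\le 2\qquad\text{for all }p\in[0,1],
\]
which is the heart of the matter. I would prove it by elementary calculus: it is symmetric about $p=\tfrac12$, equals $2$ at $p\in\{0,\tfrac12,1\}$ (using $1+2^{-c}=\sqrt 2$), and with $\phi(x)=\ln(1+x^{c})$ it reduces to $\phi(p)+\phi(1-p)\le 2\phi(\tfrac12)=\ln 2$; since $\phi$ is \emph{not} concave on all of $[0,1]$ this is not a bare application of Jensen, so instead I would differentiate $p\mapsto\phi(p)+\phi(1-p)$ and check it has no interior critical point besides $p=\tfrac12$ on $(0,\tfrac12]$ via a sign analysis, which in turn comes down to the unimodality of an auxiliary function such as $\chi(\sigma)=c\sigma-(c-1)-c\sigma^{c+1}$. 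For the power-of-two sharpness: when $n=2^{k}$, \cref{lem:silver_steps_basic} shows $\pi^{(k)}$ is basic \scomposable{} with rate $(1+\sqrt 2)^{-k}=(2^{k})^{-c}$, meeting the lower bound; since $\eta^{\mathtt{OBS\mbox{-}S}}$ is by construction minimal over all basic \scomposable{} schedules of that length (\cref{lem:composition_basic_facts}), equality holds and is attained by $\pi^{(k)}$.

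For the upper bound, fix $k\ge 0$ and $n\ge 2^{k+1}$. I would set $j=\floor{\log_2(n/2^{k})}\ge 1$, so that $2^{j+k}\le n<2^{j+k+1}$ and hence $x:=n/2^{j}\in[2^{k},2^{k+1})$; put $m=\floor{x}\in\{2^{k},\dots,2^{k+1}-1\}$. Then \cref{lem:easy_construction}, applied with this $m$ and exponent $j$ (valid since $2^{j}m\le n$), gives $\eta(n-1)\le 2^{-jc}\,\eta(m-1)$, and because $m\in[2^{k},2^{k+1})$ the definition \eqref{eq:def_R_OBS-S} yields $\eta(m-1)\le R^{\mathtt{OBS\mbox{-}S}}_{k}\,m^{-c}$, so $\eta(n-1)\le R^{\mathtt{OBS\mbox{-}S}}_{k}(2^{j}m)^{-c}$. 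It remains to bound $2^{j}m$ below in terms of $n$: $\frac{2^{j}m}{n}=\frac{\floor{x}}{x}>\frac{\floor{x}}{\floor{x}+1}\ge\frac{2^{k}}{2^{k}+1}$, where the last step uses that $t\mapsto t/(t+1)$ is increasing and $\floor{x}\ge 2^{k}$. Hence $2^{j}m\ge n/(1+2^{-k})$, and substituting this in produces exactly $\eta(n-1)\le\frac{R^{\mathtt{OBS\mbox{-}S}}_{k}}{n^{c}}\bigl(1+2^{-k}\bigr)^{c}$.

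\textbf{Main obstacle.} All of the index bookkeeping above is routine, and the upper bound is essentially immediate once \cref{lem:easy_construction} is in hand. The one genuinely delicate point is the scalar inequality $(1+p^{c})(1+(1-p)^{c})\le 2$: this is where the exact value $c=\log_2(1+\sqrt 2)$ enters, and because the lower bound is tight at powers of two the estimate cannot be slackened, so a careful (if elementary) derivative argument is unavoidable there.
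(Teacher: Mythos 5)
Your proposal is correct and follows essentially the same route as the paper: strong induction combined with the homogeneity and monotonicity of $\Join$ to reduce the lower bound to the univariate claim $\lambda^{-c}\Join(1-\lambda)^{-c}\ge 1$, \cref{lem:silver_steps_basic} for equality at powers of two, and \cref{lem:easy_construction} with the identical floor-function bookkeeping ($j=k'-k$, $m=\lfloor n/2^{j}\rfloor$, $2^{j}m\ge n/(1+2^{-k})$) for the upper bound. The only difference is that you correctly single out the univariate minimization as the nontrivial analytic step and recast it in the equivalent form $(1+p^{c})(1+(1-p)^{c})\le 2$ (a valid algebraic reduction), whereas the paper simply asserts that the minimum is attained at $\lambda=\tfrac12$ with value $1$; neither you nor the paper carries out that verification in full.
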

\begin{proof}
	We prove the lower bound for all $n\geq 1$ inductively. For $n=1$, the base case follows as the empty schedule $h=\emptySchedule$ is the only basic schedule of length $n-1=0$ and achieves rate $1$.
    Now suppose $n>1$ and the lower bound holds for all schedules of length $m-1=0,\dots, n-2$. Letting $r=n^{\log_2(1+\sqrt{2})}$, we may prove the desired lower bound as follows:
	\begin{align}
		\eta^\mathtt{OBS\mbox{-}S}(n-1) & = \min_{m=1,\dots n-1} \eta^\mathtt{OBS\mbox{-}S}(m-1) \Join \eta^\mathtt{OBS\mbox{-}S}(n-m-1)\nonumber\\
		& = \frac{1}{r}\min_{m=1,\dots n-1} \left(r \eta^\mathtt{OBS\mbox{-}S}(m-1) \right)\Join \left(r \eta^\mathtt{OBS\mbox{-}S}(n-m-1)\right)\nonumber\\
		& \geq \frac{1}{r}\ \min_{m=1,\dots n-1} \frac{n^{\log_2(1+\sqrt{2})}}{m^{\log_2(1+\sqrt{2})}} \Join \frac{n^{\log_2(1+\sqrt{2})}}{(n-m)^{\log_2(1+\sqrt{2})}}\nonumber\\
		& \geq \frac{1}{r}\ \min_{0< \lambda < 1} \lambda^{-\log_2(1+\sqrt{2})} \Join (1-\lambda)^{-\log_2(1+\sqrt{2})}\label{eq:lower_bound_lambda}\\
		& = \frac{1}{n^{\log_2(1+\sqrt{2})}}\nonumber.
	\end{align} 
    The first inequality applies our inductive assumption; the second inequality reformulates and relaxes the minimization with $\lambda=\frac{m}{n}$, and the final equality notes this univariate function of $\lambda$ is minimized at $\lambda=\frac{1}{2}$ with value $1$.  

        To see that the silver stepsize schedule $\pi^{(k)}$ meets this inequality with equality when $n = 2^k$, we use \cref{lem:silver_steps_basic} to see that they are indeed basic with rate 
        \[
            \frac{1}{(1+\sqrt{2})^k} = \frac{1}{n^{\log_2(1+\sqrt{2})}}
        \]
        and length $\len(\pi^{(k)})=n-1$.
        This in particular implies that $\eta^\mathtt{OBS\mbox{-}S}(2^k-1) = \frac{1}{(1+\sqrt{2})^k}$ for any integer $k \ge 0$, since this rate is obtained by the silver stepsize schedule and we have just shown that no basic stepsize schedule can achieve a better rate.

Next we prove the upper bound using \cref{lem:easy_construction}.
        For this, let $k' = \lfloor\log_2(n)\rfloor$, and consider $m = \lfloor \frac{n}{2^{k'-k}} \rfloor$.
        Then $m \in [2^{k}, 2^{k+1})$, and $2^{k'-k} (m + 1) \ge n \ge 2^{k'-k} m$, where $k' - k >0$.
        We may then apply \cref{lem:easy_construction} and the bound that $R^\mathtt{OBS\mbox{-}S}_k \ge \eta^\mathtt{OBS\mbox{-}S}(m-1)m^{\log_2(1+\sqrt{2})}$ to see that 
        \begin{align*}
            \eta^\mathtt{OBS\mbox{-}S}(n-1) & \leq \frac{1}{(1+\sqrt{2})^{k'-k}}\eta^\mathtt{OBS\mbox{-}S}(m-1)\\
            & \leq \frac{R^\mathtt{OBS\mbox{-}S}_k}{m^{\log_2(1+\sqrt{2})} (1+\sqrt{2})^{k'-k}} \\
            & \leq \frac{R^\mathtt{OBS\mbox{-}S}_k}{(2^{k'-k}m)^{\log_2(1+\sqrt{2})}}\\
            & = \frac{R^\mathtt{OBS\mbox{-}S}_k}{n^{\log_2(1+\sqrt{2})}}\left(\frac{n}{2^{k'-k}m}\right)^{\log_2(\vr)}\\
            & \leq \frac{R^\mathtt{OBS\mbox{-}S}_k}{n^{\log_2(1+\sqrt{2})}} \left(1+\frac{1}{2^k}\right)^{\log_2(1+\sqrt{2})}.
        \end{align*}
        This last inequality uses the fact that $\frac{m+1}{m} =  1+\frac{1}{m} \le 1+\frac{1}{2^k}$.\qedhere
\end{proof}

We now return to the more standard setting where the stepsize schedule $h^\mathtt{OBS\mbox{-}S}_i(n)$ has length $n$ for $n\geq0$.
Applying~\cref{prop:fg_rates_of_scomp}, this theorem establishes a uniform bound for every $n\geq 0$ and the corresponding OBS-S on final objective gap and gradient norm for $1$-smooth convex functions.
For this, we note that 
\[
    1+2\sum_{i=0}^{n-1} h^\mathtt{OBS\mbox{-}S}_i(n) = 
    2\left(1+\sum_{i=0}^{n-1} h^\mathtt{OBS\mbox{-}S}_i(n)\right)-1 = 
    \frac{2}{\eta^\mathtt{OBS\mbox{-}S}(n)}-1,
\]
where this second equation uses the definition of \scomposable{} schedules.

Therefore, \cref{prop:fg_rates_of_scomp} can be rewritten to state that
\begin{gather*}
    f(x_n) - f(x_\star) \leq \left(\frac{1}{\frac{2}{\eta^\mathtt{OBS\mbox{-}S}(n)}-1}\right)\frac{\|x_0-x_\star\|^2}{2} \qquad\text{and} \\
\frac{1}{2}\|\nabla f(x_n)\|^2 \leq \left(\frac{1}{\frac{2}{\eta^\mathtt{OBS\mbox{-}S}(n)}-1}\right)\ (f(x_0)-f(x_\star)).
\end{gather*}
Substituting our bounds for $\eta^\mathtt{OBS\mbox{-}S}(n)$ produces that for any $n\geq 2^{18}-1$, any optimal basic \scomposable{} schedule simultaneously has
\begin{gather*}
    f(x_n) - f(x_\star) \leq \left(\frac{1.00724}{2(n+1)^{\log_2(1+\sqrt{2})} - 1.00724}\right)\frac{\|x_0-x_\star\|^2}{2} \qquad\text{and} \\
    \frac{1}{2}\|\nabla f(x_n)\|^2 \leq \left(\frac{1.00724}{2(n+1)^{\log_2(1+\sqrt{2})} - 1.00724}\right)\ (f(x_0)-f(x_\star)).
\end{gather*}

Moreover, since $h^\mathtt{OBS\mbox{-}S}(n)$ attains the minimum rate over all basic schedules, the lower bound of Theorem~\ref{thm:OBS-S-rate} establishes that no basic \scomposable{} schedule of length $n$ can achieve an \scomposable{} rate smaller than $1/(n+1)^{\log_2(1+\sqrt{2})}$. The fact that the lower bound in \eqref{eq:lower_bound_lambda} minimizes with $\lambda=1/2$ above highlights the importance and necessity of symmetry for optimal basic \scomposable{} schedules.

\subsection{The Optimized Basic \fcomposable{} Schedule (OBS-F)} \label{sec:obs-f}
Mirroring the construction of OBS-S schedules above, we define an Optimized Basic \fcomposable{} Schedule (OBS-F) to be any basic \fcomposable{} schedule attaining the minimum rate among all basic \fcomposable{} schedules of a given length. 
We will again find a particular OBS-F $h^\mathtt{OBS\mbox{-}F}(n-1)\in\mathbb{R}^{n-1}_{++}$ with rate $\eta^\mathtt{OBS\mbox{-}F}(n-1)$.

We will again note that $\eta^\mathtt{OBS\mbox{-}F}(n) < \eta^\mathtt{OBS\mbox{-}F}(n-1)$, since $\emptySchedule \triangleright h^{\mathtt{OBS\mbox{-}F}}(n-1)$ is an \fcomposable{} basic schedule of length $n$ with rate $(1\triangleright\eta^\mathtt{OBS\mbox{-}F}(n-1) ) <  \eta^\mathtt{OBS\mbox{-}F}(n-1)$ by \cref{lem:composition_basic_facts}.
Noting that the rate $\alpha \triangleright \beta$ is increasing in both arguments, these schedules are readily computable using dynamic programming. Define $h^\mathtt{OBS\mbox{-}F}(0)=\emptySchedule$ and $\eta^\mathtt{OBS\mbox{-}F}(0)=1$. Then, define
\begin{equation}
    \eta^\mathtt{OBS\mbox{-}F}(n-1) \coloneqq \min_{1\leq m\leq n-1}\eta^\mathtt{OBS\mbox{-}S}(m-1) \triangleright \eta^\mathtt{OBS\mbox{-}F}(n-m-1), \label{eq:OBS-F-eta}
\end{equation}
and set $h^\mathtt{OBS\mbox{-}F}(n)$ to be a corresponding stepsize schedule
\begin{equation}
	h^\mathtt{OBS\mbox{-}F}(n-1) \coloneqq h^\mathtt{OBS\mbox{-}S}(m-1) \triangleright h^\mathtt{OBS\mbox{-}F}(n- m - 1), \label{eq:OBS-F-h}
\end{equation}
where $m$ is any $\argmin$ of \eqref{eq:OBS-F-eta}.

    As above, we will give a simple recursive construction for basic \fcomposable{} stepsize schedules.
\begin{lemma}
    \label{lem:easy_f_construction}
    Fix some $m \ge 1$ and $k \ge 1$. For all $n \ge 2^km$,
    \[
        \eta^{\mathtt{OBS\mbox{-}F}}(n-1) \le 
        \frac{1}{(\vr)^{k}}\eta^{\mathtt{OBS\mbox{-}F}}(m-1).
    \]
\end{lemma}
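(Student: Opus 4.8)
The plan is to mimic the proof of \cref{lem:easy_construction} for OBS-S schedules, with two adjustments forced by the asymmetry of the f-join: the outer schedule in each composition step must be taken from the OBS-S family (not the OBS-F family), and we cannot simply self-compose an f-composable schedule. Concretely, I would first establish the single-step bound
\[
    \eta^{\mathtt{OBS\mbox{-}F}}(2m-1) \le \eta^{\mathtt{OBS\mbox{-}S}}(m-1) \triangleright \eta^{\mathtt{OBS\mbox{-}F}}(m-1),
\]
which is immediate from the defining minimization \eqref{eq:OBS-F-eta} (taking the particular choice splitting $2m-1$ into $m-1$ and $m-1$). Then, using $\eta^{\mathtt{OBS\mbox{-}S}}(m-1) \le \eta^{\mathtt{OBS\mbox{-}S}}(0) = 1$ together with monotonicity of the f-join in its first argument (\cref{lem:composition_basic_facts}), I would bound $\eta^{\mathtt{OBS\mbox{-}S}}(m-1) \triangleright \eta^{\mathtt{OBS\mbox{-}F}}(m-1) \le 1 \triangleright \eta^{\mathtt{OBS\mbox{-}F}}(m-1)$. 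The key computational fact to pin down is that $1 \triangleright x = \tfrac{1}{\vr}\, x$; this should follow from the explicit formula $\alpha \triangleright \beta = \tfrac{2\alpha\beta}{\alpha + 4\beta + \sqrt{\alpha^2 + 8\alpha\beta}}$ by setting $\alpha = 1$ and simplifying $\tfrac{2\beta}{1 + 4\beta + \sqrt{1 + 8\beta}}$ — indeed this is consistent with the identity $1 \triangleright \alpha < \alpha$ noted in \cref{lem:composition_basic_facts}, and one can verify directly that the ratio is $\sqrt2 - 1 = 1/(1+\sqrt2)$. Combining these yields
\[
    \eta^{\mathtt{OBS\mbox{-}F}}(2m-1) \le \tfrac{1}{\vr}\,\eta^{\mathtt{OBS\mbox{-}F}}(m-1).
\]

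With this in hand, the induction on $k$ runs exactly as in \cref{lem:easy_construction}: for $n \ge 2^k m$, monotonicity of $\eta^{\mathtt{OBS\mbox{-}F}}$ in the length (the remark $\eta^{\mathtt{OBS\mbox{-}F}}(n) < \eta^{\mathtt{OBS\mbox{-}F}}(n-1)$ recorded just before the lemma) gives $\eta^{\mathtt{OBS\mbox{-}F}}(n-1) \le \eta^{\mathtt{OBS\mbox{-}F}}(2^k m - 1)$, and then one applies the single-step bound $k$ times to descend from $2^k m$ to $m$:
\[
    \eta^{\mathtt{OBS\mbox{-}F}}(2^k m - 1) \le \tfrac{1}{\vr}\,\eta^{\mathtt{OBS\mbox{-}F}}(2^{k-1} m - 1) \le \dots \le \tfrac{1}{(\vr)^k}\,\eta^{\mathtt{OBS\mbox{-}F}}(m - 1).
\]
Here the constraint $k \ge 1$ in the statement (versus $k \ge 0$ for OBS-S) is natural — the $k = 0$ case is the trivial identity, and more importantly we never need to self-compose in the f-composable sense, so no new subtlety arises from the asymmetric role of the first argument.

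The main obstacle, such as it is, is purely bookkeeping: one must be careful that at each descent step the outer block is legitimately an OBS-S schedule of the right length so that \eqref{eq:OBS-F-eta} indeed permits the split, and that the monotonicity invocation $\eta^{\mathtt{OBS\mbox{-}F}}(n-1)\le\eta^{\mathtt{OBS\mbox{-}F}}(2^k m-1)$ is applied in the correct direction (rates \emph{decrease} with length). I do not anticipate any genuine difficulty; the only mild computation is verifying $1 \triangleright x = x/\vr$, and even that can be sidestepped by citing $\alpha \triangleright \alpha$-type identities or by noting that $h^{\mathtt{OBS\mbox{-}S}}(0)\triangleright b = \emptySchedule \triangleright b$ has rate $1 \triangleright \beta$, combined with the consistency of lengths from \cref{lem:fcomp_simple}. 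Once the single-step halving bound is established, the lemma is a three-line induction identical in structure to \cref{lem:easy_construction}.
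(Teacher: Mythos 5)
There is a genuine gap, and it sits exactly where you flagged the "key computational fact": the identity $1 \triangleright x = x/(1+\sqrt{2})$ is false. From the formula, $1 \triangleright \beta = \tfrac{2\beta}{1+4\beta+\sqrt{1+8\beta}}$, so the ratio $(1\triangleright\beta)/\beta = \tfrac{2}{1+4\beta+\sqrt{1+8\beta}}$ depends on $\beta$: at $\beta=1$ it equals $1/4$, and as $\beta\to 0$ it tends to $1$. (You may be conflating this with the s-join identity $\alpha\Join\alpha = \alpha/(1+\sqrt{2})$ from \cref{lem:composition_basic_facts}; no such identity holds for $1\triangleright\beta$.) Since $\eta^{\mathtt{OBS\mbox{-}F}}(m-1)$ is small for large $m$, discarding the OBS-S factor by bounding its rate by $1$ yields only $\eta^{\mathtt{OBS\mbox{-}F}}(2m-1)\le 1\triangleright\eta^{\mathtt{OBS\mbox{-}F}}(m-1)\approx \eta^{\mathtt{OBS\mbox{-}F}}(m-1)$, which gives no per-doubling contraction at all. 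The failure is not merely in the simplification: even retaining the exact value $\alpha=\eta^{\mathtt{OBS\mbox{-}S}}(m-1)$ in your symmetric split, the inequality $\alpha\triangleright\beta\le\beta/(1+\sqrt{2})$ holds only when $\beta/\alpha\gtrsim 0.43$, whereas \cref{thm:OBS-S-rate,thm:OBS-F-rate} show $\eta^{\mathtt{OBS\mbox{-}F}}(m-1)/\eta^{\mathtt{OBS\mbox{-}S}}(m-1)\to\approx 0.42$. So the symmetric split of $2m$ into $(m,m)$ cannot produce the factor $1/(1+\sqrt{2})$ for large $m$; the optimal OBS-F split is genuinely asymmetric.

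The paper's proof avoids this by running a strong induction on $m$ for the $k=1$ case rather than a one-shot bound. Writing $\eta^{\mathtt{OBS\mbox{-}F}}(m-1)=\eta^{\mathtt{OBS\mbox{-}S}}(m'-1)\triangleright\eta^{\mathtt{OBS\mbox{-}F}}(m-m'-1)$ at its \emph{optimal} split point $m'$, one uses the candidate split $(2m',2(m-m'))$ of $2m$ in \eqref{eq:OBS-F-eta}, applies \cref{lem:easy_construction} to halve the OBS-S rate and the inductive hypothesis to halve the OBS-F rate, and then extracts the common factor $1/(1+\sqrt{2})$ by homogeneity of $\triangleright$ (\cref{lem:composition_basic_facts}), with base case $\eta^{\mathtt{OBS\mbox{-}F}}(1)=1/4\le 1/(1+\sqrt{2})$. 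Your outer induction on $k$ and the monotonicity step $\eta^{\mathtt{OBS\mbox{-}F}}(n-1)\le\eta^{\mathtt{OBS\mbox{-}F}}(2^k m-1)$ are fine; it is the single-step halving bound that must be proved by this inductive doubling of the optimal decomposition rather than by collapsing the OBS-S factor.
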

\begin{proof}
    We will first show this in the case $k = 1$, where we need to establish
    \[
        \eta^{\mathtt{OBS\mbox{-}F}}(2m-1) \le 
        \frac{1}{\vr}\eta^{\mathtt{OBS\mbox{-}F}}(m-1).
    \]
    We will do this by strong induction on $m$.
    Note for $m=1$, 
    \[
        \eta^\mathtt{OBS\mbox{-}F}(1) = \frac{1}{4} \leq \frac{1}{1+\sqrt{2}}=\frac{1}{1+\sqrt{2}}\eta^\mathtt{OBS\mbox{-}F}(0).
    \]
    Now, let $m\geq 2$ and set $m'\in[1, m-1]$ so that $\eta^\mathtt{OBS\mbox{-}F}(m - 1) = \eta^\mathtt{OBS\mbox{-}S}(m' - 1) \triangleright\eta^\mathtt{OBS\mbox{-}F}(m - m' - 1)$. Then,
    \begin{align*}
        \eta^\mathtt{OBS\mbox{-}F}(2m-1) &\leq \eta^\mathtt{OBS\mbox{-}S}(2m' - 1) \triangleright \eta^\mathtt{OBS\mbox{-}F}(2(m-m') - 1)\\
        &\leq \left(\frac{1}{1+\sqrt{2}} \eta^\mathtt{OBS\mbox{-}S}(m' - 1)\right) \triangleright \left( \frac{1}{1+\sqrt{2}} \eta^\mathtt{OBS\mbox{-}F}(m-m' - 1)\right) \\
        &= \frac{1}{1+\sqrt{2}} \left(\eta^\mathtt{OBS\mbox{-}S}(m' - 1) \triangleright \eta^\mathtt{OBS\mbox{-}F}(m-m' - 1) \right)\\
        &= \frac{\eta^\mathtt{OBS\mbox{-}F}(m - 1)}{1+\sqrt{2}} \ . 
    \end{align*}
    In the second inequality, we use \cref{lem:easy_construction} and the inductive hypothesis, and the first equality is by \cref{lem:composition_basic_facts}.

    For $k > 1$, we simply apply the case $k=1$ repeatedly, as we have that
    \begin{align*}
        \eta^\mathtt{OBS\mbox{-}F}(2^km-1) &\leq \frac{1}{\vr}\eta^\mathtt{OBS\mbox{-}F}(2^{k-1}m - 1)\\
                                           &\leq\dots\\
                                           &\leq \frac{1}{(\vr)^k}\eta^\mathtt{OBS\mbox{-}F}(m - 1). \qedhere\\
    \end{align*}
\end{proof}

Under Conjecture~\ref{conj:strong-f-minimax-descripiton}, any stepsize schedule computed via this dynamic program is minimax optimal in the sense of~\eqref{eq:minimax}. As we did with OBS-S schedules, we will establish Theorem~\ref{thm:OBS-F-rate} to provide nearly tight uniform upper and lower bounds on the convergence rate of these conjectured minimax optimal schedules. Our upper bounds will be stated in terms of the constant
\begin{equation}
    R^\mathtt{OBS\mbox{-}F}_k = \max_{n \in[2^k,2^{k+1})}\eta^{\mathtt{OBS\mbox{-}F}}(n-1)n^{\log_2(1+\sqrt{2})} \ , \label{eq:def_R_OBS-F}
\end{equation}
defined for any $k\geq 0$. For reference, $R^\mathtt{OBS\mbox{-}F}_{18}\approx 0.42311$.
Our lower bounds will be stated in terms of a constant $c_\mathtt{low}$ defined implicitly by the below formula.
\begin{equation}
    c_\mathtt{low} = \min_{0\leq \lambda \leq 1} \left(\lambda^{-\log_2(1+\sqrt{2})}\right) \triangleright \left(c_\mathtt{low} (1-\lambda)^{-\log_2(1+\sqrt{2})}\right). \label{eq:def_c_low}
\end{equation}
For reference, $c_\mathtt{low} \approx 0.4208$.

\begin{theorem}\label{thm:OBS-F-rate}
    For all $n\geq 1$, the \fcomposable{} rate of $h^\mathtt{OBS\mbox{-}F}(n-1)$ is lower bounded by
 \begin{equation*}
    \eta^\mathtt{OBS\mbox{-}F}(n-1)\geq \frac{c_\mathtt{low}}{n^{\log_2(1+\sqrt{2})}}.
 \end{equation*}
 If $k\geq 0$ is an integer so that $n\geq 2^{k+1}$, then we also have an upper bound
	$$ \eta^\mathtt{OBS\mbox{-}F}(n-1) \leq \frac{R^\mathtt{OBS\mbox{-}F}_k}{n^{\log_2(1+\sqrt{2})}} \left(1 + \frac{1}{2^k}\right)^{\log_2(1+\sqrt{2})} \ . $$
\end{theorem}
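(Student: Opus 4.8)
The plan is to prove the lower and upper bounds separately, in each case closely mimicking the proof of \cref{thm:OBS-S-rate}. The one genuinely new feature is that the recursion~\eqref{eq:OBS-F-eta} for OBS-F schedules mixes an \scomposable{} left factor with an \fcomposable{} right factor, so the univariate minimization that appears is no longer the symmetric one solved at $\lambda = \tfrac12$, but the asymmetric fixed-point problem~\eqref{eq:def_c_low} defining $c_\mathtt{low}$.

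\textbf{Lower bound.} I would argue by strong induction on $n$. The base case $n=1$ holds since the only basic schedule of length $0$ is $\emptySchedule$, with rate $1 \ge c_\mathtt{low}$. For $n \ge 2$, unfold~\eqref{eq:OBS-F-eta}; bound the left factor by $\eta^\mathtt{OBS\mbox{-}S}(m-1) \ge m^{-\log_2(1+\sqrt{2})}$ using the lower bound of \cref{thm:OBS-S-rate}, and the right factor by $\eta^\mathtt{OBS\mbox{-}F}(n-m-1) \ge c_\mathtt{low}(n-m)^{-\log_2(1+\sqrt{2})}$ using the inductive hypothesis (valid since $n-m-1 < n-1$). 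Since $\alpha \triangleright \beta$ is increasing in both arguments and homogeneous (\cref{lem:composition_basic_facts}), pulling out $n^{-\log_2(1+\sqrt{2})}$ and setting $\lambda = m/n$ gives
\begin{equation*}
    \eta^\mathtt{OBS\mbox{-}F}(n-1) \;\ge\; \frac{1}{n^{\log_2(1+\sqrt{2})}}\min_{0\le\lambda\le1}\left(\lambda^{-\log_2(1+\sqrt{2})}\right)\triangleright\left(c_\mathtt{low}(1-\lambda)^{-\log_2(1+\sqrt{2})}\right) \;=\; \frac{c_\mathtt{low}}{n^{\log_2(1+\sqrt{2})}},
\end{equation*}
the final equality being exactly~\eqref{eq:def_c_low}; this closes the induction. (Relaxing the discrete feasible set $\{m/n : 1\le m\le n-1\}\subset(0,1)$ to all $\lambda\in[0,1]$ only decreases the minimum, so the inequality is valid.)

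\textbf{Upper bound.} This is the OBS-S argument verbatim, with \cref{lem:easy_f_construction} replacing \cref{lem:easy_construction}. Given $k\ge0$ with $n\ge 2^{k+1}$, set $k' = \lfloor\log_2 n\rfloor$ (so $k'-k\ge 1$) and $m = \lfloor n/2^{k'-k}\rfloor$, so that $m\in[2^k, 2^{k+1})$ and $2^{k'-k}m \le n \le 2^{k'-k}(m+1)$. Using that $\eta^\mathtt{OBS\mbox{-}F}$ is strictly decreasing in length (as noted just before the theorem), then \cref{lem:easy_f_construction} (applied with $k'-k\ge 1$), then $R^\mathtt{OBS\mbox{-}F}_k \ge \eta^\mathtt{OBS\mbox{-}F}(m-1)\,m^{\log_2(1+\sqrt{2})}$, and finally $\tfrac{m+1}{m}\le 1+2^{-k}$, one chains
\begin{equation*}
    \eta^\mathtt{OBS\mbox{-}F}(n-1) \le \frac{\eta^\mathtt{OBS\mbox{-}F}(m-1)}{(1+\sqrt{2})^{k'-k}} \le \frac{R^\mathtt{OBS\mbox{-}F}_k}{(2^{k'-k}m)^{\log_2(1+\sqrt{2})}} \le \frac{R^\mathtt{OBS\mbox{-}F}_k}{n^{\log_2(1+\sqrt{2})}}\left(1+\frac{1}{2^k}\right)^{\log_2(1+\sqrt{2})},
\end{equation*}
identifying $(1+\sqrt{2})^{k'-k} = (2^{k'-k})^{\log_2(1+\sqrt{2})}$, exactly as in \cref{thm:OBS-S-rate}.

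\textbf{Main obstacle.} The substantive point is the lower bound's appeal to $c_\mathtt{low}$ being a genuine (and, for the sharpest bound, the largest) solution of the implicit equation~\eqref{eq:def_c_low}. Before running the induction I would verify, using the closed-form expression for $\triangleright$ together with its monotonicity and homogeneity, that the map $c\mapsto \min_{0\le\lambda\le1}(\lambda^{-\log_2(1+\sqrt{2})})\triangleright(c(1-\lambda)^{-\log_2(1+\sqrt{2})})$ has a fixed point; concretely, as $\lambda\to0$ the expression tends to $c$ (since $\alpha\triangleright\beta\to\beta$ as $\alpha\to\infty$), so the map is $\le c$ everywhere, with equality precisely for $c$ small enough, and one takes $c_\mathtt{low}$ to be the largest such $c$. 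In contrast to \cref{thm:OBS-S-rate}, symmetry is unavailable here — the f-join is not commutative and the OBS-S and OBS-F lower-bound constants differ — so the minimizing $\lambda$ is an interior critical point other than $\tfrac12$; once well-definedness of $c_\mathtt{low}$ is settled, the remainder is a transcription of the OBS-S proof.
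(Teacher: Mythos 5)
Your proof is correct and follows essentially the same route as the paper's: the lower bound by strong induction using the OBS-S lower bound on the left factor, the inductive hypothesis on the right factor, monotonicity and homogeneity of $\triangleright$, and the relaxation $\lambda = m/n$ resolved by the defining equation \eqref{eq:def_c_low}; the upper bound by transcribing the OBS-S argument with \cref{lem:easy_f_construction}. Your additional discussion of the well-definedness of $c_\mathtt{low}$ goes beyond the paper, which simply takes \eqref{eq:def_c_low} as an implicit definition with a numerical value.
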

\begin{proof}	
	First we prove the lower bound for all $n\geq 1$ inductively. For $n=1$, the base case follows as the empty schedule $h=\emptySchedule$ is the only basic schedule of length $n-1=0$ and achieves rate $1\geq c_\mathtt{low}$.
	Supposing the lower bound holds for all schedules of length $m-1=0,\dots, n-2$, the lower bound at $n-1$ follows as
	\begin{align*}
		\eta^\mathtt{OBS\mbox{-}F}(n-1) & = \min_{m=1,\dots n-1} \eta^\mathtt{OBS\mbox{-}S}(m-1) \triangleright \eta^\mathtt{OBS\mbox{-}F}(n-m-1)\\
		&\geq \min_{m=1,\dots n-1} \left(\frac{1}{m^{\log_2(1+\sqrt{2})}}\right) \triangleright \left(\frac{c_\mathtt{low}}{(n-m)^{\log_2(1+\sqrt{2})}}\right)\\
		& = \frac{1}{n^{\log_2(1+\sqrt{2})}}\ \min_{m=1,\dots n-1} \frac{n^{\log_2(1+\sqrt{2})}}{m^{\log_2(1+\sqrt{2})}} \triangleright \frac{c_\mathtt{low} n^{\log_2(1+\sqrt{2})}}{(n-m)^{\log_2(1+\sqrt{2})}}\\
		& \geq \frac{1}{n^{\log_2(1+\sqrt{2})}}\ \min_{0\leq \lambda \leq 1} \left(\lambda^{-\log_2(1+\sqrt{2})}\right) \triangleright \left(c_\mathtt{low}(1-\lambda)^{-\log_2(1+\sqrt{2})}\right)\\
		& = \frac{c_\mathtt{low}}{n^{\log_2(1+\sqrt{2})}}
	\end{align*} 
	where the first inequality applies Theorem~\ref{thm:OBS-S-rate} and our inductive assumption, the second equality is by Lemma~\ref{lem:composition_basic_facts}, the second inequality reformulates with $\lambda=\frac{m}{n}$ and relaxes the minimization, and the final equality uses the property defining our selection of $c_\mathtt{low}$.

    We prove the upper bound using \cref{lem:easy_f_construction} (similarly to the case of \scomposable{} schedules).
        For this, let $k' = \lfloor\log_2(n)\rfloor$, and consider $m = \lfloor \frac{n}{2^{k'-k}} \rfloor$.
        Then $m \in [2^{k}, 2^{k+1})$, and $2^{k'-k} (m + 1) \ge n \ge 2^{k'-k} m$, where $k' - k >0$.
        We may then apply the lemma and the bound that $R^\mathtt{OBS\mbox{-}F}_k \ge \eta^\mathtt{OBS\mbox{-}F}(m-1)m^{\log_2(1+\sqrt{2})}$ to see that 
        \begin{align*}
            \eta^\mathtt{OBS\mbox{-}F}(n-1) & \leq \frac{1}{(1+\sqrt{2})^{k'-k}}\eta^\mathtt{OBS\mbox{-}F}(m-1)\\
            & \leq \frac{R^\mathtt{OBS\mbox{-}S}_k}{m^{\log_2(1+\sqrt{2})} (1+\sqrt{2})^{k'-k}} \\
            & = \frac{R^\mathtt{OBS\mbox{-}S}_k}{(2^{k'-k}m)^{\log_2(1+\sqrt{2})}}\\
            & =\frac{R^\mathtt{OBS\mbox{-}S}_k}{n^{\log_2(1+\sqrt{2})}}\left(\frac{n}{2^{k'-k}m}\right)^{\log_2(\vr)}\\
            & \leq \frac{R^\mathtt{OBS\mbox{-}S}_k}{n^{\log_2(1+\sqrt{2})}} \left(1+\frac{1}{2^k}\right)^{\log_2(1+\sqrt{2})}.
        \end{align*}
        This last inequality uses the fact that $\frac{m+1}{m} =  1+\frac{1}{m} \le 1+\frac{1}{2^k}$.\qedhere

\end{proof}

Again, this statement can be interpreted to give performance guarantees for OBS-F schedule of length $n$ where $n\geq 2^{18} - 1$: On any $1$-smooth convex $f$ and initialization $x_0$, gradient descent with stepsize schedule $h^\mathtt{OBS\mbox{-}F}(n)$ satisfies
$$ f(x_n) - f(x_\star) \leq \left(\frac{0.42312}{(n+1)^{\log_2(1+\sqrt{2})}}\right)\frac{\norm{x_0 -x_\star}^2}{2}\ . $$
On the other hand, for any basic \fcomposable{} schedule  $h$ of length $n$, there exists a $1$-smooth convex $f$ and $x_0$ such that gradient descent with stepsize schedule $h$ satisfies 
$$ f(x_n) - f(x_\star) \geq \left(\frac{0.4208}{(n+1)^{\log_2(1+\sqrt{2})}}\right)\frac{\norm{x_0 - x_\star}^2}{2} \ . $$

\begin{remark}
\label{rem:compare_rates}
\citet{zhang2024acceleratedgradientdescentconcatenation} analyze an identical construction and give an asymptotic upper bound on $\eta^\mathtt{OBS\mbox{-}F}(n)$. However, they use a suboptimal proof strategy that is unable to get the sharp asymptotic constant in \cref{thm:OBS-F-rate}.\qedhere
\end{remark}

\subsection{The Optimized Basic \gcomposable{} Schedule (OBS-G)} \label{sec:obs-g}

Our H-Duality theory from \cref{thm:HDual} establishes an exact correspondence between basic \fcomposable{} and basic \gcomposable{} schedules. As a result, the basic \gcomposable{} schedule with minimum rate is $h^\mathtt{OBS\mbox{-}G}(n) = \rev(h^\mathtt{OBS\mbox{-}F}(n))$. Then convergence guarantees for gradient norm convergence follow immediately from Theorem~\ref{thm:OBS-F-rate}.
\begin{corollary}\label{cor:OBS-G-rate}
    Let $n\geq 1$. The OBS-G schedule $h^\mathtt{OBS\mbox{-}G}(n)$ has \gcomposable{} rate lower bounded by
\begin{equation*}
    \eta^\mathtt{OBS\mbox{-}G}(n) \geq \frac{c_\mathtt{low}}{(n+1)^{\log_2(1+\sqrt{2})}}.
\end{equation*}
If $n\geq 2^{k+1}-1$ for some integer $k\geq 0$, then,
    $$ \eta^\mathtt{OBS\mbox{-}G}(n) \leq \frac{R^\mathtt{OBS\mbox{-}G}_k}{(n+1)^{\log_2(1+\sqrt{2})}} \left(1 + \frac{1}{2^k}\right)^{\log_2(1+\sqrt{2})} $$
    where $R^\mathtt{OBS\mbox{-}G}_k=R^\mathtt{OBS\mbox{-}F}_k$. Hence, for any $1$-smooth convex $f$ and $n\geq 2^{18}-1$, gradient descent with stepsize schedule $h^\mathtt{OBS\mbox{-}G}(n)$ has
	$$ \frac{1}{2}\|\nabla f(x_n)\|^2 \leq \left(\frac{0.42312  }{(n+1)^{\log_2(1+\sqrt{2})}}\right)  (f(x_0)-f(x_\star)) \ . $$
	Moreover, for any basic \gcomposable{} schedule $h$ of length $n$, there exists a $1$-smooth convex $f$ and $x_0$ such that
	$$ \frac{1}{2}\|\nabla f(x_n)\|^2 \geq \left(\frac{0.4208}{(n+1)^{\log_2(1+\sqrt{2})}}\right)  (f(x_0)-f(x_\star))\ . $$
\end{corollary}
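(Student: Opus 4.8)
The plan is to reduce everything to \cref{thm:OBS-F-rate} via the H-duality of \cref{thm:HDual}. The first step is to record that $\rev(\cdot)$ restricts to a bijection between basic \fcomposable{} schedules of length $n$ (with \fcomposable{} rate $\eta$) and basic \gcomposable{} schedules of length $n$ (with \gcomposable{} rate $\eta$). Consequently the minimum \gcomposable{} rate over all basic \gcomposable{} schedules of a given length coincides with the minimum \fcomposable{} rate over all basic \fcomposable{} schedules of that length, so $\eta^\mathtt{OBS\mbox{-}G}(n) = \eta^\mathtt{OBS\mbox{-}F}(n)$ and $h^\mathtt{OBS\mbox{-}G}(n) = \rev(h^\mathtt{OBS\mbox{-}F}(n))$ is an optimal basic \gcomposable{} schedule. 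The same identity of rates, fed through \eqref{eq:def_R_OBS-F}, gives $R^\mathtt{OBS\mbox{-}G}_k = R^\mathtt{OBS\mbox{-}F}_k$.

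Given this, the claimed lower and upper bounds on $\eta^\mathtt{OBS\mbox{-}G}(n)$ are immediate from \cref{thm:OBS-F-rate}: that theorem bounds $\eta^\mathtt{OBS\mbox{-}F}(m-1)$ for schedules of length $m-1$, so taking $m-1 = n$ (equivalently, replacing $n$ by $n+1$) and using $\eta^\mathtt{OBS\mbox{-}G}(n)=\eta^\mathtt{OBS\mbox{-}F}(n)$ yields exactly the displayed inequalities, with the hypothesis $2^{k+1}-1 \le n$ being the translation of $2^{k+1}\le n+1$. The one point to watch is precisely this index shift between the length-$(n-1)$ convention used in \cref{sec:obs-f} and the length-$n$ convention of the corollary.

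For the concrete performance guarantee I would invoke the definition of a \gcomposable{} schedule directly: gradient descent with $h^\mathtt{OBS\mbox{-}G}(n)$ satisfies $\tfrac12\norm{\nabla f(x_n)}^2 \le \eta^\mathtt{OBS\mbox{-}G}(n)\,(f(x_0)-f(x_\star))$ on every $1$-smooth convex $f$. Substituting the upper bound on $\eta^\mathtt{OBS\mbox{-}G}(n)$, choosing $k$ as large as $n \ge 2^{k+1}-1$ permits, using $R^\mathtt{OBS\mbox{-}G}_k = R^\mathtt{OBS\mbox{-}F}_k$, and noting that the correction factor $(1+2^{-k})^{\log_2(1+\sqrt2)}$ differs from $1$ by less than $10^{-5}$, reproduces the rounded constant $0.42312$ exactly as in the analogous OBS-F statement following \cref{thm:OBS-F-rate}.

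Finally, for the matching lower-bound instance: let $h$ be any basic \gcomposable{} schedule of length $n$ with \gcomposable{} rate $\eta$; since $h^\mathtt{OBS\mbox{-}G}(n)$ has the minimum such rate, $\eta \ge \eta^\mathtt{OBS\mbox{-}G}(n) \ge c_\mathtt{low}/(n+1)^{\log_2(1+\sqrt2)}$ by the lower bound just established. Taking $f=q$, the quadratic of \eqref{eq:quad_huber}, and $x_0=1$, \cref{lem:g_huber_tight} says the \gcomposable{} inequality \eqref{eq:g_composable_ineq} holds with equality on this instance, so $\tfrac12\norm{\nabla f(x_n)}^2 = \eta\,(f(x_0)-f(x_\star)) \ge \tfrac{c_\mathtt{low}}{(n+1)^{\log_2(1+\sqrt2)}}(f(x_0)-f(x_\star))$, and $c_\mathtt{low} > 0.4208$ finishes the bound. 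Nothing here is a genuine obstacle — the corollary is essentially a transcription of \cref{thm:OBS-F-rate} through H-duality, so the only ``hard'' part is careful bookkeeping of the length conventions and the routine numerical rounding in the final constants.
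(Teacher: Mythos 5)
Your proposal is correct and follows exactly the route the paper intends: the corollary is stated as an immediate consequence of Theorem~\ref{thm:OBS-F-rate} via the H-duality of Proposition~\ref{thm:HDual} (which gives $\eta^\mathtt{OBS\mbox{-}G}(n)=\eta^\mathtt{OBS\mbox{-}F}(n)$ and hence $R^\mathtt{OBS\mbox{-}G}_k=R^\mathtt{OBS\mbox{-}F}_k$), with the tight quadratic instance from \cref{lem:g_huber_tight} supplying the matching lower-bound example. Your handling of the length-$(n-1)$ versus length-$n$ index shift and the numerical constants matches the paper's own discussion following \cref{thm:OBS-F-rate}.
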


Complementary to our Conjecture~\ref{conj:strong-f-minimax-descripiton}, we expect that the minimax optimal stepsizes for minimizing the final gradient norm are basic \gcomposable{} schedules.
\begin{conjecture}\label{conj:strong-g-minimax-descripiton}
For each $n$, every minimax optimal stepsize schedule solving 
$$ \min_{h \in \mathbb{R}^n} \max_{(f,x_0)\in \mathfrak{F}_{1,\delta}} \frac{1}{2}\|\nabla f(x_n)\|^2, $$
is basic and \gcomposable{}, where $\mathfrak{F}_{1,\delta}$ is the set of all problem instances $(f,x_0)$ defined by a $1$-smooth convex $f$ and initialization $x_0$ having suboptimality $f(x_0)-f(x_\star)$ at most $\delta$.
\end{conjecture}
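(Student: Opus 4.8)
The plan is to \emph{reduce} Conjecture~\ref{conj:strong-g-minimax-descripiton} to Conjecture~\ref{conj:strong-f-minimax-descripiton} via a minimax-level H-duality, so that the only genuinely open content beyond this reduction is the objective-gap conjecture itself. Concretely, I would first prove the following statement: for each $n$, a schedule $h\in\R^n$ is minimax optimal for the gradient-norm problem $\min_h \max_{(f,x_0)\in\mathfrak{F}_{1,\delta}} \tfrac12\|\nabla f(x_n)\|^2$ if and only if $\rev(h)$ is minimax optimal for the objective-gap problem~\eqref{eq:minimax}, and the two optimal values agree up to the trivial rescaling relating $\delta$ and $D^2/2$. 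Given this, the conjecture follows immediately: if $h$ is $g$-minimax optimal of length $n$, then $\rev(h)$ is $f$-minimax optimal, hence basic and \fcomposable{} by Conjecture~\ref{conj:strong-f-minimax-descripiton}, hence $h=\rev(\rev(h))$ is basic and \gcomposable{} by Proposition~\ref{thm:HDual}.

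To establish the minimax H-duality I would work through the performance estimation SDP. By the interpolation theorem of~\cite{taylor2017interpolation}, each inner maximization becomes a semidefinite program over the Gram matrix of the iterates, gradients, and the minimizer, and the overall minimax problem becomes a nonconvex QCQP in $h$ whose fixed-$h$ slices are SDPs. Dualizing the inner SDP produces a certificate SDP whose variables are the multipliers on the convexity/smoothness interpolation inequalities. The key claim is that the certificate SDP associated with the $g$-problem and schedule $h$ is carried \emph{bijectively} onto the certificate SDP associated with the $f$-problem and schedule $\rev(h)$ by an explicit invertible affine change of variables: reversing the iterate indices $i\mapsto n-i$, which swaps the initial quantity $f(x_0)-f_\star$ with the final quantity $f(x_n)-f_\star$ and $\|x_0-x_\star\|^2$ with $\|\nabla f(x_n)\|^2$. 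This is exactly the symmetry underlying the H-duality of~\cite{Kim2024-Hduality}, but applied to the minimax problem directly rather than to a single method equipped with a structured proof; since the transformation is a value-preserving bijection on feasible points, it is a bijection on optimizers. I would double-check consistency against the small cases already in the excerpt (for $n=1$, $\rev([3/2])=[3/2]$; for $n=3$, $\rev([\sqrt2,1+\sqrt2,3/2])=[3/2,1+\sqrt2,\sqrt2]$, the left-side-heavy schedule, which is basic \gcomposable{}).

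The main obstacle is proving Conjecture~\ref{conj:strong-f-minimax-descripiton} itself, which requires two ingredients not yet available. First, a lower bound on the $f$-minimax value over \emph{all} schedules $h\in\R^n$ that matches $\eta^\mathtt{OBS\mbox{-}F}(n)$: Theorem~\ref{thm:OBS-F-rate} only lower-bounds the rate among \emph{basic} schedules, so one must exhibit, for every $h$, a $1$-smooth convex hard instance — presumably a quadratic or Huber function from~\eqref{eq:quad_huber}, by the hedging principle — forcing $f(x_n)-f_\star\ge \eta^\mathtt{OBS\mbox{-}F}(n)\,\tfrac12\|x_0-x_\star\|^2$. I expect this to require an inductive ``peeling'' argument paralleling the dynamic program~\eqref{eq:OBS-F-eta}--\eqref{eq:OBS-F-h}: along an optimal hard instance, the schedule must split at some index $m$ so that the prefix behaves like an optimal \scomposable{} schedule on $[0,m-1]$, the suffix like an optimal \fcomposable{} schedule on $[m,n-1]$, and $h_{m-1}$ equals the prescribed join stepsize $\mu$. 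Second, a rigidity/extreme-point argument: equality in the lower bound must force exactly this recursive structure, so that every optimizer (not merely the optimal rate) is basic \fcomposable{}.

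I expect the bulk of the difficulty to lie in these last two ingredients, driven by a careful analysis of the complementary slackness conditions of the PEP SDP at optimality (which iterates are ``active'', which interpolation inequalities bind); the reduction via minimax H-duality and the bookkeeping relating $\mathfrak{F}_{1,\delta}$ to $\mathcal F_{L,D}$ are comparatively routine. As an alternative to the reduction, one could run the same peeling-and-rigidity argument directly in the $g$-setting — replacing $\triangleright$ by $\triangleleft$ and the role of ``initial distance'' by ``final gradient norm'' — but since Proposition~\ref{thm:HDual} already mirrors basic \fcomposable{} and \gcomposable{} schedules, reducing to the $f$-conjecture is the cleaner route.
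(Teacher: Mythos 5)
The statement you are asked about is a \emph{conjecture}: the paper offers no proof of it, and your proposal does not constitute one either. Your plan has two load-bearing ingredients, and both are open. First, you invoke Conjecture~\ref{conj:strong-f-minimax-descripiton} wholesale; since that conjecture is unproven (your own third paragraph concedes that proving it is ``the main obstacle''), the reduction cannot close the argument. Second, the reduction itself rests on a ``minimax-level H-duality'' --- that for \emph{every} $h\in\R^n$ the worst-case final gradient norm of $h$ over $\mathfrak{F}_{1,\delta}$ coincides (up to the $\delta$ vs.\ $D^2/2$ rescaling) with the worst-case objective gap of $\rev(h)$ over $\mathcal{F}_{L,D}$ --- which you label ``comparatively routine'' but which is itself a substantial open problem. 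The paper is explicit that the H-duality of~\cite{Kim2024-Hduality} is proved only for methods admitting a particular structured (H-matrix) certificate and that the certificates arising here ``lack this tailored form''; Proposition~\ref{thm:HDual} establishes the reversal duality only for \emph{basic} schedules, by induction on the composition structure, not for arbitrary schedules. Your claimed bijection between the two certificate SDPs under index reversal is asserted, not proved: the interpolation terms $Q_{i,j}$ of~\eqref{eq:qdef} are not carried onto one another by the substitution $i\mapsto n-i$ together with swapping $\|x_0-x_\star\|^2$ with $\|\nabla f(x_n)\|^2$, because $Q_{i,j}$ involves the iterate differences $x_i-x_j$ (which reverse sign and re-index) \emph{and} the squared gradient differences (which do not transform covariantly with the distance/gradient swap). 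Exhibiting a value-preserving affine bijection between the two dual SDPs is precisely the hard content of an H-duality theorem, and no such theorem covering all fixed-step schedules is available in or cited by the paper.

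That said, the \emph{direction} of your reduction is consistent with how the paper frames the two conjectures: Conjecture~\ref{conj:strong-g-minimax-descripiton} is presented as the H-dual counterpart of Conjecture~\ref{conj:strong-f-minimax-descripiton}, and Proposition~\ref{thm:HDual} does show that if the $f$-conjecture held and the minimax-level reversal duality held, the $g$-conjecture would follow. So your architecture is the natural one; the gap is that both pillars supporting it are unproven, and the second is materially harder than you represent.
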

 \section{Proofs for Inductive Composition Theorems}\label{sec:proofs}

Following the PEP framework~\cite{drori2012PerformanceOF,taylor2017interpolation}, our goal will be to certify convergence rates of GD using the following inequality, specifically applying it to the points on the gradient descent trajectory and then taking a conic combination of the resulting inequalities. This inequality is a standard fact~\cite[Theorem 2.1.5]{nesterov-textbook} for smooth convex functions.
\begin{fact}
	\label{fact}
	Suppose $f:\R^d\to\R$ is a $1$-smooth convex function. Suppose $x,y\in\R^d$, then
	\begin{equation*}
		2\left[f(x) - f(y)\right] - 2\ip{\grad f(y), x - y } - \norm{\grad f(x) - \grad f(y)}^2 \geq 0.
	\end{equation*}
\end{fact}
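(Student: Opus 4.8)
The plan is to reduce this inequality to the standard descent lemma for smooth functions by passing to a shifted (and hence minimized-at-$y$) function. Concretely, first I would introduce
\[
\phi(z) \coloneqq f(z) - \ip{\grad f(y), z}.
\]
Since $\phi$ differs from $f$ only by a linear term, it is again convex and $1$-smooth, and $\grad \phi(z) = \grad f(z) - \grad f(y)$. In particular $\grad \phi(y) = 0$, so by convexity $y$ is a global minimizer of $\phi$.

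Next I would apply the descent lemma, which is the defining consequence of $1$-smoothness: for any $z$,
\[
\phi(z - \grad\phi(z)) \leq \phi(z) + \ip{\grad\phi(z), -\grad\phi(z)} + \tfrac12\norm{\grad\phi(z)}^2 = \phi(z) - \tfrac12\norm{\grad\phi(z)}^2.
\]
Applying this at $z = x$ and using $\phi(y) \leq \phi(x - \grad\phi(x))$ (as $y$ minimizes $\phi$), we obtain
\[
\phi(y) \leq \phi(x) - \tfrac12\norm{\grad\phi(x)}^2.
\]

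Finally I would unfold the definition of $\phi$: this reads $f(y) - \ip{\grad f(y), y} \leq f(x) - \ip{\grad f(y), x} - \tfrac12\norm{\grad f(x) - \grad f(y)}^2$, which rearranges to $f(x) - f(y) - \ip{\grad f(y), x-y} - \tfrac12\norm{\grad f(x)-\grad f(y)}^2 \geq 0$; multiplying by $2$ gives the claimed inequality. There is no real obstacle here — this is a textbook fact (\cite[Theorem 2.1.5]{nesterov-textbook}); the only points requiring care are justifying the descent lemma from the smoothness definition and the elementary observation that a stationary point of a convex function is a global minimizer. (Symmetrically, swapping the roles of $x$ and $y$ and adding the two resulting inequalities yields the familiar co-coercivity bound $\ip{\grad f(x) - \grad f(y), x-y} \geq \norm{\grad f(x) - \grad f(y)}^2$, which can serve as a sanity check.)
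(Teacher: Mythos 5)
Your proof is correct and is precisely the standard argument for this inequality (the paper itself offers no proof, only the citation to \cite[Theorem 2.1.5]{nesterov-textbook}, and your shifted-function-plus-descent-lemma derivation is the canonical proof of that cited result). No issues.
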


\subsection{Useful Equivalent Definitions of Composablity}

The following lemma captures the background information we will use about the PEP framework. It states that any inequality that is linear in $f_\star,f_0,\dots,f_n$ and quadratic in $x_0-x_\star,g_0,\dots,g_n$ and tight over the set of $1$-smooth convex functions has a certificate.
This result is a minor extension of~\cite[Theorem 6]{taylor2017interpolation}.
\begin{lemma}
	\label{lem:pep_exists}
	Fix $h\in\R^{n}_{++}$.
	Let $P$ be an expression that is linear in the formal variables
	\begin{align*}
		f &= \begin{pmatrix}
			f_\star & f_0 & f_1 & \dots & f_n
		\end{pmatrix}^\intercal,\quad\text{and}\\
		G &= \begin{pmatrix}
			(x_0 - x_\star) & g_0 & g_1 & \dots & g_n
		\end{pmatrix}^\intercal\begin{pmatrix}
			(x_0 - x_\star) & g_0 & g_1 & \dots & g_n
		\end{pmatrix}.
	\end{align*}
    For $i\in\set{\star,0,\dots,n}$ and $j\in\set{\star,0,\dots,n}$, define the formal expressions
    \begin{equation}
    \label{eq:qdef}
        Q_{i,j}\coloneqq 2f_i - 2f_j - 2\ip{g_j, x_i - x_j} - \norm{g_i - g_j}^2.
    \end{equation}
 
	Suppose $P$ is nonnegative for all $1$-smooth convex functions $f$ and that there exists some $1$-smooth convex function so that $P = 0$.
	Then, there exist $\lambda_{i,j}\geq 0$ and $S$, a PSD quadratic form in $x_0 - x_\star, g_0, \dots, g_n$ such that
	\begin{equation*}
		P = \sum_{i,j}\lambda_{i,j}Q_{i,j} + S.
	\end{equation*} 
\end{lemma}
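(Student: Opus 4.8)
The plan is to reduce the claim to the existing interpolation theorem of \cite{taylor2017interpolation} together with a standard SDP duality argument. First I would invoke \cite[Theorem 1]{taylor2017interpolation}, which characterizes when a list of triples $(x_i, g_i, f_i)_{i \in \set{\star,0,\dots,n}}$ is $\set{\grad f(x_i), f(x_i)}$-consistent with \emph{some} $1$-smooth convex $f$: namely, exactly when $Q_{i,j} \geq 0$ for all pairs $i,j$ (where $Q_{i,j}$ is the formal expression in \eqref{eq:qdef}). Since $h$ is fixed, the iterates $x_i$ are affine functions of $x_0 - x_\star$ and $g_0, \dots, g_{i-1}$, so each inner product $\ip{g_j, x_i - x_j}$ and each expression $\norm{g_i - g_j}^2$ is a quadratic form in the Gram matrix $G$ (and the $x_0-x_\star$ block). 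Thus $P$, being linear in $f$ and quadratic in the $G$-variables, and each $Q_{i,j}$, can all be written as affine-linear functionals of the pair $(f, G)$ where $G$ ranges over PSD matrices. The hypothesis that $P \geq 0$ whenever $f$ corresponds to a genuine $1$-smooth convex function is, by the interpolation theorem, exactly the statement that $P \geq 0$ on the spectrahedron $\set{(f,G) : G \succeq 0,\ Q_{i,j}(f,G) \geq 0\ \forall i,j}$.

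Next I would apply a Positivstellensatz / Farkas-type argument for this spectrahedron. The set of constraints is: finitely many linear inequalities $Q_{i,j}(f,G) \geq 0$, plus the PSD constraint $G \succeq 0$. We want to conclude that $P = \sum_{i,j} \lambda_{i,j} Q_{i,j} + S$ with $\lambda_{i,j} \geq 0$ and $S$ a PSD quadratic form in $G$'s variables. This is precisely strong duality for the semidefinite feasibility problem ``$P(f,G) < 0$, $Q_{i,j}(f,G) \geq 0$, $G \succeq 0$'': infeasibility of the primal yields a dual certificate of the claimed form. The subtlety is that $P$ is only \emph{nonnegative} (not strictly positive) on the feasible region, and moreover we are told there is a feasible point attaining $P = 0$; this means we cannot separate strictly and must be careful about whether strong duality (rather than merely weak duality) holds, i.e. whether the dual optimum is attained and the duality gap is zero. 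The standard remedy, and the one I expect the authors use, is to note that a Slater-type condition holds for the relevant SDP (the quadratic/Huber constructions used throughout the paper, or simply a generic strongly-convex perturbation, furnish a strictly interior point of the Gram spectrahedron), so Sion's minimax theorem or conic strong duality applies and the certificate $(\lambda_{i,j}, S)$ exists. Alternatively one argues by a limiting/closedness argument: perturb $P$ to $P + \epsilon(\text{trace of }G)$, which is strictly positive, extract certificates $(\lambda^\epsilon, S^\epsilon)$, and pass to a limit using a compactness bound on the multipliers.

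The main obstacle is exactly this last point — establishing that the dual certificate genuinely \emph{exists} (attainment, not just a supremum) given that $P$ touches zero on the feasible set. Handling it requires either verifying a Slater condition for the performance-estimation SDP associated to the fixed schedule $h$ — which is why the lemma is phrased as a ``minor extension'' of \cite[Theorem 6]{taylor2017interpolation}, that reference presumably having done the heavy lifting of showing the relevant SDP is strictly feasible — or the perturbation-and-limit argument with a uniform bound on $\set{\lambda_{i,j}}$ and on the eigenvalues of $S^\epsilon$. Everything else (rewriting $x_i$ affinely in terms of $G$, translating ``nonnegative on $1$-smooth convex functions'' into ``nonnegative on the $Q_{i,j}\geq 0$ spectrahedron'' via interpolation, and invoking conic duality) is routine. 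I would therefore organize the proof as: (1) set up the Gram-matrix / formal-variable encoding and express $P$, $Q_{i,j}$ linearly in $(f,G)$; (2) apply the interpolation theorem to identify the feasible region; (3) invoke strong SDP duality under a Slater condition (citing \cite[Theorem 6]{taylor2017interpolation} for the strict-feasibility input) to extract $\lambda_{i,j} \geq 0$ and $S \succeq 0$; (4) conclude $P = \sum_{i,j}\lambda_{i,j} Q_{i,j} + S$.
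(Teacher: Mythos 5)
Your proposal is correct and follows essentially the same route as the paper: identify the set of realizable $(f,G)$ pairs with the spectrahedron $\set{Q_{i,j}\geq 0,\ G\succeq 0}$ via the interpolation theorem, then obtain the certificate from conic strong duality, using strict feasibility of the primal (cited from \cite[Theorem 6]{taylor2017interpolation}) to guarantee dual attainment. The attainment subtlety you flag as the main obstacle is resolved in the paper exactly as you predict, so no gap remains.
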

\begin{proof}
Let $\cD_\textup{PEP}$ denote the set of $(f,G)\in\R^{n+2}\times\R^{(n+2) \times (n+2)}$ (where $G$ is symmetric) for which there exists a $1$-smooth convex function $\tilde f:\R^d\to\R$ realizing $f$ and $G$ (for an arbitrary $d$).
By \cite[Corollary 1]{taylor2017smooth},
\begin{equation*}
    \cD_\textup{PEP} = \set{(f,G)\in\R^{n+2}\times\S^{n+2}:\,
    \begin{array}{l}
         Q_{i,j}\geq 0,\, \forall i,j\in\set{\star,0,\dots,n}\\
         G\succeq 0 
    \end{array}}.
\end{equation*}

	By assumption,
	\begin{align*}
		0 &= \min_{(f, G) \in\cD_\textup{PEP}}P(f,G)\\
		&= \max_{\lambda\in\R^{(n+2)\by(n+2)}, S\in\S^{n+2}}\set{0:\, \begin{array}{l}
				\sum_{i,j}\lambda_{i,j}Q_{i,j}  + S = P\\
				\lambda\geq 0\\
				S\succeq 0
			\end{array}
		}.
	\end{align*}
	Here, the second line follows by the fact that $\cD_{\textup{PEP}}$ is strictly feasible~\cite[Theorem 6]{taylor2017interpolation} so that strong duality holds and the program on the second line has a maximizer (and hence a feasible solution).\qedhere
\end{proof}

Below, we give equivalent conditions for \fcomposable{}, \gcomposable{}, and \scomposable{} schedules. In contrast to the original definitions, which measure performance against optimality, the definitions below measure performance only against history that has already been seen and are the key inequalities we will need to perform our inductive proofs.

The following lemma will be useful.
\begin{lemma}
	\label{lem:positive_Qstari}
	Suppose $h\in\R^{n}_{++}$ and let $H\coloneqq \sum_{i=0}^{n-1}h_i$. Consider gradient descent with stepsize $h$ from $x_0 = 1$ on
	\begin{equation*}
		f(x) = \begin{cases}
			\frac{1}{2}x^2&\text{if }\abs{x}\leq \eta\\
			\eta\abs{x} - \frac{\eta^2}{2} & \text{else}
		\end{cases}.
	\end{equation*}
	If $\eta \in(0, \frac{1}{1+H})$, then $Q_{i,\star}>0$ for all $i\in[0,n]$.
	If $\eta = \frac{1}{1+H}$, then $Q_{i,\star}>0$ for all $i\in[0,n-1]$.
\end{lemma}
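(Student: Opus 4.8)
The plan is to reduce the claim to an explicit one-dimensional computation along the gradient descent trajectory. Since $x_\star = 0$ minimizes the Huber function in the statement, we have $f_\star = f(0) = 0$ and $g_\star = f'(0) = 0$, so the formal expression \eqref{eq:qdef} collapses to $Q_{i,\star} = 2f(x_i) - \norm{g_i}^2 = 2f(x_i) - f'(x_i)^2$ for each $i\in[0,n]$. Thus it suffices to locate the iterates $x_i$ exactly. One may assume $n\ge 1$ (the case $n=0$ is immediate), so $H>0$ and hence $\eta \le \tfrac{1}{1+H} < 1$; in particular $x_0 = 1 > \eta > 0$.

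First I would show, by induction on $i$, that the whole trajectory stays on the linear branch of the Huber function: $x_i \ge \eta$, so $f'(x_i) = \eta$ and therefore $x_{i+1} = x_i - h_i f'(x_i) = x_i - h_i\eta$, which yields the closed form $x_i = 1 - \eta\sum_{j=0}^{i-1} h_j$. The base case $x_0 = 1 \ge \eta$ holds, and the inductive step follows from $x_{i+1} = 1 - \eta\sum_{j\le i} h_j \ge 1 - \eta H \ge \eta$, where the last inequality is exactly the hypothesis $\eta(1+H) \le 1$. Since each $h_j > 0$, the partial sums $\sum_{j<i}h_j$ strictly increase, so $(x_i)_{i=0}^n$ is strictly decreasing and bounded below by $x_n = 1 - \eta H \ge \eta > 0$; in particular no iterate ever enters the capped or negative branch, so the closed form above is valid throughout.

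Finally I would substitute the linear-branch values $f(x_i) = \eta x_i - \tfrac{\eta^2}{2}$ and $f'(x_i) = \eta$ to obtain $Q_{i,\star} = 2\eta x_i - 2\eta^2 = 2\eta(x_i - \eta)$. As $\eta > 0$, the sign of $Q_{i,\star}$ matches that of $x_i - \eta$. If $\eta \in (0,\tfrac{1}{1+H})$ then $x_n - \eta = 1 - \eta(1+H) > 0$, so $x_i \ge x_n > \eta$ and $Q_{i,\star} > 0$ for every $i\in[0,n]$. If $\eta = \tfrac{1}{1+H}$ then $x_n = \eta$ exactly, so $Q_{n,\star} = 0$, but strict monotonicity of the iterates gives $x_i > x_n = \eta$, hence $Q_{i,\star} > 0$, for all $i \in [0,n-1]$, which is what is claimed.

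The only subtle point — and it is a mild one — is the inductive verification that the trajectory never leaves the linear branch: the argument controls directly only the \emph{last} iterate against $\eta$, and it is monotonicity of the partial sums $\sum_{j<i}h_j$ (forcing $x_i \ge x_n$) that rules out any intermediate iterate dipping below $\eta$. This is precisely where the quantitative hypothesis $\eta(1+H)\le 1$ is needed.
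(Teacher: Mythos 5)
Your proof is correct and follows essentially the same route as the paper's: show that the hypothesis $\eta(1+H)\le 1$ keeps every iterate on the linear branch of the Huber function, so that $x_i = 1-\eta\sum_{j<i}h_j$ and $Q_{i,\star}=2\eta(x_i-\eta)$, and then read off the sign from the strict monotonicity of the iterates. Your write-up is just a more explicit version of the paper's (which compresses the induction into the single observation $1-\eta H\ge\eta$), so there is nothing to add.
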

\begin{proof}
	Suppose $\eta \in(0,\frac{1}{1+H}]$. Note that
		$1 - \eta H \geq \eta$ so that $x_i > \eta$ for all $i\in[0,n-1]$ and that  $x_n > \eta$ if $\eta < \frac{1}{1+H}$.
	
	Now, suppose $x_i> \eta$. Then
	\begin{equation*}
		Q_{i,\star} = 2(f_i - f_\star) - \norm{g_i}^2 = 2\left(\eta x_i - \frac{\eta^2}{2}\right) - \eta^2= 2\eta (x_i-\eta) >0.\qedhere
	\end{equation*}
\end{proof}

\begin{proposition}
	\label{prop:composable_eq}
	Let $h\in\R^n_{++}$ and let $\eta>0$.
	Suppose
	\begin{equation*}
		\eta = \frac{1}{1+2\sum_{i=0}^{n-1} h_i}  = \prod_{i=0}^{n-1} (h_i - 1)^2.
	\end{equation*}
	Then, $h$ is \fcomposable{} with rate $\eta$ if and only if there exists a vector $v\in\R^{n+1}_+$ indexed by $[0,n]$ such that $\sum_{i=0}^n v_i = \frac{1}{\eta}$ and, for any $1$-smooth convex $f$ and $x_0$, it holds that
	\begin{equation}
		\label{eq:composable_eq}
		\sum_{i=0}^n v_i(2(f_i - f_n) + \norm{g_i}^2 + 2\ip{g_i, x_0 - x_i}) - \norm{\sum_{i=0}^{n}v_i g_i}^2 \ge 0.
	\end{equation}
\end{proposition}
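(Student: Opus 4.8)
The plan is to obtain both directions of the proposition from a single algebraic identity. Write $V \coloneqq 1/\eta$; for a vector $v\in\R^{n+1}$ indexed by $[0,n]$ with $\sum_{i=0}^n v_i = V$, set $G \coloneqq \sum_{i=0}^n v_i g_i$ and let $E(v)$ denote the left-hand side of \eqref{eq:composable_eq}. I would first establish that for every $1$-smooth convex $f$, every $x_0$, and every such $v$,
\[
    \frac{\eta}{2}\norm{x_0 - x_\star}^2 - (f_n - f_\star)
    = \frac{\eta}{2}\Big( E(v) + \sum_{i=0}^n v_i\, Q_{\star,i} + \norm{x_0 - x_\star - G}^2 \Big).
\]
This is a routine expansion; the only nonobvious ingredient is that, after using $g_\star = 0$, the $x$-dependent terms collapse via $(x_0 - x_i) - (x_\star - x_i) = x_0 - x_\star$, and notably the identity uses nothing about the stepsizes beyond $\sum_i v_i = 1/\eta$. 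Since $Q_{\star,i} \ge 0$ is exactly \cref{fact} applied at $x_\star$ and $x_i$, the ``if'' direction is immediate: if $v \ge 0$ and $E(v) \ge 0$ on all instances, then the right-hand side is nonnegative, so \eqref{eq:f_composable_inequality} holds; combined with the hypothesis $\eta = \tfrac{1}{1+2\sum_{i} h_i} = \prod_{i}(h_i-1)^2$ this is exactly the statement that $h$ is \fcomposable{} with rate $\eta$.

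For the ``only if'' direction, assume $h$ is \fcomposable{} with rate $\eta$. The case $n = 0$ is disposed of by $v = (1)$. For $n \ge 1$, the expression $P \coloneqq \tfrac{\eta}{2}\norm{x_0-x_\star}^2 - (f_n - f_\star)$ is nonnegative over all $1$-smooth convex instances and, by \cref{lem:f_huber_tight}, vanishes on the instance $(H_\eta,\, x_0 = 1)$. Hence \cref{lem:pep_exists} gives $\lambda_{i,j}\ge 0$ and a PSD quadratic form $S$ in $x_0-x_\star,g_0,\dots,g_n$ with $P = \sum_{i,j}\lambda_{i,j}Q_{i,j} + S$. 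Evaluating this identity at the tight Huber instance, every summand is nonnegative and the total is zero, so each $\lambda_{i,j}Q_{i,j}$ and $S$ vanish there; and since $n\ge1$ forces $\eta = \tfrac1{1+2\sum_{i} h_i} < \tfrac1{1+\sum_{i} h_i}$, \cref{lem:positive_Qstari} shows $Q_{i,\star} > 0$ on that instance for every $i\in[0,n]$, forcing $\lambda_{i,\star} = 0$ for all $i$. Matching the coefficient of $f_\star$ on the two sides of $P = \sum_{i,j}\lambda_{i,j}Q_{i,j} + S$ (only the $Q_{\star,i}$ carry $f_\star$, each with coefficient $+2$) then yields $\sum_i\lambda_{\star,i} = \tfrac12$, so $v_i \coloneqq \tfrac{2}{\eta}\lambda_{\star,i} \ge 0$ has $\sum_i v_i = 1/\eta$, the required normalization.

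To finish, I would verify \eqref{eq:composable_eq} for this $v$ by rearranging the identity as $E(v) = \tfrac2\eta P - \sum_i v_i Q_{\star,i} - \norm{x_0-x_\star-G}^2$, substituting the certificate (with its $\lambda_{i,\star}$ now zero) and $v_i = \tfrac2\eta\lambda_{\star,i}$ so that the $Q_{\star,i}$ terms cancel, which leaves
\[
    E(v) = \tfrac{2}{\eta}\!\!\sum_{i,j\in[0,n]}\!\!\lambda_{i,j}Q_{i,j} + \Big(\tfrac{2}{\eta}S - \norm{x_0-x_\star-G}^2\Big).
\]
The first sum is a nonnegative combination of instances of \cref{fact}; for the bracketed term, comparing coefficients of $\norm{x_0-x_\star}^2$ and of each $\ip{g_i, x_0-x_\star}$ in $P = \sum_{i,j}\lambda_{i,j}Q_{i,j}+S$ pins down $S$ (as a quadratic form) to be $\tfrac{\eta}{2}\norm{x_0-x_\star-G}^2$ plus a form in $g_0,\dots,g_n$ alone, the latter being PSD because it is $S$ evaluated at $x_0-x_\star = G$. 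Hence $\tfrac2\eta S - \norm{x_0-x_\star-G}^2 \succeq 0$ and $E(v) \ge 0$, as needed. The step I expect to be the crux — the one place where the argument could fail without the precise hypotheses — is forcing $\lambda_{i,\star} = 0$: it needs the tight Huber instance to keep $Q_{i,\star}$ strictly positive at \emph{every} index $i\in[0,n]$, including the terminal index $i=n$, which is exactly why the rate must obey the strict inequality $\eta < 1/(1+\sum_{i} h_i)$ when $n\ge1$ and why \cref{lem:positive_Qstari} (rather than a direct computation) is the right tool. The remaining steps are bookkeeping organized by the displayed identity.
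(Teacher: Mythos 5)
Your proposal is correct and follows essentially the same route as the paper's proof: the ``if'' direction is the paper's computation of $E(v)+\sum_i v_iQ_{\star,i}$ (merely rescaled by $\eta/2$ and packaged as a single identity), and the ``only if'' direction invokes \cref{lem:pep_exists}, kills the $\lambda_{i,\star}$ via \cref{lem:f_huber_tight} and \cref{lem:positive_Qstari}, matches the $f_\star$ and $\ip{g_i,x_0-x_\star}$ coefficients, and bounds $S$ below by $\frac{\eta}{2}\norm{x_0-x_\star-G}^2$ exactly as the paper does via the Schur complement (your ``evaluate $S$ at $x_0-x_\star=G$'' argument is an equivalent phrasing). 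The details you defer as bookkeeping are the ones the paper carries out, and they check out.
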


\begin{proof}

	First, suppose a vector $v$ with the stated properties exists and let $f$ be a $1$-smooth convex function with minimizer $x_\star$. Then,
	\begin{align*}
		0&\leq \sum_{i=0}^n v_i(2(f_i - f_n) + \norm{g_i}^2 + 2\ip{g_i, x_0 - x_i}) - \norm{\sum_{i=0}^{n}v_i g_i}^2 \\
		&\qquad + \sum_{i=0}^n v_i Q_{\star, i}\\
		&= \sum_{i=0}^n v_i(2(f_\star - f_n) + 2\ip{g_i, x_0 - x_\star}) - \norm{\sum_{i=0}^{n}v_i g_i}^2 \\
		&= \frac{2}{\eta}(f_\star - f_n) + 2\ip{\sum_{i=0}^n v_i g_i, x_0 - x_\star} - \norm{\sum_{i=0}^n v_ig_i}^2\\
		&= \frac{2}{\eta}(f_\star - f_n) + \norm{x_0 - x_\star}^2 - \norm{x_0 - \sum_{i=0}^n v_i g_i - x_\star}^2.
	\end{align*}
	We conclude that
	\begin{equation*}
		f_n - f_\star \leq \eta\left(\frac{1}{2}\norm{x_0 - x_\star}^2 - \frac{1}{2}\norm{x_0 - \sum_{i=0}^n v_i g_i - x_\star}^2\right) \leq  \eta\frac{1}{2}\norm{x_0 - x_\star}^2.
	\end{equation*}
	
	Now, we turn to the forward direction. Suppose $h$ is \fcomposable{} with rate $\eta$.
	We handle the case $n = 0$ separately.
	If $n = 0$, then $h = [\hspace{0.5em}]$, and $\eta = 1$. We verify that the claim holds for $v = [1]\in\R^1_+$:
	\begin{equation*}
		2(f_0 - f_0) + \norm{g_0}^2 + 2\ip{g_0, x_0 - x_0} - \norm{v_0 g_0}^2 = 0 \geq 0.
	\end{equation*}
	
	Now, suppose $n\geq 1$.
	The definition of being \fcomposable{} implies that the expression
	\begin{equation*}
		\norm{x_0 - x_\star}^2 - \frac{2}{\eta}(f_n - f_\star)
	\end{equation*}
	is nonnegative for all $1$-smooth convex $f$ and achieves the value $0$ for the Huber function $H_\eta$. 
	By \cref{lem:pep_exists}, there exists $\lambda\in\R^{(n+2)\by (n+2)}$ and $S$ a PSD quadratic form so that
	\begin{equation}
		\label{eq:f_pep_cert}
		\norm{x_0 - x_\star}^2 - \frac{2}{\eta}(f_n - f_\star)= \sum_{i,j}\lambda_{i,j}Q_{i,j} + S.
	\end{equation}
	Define $v_i = \lambda_{\star,i}\geq 0$ for all $i\in[0,n]$.
	
	Now, consider \eqref{eq:f_pep_cert} for $f = H_\eta$. By assumption both the LHS and RHS evaluate to $0$.
	Note that
	\begin{equation*}
		\eta = \frac{1}{1+2\sum_{i=0}^{n-1}h_i} < \frac{1}{1+\sum_{i=0}^{n-1}h_i}
	\end{equation*}
	so that by \cref{lem:positive_Qstari} it holds that $Q_{i,\star} > 0$ for all $i\in[0,n]$. Thus, we deduce that $\lambda_{i,\star} = 0$ for all $i\in[0,n]$.
	
	We will overload notation and identify the quadratic form $S$ with a PSD matrix $S\in \S^{n+2}_+$ indexed by $\set{\star,0,1,\dots,n}$ so that
	\begin{equation*}
		S = \tr\left(\begin{pmatrix}
			x_0 - x_\star & g_0 & \dots & g_n
		\end{pmatrix} S \begin{pmatrix}
			x_0 - x_\star & g_0 & \dots & g_n
		\end{pmatrix}^\intercal\right)
	\end{equation*}
	Note that the coefficient on $\norm{x_0 - x_\star}^2$ on the LHS of \eqref{eq:f_pep_cert} is $1$. On the other hand, $\norm{x_0 - x_\star}^2$ does not appear in any of the $Q_{i,j}$ terms on the RHS of \eqref{eq:f_pep_cert}.
	We deduce that $S_{\star,\star} = 1$.
	Next, for $i \in[0,n]$, the coefficient on $\ip{g_i, x_0 - x_\star}$ in the LHS of \eqref{eq:f_pep_cert} is $0$.
	The coefficient on $\ip{g_i, x_0 - x_\star}$ in the RHS of \eqref{eq:f_pep_cert} is $2v_i + 2S_{\star,i}$. We deduce that $S_{\star,i} = -v_i$.
	
	Thus, by Schur complement lemma
	\begin{equation*}
		S = \begin{pmatrix}
			1 & -v^\intercal \\
			-v & Q
		\end{pmatrix} \succeq \begin{pmatrix}
			1 & -v^\intercal \\
			-v & vv^\intercal
		\end{pmatrix}.
	\end{equation*}
	
	Next, consider the coefficient on $f_\star$ in \eqref{eq:f_pep_cert}. On the LHS, the coefficient is $\frac{2}{\eta}$. On the RHS, it is $2\sum_{i=0}^n \left(\lambda_{\star,i}-\lambda_{i,\star}\right) = 2\sum_{i=0}^n v_i$. We deduce that $\sum_{i=0}^n v_i = \frac{1}{\eta}$.
	
	Finally, we compute
	\begin{align*}
		0&\leq \sum_{\substack{i,j\in[0,n]\\
				i\neq j}}\lambda_{i,j}Q_{i,j}\\
		&= \norm{x_0 - x_\star}^2 - \frac{2}{\eta}\left(f_n - f_\star\right) - S - \sum_{i=0}^n\lambda_{\star,i}Q_{\star,i}\\
		&\leq \norm{x_0 - x_\star}^2 - \frac{2}{\eta}\left(f_n - f_\star\right) - \norm{x_0 - x_\star - \sum_{i=0}^n v_i g_i}^2 \\
        &\qquad - \sum_{i=0}^n v_i \left(2(f_\star - f_i) - 2\ip{g_i, x_\star - x_i} - \norm{g_i}^2\right)\\
		&= \sum_{i=0}^n v_i \left(2(f_i - f_n) + 2\ip{g_i, x_0 - x_i} + \norm{g_i}^2\right) - \norm{\sum_{i=0}^n v_i g_i}^2.\qedhere
	\end{align*}
\end{proof}

\begin{remark}
	Since the analysis above bounds a term $\norm{x_0 - \sum_{i=0}^n v_i g_i - x_\star}^2$ by zero, every tight instance must minimize at this point, having $x_\star = x_0 - \sum_{i=0}^n v_i g_i$. We can interpret the quantity $x_0 - \sum_{i=0}^n v_i g_i$ as the output of a prox-step on a model of $f$ built on the first-order information seen by GD. Specifically, define the model
	\begin{equation*}
		m(x) \coloneqq \frac{\sum_{i=0}^n v_i \left(f_i +\ip{g_i, x- x_i} + \frac{1}{2}\norm{g_i}^2\right)}{\sum_{i=0}^n v_i}.
	\end{equation*}
	Then, $x_0 - \sum_{i=0}^n v_i g_i = \argmin_{x} \left(m(x) + \frac{1}{2}\norm{x - x_0}^2\right).$
	Noting that $m(x_\star) \leq f(x_\star)$, this aggregate dual model $m$ can be viewed as a dual certificate implicitly built by any \fcomposable{} schedule.
\end{remark}

A similar proof strategy allow us to give the following equivalent definitions of \gcomposable{} and \scomposable{} schedules. See Appendix \ref{ap:deferred} for proofs of \cref{prop:g_composable_eq,prop:self_comp_ineq}.
\begin{proposition}
	\label{prop:g_composable_eq}
	Let $h\in\R^n_{++}$ and let $\eta>0$. Suppose
	\begin{equation*}
		\eta = \frac{1}{1+2\sum_{i=0}^{n-1}h_i} = \prod_{i=0}^{n-1}(h_i-1)^2.
	\end{equation*}
    Then, $h$ is \gcomposable{} with rate $\eta$ if and only if for any $1$-smooth convex $f$ and $x_0$, gradient descent with stepsizes $h$ satsifies the inequality
	\begin{equation*}
		\eta(f_0-f_n) - \frac{1-\eta}{2}\norm{g_n}^2 \geq 0.
	\end{equation*}
\end{proposition}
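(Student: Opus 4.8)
The plan is to prove the two implications separately: the reverse direction by a short direct argument, and the forward direction via a performance-estimation dual certificate in the style of the proof of \cref{prop:composable_eq}.

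For the reverse direction, suppose the history inequality $\eta(f_0-f_n)-\frac{1-\eta}{2}\norm{g_n}^2\geq 0$ holds for every $1$-smooth convex $f$, and fix such an $f$ with minimizer $x_\star$, so that $g_\star=0$. By \cref{fact} applied at $x=x_n$, $y=x_\star$ we have $Q_{n,\star}=2(f_n-f_\star)-\norm{g_n}^2\geq 0$. Adding $\eta$ times this to twice the history inequality, the $f_n$-terms cancel and the gradient-norm terms combine, leaving $2\eta(f_0-f_\star)\geq\norm{g_n}^2$, which is exactly the $g$-convergence inequality; together with the rate identities (standing hypotheses of the proposition), $h$ is \gcomposable{} with rate $\eta$.

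For the forward direction, suppose $h$ is \gcomposable{} with rate $\eta$; the case $n=0$ is trivial ($\eta=1$ and both sides of the history inequality vanish), so take $n\geq 1$ and set $H\coloneqq\sum_{i=0}^{n-1}h_i$. Let $P\coloneqq 2\eta(f_0-f_\star)-\norm{g_n}^2$; by the definition of \gcomposable{} this is nonnegative on all $1$-smooth convex functions, and by \cref{lem:g_huber_tight} it vanishes on the Huber instance $H_{2\eta/(1+\eta)}$ started from $x_0=1$. Hence \cref{lem:pep_exists} supplies $\lambda_{i,j}\geq 0$ and a PSD form $S$ with $P=\sum_{i,j}\lambda_{i,j}Q_{i,j}+S$. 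Because $\eta=\tfrac{1}{1+2H}$, the Huber parameter $\tfrac{2\eta}{1+\eta}$ equals $\tfrac{1}{1+H}$, so this tight instance is exactly the boundary case of \cref{lem:positive_Qstari}; evaluating the certificate identity there, where $P=0$ while every summand $\lambda_{i,j}Q_{i,j}$ and the form $S$ are nonnegative, each term must vanish, and since \cref{lem:positive_Qstari} gives $Q_{i,\star}>0$ for all $i\in[0,n-1]$ we conclude $\lambda_{i,\star}=0$ for $i\in[0,n-1]$. Matching coefficients of the formal variable $f_\star$ in $P=\sum_{i,j}\lambda_{i,j}Q_{i,j}+S$ (the form $S$ carries no $f$-variables, and among the $\lambda_{i,\star}$ only $\lambda_{n,\star}$ remains) gives $-2\eta=2\sum_i\lambda_{\star,i}-2\lambda_{n,\star}$, hence $\lambda_{n,\star}=\eta+\sum_i\lambda_{\star,i}\geq\eta$. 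Subtracting $\eta Q_{n,\star}$ from the certificate now yields the identity
\begin{equation*}
    2\left(\eta(f_0-f_n)-\tfrac{1-\eta}{2}\norm{g_n}^2\right)=P-\eta Q_{n,\star}=(\lambda_{n,\star}-\eta)Q_{n,\star}+\sum_{(i,j)\neq(n,\star)}\lambda_{i,j}Q_{i,j}+S,
\end{equation*}
whose right-hand side is a sum of nonnegative quantities whenever $f$ is $1$-smooth convex with a minimizer. A routine density argument (approximating an arbitrary $1$-smooth convex function by ones attaining their minimum, with gradient-descent trajectories converging) then extends the inequality to all $1$-smooth convex $f$.

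The substantive work is in the forward direction, and the crux is upgrading the $f_\star$-coefficient bookkeeping from a bound on $\sum_i\lambda_{i,\star}$ to the single-multiplier bound $\lambda_{n,\star}\geq\eta$, since it is exactly this that lets us absorb the term $\eta Q_{n,\star}$ into the dual certificate. That upgrade relies on the tight Huber instance sitting at the \emph{boundary} case of \cref{lem:positive_Qstari} — so that $Q_{i,\star}>0$ for every $i<n$ while $Q_{n,\star}$ is allowed to vanish — a coincidence that is itself forced by the exact rate identity $\eta=1/(1+2H)$. In contrast to the proof of \cref{prop:composable_eq}, only pointwise nonnegativity of $S$ is needed here, not its Schur-complement structure. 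The only remaining care is the coefficient match for $f_\star$ and the observation that $S$ is a form in $x_0-x_\star$ and the gradients alone.
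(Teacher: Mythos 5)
Your proposal is correct and follows essentially the same route as the paper's proof: the reverse direction adds $\eta Q_{n,\star}$ to (twice) the history inequality, and the forward direction builds the PEP certificate via \cref{lem:pep_exists}, kills $\lambda_{i,\star}$ for $i<n$ using the tightness of the Huber instance $H_{2\eta/(1+\eta)}$ together with \cref{lem:positive_Qstari}, extracts $\lambda_{n,\star}\geq\eta$ from the $f_\star$-coefficient, and subtracts $\eta Q_{n,\star}$. The only additions beyond the paper's argument are the explicit observation that $2\eta/(1+\eta)=1/(1+H)$ is the boundary case of \cref{lem:positive_Qstari} and the closing density remark, neither of which changes the substance.
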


\begin{proposition}
	\label{prop:self_comp_ineq}
	Let $h\in\R^{n}_{++}$ and let $\eta >0$. Suppose
	\begin{equation*}
		\eta = \frac{1}{1+\sum_{i=0}^{n-1}h_i} = \prod_{i=0}^{n-1}(h_i-1).
	\end{equation*}
	Then, $h$ is \scomposable{} with rate $\eta$ if and only if for any $1$-smooth convex $f$ and $x_0$, it holds that
	\begin{equation*}
		\sum_{i=0}^{n-1}h_i \left(2(f_i - f_n) + \norm{g_i}^2 + 2\ip{g_i, x_0 - x_i}\right) - \norm{x_n - x_0}^2 - \frac{1-\eta}{\eta^2} \norm{g_n}^2 \geq 0.
	\end{equation*}
\end{proposition}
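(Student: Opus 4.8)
I would prove this exactly as \cref{prop:composable_eq} was proved: the ``if'' direction by a one-line conic combination, and the ``only if'' direction by invoking the PEP certificate of \cref{lem:pep_exists} and reading off coefficients. Write $H\coloneqq\sum_{i=0}^{n-1}h_i$ (so that $\eta=\frac1{1+H}$ and $\eta^2H=\eta-\eta^2$), let $L$ denote the left-hand side of the claimed history inequality, and set
\[
P \coloneqq \tfrac{\eta^2}{2}\norm{x_0-x_\star}^2 - \tfrac{1-\eta}{2}\norm{g_n}^2 - \tfrac{\eta^2}{2}\norm{x_n-x_\star}^2 - (\eta-\eta^2)(f_n-f_\star),
\]
so that ``$h$ is \scomposable{} with rate $\eta$'' is exactly the assertion $P\ge0$ on every trajectory (the rate identities being part of the hypothesis). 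The single identity that drives everything is
\[
P \;=\; \tfrac{\eta^2}{2}\,L \;+\; \tfrac{\eta^2}{2}\sum_{i=0}^{n-1}h_i\,Q_{\star,i},
\]
valid on any gradient-descent trajectory with $x_\star$ a minimizer (so $g_\star=0$). First I would establish this by substituting $x_n=x_0-\sum_i h_ig_i$ and $x_n-x_\star=(x_0-x_\star)-\sum_i h_ig_i$, expanding $Q_{\star,i}=2(f_\star-f_i)-2\ip{g_i,x_\star-x_i}-\norm{g_i}^2$, and collecting terms using $\sum_i h_ig_i=x_0-x_n$ and $\eta^2H=\eta-\eta^2$. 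This is routine but is the computational heart of the proposition.

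\textbf{The ``if'' direction.} Given $L\ge0$ for all instances, apply \cref{fact} to the pair $(x_\star,x_i)$ to get $Q_{\star,i}\ge0$ for each $i$; since $h_i>0$, the driving identity gives $P\ge\tfrac{\eta^2}{2}L\ge0$, i.e.\ inequality \eqref{eq:self_composable_ineq}.

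\textbf{The ``only if'' direction.} Dispatch $n=0$ first ($\eta=1$ and $L\equiv0$, both conditions trivially holding). For $n\ge1$, assume $P\ge0$ on all instances; by \cref{lem:s_composable_huber_tight} it vanishes at a quadratic (or Huber) instance, so \cref{lem:pep_exists} supplies $\lambda_{i,j}\ge0$ and a PSD quadratic form $S$ with $P=\sum_{i,j}\lambda_{i,j}Q_{i,j}+S$. I would then read off coefficients of this formal identity, in order: (i) after substituting $x_n-x_\star=(x_0-x_\star)-\sum_i h_ig_i$, the expression $P$ has no $\norm{x_0-x_\star}^2$ term (the two $\tfrac{\eta^2}{2}\norm{x_0-x_\star}^2$ pieces cancel), and neither does any $Q_{i,j}$, so $S_{\star,\star}=0$, whence $S\succeq0$ forces the entire $\star$-row and column of $S$ to vanish; (ii) the coefficient of $\ip{g_j,x_0-x_\star}$ is $\eta^2h_j$ in $P$ (and $0$ at $j=n$), while on the right only $Q_{\star,j}$ produces such a term (with coefficient $2$) and $S_{\star,j}=0$, giving $\lambda_{\star,j}=\tfrac{\eta^2}{2}h_j$ for $j<n$ and $\lambda_{\star,n}=0$; (iii) the coefficient of $f_\star$ is $\eta-\eta^2$ on the left and $2\sum_j\lambda_{\star,j}-2\sum_i\lambda_{i,\star}=(\eta-\eta^2)-2\sum_i\lambda_{i,\star}$ on the right, so $\lambda_{i,\star}=0$ for all $i$. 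Plugging these values back, $P=\sum_{i=0}^{n-1}\tfrac{\eta^2}{2}h_iQ_{\star,i}+R+S$ with $R\coloneqq\sum_{i,j\in[0,n]}\lambda_{i,j}Q_{i,j}\ge0$. Comparing with the driving identity gives $\tfrac{\eta^2}{2}L=R+S$, which is $\ge0$ on every trajectory (each $Q_{i,j}\ge0$ by \cref{fact}, $S\succeq0$), hence $L\ge0$, as required.

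\textbf{Main obstacle.} The only substantive calculation is the driving identity relating $P$, $L$, and $\sum_i h_iQ_{\star,i}$; once it is in hand, both directions are short. The care needed in the ``only if'' direction is purely bookkeeping: tracking precisely which $Q_{i,j}$ carry $\norm{x_0-x_\star}^2$, $\ip{g_j,x_0-x_\star}$, and $f_\star$, and remembering that $g_\star=0$ so $Q_{i,\star}$ carries none of $x_0-x_\star$ or $\norm{x_0-x_\star}^2$. I expect this \scomposable{} case to be somewhat cleaner than \cref{prop:composable_eq}: the $\norm{x_0-x_\star}^2$-coefficient of $P$ is zero, which collapses the $\star$-block of $S$ outright (no Schur-complement estimate needed), and $\lambda_{i,\star}=0$ falls out of the $f_\star$-coefficient rather than from a strictly-positive-$Q_{i,\star}$ argument, so \cref{lem:positive_Qstari} is not needed here.
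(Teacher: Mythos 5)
Your proposal is correct and follows essentially the same route as the paper: the "if" direction is the identical conic combination $P=\tfrac{\eta^2}{2}L+\tfrac{\eta^2}{2}\sum_i h_iQ_{\star,i}$ (the paper states it as $L+\sum_i h_iQ_{\star,i}=\tfrac{2}{\eta^2}P$), and the "only if" direction is the same \cref{lem:pep_exists} certificate with the same coefficient bookkeeping ($S_{\star,\star}=0$ collapsing the $\star$-block, then $\lambda_{\star,i}=\tfrac{\eta^2}{2}h_i$ and $\lambda_{\star,n}=0$ from the $\ip{g_i,x_0-x_\star}$ coefficients). Your extra deduction of $\lambda_{i,\star}=0$ from the $f_\star$ coefficient is correct but not needed — the paper simply leaves those nonnegative terms in the residual sum — and your observation that \cref{lem:positive_Qstari} is not required here matches the paper's proof.
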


\subsection{Proof of \cref{thm:f_recurrence}}
This subsection contains a proof of \cref{thm:f_recurrence}. Fix the notation of \cref{thm:f_recurrence} and for convenience let $h = a \triangleright b$, $\eta = \alpha\triangleright \beta$, and $n = n_a +n_b$. 
Note that $a\in\R^{n_a -1}$, $b\in\R^{n_b -1}$ and $h\in\R^{n-1}$.

Our goal is to show that $h$ is \fcomposable{} with rate $\eta$ using the conditions in \cref{prop:composable_eq}. let

The following claim collects useful algebraic identities relating $\alpha, \beta, \mu, \text{ and } \eta$.
\begin{claim}
	\label{lem:identities}
    Suppose that $a\in\R^{n_a -1}$, $b\in\R^{n_b -1}$, so that $a$ is \scomposable{} with rate $\alpha$, $b$ is \fcomposable{} with rate $\beta$. Let $n = n_a + n_b$ and let $h = a \triangleright b = [a, \mu, b] \in \R^{n-1}$ have \fcomposable{} rate $\eta$. Then the following identities hold:
	\begin{equation*}
		\sqrt{\frac{\beta}{\eta}} = \frac{\beta}{\eta} - \frac{2\beta}{\alpha} = \frac{\alpha\beta(\mu-1)}{\eta} 
	\end{equation*}
\end{claim}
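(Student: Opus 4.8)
The plan is to treat this as a pure algebraic identity in $\alpha,\beta>0$, since $\mu$ and $\eta=\alpha\triangleright\beta$ are both given by explicit formulas in $\alpha$ and $\beta$ (the s- and f-composability of $a$ and $b$ play no further role here). First I would introduce the shorthand $s=\sqrt{\alpha^2+8\alpha\beta}$, so that the definitions become $\mu-1=\frac{s-\alpha}{4\alpha\beta}$ and $\eta=\frac{2\alpha\beta}{\alpha+4\beta+s}$, and I would record the single identity that does all the work: $s^2-\alpha^2=8\alpha\beta$, equivalently $(s-\alpha)(s+\alpha)=8\alpha\beta$. I would then show that each of the three displayed quantities equals $\frac{\alpha+s}{2\alpha}$.

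For $\frac{\alpha\beta(\mu-1)}{\eta}$: substituting the formulas gives $\frac{(s-\alpha)(\alpha+4\beta+s)}{8\alpha\beta}$, and writing the numerator as $(s-\alpha)(s+\alpha)+4\beta(s-\alpha)=8\alpha\beta+4\beta(s-\alpha)$ collapses this to $1+\frac{s-\alpha}{2\alpha}=\frac{\alpha+s}{2\alpha}$. For $\frac{\beta}{\eta}-\frac{2\beta}{\alpha}$: since $\frac{\beta}{\eta}=\frac{\alpha+4\beta+s}{2\alpha}$, subtracting $\frac{2\beta}{\alpha}=\frac{4\beta}{2\alpha}$ leaves $\frac{\alpha+s}{2\alpha}$. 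For $\sqrt{\beta/\eta}$: squaring the candidate value gives $\left(\frac{\alpha+s}{2\alpha}\right)^2=\frac{\alpha^2+2\alpha s+s^2}{4\alpha^2}=\frac{2\alpha^2+8\alpha\beta+2\alpha s}{4\alpha^2}=\frac{\alpha+4\beta+s}{2\alpha}=\frac{\beta}{\eta}$, using $s^2=\alpha^2+8\alpha\beta$; as $\alpha,\beta,s>0$ and $\eta>0$, every expression in sight is positive, so taking positive square roots yields $\sqrt{\beta/\eta}=\frac{\alpha+s}{2\alpha}$ as well.

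There is essentially no obstacle beyond careful bookkeeping; the only points worth a word of care are that $\eta\neq 0$ (immediate from $\alpha,\beta>0$, which keeps the divisions legitimate) and that the relevant square root is the positive one (justified by positivity of $\frac{\alpha+s}{2\alpha}$). I would also note in passing that squaring the established equality $\alpha\beta(\mu-1)/\eta=\sqrt{\beta/\eta}$ gives $\eta=\alpha^2\beta(\mu-1)^2$, i.e. the product-form identity $\eta=\prod_i(h_i-1)^2$ for $h=[a,\mu,b]$, so this claim simultaneously supplies the consistency condition needed later in the proof of Theorem~\ref{thm:f_recurrence}.
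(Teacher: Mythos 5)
Your proof is correct and follows essentially the same route as the paper's: both arguments reduce each of the three quantities to the common value $\frac{\alpha+\sqrt{\alpha^2+8\alpha\beta}}{2\alpha}$, verify the square-root identity by squaring and invoking positivity, and handle $\frac{\alpha\beta(\mu-1)}{\eta}$ by direct substitution. The shorthand $s$ and the explicit difference-of-squares factoring are only cosmetic differences.
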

\begin{proof}
    We first note that
    \begin{align}
		\frac{\beta}{\eta} - \frac{2\beta}{\alpha}&= \frac{\alpha + 4\beta + \sqrt{\alpha^2+8\alpha\beta}}{2\alpha} - \frac{2\beta}{\alpha}\nonumber\\
		&= \frac{\alpha + \sqrt{\alpha^2+8\alpha\beta}}{2\alpha}.\label{eq:identity_inner_1}
    \end{align}
    In particular, it is clear from this that $\frac{\beta}{\eta} - \frac{2\beta}{\alpha} \ge 0$.

	Therefore, the first identity follows from 
	\begin{align*}
		\left(\frac{\beta}{\eta} - \frac{2\beta}{\alpha}\right)^2&= \left(\frac{\alpha + \sqrt{\alpha^2+8\alpha\beta}}{2\alpha} \right)^2\\
		&= \frac{2\alpha^2 + 8\alpha\beta + 2\alpha \sqrt{\alpha^2 + 8 \alpha\beta}}{4\alpha^2}= \frac{\beta}{\eta}.\nonumber
	\end{align*}

	Next, we compute
	\begin{align*}
		\frac{\alpha\beta}{\eta}(\mu-1) &= \frac{\alpha + 4\beta + \sqrt{\alpha^2+8\alpha\beta}}{2} \cdot \frac{\sqrt{\alpha^2 + 8 \alpha\beta}-\alpha}{4\alpha\beta}\\
		&= \frac{\alpha\beta + \beta\sqrt{\alpha^2 + 8\alpha\beta}}{2\alpha\beta}= \frac{\alpha + \sqrt{\alpha^2 + 8\alpha\beta}}{2\alpha}.
	\end{align*}
	Comparing the last line here with \eqref{eq:identity_inner_1} proves the second identity.\qedhere
\end{proof}

The following two claims verify that $h$ satisfies the two conditions of being \fcomposable{}.

\begin{claim}
	\label{lem:three_expressions}
    Suppose that $a\in\R^{n_a -1}$ is \scomposable{} and $b\in \R^{n_b -1}$ is \fcomposable{} with rates $\alpha$ and $\beta$ respectively. Let $n = n_a+n_b$ and let $h = a \triangleright b \in \R^{n-1}$ and $\eta = \alpha \triangleright \beta$, then it holds that
		$\eta = \frac{1}{(1+2\sum_{i=0}^{n-2}h_i)} = \prod_{i=0}^{n-2}(h_i-1)^2$.
\end{claim}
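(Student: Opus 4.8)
The plan is to verify the two displayed equalities directly, peeling apart the concatenation $h=[a,\mu,b]$ and substituting the defining equations for the rates $\alpha$ and $\beta$, together with the explicit formulas for $\mu$ and $\eta=\alpha\triangleright\beta$. Crucially, $a$ being \scomposable{} with rate $\alpha$ gives $\sum_{i}a_i=\tfrac1\alpha-1$ and $\prod_i(a_i-1)=\alpha$ (the ``no factor of two, no square'' normalization), while $b$ being \fcomposable{} with rate $\beta$ gives $\sum_i b_i=\tfrac12\!\left(\tfrac1\beta-1\right)$ and $\prod_i(b_i-1)^2=\beta$.

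\textbf{Product identity.} Since the entries of $h$ are those of $a$, then $\mu$, then those of $b$, I would write
\[
\prod_{i=0}^{n-2}(h_i-1)^2=\left(\prod_{i=0}^{n_a-2}(a_i-1)\right)^{2}(\mu-1)^2\prod_{i=0}^{n_b-2}(b_i-1)^2=\alpha^2(\mu-1)^2\beta,
\]
and it remains to check $\alpha^2(\mu-1)^2\beta=\eta$. This is immediate from the second identity of \cref{lem:identities}, namely $\tfrac{\alpha\beta(\mu-1)}{\eta}=\sqrt{\beta/\eta}$: multiplying by $\eta$ gives $\alpha\beta(\mu-1)=\sqrt{\eta\beta}$ (all quantities positive and $\mu>1$ since $\sqrt{\alpha^2+8\alpha\beta}>\alpha$), and squaring then dividing by $\beta$ yields exactly $\eta=\alpha^2(\mu-1)^2\beta$.

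\textbf{Sum identity.} Collecting the pieces of $h$ and using the two summation formulas above,
\[
1+2\sum_{i=0}^{n-2}h_i=1+2\!\left(\tfrac1\alpha-1\right)+2\mu+\left(\tfrac1\beta-1\right)=\tfrac2\alpha+\tfrac1\beta+2(\mu-1).
\]
I would then substitute $2(\mu-1)=\tfrac{\sqrt{\alpha^2+8\alpha\beta}-\alpha}{2\alpha\beta}$ and put everything over the common denominator $2\alpha\beta$, obtaining $\tfrac{4\beta+2\alpha+\sqrt{\alpha^2+8\alpha\beta}-\alpha}{2\alpha\beta}=\tfrac{\alpha+4\beta+\sqrt{\alpha^2+8\alpha\beta}}{2\alpha\beta}$, which is precisely $1/\eta$ by the definition $\eta=\alpha\triangleright\beta=\tfrac{2\alpha\beta}{\alpha+4\beta+\sqrt{\alpha^2+8\alpha\beta}}$. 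There is no genuine obstacle here; the only thing to be careful about is not conflating the \scomposable{} normalization for $\alpha$ (which carries no factor of two) with the \fcomposable{} normalization for $\beta$ (which does), since that bookkeeping is the sole place the computation could derail.
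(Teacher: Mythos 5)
Your proof is correct and follows essentially the same route as the paper: the same decomposition of $h$ into $a$, $\mu$, $b$, the same use of the \scomposable{} versus \fcomposable{} normalizations, and the identical common-denominator computation for the sum identity. The only (harmless) difference is that for the product identity you invoke the identity $\alpha\beta(\mu-1)/\eta=\sqrt{\beta/\eta}$ from \cref{lem:identities} — whose proof is purely algebraic in $\alpha,\beta,\mu,\eta$ and so involves no circularity — whereas the paper substitutes the explicit formula for $\mu-1$ and completes the square directly; both yield $\alpha^2(\mu-1)^2\beta=\eta$.
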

\begin{proof}
	We can rewrite the reciprocal of the second expression as
	\begin{align*}
		1+2\sum_{i=0}^{n-2} h_i &= 1+2\left(\sum_{i=0}^{n_a-2}a_i + \mu + \sum_{i=0}^{n_b - 2} b_i\right) \\
		&= 2\left(1 + \sum_{i=0}^{n_a-2}a_i\right) + 2(\mu - 1) + \left(1 + 2\sum_{i=0}^{n_b - 2} b_i\right)\\
		&= \frac{2}{\alpha} + \frac{\sqrt{\alpha^2 + 8 \alpha\beta}-\alpha}{2\alpha\beta} + \frac{1}{\beta}= \frac{\alpha + 4\beta+\sqrt{\alpha^2 + 8 \alpha\beta}}{2\alpha\beta}= \frac{1}{\eta}.
	\end{align*}
	Here, we have used the definition of \scomposable{} schedules to simplify $1 + \sum_ia_i$ and the definition of \fcomposable{} schedules to simplify $1 + 2\sum_ib_i$.
	
	Next, we will show that $\prod_{i=0}^{n-2}(h_i-1)^2 = \eta$.
	Again, we use the definition of \scomposable{} and \fcomposable{} schedules to simplify
	\begin{align*}
		\prod_{i=0}^{n-2}(h_i-1)^2 &=
		\alpha^2 (\mu - 1)^2\beta\\
		&= \alpha^2\left(\frac{\sqrt{\alpha^2 + 8 \alpha\beta}-\alpha}{4\alpha\beta}\right)^2\beta\\
		&=\frac{\alpha^2 + 4\alpha\beta -\alpha\sqrt{\alpha^2 + 8\alpha\beta}}{8\beta}\\
		&= \frac{2\alpha\beta}{\alpha+4\beta + \sqrt{\alpha^2 + 8\alpha\beta}}\\
        &= \eta.
	\end{align*}
    Here, the second to last line follows by completing the square.\qedhere
\end{proof}

As $b$ is \fcomposable{} with rate $\beta$, \cref{prop:composable_eq} guarantees the existence of a vector $w\in\R^{n_b}_{+}$ such that GD with stepsize $b$, for any $1$-smooth convex function, satisfies
\begin{equation*}
	\sum_{i=0}^{n_b-1} w_i(2(f_i - f_{n_b-1}) + \norm{g_i}^2 + 2\ip{g_i, x_0 - x_i}) - \norm{\sum_{i=0}^{n_b-1}w_i g_i}^2 \ge 0
\end{equation*}
and $\sum_{i=0}^{n_b-1}w_i = \frac{1}{\beta}$.

\begin{claim}
	\label{lem:f_recurrence_v}
    Suppose that $a \in \R^{n_a-1}$ is \scomposable{} with rate $\alpha$ and $b \in \R^{n_b-1}$ is \fcomposable{} with rate $\beta$. Let $n = n_a + n_b$ and suppose that $h = a \triangleright b \in \R^{n-1}$ has \fcomposable{} rate $\eta$.
    Let $v\in\R^{n}_+$ be defined as
    \begin{equation*}
        v = \left[a,\hspace{0.25em} 1 + \frac{1}{\alpha} ,\hspace{0.25em} \sqrt{\frac{\beta}{\eta}} \cdot w\right].
    \end{equation*}
	Then, $v$ satisfies $\sum_{i=0}^{n-1}v_i = \frac{1}{\eta}$.
	for any $1$-smooth convex function $f$, GD with stepsize $h$ satisfies
	\begin{equation*}
		\sum_{i=0}^{n-1} v_i(2(f_i - f_{n-1}) + \norm{g_i}^2 + 2\ip{g_i, x_0 - x_i}) - \norm{\sum_{i=0}^{n-1}v_i g_i}^2 \ge 0.
	\end{equation*}
\end{claim}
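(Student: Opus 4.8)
The claim has two assertions; the normalization $\sum_{i=0}^{n-1} v_i = \frac{1}{\eta}$ is the easy one. Since $a$ is \scomposable{} with rate $\alpha$, its entries sum to $\frac{1}{\alpha}-1$, and the vector $w$ supplied by \cref{prop:composable_eq} for $b$ satisfies $\sum_{j=0}^{n_b-1} w_j = \frac{1}{\beta}$; hence $\sum_{i=0}^{n-1} v_i = \left(\frac{1}{\alpha}-1\right) + \left(1+\frac{1}{\alpha}\right) + \sqrt{\frac{\beta}{\eta}}\cdot\frac{1}{\beta} = \frac{2}{\alpha} + \frac{1}{\sqrt{\beta\eta}}$, and dividing the identity $\sqrt{\beta/\eta}=\beta/\eta - 2\beta/\alpha$ of \cref{lem:identities} by $\beta$ gives $\frac{1}{\sqrt{\beta\eta}} = \frac{1}{\eta} - \frac{2}{\alpha}$, so the sum equals $\frac{1}{\eta}$.

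For the inequality, the plan is the recursive gluing argument of~\cite{altschuler2023accelerationPartI}. Partition the GD trajectory of $h=[a,\mu,b]$ into the \emph{$a$-phase} $x_0,\dots,x_{n_a-1}$ (which is exactly GD with schedule $a$ started at $x_0$), the \emph{middle step} $x_{n_a}=x_{n_a-1}-\mu g_{n_a-1}$, and the \emph{$b$-phase} $x_{n_a},\dots,x_{n-1}$ (which is exactly GD with schedule $b$ started at $x_{n_a}$). I would then add together (i) the \scomposable{} inequality for $a$ from \cref{prop:self_comp_ineq} applied to the $a$-phase --- this supplies the block $v_i=a_i$ for $i\le n_a-2$ together with the negative slack $-\norm{x_{n_a-1}-x_0}^2-\frac{1-\alpha}{\alpha^2}\norm{g_{n_a-1}}^2$; (ii) the given inequality for $b$, scaled by $\sqrt{\beta/\eta}$ --- this supplies the block $v_{n_a+j}=\sqrt{\beta/\eta}\,w_j$; and (iii) a small number of smooth-convexity inequalities $Q_{i,j}\ge 0$ (\cref{fact}) along the $h$-trajectory. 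Using $x_0-x_{n_a-1}=\sum_{i=0}^{n_a-2}a_ig_i$ and $x_0-x_{n_a}=\sum_{i=0}^{n_a-2}a_ig_i+\mu g_{n_a-1}$ to rewrite the inner products of the middle and $b$-phase terms against $x_0$, one checks that every function-value term in the target expression $T$ is matched by (i) and (ii) except for a leftover $\frac{4}{\alpha}(f_{n_a-1}-f_{n-1})$, which is absorbed by $\frac{2}{\alpha}Q_{n_a-1,n-1}\ge 0$ (together with whatever further $Q_{i,j}$ couple $g_{n-1}$ to the $b$-phase gradients).

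What remains is a purely quadratic identity: $T$ equals a nonnegative combination of the inequalities for $a$ and $b$ and the chosen $Q_{i,j}$, plus a residual quadratic form in $\sum_{i=0}^{n_a-2}a_ig_i$, $g_{n_a-1}$, $g_{n-1}$, and $\sum_jw_jg_{n_a+j}$. The choice $v_{n_a-1}=1+\frac{1}{\alpha}$ is exactly what makes the cross-terms $\ip{\sum_ia_ig_i,\,g_{n_a-1}}$ and $\ip{\sum_ia_ig_i,\,\sum_jw_jg_{n_a+j}}$ cancel identically, and the remaining coefficients --- those of $\norm{g_{n_a-1}}^2$, $\norm{\sum_jw_jg_{n_a+j}}^2$, and $\ip{g_{n_a-1},\,\sum_jw_jg_{n_a+j}}$ --- are the ones that must combine into a perfect square. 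Verifying this uses exactly the identities $\sqrt{\beta/\eta}=\beta/\eta-2\beta/\alpha=\alpha\beta(\mu-1)/\eta$ from \cref{lem:identities} together with \cref{lem:three_expressions}, and it is precisely this requirement that forces $\mu=1+\frac{\sqrt{\alpha^2+8\alpha\beta}-\alpha}{4\alpha\beta}$.

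The main obstacle is this last step: the careful tracking of the gradient-only residual and the verification, via \cref{lem:identities}, that it is a nonnegative quadratic form --- ideally a single square in a linear combination of the gradients. I expect no conceptual difficulty, since the ``extra'' term $-\frac{1-\alpha}{\alpha^2}\norm{g_{n_a-1}}^2$ present in the \scomposable{} (but not the \fcomposable{}) inequality for $a$ is exactly the slack designed to absorb the mismatch between $\sqrt{\beta/\eta}\,\norm{\sum_jw_jg_{n_a+j}}^2$ and the corresponding square appearing inside $T$.
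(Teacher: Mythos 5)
Your normalization argument for $\sum_i v_i = \tfrac{1}{\eta}$ is correct and matches the paper's. Your overall architecture for the inequality is also the paper's: glue the \scomposable{} inequality for $a$ (\cref{prop:self_comp_ineq}), the certificate inequality for $b$ (\cref{prop:composable_eq}), and some $Q_{i,j}\ge 0$ terms, then verify the residual quadratic form collapses to $-\norm{\sum_i v_i g_i}^2$ via \cref{lem:identities}.

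However, the execution has a genuine gap at exactly the point you flag as the ``main obstacle,'' and the specific combination you float would not close. The paper scales the $b$-certificate by $\tfrac{\beta}{\eta}$ (not $\sqrt{\beta/\eta}$) and adds the \emph{$w$-weighted family} $\tfrac{2\beta}{\alpha}\sum_{i} w_i\, Q_{n_a-1,\,n_a+i}$; the net coefficient $\tfrac{\beta}{\eta}-\tfrac{2\beta}{\alpha}=\sqrt{\beta/\eta}$ on each $f_{n_a+i}$ and on each $\norm{g_{n_a+i}}^2$ is what produces the block $v_{n_a+i}=\sqrt{\beta/\eta}\,w_i$, while the cross terms aggregate into $\Delta_2=\sum_i w_i g_{n_a+i}$ because the $Q$'s carry the same weights $w_i$ as the certificate. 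Your alternative --- scaling the $b$-certificate by $\sqrt{\beta/\eta}$ and using $\tfrac{2}{\alpha}Q_{n_a-1,n-1}$ --- balances the function-value bookkeeping but fails on the quadratic side: it leaves the coefficient of $\norm{\Delta_2}^2$ at $-\sqrt{\beta/\eta}$ instead of the required $-\tfrac{\beta}{\eta}$ (a deficit of $\tfrac{2\beta}{\alpha}\norm{\Delta_2}^2$ that no small collection of $Q_{i,j}$'s can supply, since those only produce individual squares and pairwise cross terms, not the square of a $w$-weighted aggregate), and $Q_{n_a-1,n-1}$ injects $\ip{g_{n-1},\,x_{n_a-1}-x_{n-1}}$, whose $b$-phase part is weighted by the stepsizes $b_j$ rather than by $w_j$ and so does not live in the span of $\Delta_0,\Delta_1,\Delta_2$. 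Finally, the decisive computation --- that the residual $3\times 3$ quadratic form in $\Delta_0,\Delta_1,\Delta_2$ is exactly the rank-one form $-\norm{\Delta_0+\tfrac{1+\alpha}{\alpha}\Delta_1+\sqrt{\beta/\eta}\,\Delta_2}^2$ --- is asserted rather than performed; this is the bulk of the paper's proof and cannot be waved through, since it is where the identities of \cref{lem:identities} and the precise value of $\mu$ are actually consumed.
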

\begin{proof}
	The first claim follows as
	\begin{align*}
		\sum_{i=0}^{n-1} v_i &= \sum_{i=0}^{n_a-2}a_i + 1 + \frac{1}{\alpha} + \sqrt{\frac{\beta}{\eta}}\sum_{i=0}^{n_b-1}w_i\\
		&= \frac{2}{\alpha} + \sqrt{\frac{\beta}{\eta}}\frac{1}{\beta}= \frac{1}{\eta}.
	\end{align*}
	Here, we have used the fact that $\frac{\beta}{\eta}- \frac{2\beta}{\alpha} = \sqrt{\frac{\beta}{\eta}}$ by \cref{lem:identities}.
	
    As the first $n_a-1$ stepsizes in $h$ coincide with $a$, which is \scomposable{} with rate $\alpha$, \cref{prop:self_comp_ineq} implies
	\begin{equation}
		\label{eq:f_composability_cert_1}
		\sum_{i=0}^{n_a - 2} a_{i} \left(2(f_i -f_{n_a-1}) + \norm{g_i}^2 + 2\ip{g_i, x_0 - x_i}\right)
			- \norm{x_{n_a-1} - x_0}^2 - \frac{1-\alpha}{\alpha^2}\norm{g_{n_a-1}}^2\geq 0.
	\end{equation}
	As the last $n_b-1$ steps of $h$ coincide with $b$, we have by \cref{prop:composable_eq} that
	\begin{equation}
		\label{eq:f_composability_cert_2}
		\frac{\beta}{\eta}\left(\sum_{i=0}^{n_b-1}w_i \left(2(f_{n_a +i} - f_{n-1}) + \norm{g_{n_a+i}}^2 + 2\ip{g_{n_a +i},x_{n_a} - x_{n_a +i}}\right) - \norm{\sum_{i=0}^{n_b-1}w_i g_{n_a+i}}^2\right)\geq 0.
	\end{equation}
	
	By \cref{fact}, it holds that
	\begin{align}
            &\frac{2\beta}{\alpha}\sum_{i=0}^{n_b-1} w_i Q_{n_a -1, n_a+i}\nonumber\\
            &= 
		\frac{2\beta}{\alpha}\sum_{i=0}^{n_b-1} w_i \left(
		2(f_{n_a-1} - f_{n_a +i}) - 2\ip{g_{n_a+i}, x_{n_a-1} - x_{n_a+i}} - \norm{g_{n_a-1} - g_{n_a+i}}^2
		\right) \geq 0.
		\label{eq:f_composability_cert_3}
	\end{align}

	Let $\Sigma$ denote the sum of the LHS expressions in \cref{eq:f_composability_cert_1,eq:f_composability_cert_2,eq:f_composability_cert_3} above. 
	The remainder of the proof verifies that
	\begin{equation*}
		\Sigma = \sum_{i=0}^{n - 1} v_i(2(f_i - f_{n-1}) + \norm{g_i}^2 + 2\ip{g_i, x_0 - x_i}) - \norm{\sum_{i=0}^{n - 1}v_i g_i}^2.
	\end{equation*}
	This will conclude the proof.
	
	Let $\Sigma_f$ denote the terms in $\Sigma$ that are linear in $f$ and let $\Sigma_g$ denote the terms in $\Sigma$ that are quadratic in $x_0 - x_\star$ and $g_i$. It holds that $\Sigma = \Sigma_f + \Sigma_g$.
	
	We compute
	\begin{align*}
		\frac{\Sigma_f}{2} &= \sum_{i=0}^{n_a - 2} a_{i} \left( f_i -f_{n_a - 1}\right)
		+ \frac{\beta}{\eta}\sum_{i=0}^{n_b - 1}w_i (f_{n_a+i} - f_{n-1}) + 2\frac{\beta}{\alpha}\sum_{i=0}^{n_b - 1} w_i \left(
		f_{n_a-1} - f_{n_a +i}
		\right)\\
		&= \sum_{i=0}^{n_a-2}v_i f_i + \left(1 + \frac{1}{\alpha}\right)f_{n_a-1} + \left(\frac{\beta}{\eta}- \frac{2\beta}{\alpha}\right)\sum_{i=0}^{n_b - 1}w_i f_{n_a+i} - \frac{1}{\eta}f_{n-1}\\
		&= \sum_{i=0}^{n-1}v_i (f_i -f_{n-1}).
	\end{align*}
	Here, we have used that $\frac{\beta}{\eta}- \frac{2\beta}{\alpha} = \sqrt{\frac{\beta}{\eta}}$ by \cref{lem:identities}.
	
	We next turn to $\Sigma_g$.
	We use the shorthand
	$\Delta_0 \coloneqq x_0 - x_{n_a-1} = \sum_{i=0}^{n_a-2}v_i g_i$,
	$\Delta_1 \coloneqq g_{n_a-1}$
	and
	$\Delta_2 \coloneqq \sum_{i=0}^{n_b - 1}w_i g_{n_a+i}$. Note that
	\begin{equation*}
		\sum_{i=0}^n v_i g_i = \Delta_0 + v_{n_a-1}g_{n_a-1} + \sqrt{\frac{\beta}{\eta}}\Delta_2.
	\end{equation*}
	The terms that are quadratic in $x_0-x_\star$ and $g_i$ in \eqref{eq:f_composability_cert_1} simplify as:
	\begin{align*}
		&\sum_{i=0}^{n_a - 2} a_i \left( \norm{g_i}^2 + 2\ip{g_i, x_0 - x_i}\right)
		- \norm{x_{n_a - 1} - x_0}^2 - \frac{1-\alpha}{\alpha^2}\norm{g_{n_a - 1}}^2\\
		&\qquad = \sum_{i=0}^{n_a - 1}v_i \left( \norm{g_i}^2 + 2\ip{g_i, x_0 - x_i}\right)
		-v_{n_a-1} \left( \norm{\Delta_1}^2 + 2\ip{\Delta_0, \Delta_1}\right)
		- \norm{\Delta_0}^2 - \frac{1-\alpha}{\alpha^2}\norm{\Delta_1}^2\\
		&\qquad =\sum_{i=0}^{n_a - 1} v_i\left(\norm{g_i}^2 + 2\ip{g_i, x_0 - x_i}\right) + \frac{2}{\alpha}\norm{\Delta_1}^2 - \norm{\Delta_0 + \frac{1+\alpha}{\alpha}\Delta_1}^2.
	\end{align*}
	The terms that are quadratic in $x_0-x_\star$ and $g_i$ in \eqref{eq:f_composability_cert_2} simplify as:
	\begin{align*}
		&\frac{\beta}{\eta}\sum_{i=0}^{n_b - 1}w_i \left(\norm{g_{n_a+i}}^2 + 2\ip{g_{n_a+i},x_{n_a} - x_{n_a+i}}\right) - \frac{\beta}{\eta} \norm{\Delta_2}^2\\
  		&\qquad = \frac{\beta}{\eta}\sum_{i=0}^{n_b - 1}w_i \left(\norm{g_{n_a+i}}^2 + 2\ip{g_{n_a+i},x_0 - x_{n_a+i}}\right)
    +\frac{2\beta}{\eta}\ip{\sum_{i=0}^{n_b - 1}w_ig_{n_a+i},x_{n_a} - x_0}
    - \frac{\beta}{\eta} \norm{\Delta_2}^2\\
		&\qquad = \frac{\beta}{\eta}\sum_{i=0}^{n_b - 1}w_i \left(\norm{g_{n_a+i}}^2 + 2\ip{g_{n_a+i},x_0 - x_{n_a+i}}\right)
		-\frac{2\beta}{\eta} \ip{\Delta_2,\Delta_0  + \mu \Delta_1}
		- \frac{\beta}{\eta} \norm{\Delta_2}^2\\
		&\qquad = \frac{\beta}{\eta}\sum_{i=0}^{n_b - 1}w_i \left(\norm{g_{n_a+i}}^2 + 2\ip{g_{n_a+i},x_0 - x_{n_a+i}}\right)
		-\frac{2\beta}{\eta} \ip{\Delta_0,\Delta_2}
		-\frac{2\mu \beta}{\eta} \ip{\Delta_1,\Delta_2}
		- \frac{\beta}{\eta} \norm{\Delta_2}^2.
	\end{align*}
	The terms that are quadratic in $x_0-x_\star$ and $g_i$ in \eqref{eq:f_composability_cert_3} simplify as:
	\begin{align*}
		&\frac{2\beta}{\alpha}\sum_{i=0}^{n_b - 1} w_i \left(
		- 2\ip{g_{n_a+i}, x_{n_a-1} - x_{n_a+i}} - \norm{g_{n_a-1} - g_{n_a+i}}^2
		\right)\\
  		&= -\frac{2\beta}{\alpha}\sum_{i=0}^{n_b - 1} w_i\bigg(
		2\ip{g_{n_a+i}, x_0 - x_{n_a+i}} + \norm{g_{n_a+i}}^2 + 
		2\ip{g_{n_a+i}, x_{n_a-1} - x_{0}} \\
        &\qquad - 2\ip{g_{n_a+i},g_{n_a-1}} +\norm{g_{n_a-1}}^2
		\bigg)\\
		& = -\frac{2\beta}{\alpha}\sum_{i=0}^{n_b - 1} w_i \left(
		2\ip{g_{n_a+i}, x_0 - x_{n_a+i}} + \norm{g_{n_a+i}}^2
		\right) -\frac{2}{\alpha}\norm{\Delta_1}^2  + \frac{4\beta}{\alpha}\ip{\Delta_0, \Delta_2}
		+\frac{4\beta}{\alpha}\ip{\Delta_1,\Delta_2}.
	\end{align*}

	We deduce that
	\begin{align*}
		\Sigma_g &= \sum_{i=0}^{n_a - 1} v_i\left(\norm{g_i}^2 + 2\ip{g_i, x_0 - x_i}\right) 
		+ \left(\frac{\beta}{\eta}-\frac{2\beta}{\alpha}\right)\sum_{i=0}^{n_b - 1}w_i \left(\norm{g_{n_a+i}}^2 + 2\ip{g_{n_a+i},x_0 - x_{n_a+i}}\right)\\
		&\qquad 
		- \norm{\Delta_0 + \frac{1+\alpha}{\alpha}\Delta_1}^2 
		-2\left(\frac{\beta}{\eta} - \frac{2\beta}{\alpha}\right)\ip{\Delta_0,\Delta_2}
		-2\left(\frac{\mu \beta}{\eta} - \frac{2\beta}{\alpha}\right)\ip{\Delta_1,\Delta_2}
		- \frac{\beta}{\eta} \norm{\Delta_2}^2.
	\end{align*}
	Using the fact that $\frac{\beta}{\eta}- \frac{2\beta}{\alpha} = \sqrt{\frac{\beta}{\eta}}$ by \cref{lem:identities}, we deduce that the first line on the right-hand side is equal to $\sum_{i=0}^{n-1}v_i\left(\norm{g_i}^2 + 2\ip{g_i, x_0 - x_i}\right)$.
	The remaining entries in the right-hand side are a quadratic form in $\Delta_0,\Delta_1,\Delta_2$ corresponding to the matrix
	\begin{equation*}
		\begin{pmatrix}
			-1 &-\frac{1+\alpha}{\alpha} & - \sqrt{\frac{\beta}{\eta}}\\
			\cdot & - \left(\frac{1+\alpha}{\alpha}\right)^2 & -\frac{\mu\beta}{\eta} + \frac{2\beta}{\alpha}\\
			\cdot & \cdot & -\frac{\beta}{\eta}
		\end{pmatrix} = \begin{pmatrix}
			-1 &-\frac{1+\alpha}{\alpha} & - \sqrt{\frac{\beta}{\eta}}\\
			\cdot & - \left(\frac{1+\alpha}{\alpha^2}\right)^2 & -\sqrt{\frac{\beta}{\eta}}\frac{1+\alpha}{\alpha}\\
			\cdot & \cdot & -\frac{\beta}{\eta}
		\end{pmatrix},
	\end{equation*} 
	where the bottom-left entries are defined by symmetry.
	Here, we have used the identities in \cref{lem:identities} to simplify the entries of the third column.
	We conclude that
	\begin{align*}
		\Sigma_g &= \sum_{i=0}^{n-1}v_i\left(\norm{g_i}^2 + 2\ip{g_i, x_0 - x_i}\right) - \norm{\Delta_0 - \frac{1+\alpha}{\alpha}\Delta_1 + \sqrt{\frac{\beta}{\eta}}\Delta_2}^2\\
		&= \sum_{i=0}^{n-1}v_i\left(\norm{g_i}^2 + 2\ip{g_i, x_0 - x_i}\right) - \norm{\sum_{i=0}^{n-1} v_i g_i}^2.
		\qedhere
	\end{align*}
\end{proof}

This completes the proof of \cref{thm:f_recurrence} as \cref{lem:three_expressions,lem:f_recurrence_v} verify the equivalent conditions for being \fcomposable{} stated in \cref{prop:composable_eq}.

\subsection{Proof of \cref{thm:g_recurrence}}
This subsection contains a proof of \cref{thm:g_recurrence}. Fix the notation of \cref{thm:g_recurrence} and for convenience let $h = b \triangleleft a$, $\eta = \beta\triangleleft \alpha$, and $n = n_b +n_a$. Our goal is to show that $h$ is \gcomposable{} with rate $\eta$ using the conditions in \cref{prop:g_composable_eq}.

Note that the values of $\mu$ in \cref{thm:f_recurrence} and \cref{thm:g_recurrence} are the same expressions in $\alpha,\beta$. Similarly, note that $\eta = \beta\triangleleft \alpha = \alpha\triangleright\beta$. Thus, the identities that we proved in \cref{lem:identities} relating $\alpha,\beta,\mu,\eta$ continue to hold in this setting.
Furthermore, as $\frac{1}{1 + 2\sum_{i=0}^{n-2}h_i}$ and $\prod_{i=0}^{n-2}(h_i-1)^2$ are independent of the ordering of the stepsizes in $h$, we have immediately by \cref{lem:three_expressions} that
\begin{equation*}
	\eta = \frac{1}{1 + 2\sum_{i=0}^{n-2}h_i} = \prod_{i=0}^{n-2}(h_i-1)^2.
\end{equation*}

It remains to check that the inequality in \cref{prop:g_composable_eq} holds for $h$ and $\eta$.
Similar to the proof of \cref{lem:f_recurrence_v}, the proof of \cref{lem:g_certificate_composition} will weight the guarantees of \gcomposable{} and \fcomposable{} schedules given in \cref{prop:g_composable_eq,prop:self_comp_ineq} and a few additional $Q_{i,j}$ terms with careful combination weights. Its proof is deferred to Appendix \ref{ap:deferred}.
\begin{lemma}
\label{lem:g_certificate_composition}
	For any $1$-smooth convex function $f$, GD with stepsize $h$ satisfies
	\begin{equation*}
		2f_0 - 2f_{n-1}  - \frac{1-\eta}{\eta}\norm{g_{n-1}}^2 \geq 0.
	\end{equation*}
\end{lemma}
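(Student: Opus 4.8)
The plan is to mirror the proof of \cref{lem:f_recurrence_v}: exhibit a nonnegative combination of valid inequalities along the gradient descent trajectory of $h = [b,\mu,a]$ that, after cancellation, dominates $2f_0 - 2f_{n-1} - \frac{1-\eta}{\eta}\norm{g_{n-1}}^2$. Recall from the paragraph preceding the lemma that $\eta = \frac{1}{1+2\sum_{i=0}^{n-2}h_i} = \prod_{i=0}^{n-2}(h_i-1)^2$, so once this inequality is established, \cref{prop:g_composable_eq} immediately yields \cref{thm:g_recurrence}.

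The ingredients are: (i) the \gcomposable{} inequality for $b$ from \cref{prop:g_composable_eq}, namely $\beta(f_0 - f_{n_b-1}) - \frac{1-\beta}{2}\norm{g_{n_b-1}}^2 \geq 0$, since the first $n_b-1$ steps of $h$ coincide with $b$; (ii) the \scomposable{} inequality of \cref{prop:self_comp_ineq} applied to $a$ run from $x_{n_b}$, since the last $n_a-1$ steps of $h$ coincide with $a$, which reads $\sum_{i=0}^{n_a-2} a_i\bigl(2(f_{n_b+i}-f_{n-1}) + \norm{g_{n_b+i}}^2 + 2\ip{g_{n_b+i}, x_{n_b}-x_{n_b+i}}\bigr) - \norm{x_{n-1}-x_{n_b}}^2 - \frac{1-\alpha}{\alpha^2}\norm{g_{n-1}}^2 \geq 0$; and (iii) the smoothness/convexity inequalities $Q_{n_b-1,\,n_b+i}\geq 0$ from \cref{fact}, connecting the junction iterate $x_{n_b-1}$ to the $a$-part iterates $x_{n_b+i}$ (together, if needed, with $Q_{n_b-1,\,n-1}\geq 0$ and $Q_{n-1,\,n_b+i}\geq 0$ involving the final iterate). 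I would combine (i) with weight $2/\beta$, and (ii) and (iii) with nonnegative weights — essentially $\tfrac{\alpha}{1-\alpha}$ on (ii) and $\tfrac{\alpha a_i}{1-\alpha}$ on the $Q$ term indexed by $i$ — chosen so that the terms linear in the $f_j$'s telescope exactly to $2f_0 - 2f_{n-1}$; this uses the defining identity $\sum_i a_i = \frac{1-\alpha}{\alpha}$ of the \scomposable{} rate of $a$, together with $\eta = \beta\triangleleft\alpha = \alpha\triangleright\beta$. The degenerate case $a = \emptyset$, where $\alpha = 1$, is handled separately and directly.

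With the weights fixed, it remains to verify that the part of the combination quadratic in the gradients is at most $-\frac{1-\eta}{\eta}\norm{g_{n-1}}^2$. Here I would substitute $x_{n_b-1}-x_{n_b} = \mu g_{n_b-1}$, $x_{n_b}-x_{n_b+i} = \sum_{l<i}a_l g_{n_b+l}$, and $x_{n-1}-x_{n_b} = -\sum_l a_l g_{n_b+l}$, collapse everything into the aggregate directions $g_{n_b-1}$, $g_{n-1}$, and $\Delta \coloneqq \sum_l a_l g_{n_b+l}$, and check that the resulting quadratic form, plus $\frac{1-\eta}{\eta}\norm{g_{n-1}}^2$, is negative semidefinite — exactly as the quadratic form in $(\Delta_0,\Delta_1,\Delta_2)$ collapsed to a negative perfect square in \cref{lem:f_recurrence_v}. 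The computation rests on the identities $\sqrt{\beta/\eta} = \beta/\eta - 2\beta/\alpha = \alpha\beta(\mu-1)/\eta$ of \cref{lem:identities} (in particular $(\mu-1)^2 = \eta/(\alpha^2\beta)$), which still hold since $\mu$ and $\eta$ are the same expressions in $\alpha,\beta$ as in \cref{thm:f_recurrence}. The main obstacle is precisely this last step: the weights are pinned down entirely by the telescoping of the $f$-linear part, so there is no freedom left, and one must check that they nonetheless render the residual gradient form negative semidefinite, with the cross-terms in $g_{n_b-1}$, $g_{n-1}$, and $\Delta$ conspiring to cancel — this is where the coefficient $\frac{1-\alpha}{\alpha^2}$ in the third \scomposable{} term and the precise value of $\mu$ must align.
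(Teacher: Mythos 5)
Your overall strategy is the paper's: aggregate the \gcomposable{} certificate of $b$ (\cref{prop:g_composable_eq}), the \scomposable{} certificate of $a$ (\cref{prop:self_comp_ineq}), and bridging inequalities $Q_{i,j}\geq 0$ from \cref{fact}, choose weights so the $f$-linear part telescopes to $2f_0-2f_{n-1}$, and verify the residual quadratic form is negative semidefinite. But the step you flag as ``the main obstacle'' and leave unverified is exactly where your proposal breaks, and your claim that ``the weights are pinned down entirely by the telescoping of the $f$-linear part'' is false: telescoping leaves several degrees of freedom (the split of weight between $Q_{n_b-1,n_b+i}$ and $Q_{n-1,n_b+i}$, and the weights on the two junction inequalities $Q_{n_b-1,n-1}$ and $Q_{n-1,n_b-1}$ subject to one linear relation), and the paper's proof lives precisely in that freedom. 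The paper weights the \scomposable{} inequality by $\alpha^2/\eta$ (not your $\alpha/(1-\alpha)$), splits weight $\tfrac{\alpha^2}{2\eta}a_i$ \emph{equally} between $Q_{n_b-1,n_b+i}$ and $Q_{n-1,n_b+i}$ (you set the latter to zero), and uses \emph{unequal} junction weights $\tfrac{\alpha^2}{2\eta}$ and $(\tfrac{\alpha}{2}-\eta)/\eta$, whose nonnegativity requires the additional check $\eta\leq\alpha/2$ that your argument never surfaces.

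Concretely, your weights fail already for $b=\emptySchedule$, $a=[\sqrt2]$ (so $\beta=1$, $\alpha=\sqrt2-1$, $\mu\approx 1.8768$, $\eta\approx 0.13189$). Your telescoping forces equal weights $e=e'$ on $Q_{n_b-1,n-1}$ and $Q_{n-1,n_b-1}$, and the coefficient of $\norm{g_{n-1}}^2$ in your aggregate is $-\tfrac{1}{\alpha}-2e=-(\sqrt2+1)-2e$, which must be at most $-\tfrac{1-\eta}{\eta}\approx-6.582$, forcing $e\geq 2.08$; meanwhile the coefficient of $\norm{g_{n_b-1}}^2$ is $-1+2e(\mu-1)$, which must be nonpositive, forcing $e\leq 0.57$. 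These are incompatible, so no choice of junction weights rescues your combination: with weight only $\alpha/(1-\alpha)$ on the \scomposable{} inequality and none on the $Q_{n-1,n_b+i}$ family, the aggregate simply cannot produce a large enough negative multiple of $\norm{g_{n-1}}^2$ without turning the $\norm{g_{n_b-1}}^2$ coefficient positive. The fix is to redo the weight selection as in the paper's \cref{lem:g_certificate_composition} proof (weights $\alpha^2$, $\tfrac{\alpha^2}{2}a_i$ on both bridging families, and $\tfrac{\alpha^2}{2}$, $\tfrac{\alpha}{2}-\eta$ at the junction, all after scaling the target by $\eta$), and then carry out the $3\times 3$ quadratic-form verification using the identities of \cref{lem:identities}; as written, your proposal is not a proof.
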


\subsection{Proof of \cref{thm:selfcomp_recurrence}}
Fix the notation of \cref{thm:selfcomp_recurrence} and for convenience let $h = a\Join b$, $\eta = \alpha\Join\beta$, and $n = n_a +n_b$. Our goal is to show that $h$ is \scomposable{} with rate $\eta$ using the conditions in \cref{prop:self_comp_ineq}.

The following two lemmas state the necessary conditions to check. The proofs are similar to the proofs of \cref{lem:three_expressions,lem:f_recurrence_v} and are deferred to Appendix \ref{ap:deferred}.

\begin{lemma}
\label{lem:checking_scomp_identity}
Let $a \in \R^{n_a-1}$ and $b\in \R^{n_a-1}$ be \scomposable{} with rates $\alpha$ and $\beta$ respectively. Let $n = n_a + n_b$, and let $h = a \Join b\in \R^{n-1}$. Let $\eta = \alpha \Join \beta$.
	It holds that
		$\eta = \frac{1}{1+\sum_{i=0}^{n-2}h_i} = \prod_{i=0}^{n-2}(h_i-1)$.
\end{lemma}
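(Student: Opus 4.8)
The statement is a pair of purely algebraic identities, to be verified exactly in parallel with the proof of \cref{lem:three_expressions}, the only subtlety being that one must use the \scomposable{} normalization of the rate (which, as noted in the remark following \cref{lem:s_composable_huber_tight}, drops the factors of $2$ and the squares that appear in the \fcomposable{}/\gcomposable{} case). Write $h = a \Join b = [a,\mu,b] \in \R^{n-1}$ with $\mu = 1 + \frac{\sqrt{\alpha^2 + 6\alpha\beta + \beta^2} - (\alpha+\beta)}{2\alpha\beta}$ and $\eta = \alpha \Join \beta = \frac{2\alpha\beta}{\alpha + \beta + \sqrt{\alpha^2 + 6\alpha\beta + \beta^2}}$.

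First I would establish $\eta = 1/\bigl(1 + \sum_{i=0}^{n-2} h_i\bigr)$. Splitting the sum according to the block structure of $h$ gives $1 + \sum_{i=0}^{n-2} h_i = \bigl(1 + \sum_{i=0}^{n_a-2} a_i\bigr) + (\mu - 1) + \bigl(1 + \sum_{i=0}^{n_b-2} b_i\bigr)$. By the defining identity for \scomposable{} schedules applied to $a$ and to $b$, the first and third bracketed terms equal $1/\alpha$ and $1/\beta$. Substituting the expression for $\mu - 1$ and collecting over the common denominator $2\alpha\beta$ yields $\frac{2(\alpha+\beta) + \sqrt{\alpha^2 + 6\alpha\beta + \beta^2} - (\alpha+\beta)}{2\alpha\beta} = \frac{(\alpha+\beta) + \sqrt{\alpha^2 + 6\alpha\beta + \beta^2}}{2\alpha\beta}$, which is precisely $1/\eta$ by the definition of $\alpha \Join \beta$.

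Next I would establish $\eta = \prod_{i=0}^{n-2}(h_i - 1)$. Again splitting by blocks, $\prod_{i=0}^{n-2}(h_i - 1) = \bigl(\prod_{i=0}^{n_a-2}(a_i-1)\bigr)(\mu-1)\bigl(\prod_{i=0}^{n_b-2}(b_i-1)\bigr) = \alpha\,\beta\,(\mu - 1)$, using the product form of the \scomposable{} rate for $a$ and $b$. Plugging in $\mu - 1$ gives $\alpha\beta(\mu-1) = \tfrac12\bigl(\sqrt{\alpha^2 + 6\alpha\beta + \beta^2} - (\alpha+\beta)\bigr)$. To see this equals $\eta$, rationalize: multiply $\eta = \frac{2\alpha\beta}{\alpha + \beta + \sqrt{\alpha^2 + 6\alpha\beta + \beta^2}}$ by $\frac{\sqrt{\alpha^2 + 6\alpha\beta + \beta^2} - (\alpha+\beta)}{\sqrt{\alpha^2 + 6\alpha\beta + \beta^2} - (\alpha+\beta)}$; the new denominator is $(\alpha^2 + 6\alpha\beta + \beta^2) - (\alpha+\beta)^2 = 4\alpha\beta$, so $\eta = \frac{2\alpha\beta\bigl(\sqrt{\alpha^2 + 6\alpha\beta + \beta^2} - (\alpha+\beta)\bigr)}{4\alpha\beta} = \tfrac12\bigl(\sqrt{\alpha^2 + 6\alpha\beta + \beta^2} - (\alpha+\beta)\bigr)$, matching the product. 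This completes both identities.

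There is no substantive obstacle here; the proof is routine symbol-pushing. The two points requiring care are (i) invoking the \emph{correct} \scomposable{} identities $1 + \sum a_i = 1/\alpha$, $\prod(a_i-1) = \alpha$ (and likewise for $b$), rather than the squared/doubled \fcomposable{} versions, and (ii) the elementary simplification $(\alpha^2 + 6\alpha\beta + \beta^2) - (\alpha+\beta)^2 = 4\alpha\beta$ used in rationalizing. One may also note in passing that $\mu > 1$ since $\sqrt{\alpha^2 + 6\alpha\beta + \beta^2} > \alpha+\beta$ for $\alpha,\beta>0$, so $h \in \R^{n-1}_{++}$ as required.
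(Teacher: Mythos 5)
Your proof is correct and follows essentially the same route as the paper's: split the sum and product over the blocks $[a,\mu,b]$, invoke the \scomposable{} identities $1+\sum a_i = 1/\alpha$ and $\prod(a_i-1)=\alpha$ (likewise for $b$), and rationalize using $(\alpha^2+6\alpha\beta+\beta^2)-(\alpha+\beta)^2 = 4\alpha\beta$. The only difference is cosmetic (the paper verifies the product identity by un-rationalizing rather than rationalizing), and your added remark that $\mu>1$ is a harmless bonus.
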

The previous lemma asserts that $\eta$ (defined as $\alpha \Join \beta\coloneqq \frac{2\alpha\beta}{\alpha + \beta + \sqrt{\alpha^2 + 6\alpha\beta + \beta^2}}$) satisfies a given algebraic relationship.
The next lemma checks that $\eta$ is in fact the \scomposable{} rate of $h$.

\begin{lemma}
\label{lem:checking_scomp_inequality}
Let $a \in \R^{n_a-1}$ and $b \in \R^{n_b-1}$ be \scomposable{} with rates $\alpha$ and $\beta$ respectively. Let $n = n_a+n_b$ and $h = a \Join b \in \R^{n-1}$.
	For any $1$-smooth convex function $f$, GD with stepsize $h$ satisfies
	\begin{equation*}
		\sum_{i=0}^{n-2}h_i \left(2(f_i - f_{n-1}) + \norm{g_i}^2 + 2\ip{g_i, x_0 - x_i}\right) - \norm{x_{n-1} - x_0}^2 - \frac{1-\eta}{\eta^2} \norm{g_{n-1}}^2 \geq 0.
	\end{equation*}
\end{lemma}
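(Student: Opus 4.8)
The plan is to mirror the proof of \cref{thm:f_recurrence} (Claims~\ref{lem:identities}--\ref{lem:f_recurrence_v}), now certifying s-composability through \cref{prop:self_comp_ineq} rather than \cref{prop:composable_eq}. Write $h = a\Join b = [a,\mu,b]$, let $x_0,\dots,x_{n-1}$ be the GD iterates with stepsize $h$ on a $1$-smooth convex $f$, and abbreviate $\Delta_0 \coloneqq x_0-x_{n_a-1} = \sum_{i=0}^{n_a-2} a_i g_i$, $\Delta_1 \coloneqq g_{n_a-1}$, and $\Delta_2 \coloneqq x_{n_a}-x_{n-1} = \sum_{j=0}^{n_b-2} b_j g_{n_a+j}$, so that $x_0-x_{n-1} = \Delta_0 + \mu\Delta_1 + \Delta_2$. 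The first step is to record the algebraic identities that play the role of \cref{lem:identities}: from the definition of the $\Join$ operation together with \cref{lem:checking_scomp_identity} one has $\eta = \alpha\beta(\mu-1)$ and $\frac1\eta = (\mu-1) + \frac1\alpha + \frac1\beta$, together with a handful of derived relations (e.g.\ an expression for $\frac{1-\eta}{\eta^2}$ and for $\mu$ in terms of $\alpha,\beta$); these are exactly what make the coefficient bookkeeping below collapse.

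The heart of the argument is to assemble one nonnegative quantity $\Sigma$, exactly as in \cref{lem:f_recurrence_v}, from three ingredients: (i) the inequality of \cref{prop:self_comp_ineq} applied to the first $n_a-1$ steps, which coincide with $a$ (s-composable, rate $\alpha$); (ii) the inequality of \cref{prop:self_comp_ineq} applied to the last $n_b-1$ steps run from $x_{n_a}$, which coincide with $b$ (s-composable, rate $\beta$), taken with an appropriate positive weight; and (iii) a conic combination of the interpolation inequalities $Q_{i,j}\ge 0$ from \cref{fact} glueing the two halves at the join iterate $x_{n_a-1}$ — by analogy with Claim~\ref{lem:f_recurrence_v} I expect these to be at least $Q_{n_a-1,\,n-1}$ and the family $Q_{n_a-1,\,n_a+j}$ weighted proportionally to $b_j$, the latter serving (as in the f-case) to rebase $b$'s inner products via $x_{n_a}-x_{n_a+j} = (x_0-x_{n_a+j}) - (\Delta_0+\mu\Delta_1)$. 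Splitting $\Sigma = \Sigma_f + \Sigma_g$ into its part linear in the $f_i$ and its part quadratic in $\Delta_0,\Delta_1,\Delta_2,g_{n-1}$, the goal is to show $\Sigma$ equals the left-hand side of the asserted inequality.

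The $f$-linear bookkeeping is routine: the two displayed identities are precisely what force the coefficients of $f_i$ $(i\le n_a-2)$, of $f_{n_a-1}$, of $f_{n_a+j}$, and of $f_{n-1}$ in $\Sigma_f$ to equal $2a_i$, $2\mu$, $2b_j$, and $-2\sum_{i=0}^{n-2}h_i = -\frac{2(1-\eta)}{\eta}$ respectively, i.e.\ to match $\sum_{i=0}^{n-2}h_i\cdot 2(f_i-f_{n-1})$; imposing this simultaneously pins down the weight in (ii) and the conic weights in (iii). \textbf{The main obstacle is the quadratic part $\Sigma_g$.} After cancelling the common ``nice'' sum $\sum_{i=0}^{n-2}h_i\bigl(\norm{g_i}^2 + 2\ip{g_i,x_0-x_i}\bigr)$ on both sides and carrying out the rebasing above, one must verify that the leftover quadratic form in $\Delta_0,\Delta_1,\Delta_2,g_{n-1}$ coincides \emph{exactly} with $-\norm{\Delta_0+\mu\Delta_1+\Delta_2}^2 - \frac{1-\eta}{\eta^2}\norm{g_{n-1}}^2 = -\norm{x_{n-1}-x_0}^2 - \frac{1-\eta}{\eta^2}\norm{g_{n-1}}^2$. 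I expect the cross terms $\ip{\Delta_0,\Delta_1}$ and $\ip{\Delta_0,\Delta_2}$ to cancel on their own; the delicate point is that the residual form in $\Delta_1,\Delta_2,g_{n-1}$ must degenerate into precisely the square(s) above, which is possible only for the stated value of $\mu$ — $\mu$ is defined so that tracking the $\norm{\Delta_1}^2$, $\ip{\Delta_1,\Delta_2}$ and $\norm{g_{n-1}}^2$ coefficients all balance at once, the algebraic shadow of the ``balance quadratic vs.\ Huber'' design principle. Carrying out this verification and checking that the conic weights forced by the $f$-linear matching are nonnegative is the one genuinely involved computation; everything else is substitution using the $\Join$-identities. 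The companion statement \cref{lem:g_certificate_composition} for the g-join is proved by the same template and is deferred to the appendix.
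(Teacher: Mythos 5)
Your plan follows the paper's proof of this lemma essentially step for step: the paper likewise takes the two certificates from \cref{prop:self_comp_ineq} (for $a$ on the first $n_a-1$ steps and for $b$ on the last $n_b-1$ steps, rebased at $x_{n_a}$), adds a conic combination of interpolation inequalities glueing the halves, splits the sum into a part linear in the $f_i$ and a quadratic form in the $\Delta$'s, and verifies that the linear part matches $2\sum_i h_i(f_i-f_{n-1})$ while the quadratic form collapses to $-\norm{x_{n-1}-x_0}^2-\frac{1-\eta}{\eta^2}\norm{g_{n-1}}^2$. The identities you cite ($\eta=\alpha\beta(\mu-1)$ and $\tfrac1\eta=\tfrac1\alpha+(\mu-1)+\tfrac1\beta$) are exactly the ones the paper's computation relies on.

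Two concrete points are missing from your outline, and both matter. First, your guessed list of glueing inequalities is incomplete even granting the ``at least'' hedge: besides $Q_{n_a-1,\,n_a+j}$ and $Q_{n_a-1,\,n-1}$, the paper also needs the inequalities anchored at the \emph{final} iterate, namely $Q_{n-1,\,n_a+j}$ for each $j$ (paired with $Q_{n_a-1,\,n_a+j}$ and weighted by $b_j$) and the reversed inequality $Q_{n-1,\,n_a-1}$ with weight proportional to $\mu-1$. Without the $Q_{n-1,\,\cdot}$ terms the cross terms involving $x_{n-1}$ cannot be generated and the quadratic form does not degenerate into the required squares; this is not a detail that falls out of the $f$-linear matching alone. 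Second, the weights themselves --- the paper uses $1$, $1+2\sigma$, $\sigma$, $\sigma$ with $\sigma=\frac{\beta+\alpha\beta(\mu-1)}{\alpha-\alpha\beta(\mu-1)}$, whose nonnegativity requires a separate check that $\beta(\mu-1)<1$ --- are never produced, and the verification that the residual $4\times4$ quadratic form in $\Delta_0,\Delta_1,\Delta_2,g_{n-1}$ has exactly the coefficients $\mu$ on $\norm{g_{n_a-1}}^2$ and $-\frac{1-\eta}{\eta^2}$ on $\norm{g_{n-1}}^2$ (with all unwanted cross terms vanishing) is precisely the content of the lemma. As written, your proposal is a correct and well-aimed blueprint of the paper's argument, but the decisive computation that constitutes the proof is deferred rather than carried out.
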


\bibliographystyle{plainnat}

\appendix

\section{Deferred proofs}
\label{ap:deferred}

\begin{proof}[Proof of \cref{lem:g_huber_tight}]
    	First, suppose $f(x)= \frac{1}{2}x^2$ and $x_0=1$. Then,
    	\begin{equation*}
    		\frac{1}{2}\norm{\nabla f(x_n)}^2 = \frac{1}{2}x_n^2 = \left(\prod_{i=0}^{n-1}(h_i-1)^2\right) \frac{1}{2}x_0^2= \eta (f(x_0)-f(x_\star)).
    	\end{equation*}
        
    	Next, suppose $f(x)$ is the Huber function described in the lemma statement. Note that
    	\begin{equation*}
    		1 - \delta\sum_{i=0}^{n-1}h_i = 1- \left(\frac{\eta}{1+\eta}\right)\left(\frac{1-\eta}{\eta}\right) = \frac{2\eta}{1+\eta}=\delta.
    	\end{equation*}
    	We deduce that $x_n = \delta$ and that
    	\begin{equation*}
    		\frac{1}{2}\norm{\nabla f(x_n)}^2 = \frac{\delta^2}{2} = \frac{2\eta^2}{(1+\eta)^2} = \eta \left(f_0-f_\star\right).\qedhere
    	\end{equation*}
    \end{proof}

    \begin{proof}[Proof of \cref{lem:s_composable_huber_tight}]
        	First, suppose $f(x) = \frac{1}{2}x^2$ and $x_0 = 1$.
        	Note that $x_n^2 = \eta^2$.
        	We will show that the difference in the LHS and RHS of \eqref{eq:self_composable_ineq} is zero:
        	\begin{align*}
        		&\frac{\eta^2}{2}\norm{x_0 - x_\star}^2 - \frac{1-\eta}{2}\norm{\nabla f(x_n)}^2 - \frac{\eta^2}{2}\norm{x_n-x_\star}^2 - (\eta-\eta^2)(f(x_n)-f(x_\star))\\
        		&\qquad = \frac{\eta^2}{2} - \frac{1-\eta}{2}\eta^2 - \frac{\eta^2}{2}\eta^2 - \frac{\eta-\eta^2}{2}\eta^2 = 0.
        	\end{align*}
            
        	Next, suppose $f(x)$ is the Huber function described in the lemma statement. We check that
        	\begin{equation*}
        		1-\delta \sum_{i=0}^{n-1}h_i = 1 - \frac{\delta(1-\eta)}{\eta} \geq \delta,
        	\end{equation*}
        	where the last inequality holds for all $\delta \leq \eta$. We deduce that $x_n = 1 - \frac{\delta(1-\eta)}{\eta}$. Again, we compute the difference in the LHS and RHS in \eqref{eq:self_composable_ineq}:
        	\begin{align*}
        		&\frac{\eta^2}{2}\norm{x_0 - x_\star}^2 - \frac{1-\eta}{2}\norm{\nabla f(x_n)}^2 - \frac{\eta^2}{2}\norm{x_n-x_\star}^2 - (\eta-\eta^2)(f(x_n)-f(x_\star))\\
        		&\qquad = \frac{\eta^2}{2} - \frac{1-\eta}{2}\delta^2 - \frac{\eta^2}{2}\left(1 - \frac{\delta(1-\eta)}{\eta}\right)^2 - (\eta-\eta^2)\left(\delta - \frac{\delta^2(1-\eta)}{\eta} - \frac{\delta^2}{2}\right)= 0.\qedhere
        	\end{align*}
        \end{proof}

\begin{proof}[Proof of \cref{prop:g_composable_eq}]
    We handle the reverse direction first. Let $f$ be a $1$-smooth convex function with minimizer $x_\star$. Let $x_0 = 1$. Then,
	\begin{align*}
		0&\leq 2\eta(f_0-f_n) - (1-\eta)\norm{g_n}^2 + \eta Q_{n,\star}\\
		&= 2\eta(f_0 - f_\star) - \norm{g_n}^2.
	\end{align*}
	
	Now, suppose 
	$h$ is \gcomposable{} with rate $\eta$ and $n\geq 0$.
    The definition of a \gcomposable{} schedule implies that the expression
	\begin{equation*}
		2\eta(f_0 - f_\star) - \norm{g_n}^2
	\end{equation*}
    is nonnegative for all $1$ smooth convex $f$ and achieves the value $0$ for the Huber function $H_{2\eta/(1+\eta)}$ by \cref{lem:g_huber_tight}.
	By \cref{lem:pep_exists}, there exists $\lambda\in\R^{(n+2)\by (n+2)}$ and $S$ a PSD quadratic form so that
	\begin{equation}
		\label{eq:g_pep_cert}
		2\eta(f_0 - f_\star) - \norm{g_n}^2 = \sum_{i,j}\lambda_{i,j}Q_{i,j} + S.
	\end{equation}
	
	Now, consider \eqref{eq:g_pep_cert} for $f = H_{2\eta/(1+\eta)}$. By assumption both the LHS and RHS evaluate to $0$. By \cref{lem:positive_Qstari} it holds that $Q_{i,\star} > 0$ for all $i\in[0,n-1]$. Thus, we deduce that $\lambda_{i,\star} = 0$ for all $i\in[0,n-1]$.
	
	Comparing the coefficient on $f_\star$ in the LHS and RHS of \eqref{eq:g_pep_cert} gives
	\begin{align*}
		-2\eta &= 2\sum_{i=0}^n \lambda_{\star,i} - 2\sum_{i=0}^n \lambda_{i,\star}\\
		&= 2\sum_{i=0}^n \lambda_{\star,i} - 2\lambda_{n,\star}.
	\end{align*}
	We deduce that $\lambda_{n,\star}\geq \eta$.
	Thus,
	\begin{align*}
		0 &\leq \sum_{i,j}\lambda_{i,j}Q_{i,j} + S - \eta Q_{n,\star}\\
		&= 2\eta(f_0 - f_n) - (1-\eta)\norm{g_n}^2.
	\end{align*}
	Rearranging this inequality completes the proof.\qedhere
\end{proof}

\begin{proof}[Proof of \cref{prop:self_comp_ineq}]
	We begin with the reverse direction. Let $f$ be a $1$-smooth function with minimizer $x_\star$. Then,
	\begin{align*}
		0&\leq \sum_{i=0}^{n-1}h_i \left(2(f_i - f_n) + \norm{g_i}^2 + 2\ip{g_i, x_0 - x_i}\right) - \norm{x_n - x_0}^2 - \frac{1-\eta}{\eta^2} \norm{g_n}^2\\
		&\qquad + \sum_{i=0}^{n-1}h_i Q_{\star,i}\\
		&= \sum_{i=0}^{n-1}h_i \left(2(f_\star - f_n) + 2\ip{g_i, x_0 - x_\star}\right) - \norm{x_n - x_0}^2 - \frac{1-\eta}{\eta^2} \norm{g_n}^2\\
		&= \frac{2(1-\eta)}{\eta}(f_\star - f_n) + 2\ip{x_0 - x_n, x_0 - x_\star} - \norm{x_n - x_0}^2 - \frac{1-\eta}{\eta^2}\norm{g_n}^2\\
		&= \frac{2(1-\eta)}{\eta}(f_\star - f_n) + \norm{x_0 - x_\star}^2 -\norm{x_n-x_\star}^2 - \frac{1-\eta}{\eta^2}\norm{g_n}^2.
	\end{align*}
	Rearranging this inequality shows that $h$ is \scomposable{} with rate $\eta$.

	Next, suppose $h$ is \scomposable{} with rate $\eta$. By definition, the expression
	\begin{equation*}
		\frac{2(1-\eta)}{\eta}(f_\star - f_n) + \norm{x_0 - x_\star}^2 -\norm{x_n-x_\star}^2 - \frac{1-\eta}{\eta^2}\norm{g_n}^2
	\end{equation*}
	is nonnegative for any $1$-smooth convex $f$ and achieves the value $0$ for any Huber function $f_\delta$ with $\delta \leq \eta$.
	By \cref{lem:pep_exists}, there exists $\lambda\in\R^{(n+2)\by(n+2)}$ and $S$ a PSD quadratic form so that
	\begin{equation}
		\label{eq:s-composability_pep}
		\frac{2(1-\eta)}{\eta}(f_\star - f_n) + \norm{x_0 - x_\star}^2 -\norm{x_n-x_\star}^2 - \frac{1-\eta}{\eta^2}\norm{g_n}^2 = \sum_{i,j}\lambda_{i,j}Q_{i,j} + S.
	\end{equation}
	
	We will overload notation and identify the quadratic form $S$ with a PSD matrix $S\in \S^{n+2}_+$ indexed by $\set{\star,0,1,\dots,n}$ so that
	\begin{equation*}
		S = \tr\left(\begin{pmatrix}
			x_0 - x_\star & g_0 & \dots & g_n
		\end{pmatrix} S \begin{pmatrix}
			x_0 - x_\star & g_0 & \dots & g_n
		\end{pmatrix}^\intercal\right)
	\end{equation*}
	Note that the coefficient on $\norm{x_0 - x_\star}^2$ on the LHS of \eqref{eq:s-composability_pep} is $0$. We deduce that $S_{\star,\star} = 0$.
	In turn, as $S$ is PSD, we deduce that $S_{\star,i} = 0$ for all $i\in[0,n]$.
	Now, consider the coefficient on $\ip{g_i, x_0 - x_\star}$ in \eqref{eq:s-composability_pep}. By equating the coefficient in the LHS with the coefficient in the RHS, we deduce that
	\begin{equation*}
		2h_i = 2\lambda_{\star,i} + 2S_{\star,i} = 2\lambda_{\star,i}
	\end{equation*}
    for all $i=0,1,\dots,n-1$ and $\lambda_{\star,n}=0$.
	
	Finally, we compute
	\begin{align*}
		0&\leq \sum_{\substack{i\in[0,n]\\
				j\in[\star,n]}}\lambda_{i,j}Q_{i,j}\\
		&= \frac{2(1-\eta)}{\eta}(f_\star - f_n) + \norm{x_0 - x_\star}^2 -\norm{x_n-x_\star}^2 - \frac{1-\eta}{\eta^2}\norm{g_n}^2 - S - \sum_{i=0}^{n-1}h_iQ_{\star,i}\\
		&\leq \frac{2(1-\eta)}{\eta}(f_\star - f_n) + \norm{x_0 - x_\star}^2 -\norm{x_n-x_\star}^2 - \frac{1-\eta}{\eta^2}\norm{g_n}^2\\
		&\qquad + \sum_{i=0}^{n-1}h_i\left(2(f_i - f_\star) + \ip{g_i, x_\star - x_i} + \norm{g_i}^2\right)\\
		&= \sum_{i=0}^{n-1}h_i \left(2(f_i - f_n) + \norm{g_i}^2 + 2\ip{g_i, x_0 - x_i}\right) - \norm{x_n - x_0}^2 - \frac{1-\eta}{\eta^2} \norm{g_n}^2.\qedhere
	\end{align*}
\end{proof}

\begin{proof}
    [Proof of \cref{lem:g_certificate_composition}]

	The first $n_b - 1$ steps of $h$ coincide with $b$. As $b$ is \gcomposable{} with rate $\beta$, \cref{prop:g_composable_eq} implies that the following expression is nonnegative
	\begin{equation}
		\label{eq:g_cert_part1}
		\eta\left(2f_0  - 2f_{n_b - 1} - \frac{1-\beta}{\beta}\norm{g_{n_b - 1}}^2\right).
	\end{equation}
	
	The last $n_a - 1$ steps of $h$ coincide with $a$. As $a$ is \scomposable{} with rate $\alpha$, \cref{prop:self_comp_ineq} implies that the following expression is nonnegative
	\begin{align}
		\label{eq:g_cert_part2}
		&\alpha^2\bigg[\sum_{i=0}^{n_a - 2} a_i \left(2(f_{n_b + i} -f_{n-1}) + \norm{g_{n_b + i}}^2 + 2\ip{g_{n_b + i}, x_{n_b} - x_{n_b + i}}\right)\nonumber
		\\
        &\qquad\qquad - \norm{x_{n-1} - x_{n_b}}^2 - \frac{1-\alpha}{\alpha^2}\norm{g_{n-1}}^2\bigg].
	\end{align}

	By \cref{fact}, the following expressions are nonnegative:
	\begin{gather}
		\frac{\alpha^2}{2}\sum_{i=0}^{n_a-2} a_i\left(Q_{n_b - 1,n_b + i} + Q_{n-1,n_b + i}\right) \label{eq:g_cert_part3},\qquad \text{and}\\
		\frac{\alpha^2}{2} Q_{n_b - 1, n-1} + 
		\left(\frac{\alpha}{2}-\eta\right) Q_{n-1,n_b - 1}.     \label{eq:g_cert_part4}
	\end{gather}
	Here, we have used the fact that
	\begin{equation*}
		\eta = \frac{2\alpha\beta}{\alpha+4\beta+\sqrt{\alpha^2+8\alpha\beta}} \leq \frac{2\alpha\beta}{4\beta} = \frac{\alpha}{2}.
	\end{equation*}

	Let $\Sigma$ denote the sum of \cref{eq:g_cert_part1,eq:g_cert_part2,eq:g_cert_part3,eq:g_cert_part4}. Let $\Sigma_f$ denote the terms depending on $f$ and let $\Sigma_g$ denote the terms depending on $x_0-x_\star$ and $g_i$. Again, $\Sigma = \Sigma_f + \Sigma_g$.
	
	We compute
	\begin{align*}
		\frac{\Sigma_f}{2} &= \eta(f_0 - f_{n_b -1}) + \alpha^2\sum_{i=0}^{n_a - 2} a_i (f_{n_b +i} - f_{n-1})\\
		&\qquad + \frac{\alpha^2}{2}\sum_{i=0}^{n_a -2} a_i\left(f_{n_b-1} + f_{n-1} - 2f_{n_b+i}\right) + \left(\frac{\alpha-\alpha^2}{2}-\eta\right)(f_{n - 1} - f_{n_b-1})\\
		&= \eta f_0 - \eta f_{n-1}.
	\end{align*}
	
	Let $\Delta_0 = g_{n_b-1}$, $\Delta_1 = x_{n-1} - x_{n_b}$, and $\Delta_2 = g_{n-1}$,

	The terms in \eqref{eq:g_cert_part1} that depend on $x_0 - x_\star$ and $g_i$ simplify to
	\begin{equation*}
		-\eta\frac{1-\beta}{\beta}\norm{\Delta_0}^2.
	\end{equation*}
	
	The terms in \eqref{eq:g_cert_part2} that depend on $x_0 - x_\star$ and $g_i$ simplify to
	\begin{align*}
		&\alpha^2\sum_{i=0}^{n_a - 2} a_i \left(\norm{g_{n_b + i}}^2 + 2\ip{g_{n_b + i}, x_{n_b} - x_{n_b + i}}\right)
		- \alpha^2\norm{x_{n-1} - x_{n_b}}^2 - (1-\alpha)\norm{g_{n-1}}^2\\
		&\qquad = \alpha^2\sum_{i=0}^{n_a - 2} a_i \left(\norm{g_{n_b + i}}^2 + 2\ip{g_{n_b + i}, x_{n_b} - x_{n_b + i}}\right)
		- \alpha^2\norm{\Delta_1}^2 - (1-\alpha)\norm{\Delta_2}^2.
	\end{align*}
	
	The terms in \eqref{eq:g_cert_part3} that depend on $x_0 - x_\star$ and $g_i$ simplify to
	\begin{align*}
		&\frac{\alpha^2}{2}\sum_{i=0}^{n_a-2} a_i\bigg(
		-2\ip{g_{n_b +i}, x_{n_b-1} - x_{n_b +i}} - \norm{g_{n_b+i}-g_{n_b-1}}^2
		-2\ip{g_{n_b +i}, x_{n-1} - x_{n_b +i}}\\
        &\qquad - \norm{g_{n_b+i}-g_{n-1}}^2\bigg)\\
		& = \frac{\alpha^2}{2}\sum_{i=0}^{n_a-2} a_i\bigg(
		-4\ip{g_{n_b +i}, x_{n_b} - x_{n_b +i}}
		-2\ip{g_{n_b +i}, x_{n_b-1} - x_{n_b}}
		-2\ip{g_{n_b +i}, x_{n-1} - x_{n_b}}\\
		&\qquad
		- 2\norm{g_{n_b+i}}^2
		- \norm{g_{n_b-1}}^2
		- \norm{g_{n-1}}^2
		+ 2\ip{g_{n_b+i},g_{n_b-1}}
		+2\ip{g_{n_b+i},g_{n-1}}       
		\bigg)\\
		& = 
		-\alpha^2\sum_{i=0}^{n_a-2} a_i\left( \norm{g_{n_b+i}}^2 + 2\ip{g_{n_b +i}, x_{n_b} - x_{n_b +i}}\right)\\
		&\qquad
		- \frac{\alpha-\alpha^2}{2}\norm{\Delta_0}^2
		+\alpha^2\norm{\Delta_1}^2
		- \frac{\alpha-\alpha^2}{2}\norm{\Delta_2}^2
		+ \alpha^2
		\ip{\Delta_1, (\mu-1)\Delta_0 - \Delta_2}.
	\end{align*}
	Here, we have used the fact that $\sum_{i=0}^{n_a-2}a_i g_{n_b+i} = x_{n_b} - x_{n-1} = -\Delta_1$.
	
	The terms in \eqref{eq:g_cert_part4} that depend on $x_0 - x_\star$ and $g_i$ simplify to
	\begin{align*}
		&\frac{\alpha^2}{2} \left(-2\ip{g_{n-1}, x_{n_b - 1} - x_{n-1}} - \norm{g_{n_b-1} - g_{n-1}}^2\right)
		\\
        &\qquad\qquad +
		\left(\frac{\alpha}{2}-\eta\right) \left(-2\ip{g_{n_b-1}, x_{n - 1} - x_{n_b-1}} - \norm{g_{n_b-1} - g_{n-1}}^2\right)\\
		&= \ip{(\alpha -2\eta)\Delta_0-\alpha^2\Delta_2, \mu\Delta_0 -\Delta_1} - \left(\frac{\alpha+\alpha^2}{2}-\eta\right) \norm{\Delta_2-\Delta_0}^2.
	\end{align*}
	
	Summing up the above quantities gives
	\begin{align*}
		\Sigma_g &= -\eta\frac{1-\beta}{\beta}\norm{\Delta_0}^2 - \alpha^2\norm{\Delta_1}^2 - (1-\alpha)\norm{\Delta_2}^2\\
		&\qquad - \frac{\alpha(1-\alpha)}{2}\norm{\Delta_0}^2
		+\alpha^2\norm{\Delta_1}^2
		- \frac{\alpha(1-\alpha)}{2}\norm{\Delta_2}^2
		+ \alpha^2
		\ip{\Delta_1, (\mu-1)\Delta_0 - \Delta_2}\\
		&\qquad + \ip{(\alpha -2\eta)\Delta_0-\alpha^2\Delta_2, \mu\Delta_0 -\Delta_1} - \left(\frac{\alpha(1+\alpha)}{2}-\eta\right) \norm{\Delta_2-\Delta_0}^2
	\end{align*}
	We see that $\Sigma_g$ is a quadratic form in the $\Delta_0,\Delta_1,\Delta_2$ corresponding to the symmetric matrix
	\begin{equation*}
		\begin{pmatrix}
			-\eta\left(\frac{1}{\beta} + 2(\mu-1)\right) +\alpha(\mu-1)& \eta + \frac{\alpha(\alpha(\mu-1)-1)}{2} & -\eta + \frac{\alpha(1-\alpha(\mu-1))}{2}\\
			\cdot & 0 & 0\\
			\cdot & \cdot & -(1-\eta)
		\end{pmatrix},
	\end{equation*}
	where the entries below the diagonal are defined by symmetry.
	
	We claim that the entries in the first row are zero. Indeed, the second and third entries are zero by
	\begin{align*}
		\frac{\alpha(1-\alpha(\mu-1))}{2}&= 
		\frac{\alpha}{2}\left(\frac{\alpha + 4\beta - \sqrt{\alpha^2 + 8 \alpha\beta}}{4\beta}\right)\\
		&= \frac{8\alpha\beta^2}{(4\beta)\left(\alpha+4\beta+\sqrt{\alpha^2+8\alpha\beta}\right)}= \eta.
	\end{align*}
The first entry in the first row is zero by
	\begin{align*}
		&-\eta\left(\frac{1}{\beta} + 2(\mu-1)\right) +\alpha(\mu-1)\\
        &=  (\alpha-2\eta)(\mu-1) - \frac{\eta}{\beta}\\
		&=
        \alpha\left(\frac{\alpha + \sqrt{\alpha^2 + 8\alpha\beta}}{\alpha + 4\beta + \sqrt{\alpha^2 + 8\alpha\beta}}\right)\left(\frac{\sqrt{\alpha^2 + 8\alpha \beta}-\alpha}{4\alpha\beta}\right) - \left(\frac{2\alpha}{\alpha+4\beta+\sqrt{\alpha^2 + 8\alpha\beta}}\right)= 0.
	\end{align*}

	In summary, we have shown that
	\begin{equation*}
		\eta f_0 - \eta f_{n-1} - (1-\eta)\norm{g_{n-1}}^2 \geq 0
	\end{equation*}
	and that $h$ is \gcomposable{} with rate $\eta$.\qedhere
\end{proof}

\begin{proof}[Proof of \cref{lem:checking_scomp_identity}]
	First, we compute
	\begin{align*}
		1 + \sum_{i=0}^{n-2}h_i &= \left(1 + \sum_{i=0}^{n_a -2}a_i\right) + (\mu-1) + \left(1 + \sum_{i=0}^{n_b -2}b_i\right)\\
		&= \frac{1}{\alpha} + \left(\frac{\sqrt{\alpha^2+6\alpha\beta+\beta^2}-(\alpha+\beta)}{2\alpha\beta}\right) + \frac{1}{\beta}\\
		&= \frac{\alpha+\beta + \sqrt{\alpha^2+6\alpha\beta+\beta^2}}{2\alpha\beta}.
	\end{align*}
	We recognize the final line as $\frac{1}{\eta}$.
	
	Next, we compute
	\begin{align*}
		\prod_{i=0}^{n-2}(h_i - 1) &= \prod_{i=0}^{n_a-2}(a_i - 1) \cdot \prod_{i=0}^{n_b-2}(b_i - 1) \cdot (\mu-1)\\
		&= \frac{\sqrt{\alpha^2+6\alpha\beta+\beta^2}-(\alpha+\beta)}{2}\\
		&= \frac{4\alpha\beta}{2\alpha+2\beta + 2\sqrt{\alpha^2+6\alpha\beta+\beta^2}}.
	\end{align*}
	We recognize the final line as $\eta$.\qedhere
\end{proof}

\begin{proof}[Proof of \cref{lem:checking_scomp_inequality}]
	Let
	\begin{equation*}
		\sigma = \frac{\beta + \alpha\beta (\mu-1)}{\alpha - \alpha \beta(\mu-1)}.
	\end{equation*}
	This expression is nonnegative as
	\begin{align*}
		\beta(\mu-1) &= \beta \left(\frac{\sqrt{\alpha^2+6\alpha\beta+\beta^2}-(\alpha+\beta)}{2\alpha\beta}\right)\\
		&    
		< \frac{\sqrt{9\alpha^2+6\alpha\beta+\beta^2}-(\alpha+\beta)}{2\alpha}= 1.
	\end{align*}

	As $a$ is \scomposable{} with rate $\alpha$, \cref{prop:self_comp_ineq} implies that the following expression is nonnegative:
	\begin{equation}
		\label{eq:sc_cert_part1}
		\sum_{i=0}^{n_a - 2} a_i \left( 2(f_i -f_{n_a-1}) + \norm{g_i}^2 + 2\ip{g_i, x_0 - x_i}\right)- \norm{x_{n_a-1} - x_0}^2 - \frac{1-\alpha}{\alpha^2}\norm{g_{n_a-1}}^2.
	\end{equation}
	Similarly, the following expression is nonnegative:
	\begin{align}
		\label{eq:sc_cert_part2}
		&\sum_{i=0}^{n_b - 2} b_i \left( 2(f_{n_a+i} -f_{n-1}) + \norm{g_{n_a+i}}^2 + 2\ip{g_{n_a+i}, x_{n_a} - x_{n_a+i}}\right)\nonumber\\
        &\qquad\qquad\qquad- \norm{x_{n-1} - x_{n_a}}^2 - \frac{1-\beta}{\beta^2}\norm{g_{n-1}}^2.
	\end{align}
	
	By \cref{fact}, the following expressions are also nonnegative
	\begin{gather}
		\sum_{i=0}^{n_b-2}b_i\left(Q_{n_a-1,n_a+i} + Q_{n-1,n_a+i}\right)\label{eq:sc_cert_part3}\\
		Q_{n_a-1,n-1} + (\mu-1)Q_{n-1,n_a-1}.\label{eq:sc_cert_part4}
	\end{gather}
	
	Let $\Sigma$ denote the weighted sum of \cref{eq:sc_cert_part1,eq:sc_cert_part2,eq:sc_cert_part3,eq:sc_cert_part4}
    with weights $1$, $1+2\sigma$, $\sigma$, $\sigma$ respectively. Let $\Sigma_f$ denote the terms depending on $f$ and let $\Sigma_g$ denote the terms depending on $x_0-x_\star,g_0,\dots,g_n$. Again, $\Sigma = \Sigma_f + \Sigma_g$.
	
	We compute
	\begin{align*}
		\frac{\Sigma_f}{2}&= \sum_{i=0}^{n_a - 2} a_i(f_i -f_{n_a-1})
		+ (1+2\sigma)\sum_{i=0}^{n_b - 2} b_i (f_{n_a+i} -f_{n-1}) \\
		&\qquad +
		\sigma\sum_{i=0}^{n_b-2}b_i\left(f_{n_a-1} + f_{n-1} - 2f_{n_a+i}\right) + 
		(\mu-2)\sigma \left(f_{n-1} - f_{n_a-1}\right)\\
		&=  \sum_{i=0}^{n_a - 2} h_i f_i 
		+ \left(\frac{\sigma}{\beta}
		- \frac{1-\alpha}{\alpha} - (\mu-1)\sigma\right)f_{n_a-1}
		+ \sum_{i=n_a}^{n-2}h_if_i\\
        &\qquad\qquad  - \left(\frac{1+\sigma}{\beta}-1-(\mu-1)\sigma\right)f_{n-1}.
	\end{align*}
	A straightforward calculation shows that the coefficient on $f_{n_a-1}$ is
	\begin{equation*}
		\left(\frac{1}{\beta} - (\mu-1)\right)\sigma
		- \frac{1}{\alpha}+1 = \mu = h_{n_a-1}.
	\end{equation*}
	We deduce that
	\begin{equation*}
		\Sigma_f = 2\sum_{i=0}^{n-2}h_i(f_i - f_{n-1}).
	\end{equation*}
	
	Let $\Delta_0 = x_{n_a-1}-x_0$,
	$\Delta_1 = g_{n_a-1}$,
	$\Delta_2 = x_{n-1}-x_0$, and
	$\Delta_3 = g_{n-1}$.
	Note that $x_{n_a} - x_0 = \Delta_0 - \mu\Delta_1$ and $x_{n-1} - x_{n_a} = -\Delta_0 + \mu\Delta_1 + \Delta_2$.
	
	The terms in \eqref{eq:sc_cert_part1} that depend on $x_0 - x_\star$ and $g_i$ simplify to
	\begin{align*}
		&\sum_{i=0}^{n_a - 2} a_i \left(\norm{g_i}^2 + 2\ip{g_i, x_0 - x_i}\right)- \norm{x_{n_a-1} - x_0}^2 - \frac{1-\alpha}{\alpha^2}\norm{g_{n_a-1}}^2\\
		&\qquad = \sum_{i=0}^{n_a - 2} h_i \left(\norm{g_i}^2 + 2\ip{g_i, x_0 - x_i}\right)- \norm{\Delta_0}^2 - \frac{1-\alpha}{\alpha^2}\norm{\Delta_1}^2.
	\end{align*}
	
	The terms in \eqref{eq:sc_cert_part2} that depend on $x_0 - x_\star$ and $g_i$ simplify to
	\begin{align*}
		&\sum_{i=0}^{n_b - 2} b_i \left(\norm{g_{n_a+i}}^2 + 2\ip{g_{n_a+i}, x_{n_a} - x_{n_a+i}}\right)- \norm{x_{n-1} - x_{n_a}}^2 - \frac{1-\beta}{\beta^2}\norm{g_{n-1}}^2\\
		&\qquad = \sum_{i=n_a}^{n - 2} h_i \left(\norm{g_i}^2 + 2\ip{g_i, x_0 - x_i}\right)
		+2\ip{x_{n_a} - x_{n-1},x_{n_a} - x_0}\\
		&\qquad\qquad
		- \norm{x_{n-1} - x_{n_a}}^2 - \frac{1-\beta}{\beta^2}\norm{g_{n-1}}^2\\
		&\qquad = \sum_{i=n_a}^{n - 2} h_i \left(\norm{g_i}^2 + 2\ip{g_i, x_0 - x_i}\right)
		+\norm{\Delta_0 - \mu\Delta_1}^2
		- \norm{\Delta_2}^2 - \frac{1-\beta}{\beta^2}\norm{\Delta_3}^2.
	\end{align*}
	
	The terms in \eqref{eq:sc_cert_part3} that depend on $x_0 - x_\star$ and $g_i$ simplify to
	\begin{align*}
		&\sum_{i=0}^{n_b-2}b_i\left(
		-2\ip{g_{n_a+i}, x_{n_a-1} + x_{n-1} - 2x_{n_a+i}} - \norm{g_{n_a-1} - g_{n_a +i}}^2
		- \norm{g_{n-1} - g_{n_a +i}}^2
		\right)\\
		&=\sum_{i=n_a}^{n-2}h_i\bigg(
		-4\ip{g_{i}, x_0 - x_{i}}
		- 2\norm{g_i}^2
		-2 \ip{g_i, x_{n_a-1}-x_0}
		-2 \ip{g_i, x_{n-1} -x_0}\\
		&\qquad
		- \norm{g_{n_a-1}}^2 + 2\ip{g_{n_a-1},g_i}
		- \norm{g_{n-1}}^2 + 2\ip{g_{n-1},g_i}
		\bigg)\\
		& = 
		-2\sum_{i=n_a}^{n-2}h_i\left(\norm{g_i}^2+2\ip{g_{i}, x_0 - x_{i}}\right)
		-2 \ip{x_{n_a} -x_{n-1}, x_{n_a-1}-x_0}\\
		&\qquad -2 \ip{x_{n_a} -x_{n-1}, x_{n-1} -x_0}		- \frac{1-\beta}{\beta}\norm{g_{n_a-1}}^2 + 2\ip{g_{n_a-1},x_{n_a}-x_{n-1}}\\
        &\qquad 
		- \frac{1-\beta}{\beta}\norm{g_{n-1}}^2 + 2\ip{g_{n-1},x_{n_a}-x_{n-1}}\\
		& = 
		-2\sum_{i=n_a}^{n-2}h_i\left(\norm{g_i}^2+2\ip{g_{i}, x_0 - x_{i}}\right)
		-2\norm{\Delta_0}^2 
		-\left(2\mu + \frac{1-\beta}{\beta}\right)\norm{\Delta_1}^2 + 2\norm{\Delta_2}^2\\
		&\qquad  
		- \frac{1-\beta}{\beta^2}\norm{\Delta_3}^2
		+ 2(\mu+1)\ip{\Delta_0,\Delta_1} +2(\mu-1)\ip{\Delta_1,\Delta_2} + 2\ip{\Delta_3, \Delta_0 - \mu\Delta_1-\Delta_2}.
	\end{align*}
	
	The terms in \eqref{eq:sc_cert_part4} that depend on $x_0 - x_\star$ and $g_i$ simplify to
	\begin{align*}
		&-2\ip{g_{n-1}, x_{n_a-1}-x_{n-1}} - \norm{g_{n_a-1}-g_{n-1}}^2 \\
        &\qquad+ (\mu-1) \left(-2\ip{g_{n_a-1}, x_{n-1}-x_{n_a-1}} - \norm{g_{n-1}-g_{n_a-1}}^2\right)\\
		&= 2\ip{(\mu-1) g_{n_a-1} - g_{n-1}, x_{n_a-1}-x_{n-1}} - \mu \norm{g_{n_a-1}-g_{n-1}}^2\\
		&= 2\ip{(\mu-1)\Delta_1 - \Delta_3, \Delta_0 - \Delta_2} - \mu \norm{\Delta_1}^2
		- \mu \norm{\Delta_3}^2
		+ 2\mu \ip{\Delta_1,\Delta_3}.
	\end{align*}
	
	Thus, the weighted sum of these terms, i.e., $\Sigma_g$ is equal to
	\begin{align*}
		\Sigma_g &= \sum_{i=0}^{n_a - 2} h_i \left(\norm{g_i}^2 + 2\ip{g_i, x_0 - x_i}\right) + \sum_{i=n_a}^{n-2}h_i\left(\norm{g_i}^2+2\ip{g_{i}, x_0 - x_{i}}\right)\\
		&\qquad + \left(\text{a quadratic form in }\Delta_0,\Delta_1,\Delta_2,\Delta_3\right).
	\end{align*}
	The quadratic form corresponds to the matrix
	\begin{equation*}
		\begin{pmatrix}
			0 & -\mu & 0 & 0\\
			\cdot & \frac{-1+\alpha}{\alpha^2}+ \frac{(-1+\beta)\sigma}{\beta} + \mu(\mu-3\sigma+2\mu\sigma) & 0 & 0\\
			\cdot & \cdot & -1 & 0\\
			\cdot & \cdot & \cdot & \frac{-1 + \beta - 2\sigma +\beta(1+\beta -\beta\mu)\sigma}{\beta^2}
		\end{pmatrix}
	\end{equation*}
	where the entries below the diagonal are defined by symmetry.
	
	Applying the definitions of $\mu,\sigma,\eta$ we can check
	that
	the coefficient on $\norm{\Delta_1}^2$ is $\mu$ and the coefficient on $\norm{\Delta_3}^2$ is $-\frac{1-\eta}{\eta^2}$.
	
	We conclude that
	\begin{equation*}
		\Sigma_g = \sum_{i=0}^{n -2} h_i \left(\norm{g_i}^2 + 2\ip{g_i, x_0 - x_i}\right) - \norm{x_{n-1}-x_0}^2 - \frac{1-\eta}{\eta^2}\norm{g_{n-1}}^2.\qedhere
	\end{equation*}

\end{proof}

\subsection{Proof of \cref{prop:fg_rates_of_scomp}}
\label{subap:fgratesofscomp}
We break the proof of \cref{prop:fg_rates_of_scomp} into the two following lemmas.

\begin{lemma}
Suppose $h\in\R^{n}_{++}$ is \scomposable{} with rate $\eta$. Then, for any $1$-smooth convex function $f$ with minimizer $x_\star$, gradient descent with stepsizes $h$ satisfies
        \begin{equation*}
            f_n - f_\star \leq \frac{\eta}{2-\eta}\left(\frac{1}{2}\norm{x_0 - x_\star}^2 - \frac{1}{2}\norm{x_n - x_\star - \frac{g_n}{\eta}}^2\right).
        \end{equation*}
This bound is tight and is attained when $f(x)$ is either $q(x)$ or $H_{\eta/(2-\eta)}(x)$. In the latter case, the bracketed term simplifies to $\frac{1}{2}\norm{x_0-x_\star}^2$.
\end{lemma}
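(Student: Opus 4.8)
The plan is to obtain the stated inequality by adding a single smoothness--convexity inequality to the defining \scomposable{} inequality \eqref{eq:self_composable_ineq}. The relevant extra inequality is $Q_{\star,n}\geq 0$, which is \cref{fact} applied with $x=x_\star$ and $y=x_n$; recalling that $x_\star$ is a minimizer so that $g_\star=\nabla f(x_\star)=0$, it reads $-2(f_n-f_\star)+2\ip{g_n,x_n-x_\star}-\norm{g_n}^2\geq 0$. The key observation is that, after rescaling, both the target bound and \eqref{eq:self_composable_ineq} have right-hand side exactly $\norm{x_0-x_\star}^2$, so it suffices to compare left-hand sides.

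Concretely, first I would multiply \eqref{eq:self_composable_ineq} by $2/\eta^2$ to get
\[
    \frac{1-\eta}{\eta^2}\norm{g_n}^2+\norm{x_n-x_\star}^2+\frac{2(1-\eta)}{\eta}(f_n-f_\star)\ \leq\ \norm{x_0-x_\star}^2 .
\]
Next I would expand $\norm{x_n-x_\star-g_n/\eta}^2=\norm{x_n-x_\star}^2-\tfrac2\eta\ip{g_n,x_n-x_\star}+\tfrac1{\eta^2}\norm{g_n}^2$ and note that the claimed bound, after multiplying by $2(2-\eta)/\eta>0$ and moving the squared term to the left, is equivalent to
\[
    \frac{2(2-\eta)}{\eta}(f_n-f_\star)+\norm{x_n-x_\star}^2-\frac2\eta\ip{g_n,x_n-x_\star}+\frac1{\eta^2}\norm{g_n}^2\ \leq\ \norm{x_0-x_\star}^2 .
\]
Subtracting the left-hand side of the first display from that of the second is a routine coefficient computation: the $\norm{x_n-x_\star}^2$ terms cancel, and what remains is $\tfrac1\eta\big(2(f_n-f_\star)-2\ip{g_n,x_n-x_\star}+\norm{g_n}^2\big)=-\tfrac1\eta Q_{\star,n}\leq 0$. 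Hence the left-hand side of the second display is at most that of the first, which is at most $\norm{x_0-x_\star}^2$; undoing the rescaling yields the lemma.

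For the tightness claim, note $\eta\leq 1$ implies $\delta\coloneqq\eta/(2-\eta)\leq\eta$, so \cref{lem:s_composable_huber_tight} gives that for $x_0=1$ and $f$ equal to $q$ or $H_{\eta/(2-\eta)}$ the inequality \eqref{eq:self_composable_ineq} holds with equality. Since the only slack introduced by the argument above is $\tfrac1\eta Q_{\star,n}$, it remains to check $Q_{\star,n}=0$ for these two instances. For the quadratic this is the standard fact that $Q$ vanishes between any two points on $q$ (here $x_\star=0$, $g_n=x_n$, $f_n=\tfrac12 x_n^2$); for the Huber, the final iterate satisfies $x_n=1-\delta(1-\eta)/\eta\geq\delta$ (since $\delta\leq\eta$), so it lies in the linear region, $g_n=\delta$, and a one-line computation gives $Q_{\star,n}=0$. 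Thus equality propagates through the whole chain. Finally, in the Huber case the same expression for $x_n$ together with $\delta=\eta/(2-\eta)$ gives $x_n=\delta/\eta=g_n/\eta$, hence $x_n-x_\star-g_n/\eta=0$ and the bracketed term reduces to $\tfrac12\norm{x_0-x_\star}^2$, as claimed. The whole argument is mechanical; the only thing requiring care is the coefficient bookkeeping in the subtraction step, and there is no substantive obstacle.
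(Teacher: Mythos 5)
Your proof is correct. For the main inequality it is essentially the paper's argument in collapsed form: the paper routes through the equivalent ``history-only'' characterization of \cref{prop:self_comp_ineq} and adds the certificate $\sum_{i=0}^{n-1}h_i Q_{\star,i} + \frac{1}{\eta}Q_{\star,n}$, whereas you start directly from the defining inequality \eqref{eq:self_composable_ineq} and add only $\frac{1}{\eta}Q_{\star,n}\geq 0$; since the defining inequality is itself recovered from \cref{prop:self_comp_ineq} by adding $\sum_i h_i Q_{\star,i}$, the two certificates coincide, and your coefficient bookkeeping (the difference of left-hand sides equaling $-\frac{1}{\eta}Q_{\star,n}$) checks out. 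Where you genuinely diverge is the tightness claim: the paper evaluates both sides explicitly on $q$ and $H_{\eta/(2-\eta)}$, while you propagate equality through the certificate, invoking \cref{lem:s_composable_huber_tight} (valid since $\eta/(2-\eta)\leq\eta$ for $\eta\leq 1$) and checking $Q_{\star,n}=0$ on the two instances. Your route is shorter and makes transparent \emph{why} these instances are tight (every inequality used is saturated), at the cost of depending on \cref{lem:s_composable_huber_tight}; the paper's direct evaluation is self-contained and also yields the explicit value $\frac{\eta^2}{2(2-\eta)}\norm{x_0-x_\star}^2$ of both sides, but obscures the mechanism. Both are complete proofs.
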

\begin{proof}
Let $f$ be a $1$-smooth function with minimizer $x_\star$. 
    For the first claim, it suffices to show that 
	\begin{align*}
        \frac{2(2-\eta)}{\eta}(f_\star - f_n)
        - \norm{x_n - x_\star - \frac{g_n}{\eta}}^2
        +\norm{x_0 - x_\star}^2 \ge 0.
	\end{align*}
    We do this by showing that
    \begin{align*}
        &\frac{2(2-\eta)}{\eta}(f_\star - f_n)
        - \norm{x_n - x_\star - \frac{g_n}{\eta}}^2
        +\norm{x_0 - x_\star}^2 = \\
        &\left(\sum_{i=0}^{n-1}h_i \left(2(f_i - f_n) + \norm{g_i}^2 + 2\ip{g_i, x_0 - x_i}\right) - \norm{x_n - x_0}^2 - \frac{1-\eta}{\eta^2} \norm{g_n}^2\right)\\
        &\qquad+ \left(\sum_{i=0}^{n-1}h_i Q_{\star,i} + \frac{1}{\eta} Q_{\star,n}\right).
    \end{align*}
    Note that this last expression is the sum of two terms: the first term is nonnegative by \cref{prop:self_comp_ineq} and the second is nonnegative because it is a nonnegative combination of the $Q_{i,j}$.
    It remains to show the previous equation, which we do in the following equation block.

	\begin{align*}
        &\sum_{i=0}^{n-1}h_i \left(2(f_i - f_n) + \norm{g_i}^2 + 2\ip{g_i, x_0 - x_i}\right) - \norm{x_n - x_0}^2 - \frac{1-\eta}{\eta^2} \norm{g_n}^2\\
         &\qquad + \sum_{i=0}^{n-1}h_i Q_{\star,i} + \frac{1}{\eta} Q_{\star,n}\\
		&= \sum_{i=0}^{n-1}h_i \left(2(f_\star - f_n) + 2\ip{g_i, x_0 - x_\star}\right) - \norm{x_n - x_0}^2 - \frac{1-\eta}{\eta^2} \norm{g_n}^2 + \frac{1}{\eta} Q_{\star,n}\\
		&= \left(2\sum_{i=0}^{n-1}h_i\right) (f_\star - f_n) + 2\ip{x_n - x_0, x_0 - x_\star} - \norm{x_n - x_0}^2 - \frac{1-\eta}{\eta^2} \norm{g_n}^2 + \frac{1}{\eta} Q_{\star,n}\\
		&= 
        \frac{2(1-\eta)}{\eta}(f_\star - f_n)
        - \norm{x_n - x_\star}^2
        +\norm{x_0 - x_\star}^2        
        - \frac{1-\eta}{\eta^2} \norm{g_n}^2 + \frac{1}{\eta} Q_{\star,n}\\
		&= 
        \frac{2(2-\eta)}{\eta}(f_\star - f_n)
        - \norm{x_n - x_\star}^2
        +\norm{x_0 - x_\star}^2        
        - \frac{1}{\eta^2} \norm{g_n}^2 - \frac{2}{\eta}\ip{g_n, x_\star - x_n}\\
        &=         \frac{2(2-\eta)}{\eta}(f_\star - f_n)
        - \norm{x_n - x_\star - \frac{g_n}{\eta}}^2
        +\norm{x_0 - x_\star}^2.
	\end{align*}
    Note that for the second equation above, we make use of the identity
        \[
         \sum_{i=0}^{n-1}h_i \ip{g_i, x_0 - x_\star} = 
         \sum_{i=0}^{n-1}\ip{h_ig_i, x_0 - x_\star} = 
         \ip{x_{n} - x_0, x_0 - x_\star}.
        \]
        In the third equation, we also make use of the identity that $\sum_{i=0}^{n-1} h_i = \frac{1-\eta}{\eta}$, which follows from the fact that $h$ is \scomposable{}. We also use that $2\ip{x_n - x_0, x_0 - x_\star} - \norm{x_n - x_0}^2 = \norm{x_0 - x_\star}^2 - \norm{x_n - x_\star}^2$, which is a direct algebraic manipulation.

    We now verify that this bound is tight when $f(x)$ is either $q(x)$ or $H_{\eta/(2-\eta)}(x)$.
    First, suppose $f(x)= q(x)$ and $x_0 = 1$. Then,
        $$f_n - f_\star = \frac{1}{2}x_n^2 = \frac{1}{2}\prod_{i=0}^{n-1}(1-h_i)^2x_0^2 = \frac{\eta^2}{2},$$
    where the last identity follows from the definition of \scomposable.
    On the other hand, recognizing that $g_n = x_n$ when $f(x)=q(x)$, we have that
    \begin{align*}
        \frac{\eta}{2-\eta}\left(\frac{1}{2}\norm{x_0-x_\star}^2 - \frac{1}{2}\norm{x_n - \frac{g_n}{\eta}- x_\star}^2
        \right) &= \frac{\eta}{2(2-\eta)}\left(1-\left(1 - \frac{1}{\eta}\right)^2x_n^2
        \right)\\
        &=\frac{\eta}{2(2-\eta)}\left(2\eta - \eta^2
        \right)\\
        &= \frac{\eta^2}{2}.
    \end{align*}
    Next, suppose $f(x)= H_{\eta/(2-\eta)}(x)$ and set $x_0 = 1$. Let $\Sigma = \sum_{i=0}^{n-1}h_i$.
    Then,
    $\eta = \frac{1}{1+\Sigma}$ and $\frac{\eta}{2-\eta}= \frac{1}{1+2\Sigma}$.
    Thus,
        $$\frac{1+\Sigma}{1+2\Sigma}=1 - \frac{\eta}{2-\eta} \Sigma \geq \frac{1}{1+\Sigma}=\eta.$$
    We deduce that $x_n$ is given by the quantity on the LHS of this inequality. We may now evaluate:
    \begin{align*}
        f_n - f_\star &= \frac{1}{1+2\Sigma}\frac{1+\Sigma}{1+2\Sigma} - \frac{1}{2(1+2\Sigma)^2}\\
        &= \frac{1}{2(1+2\Sigma)},
    \end{align*}
    and 
    \begin{align*}
        \frac{\eta}{2-\eta}\left(\frac{1}{2}\norm{x_0-x_\star}^2 - \frac{1}{2}\norm{x_n - \frac{g_n}{\eta}- x_\star}^2
        \right) &= \frac{1}{2(1+2\Sigma)}\left(1 - \left(\frac{1+\Sigma}{1+2\Sigma} - \frac{1+\Sigma}{1+2\Sigma}\right)^2
        \right)\\
        &= \frac{1}{2(1+2\Sigma)}.
    \end{align*}
    Finally, note that the second term within the bracketed quantity (corresponding to $\norm{x_n - g_n/\eta - x_\star}^2$) evaluates to zero as claimed.\qedhere
\end{proof}

\begin{lemma}
    Suppose $h\in\R^n_{++}$ is \scomposable{} with rate $\eta$. Then, for any $1$-smooth convex function $f$ with minimizer $x_\star$, gradient descent with stepsize $h$ satisfies
\begin{equation*}
    \frac{1}{2} \norm{g_n}^2\leq 
    \frac{\eta}{2-\eta}\left(f_0 - f_\star - \frac{1}{2}\norm{g_0 - \eta\sum_{i=0}^{n-1}h_ig_i - \eta g_n}^2\right).
\end{equation*}
This bound is tight and is attained when $f(x)$ is either $q(x)$ or $H_{\eta}(x)$. In the latter case, the bracketed term simplifies to $f_0-f_\star$.
\end{lemma}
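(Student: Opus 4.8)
The plan is to reduce the claim to a single nonnegativity statement and then prove it by the same strategy used for the objective-gap half of \cref{prop:fg_rates_of_scomp}. Since the rate of a \scomposable{} schedule satisfies $0<\eta\le 1$ (indeed $\eta=1/(1+\sum_ih_i)\le 1$), we have $\frac{2-\eta}{\eta}>0$, so the asserted bound is equivalent to
\[
2(f_0-f_\star)-\frac{2-\eta}{\eta}\norm{g_n}^2-\norm{g_0-\eta\sum_{i=0}^{n-1}h_ig_i-\eta g_n}^2\ \ge\ 0 .
\]
Using $\sum_{i=0}^{n-1}h_ig_i=x_0-x_n$ and $\sum_{i=0}^{n-1}h_i=\frac{1-\eta}{\eta}$ (the \scomposable{} normalization), the last norm may also be written $\norm{g_0-\eta(x_0-x_n)-\eta g_n}$.

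To establish this inequality I would exhibit its left-hand side as a nonnegative linear combination of the \scomposable{} certificate guaranteed by \cref{prop:self_comp_ineq}, namely
\[
\Phi\ :=\ \sum_{i=0}^{n-1}h_i\bigl(2(f_i-f_n)+\norm{g_i}^2+2\ip{g_i,x_0-x_i}\bigr)-\norm{x_n-x_0}^2-\frac{1-\eta}{\eta^2}\norm{g_n}^2\ \ge\ 0 ,
\]
together with smoothness--convexity inequalities $Q_{i,j}\ge 0$ from \cref{fact}. Where the objective-gap proof used the combination $\Phi+\sum_{i=0}^{n-1}h_iQ_{\star,i}+\tfrac1\eta Q_{\star,n}$ (built from terms anchored at $x_\star$), the present target features $f_0$ rather than $f_n$, so the combination is instead built from $Q_{0,i}$-type terms (to introduce $f_0$ and cancel every interior $f_i$), a $Q_{n,\star}$ term (to convert the residual $f_n$ into $f_\star$), and, as in the proofs of \cref{thm:f_recurrence,thm:selfcomp_recurrence}, additional $Q_{i,j}$ between trajectory points to correct the quadratic part; crucially, only $Q_{i,\star}$ (never $Q_{\star,i}$) terms may enter, since $g_\star=0$ makes $Q_{i,\star}$ free of $x_\star$ while the target has no free $x_\star$. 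Matching the linear-in-$(f_\star,f_0,\dots,f_n)$ part to $2(f_0-f_\star)$ largely forces the weights of the main terms, and once the combination is in hand nonnegativity is immediate: $\Phi\ge 0$ by \cref{prop:self_comp_ineq}, each $Q_{i,j}\ge 0$ by \cref{fact}, and all weights are nonnegative (using $\eta\le 1$, as in \cref{lem:f_huber_tight}).

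The main obstacle is identifying the precise $Q_{i,j}$-combination and then carrying out the resulting algebraic identity: one must verify that, after the linear-in-$f$ part equals $2(f_0-f_\star)$, the leftover quadratic form in the gradients $g_0,\dots,g_n$ collapses to exactly $-\frac{2-\eta}{\eta}\norm{g_n}^2-\norm{g_0-\eta(x_0-x_n)-\eta g_n}^2$. This is a direct but lengthy computation of the type appearing in \cref{sec:proofs} --- substitute $x_0-x_i=\sum_{j<i}h_jg_j$ throughout, expand, and repeatedly apply $\sum_ih_i=\frac{1-\eta}{\eta}$. Finally, the tightness claims follow by substituting the two extremal instances with $x_0=1$. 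For $f=q$: each $g_i=x_i$ and $x_n=\prod_i(1-h_i)=(-1)^n\prod_i(h_i-1)=(-1)^n\eta$, so $\norm{g_n}^2=\eta^2$, $f_0-f_\star=\tfrac12$, and a short computation gives $g_0-\eta(x_0-x_n)-\eta g_n=1-\eta$; hence the right-hand side equals $\frac{\eta}{2-\eta}\cdot\tfrac12\bigl(1-(1-\eta)^2\bigr)=\tfrac{\eta^2}{2}=\tfrac12\norm{g_n}^2$. For $f=H_\eta$: since $\eta=1/(1+\sum_ih_i)$ we get $1-\eta\sum_ih_i=\eta$, so (as in \cref{lem:positive_Qstari}) the iterates stay in the affine region with $x_i>\eta$ for $i<n$ and $x_n=\eta$, whence every $g_i=\eta$; then $g_0-\eta(x_0-x_n)-\eta g_n=\eta-\eta(1-\eta)-\eta^2=0$, so the bracketed term reduces to $f_0-f_\star=\eta-\tfrac{\eta^2}{2}$ and the right-hand side is $\frac{\eta}{2-\eta}\bigl(\eta-\tfrac{\eta^2}{2}\bigr)=\tfrac{\eta^2}{2}=\tfrac12\norm{g_n}^2$.
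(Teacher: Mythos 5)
Your overall strategy is the one the paper uses---aggregate the \scomposable{} certificate from \cref{prop:self_comp_ineq} with interpolation inequalities $Q_{i,j}\ge 0$ anchored at the trajectory points $0$ and $n$, convert the residual $f_n$ to $f_\star$ with a $Q_{n,\star}$ term, and check the quadratic remainder collapses to the stated perfect square---and your tightness verification for $q$ and $H_\eta$ is complete and correct. However, the core of the lemma is precisely the step you defer: exhibiting the nonnegative weights and verifying the quadratic-form identity. Your claim that matching the linear-in-$f$ part ``largely forces the weights'' is not accurate. Writing the combination as $a\,\Phi + b\bigl(\sum_i h_iQ_{0,i}+Q_{0,n}\bigr) + c\bigl(\sum_i h_iQ_{n,i}+Q_{n,n}\bigr)$, the linear part yields only $b=\eta^2$ and $a=b+c$ (the $f_n$-coefficient condition turns out to be the same equation), leaving $c$ as a genuinely free parameter. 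The correct choice, $a=2\eta^2-\eta^3$, $b=\eta^2$, $c=\eta^2-\eta^3$, is determined only by demanding that the leftover quadratic form in $(g_0,\,x_n-x_0,\,g_n)$ equal
\begin{equation*}
-\eta\norm{g_0-\eta\textstyle\sum_{i}h_ig_i-\eta g_n}^2-2(1-\eta)\norm{g_n}^2,
\end{equation*}
and verifying this requires the identity $\sum_i h_i\bigl(\norm{g_i}^2+2\ip{g_i,x_n-x_i}\bigr)+\sum_i(h_i^2-h_i)\norm{g_i}^2=-\norm{x_n-x_0}^2$ together with a $3\times 3$ matrix computation. Without producing that choice of $c$ and checking that the residual quadratic form is exactly (not merely bounded above by) a negative combination of the two squares appearing in the statement, the inequality is not established; there is no a priori guarantee that some admissible $c$ makes the remainder negative semidefinite of that precise shape.

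A secondary imprecision: you state that only $Q_{i,\star}$ terms (never $Q_{\star,i}$) may enter ``since the target has no free $x_\star$.'' In fact $Q_{\star,i}=2f_\star-2f_i-2\ip{g_i,x_\star-x_i}-\norm{g_i}^2$ does contain $x_\star$, whereas $Q_{i,\star}=2f_i-2f_\star-\norm{g_i}^2$ does not (because $g_\star=0$); your conclusion is right but for the reason you give in reverse. The paper indeed finishes by adding $\eta Q_{n,\star}$, which introduces $-2\eta f_\star$ and $-\eta\norm{g_n}^2$ and no $x_\star$-dependence, turning $2\eta(f_0-f_n)-2(1-\eta)\norm{g_n}^2-\eta\norm{\cdot}^2\ge 0$ into the stated bound after dividing by $\eta(2-\eta)$.
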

\begin{proof}
    The three following expressions are each guaranteed to be nonnegative:
\begin{gather}
    2\sum_{i=0}^{n-1}h_i(f_i - f_n) - \sum_{i=0}^{n-1} (h_i^2-h_i)\norm{g_i}^2 - \frac{1-\eta}{\eta^2} \norm{g_n}^2,\label{eq:scomp_grate_1}\\
    \sum_{i=0}^{n-1}h_iQ_{0,i} + Q_{0,n},\qquad\text{and}\label{eq:scomp_grate_2}\\
    \sum_{i=0}^{n-1}h_iQ_{n,i} + Q_{n,n}.\label{eq:scomp_grate_3}
\end{gather}
Indeed, \eqref{eq:scomp_grate_1} is nonnegative by \cref{prop:self_comp_ineq} and \eqref{eq:scomp_grate_2} and \eqref{eq:scomp_grate_3} are nonnegative combinations of the $Q_{i,j}$.

Let $\Delta_0=g_0$, $\Delta_1=x_n-x_0$, $\Delta_2=g_n$.
The terms depending on $x_0 - x_\star,g_0,\dots,g_n$ in \eqref{eq:scomp_grate_2} are
\begin{align*}
    & \sum_{i=0}^{n-1}h_i\left(- 2\ip{g_i, x_0 - x_i} - \norm{g_0 - g_i}^2\right) + \left(- 2\ip{g_n, x_0 - x_n} - \norm{g_0 - g_n}^2\right)\\
    &\qquad =\sum_{i=0}^{n-1}h_i\left(- 2\ip{g_i, x_n - x_i} - \norm{g_i}^2
    \right)
    -\frac{1-\eta}{\eta}\norm{\Delta_0}^2 - 2\ip{\Delta_0,\Delta_1}
    -2\norm{\Delta_1}^2\\
    &\qquad\qquad + 2\ip{\Delta_1, \Delta_2} - \norm{\Delta_0 - \Delta_2}^2\\
    &\qquad = \sum_{i=0}^{n-1}h_i\left(- 2\ip{g_i, x_n - x_i} - \norm{g_i}^2
    \right)
    + \begin{pmatrix}
    -\frac{1}{\eta}&-1&1\\
    \cdot&-2&1\\
    \cdot&\cdot&-1
    \end{pmatrix}.
\end{align*}
Here, the $3\by 3$ matrix in the final line is shorthand for the quadratic form in $\Delta_0, \Delta_1,\Delta_2$ corresponding to this $3\by 3$ matrix with entries below the diagonal defined by symmetry.

The terms depending on $x_0 - x_\star,g_0,\dots,g_n$ in \eqref{eq:scomp_grate_3} are
\begin{align*}
    &\sum_{i=0}^{n-1}h_i\left(- 2\ip{g_i, x_n - x_i} - \norm{g_n - g_i}^2\right)\\
    &\qquad = \sum_{i=0}^{n-1}h_i\left(- 2\ip{g_i, x_n - x_i} - \norm{g_i}^2\right) - \frac{1-\eta}{\eta}\norm{\Delta_2}^2-2\ip{\Delta_1,\Delta_2}\\
    &\qquad = \sum_{i=0}^{n-1}h_i\left(- 2\ip{g_i, x_n - x_i} - \norm{g_i}^2\right) + \begin{pmatrix}
    0&0&0\\
    \cdot &0& -1\\
    \cdot &\cdot&\frac{-1}{\eta}+1
    \end{pmatrix}
\end{align*}

We now sum up the three expressions with weights $a=2\eta^2-\eta^3, b=\eta^2, c=\eta^2-\eta^3$.
Let $\Sigma$ denote this sum. 
Let $\Sigma_f$ and $\Sigma_g$ denote the terms that are linear in $f_i$ and that are quadratic in $x_0-x_\star,g_0,\dots,g_n$ respectively. Then,
\begin{align*}
    \frac{\Sigma_f}{2}&=a\sum_{i=0}^{n-1}h_i(f_i - f_n) + b\sum_{i=0}^{n-1}h_i(f_0 - f_i)+b(f_0 - f_n) + c\sum_{i=0}^{n-1}h_i(f_n - f_i)\\
    &\qquad = a\sum_{i=0}^{n-1}h_if_i - a\left(\sum_{i=0}^{n-1}h_i\right)f_n + b\left(\sum_{i=0}^{n-1}h_i\right)f_0 - b\sum_{i=0}^{n-1}h_if_i+b(f_0 - f_n)\\
    &\qquad\qquad + c\left(\sum_{i=0}^{n-1}h_i\right)f_n - c\sum_{i=0}^{n-1}h_if_i\\
    &\qquad = \sum_{i=0}^{n-1}(a-b-c)h_if_i + b\left(1+\sum_{i=0}^{n-1}h_i\right)f_0 + \left(- b + (c-a)\sum_{i=0}^{n-1}h_i\right)f_n\\
    &\qquad = \eta(f_0-f_n).
\end{align*}
Before we calculate $\Sigma_g$, we observe that
\begin{align*}
    &\sum_{i=0}^{n-1} (h_i^2-h_i)\norm{g_i}^2 + \sum_{i=0}^{n-1}h_i\left(2\ip{g_i, x_n - x_i} + \norm{g_i}^2
    \right)\\
    &\qquad = -\sum_{i=0}^{n-1}2\ip{h_ig_i, \sum_{j=i}^{n-1}h_jg_j} + \norm{h_ig_i}^2
= -\norm{\Delta_1}^2.
\end{align*}
This allows us to simplify $\Sigma_g$ as
\begin{align*}
\Sigma_g &= 
 -a \sum_{i=0}^{n-1} (h_i^2-h_i)\norm{g_i}^2 
 -(b+c)\sum_{i=0}^{n-1}h_i\left(2\ip{g_i, x_n - x_i}  + \norm{g_i}^2
    \right)\\
    &\qquad 
    - a\frac{1-\eta}{\eta^2} \norm{g_n}^2
    + b\begin{pmatrix}
    -\frac{1}{\eta}&-1&1\\
    \cdot&-2&1\\
    \cdot&\cdot&-1
    \end{pmatrix} + c\begin{pmatrix}
    0&0&0\\
    \cdot &0& -1\\
    \cdot &\cdot&\frac{-1}{\eta}+1
    \end{pmatrix}\\
    &= a\begin{pmatrix}
    0&0&0\\
    \cdot&1&0\\
    \cdot&\cdot&-\frac{1-\eta}{\eta^2}
    \end{pmatrix} + b\begin{pmatrix}
        -\frac{1}{\eta}&-1&1\\
        \cdot&-2&1\\
        \cdot&\cdot&-1
        \end{pmatrix} + c\begin{pmatrix}
            0&0&0\\
            \cdot&0& -1\\
            \cdot&\cdot&\frac{-1}{\eta}+1
            \end{pmatrix}\\
&= -\eta \norm{g_0 - \eta\sum_{i=0}^{n-1}h_ig_i - \eta g_n}^2 - 2(1-\eta)\norm{g_n}^2.
\end{align*}
We deduce that
\begin{equation*}
    2\eta(f_0 - f_n) - \eta\norm{g_0 - \eta\sum_{i=0}^{n-1}h_ig_i - \eta g_n}^2 -  2(1-\eta)\norm{g_n}^2\geq 0.
\end{equation*}
The desired inequality follows by adding $\eta Q_{n,\star}$ and rescaling.

It remains to show that this bound is tight when $f(x)$ is either $q(x)$ or $H_\eta(x)$. First suppose $f(x)=q(x)$ and $x_0=1$. Then,
\begin{align*}
    \abs{x_n} = \prod_{i=0}^{n-1}(h_i-1) = \eta.
\end{align*}
Additionally, $g_i = x_i$ for all $i=0,\dots,n$ due to $f(x)=q(x)$. Thus,
\begin{align*}
    \frac{1}{2}\norm{g_n}^2 = \frac{\eta^2}{2},
\end{align*}
and
\begin{align*}
   \frac{\eta}{2-\eta}\left(f_0-f_\star - \frac{1}{2}\norm{g_0 - \eta \sum_{i=0}^{n-1}h_i g_i - \eta g_n}^2\right)
   &=\frac{\eta}{2(2-\eta)}\left(1 - \left(x_0 - \eta (x_0 - x_n) - \eta x_n\right)^2\right)\\
   &= \frac{\eta}{2(2-\eta)}\left(1 - \left(1-\eta \right)^2\right)\\
   &= \frac{\eta^2}{2}.
\end{align*}
Here, we have used the identity $x_n = x_0 - \sum_{i=0}^{n-1}h_i g_i$.

Next, suppose $f(x)=H_\eta(x)$ and set $x_0 = 1$.
Note that
\begin{equation*}
    1 - \eta \sum_{i=0}^{n-1}h_i =\eta.
\end{equation*}
Thus, we have that $x_n = \eta$ and that $g_i = \eta$ for all $i=0,\dots,n$. We deduce that
\begin{equation*}
    \frac{1}{2}\norm{g_n}^2 = \frac{\eta^2}{2}.
\end{equation*}
On the other hand,
\begin{align*}
    &\frac{\eta}{2-\eta}\left(f_0-f_\star - \frac{1}{2}\norm{g_0 - \eta \sum_{i=0}^{n-1}h_i g_i - \eta g_n}^2\right)\\
    &=\frac{\eta^2}{2(2-\eta)}\left(2 - \eta - \left(1 - \eta \left(1+\sum_{i=0}^{n-1}h_i\right)\right)^2\right)\\
    &= \frac{\eta^2}{2(2-\eta)}(2 - \eta)\\
    &= \frac{\eta^2}{2}.
\end{align*}
Here, the third line uses the identity $\eta = \frac{1}{1+\sum_i h_i}$. Thus, the second term on the first line (corresponding to $\norm{g_0 - \eta\sum_ih_ig_i - \eta g_n}^2$) evaluates to zero as claimed.\qedhere
\end{proof}

\end{document}